\def\blfootnote{\xdef\@thefnmark{}\@footnotetext}
\newcommand\ccnote{
    \blfootnote{\copyright\,\, Guillaume Dubach, Pierre Germain, and Benjamin Harrop-Griffiths}
    \blfootnote{\ccLogo\, \ccAttribution\,\, Licensed under a \href{https://creativecommons.org/licenses/by/4.0/}{Creative Commons Attribution License (CC-BY)}.}
}
\numberwithin{equation}{section}
\renewcommand{\leq}{\leqslant}
\renewcommand{\geq}{\geqslant}
\renewcommand{\mathbb}{\varmathbb}
\newtheorem{theorem}{Theorem}[section]
\newtheorem{lemma}[theorem]{Lemma}
\newtheorem{corollary}[theorem]{Corollary}
\newtheorem{proposition}[theorem]{Proposition}
\newtheorem{definition}[theorem]{Definition}
\newtheorem{remark}[theorem]{Remark}
\newcommand{\mathscr}{\mathcal}
\newtheorem{ex}[theorem]{Example}
\newtheorem{fact}[theorem]{Fact}
\newcommand{\eq}[2]{\begin{equation}#2\label{#1}\end{equation}}
\newcommand{\pde}[2]{\ensuremath{\begin{cases}#1\smallskip\\#2\end{cases}}}
\newcommand{\R}{\ensuremath{\mathbb{R}}}
\newcommand{\C}{\ensuremath{\mathbb{C}}}
\newcommand{\Z}{\ensuremath{\mathbb{Z}}}
\newcommand{\Schwartz}{\ensuremath{\mathscr{S}}}
\newcommand{\Test}{\ensuremath{C^\infty_c}}
\newcommand{\Cont}{\ensuremath{C}}
\renewcommand{\Re}{\ensuremath{\operatorname{Re}}}
\renewcommand{\epsilon}{\varepsilon}
\newcommand{\mc}{\mathcal}
\newcommand{\mb}{\mathbf}
\newcommand{\mr}{\mathrm}
\newcommand{\mf}{\mathfrak}
\newcommand{\mbb}{\mathbb}
\newcommand{\mtt}[1]{{\normalfont \texttt{#1}}}
\newcommand{\<}{\ensuremath{\langle}}
\renewcommand{\>}{\ensuremath{\rangle}}
\newcommand{\p}{\ensuremath{\partial}}
\newcommand{\bigO}{O}
\newcommand{\bbo}{\ensuremath{\mathbb 1}}
\newcommand{\bbE}{\ensuremath{\mathbb E}}
\newcommand{\bbP}{\ensuremath{\mathbb P}}
\newcommand{\bpf}{\begin{proof}}
\newcommand{\epf}{\end{proof}}
\newcommand{\qtq}[1]{\quad\text{#1}\quad}
\newcommand{\LHS}[1]{\mr{LHS}\eqref{#1}}
\newcommand{\RHS}[1]{\mr{RHS}\eqref{#1}}
\DeclareMathOperator{\tr}{tr}
\DeclareMathOperator{\diag}{diag}
\newcommand{\kin}{\mr{kin}}
\newcommand{\lin}{\mr{lin}}
\newcommand{\nonlin}{\mr{nonlin}}
\newcommand{\cC}{\mb C}
\newcommand{\cM}{\mc M}
\newcommand{\app}{\mr{app}}
\newcommand{\err}{\mr{err}}
\newcommand{\bbU}{\mbb U}
\newcommand{\sbrack}[1]{^{(#1)}}
\newcommand{\sprack}[1]{^{[#1]}}
\newcommand{\cK}{\mc K}
\newcommand{\cE}{\mc E}
\newcommand{\wt}{\widetilde}
\newcommand{\Interactions}{\mtt{Interactions}}
\newcommand{\Oscillations}{\mtt{Oscillations}}
\newcommand{\ttL}{\mtt{L}}
\newcommand{\ttK}{\mtt{K}}
\newcommand{\Identifications}{\mtt{Identifications}}
\newcommand{\Data}{\mtt{Data}}
\newcommand{\cN}{\mc N}
\newcommand{\cL}{\mc L}
\newcommand{\cH}{\mc H}
\newcommand{\bA}{\mb A}
\newcommand{\cB}{\mc B}
\newcommand{\fL}{\mf L}
\newcommand{\cF}{\mc F}
\newcommand{\vk}{\xi}
\newcommand{\ttn}{n}
\newcommand{\tti}{\mtt{i}}
\newcommand{\mfi}{\mf i}
\newcommand{\mfj}{\mf j}
\DeclareMathOperator{\Wg}{Wg}
\newcommand{\ccpair}{(\sigma, \tau)}
\newcommand{\cc}{\sigma \tau^{\! -1}}
\newcommand{\ccn}{\omega}
\newcommand{\CC}{{\mathscr{C} \hspace{-.26em} \mathscr{P}}}
\newcommand{\at}{\mathfrak{a}}
\newcommand{\At}{\mathfrak{A}}
\newcommand{\cyc}{ \mathbin{\rotatebox[origin=c]{-90}{ $\cancer$}} \hspace{.02in} }
\newcommand{\Moeb}{\text{M\oe b}}
\newcommand{\fix}{ {\scriptstyle \circ} \hspace{.015in} }
\newcommand{\fixhat}{ {\scriptstyle \hat{\circ}}\hspace{.015in} }
\tikzset{
	dot/.style={circle,fill=black,draw=black,inner sep=.5mm,thick},
	ghostdot/.style={circle,fill=white,draw=black,inner sep=.5mm,thick},
	indot/.style={rectangle,fill=white,draw=black,inner sep=.75mm,thick},
    gluedot/.style={rectangle,fill=white,draw=black,densely dashed,inner sep=1mm,thick},
	outdot/.style={regular polygon, regular polygon sides=3,fill=white,draw=black,inner sep=0.4mm,thick},
	->-/.style={decoration={markings,mark=at position 0.5 with {\arrow{>}}},postaction={decorate}}
}
\definecolor{col1}{HTML}{000000}
\definecolor{col2}{HTML}{EE33FF}
\definecolor{col3}{HTML}{009988}
\definecolor{col4}{HTML}{FF7733}
\definecolor{col5}{HTML}{3399EE}
\definecolor{col6}{HTML}{EE3377}
\definecolor{col7}{HTML}{AAAAAA}
\address{Guillaume Dubach, Centre de Math\' ematiques Laurent Schwartz, \'Ecole Polytechnique, 91120 Palaiseau, France}
\email{guillaume.dubach@polytechnique.edu}
\address{Pierre Germain, Department of Mathematics, Imperial College London, London, SW7 2AZ, UK} 
\email{pgermain@ic.ac.uk}
\address{Benjamin Harrop-Griffiths, Department of Mathematics \& Statistics, Georgetown University, Washington, DC 20057, USA}
\email{benjamin.harropgriffiths@georgetown.edu}
\begin{document}

\thispagestyle{empty}

\begin{minipage}{0.28\textwidth}
\begin{figure}[H]
\includegraphics[width=2.5cm,height=2.5cm,left]{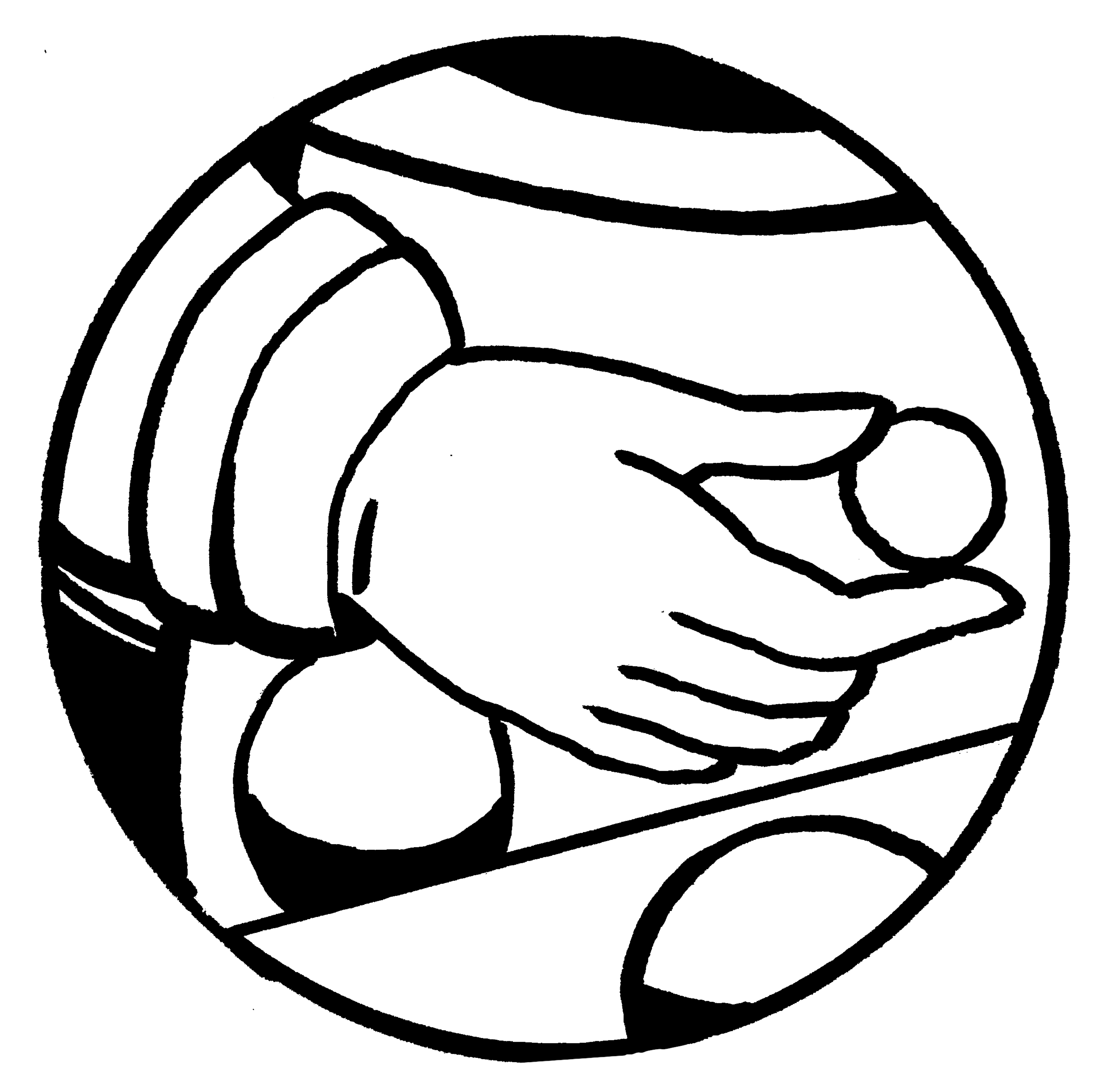}
\end{figure}
\end{minipage}
\begin{minipage}{0.7\textwidth} 
\begin{flushright}
Ars Inveniendi Analytica (2023), Paper No. 7, 63 pp.
\\
DOI 10.15781/63wy-ad98
\\
ISSN: 2769-8505
\end{flushright}
\end{minipage}

\ccnote

\vspace{1cm}


\begin{center}
\begin{huge}
\textit{On the derivation of the homogeneous kinetic wave equation for a nonlinear random matrix model}


\end{huge}
\end{center}

\vspace{1cm}


\begin{minipage}[t]{.28\textwidth}
\begin{center}
{\large{\bf{Guillaume Dubach}}} \\
\vskip0.15cm
\footnotesize{\'Ecole Polytechnique}
\end{center}
\end{minipage}
\hfill
\noindent
\begin{minipage}[t]{.28\textwidth}
\begin{center}
{\large{\bf{Pierre Germain}}} \\
\vskip0.15cm
\footnotesize{Imperial College London}
\end{center}
\end{minipage}
\hfill
\noindent
\begin{minipage}[t]{.28\textwidth}
\begin{center}
{\large{\bf{Benjamin Harrop-Griffiths}}} \\
\vskip0.15cm
\footnotesize{Georgetown University} 
\end{center}
\end{minipage}

\vspace{1cm}


\begin{center}
\noindent \em{Communicated by Zaher Hani}
\end{center}
\vspace{1cm}


\noindent \textbf{Abstract.} \textit{We consider a nonlinear system of ODEs, where the underlying linear dynamics are determined by a Hermitian random matrix ensemble. We prove that the leading order dynamics in the weakly nonlinear, infinite volume limit are determined by a solution to the corresponding kinetic wave equation on a non-trivial timescale. Our proof relies on estimates for Haar-distributed unitary matrices obtained from Weingarten calculus, which may be of independent interest.}
\vskip0.3cm

\noindent \textbf{Keywords.} Wave turbulence, wave kinetic equation, random matrix, Weingarten calculus.
\vspace{0.5cm}


\section{Introduction}

\subsection{Wave turbulence, old and new}
When considering a dynamical system involving a large number of nonlinearly interacting waves, it is often preferable to consider ensembles of solutions instead of individual trajectories. This statistical mechanics approach to the dynamics is referred to as the theory of \emph{wave turbulence}.

A central question of this theory, which lies at the forefront of current research, is to rigorously derive a \emph{kinetic wave equation} for the non-equilibrium dynamics of the underlying microscopic system. Such kinetic wave equations first appeared in work of  Peierls~\cite{Peierls} and Hasselmann~\cite{hasselmann1, hasselmann2}, and have seen renewed interest after Zakharov, L'vov, and Falkovich \cite{ZLF} discovered power-law type stationary solutions analogous to the Kolmogorov spectra of hydrodynamic turbulence~\cite{kolmogorov2, kolmogorov1}.

In recent years, significant progress has been made on the problem of rigorously deriving the kinetic wave equation in the context of nonlinear dispersive equations. The most popular model has been the nonlinear Schr\"odinger equation 
\eq{S}{
i\p_tu + \Delta u = \mu^2 |u|^2u,
}
set on a torus, with spatially homogeneous random initial data. For this model, a full derivation of the kinetic limit was obtained in~\cite{DH2,DH3}, following earlier results in~\cite{BGHS, CG1, DH1,CG2}, and formal derivations in~\cite{MR4227166,dymov2021formal,dymov2021largeperiod}. Other models have also been considered: The foundational work~\cite{MR2755061} dealt with discrete NLS at statistical equilibrium, \cite{ST} focused on multiplicative white noise forcing (both reaching the kinetic time scale), and \cite{ACG} treated the case of spatially inhomogeneous data.

All of the aforementioned rigorous derivations consider PDEs whose linear dynamics enjoy strong algebraic structures, e.g., the eigenvalues and eigenvectors of the Laplacian on a Euclidean torus. In the present article we consider the opposite case, where the linear dynamics are so disordered that they are best understood in probabilistic terms and modeled as a random ensemble. We prove that the leading order dynamics (in a suitable scaling regime) are once again determined by the solution to a corresponding kinetic wave equation. This testifies to the universality of the kinetic description of wave turbulence.

\subsection{The model and its relevance} We consider solutions \(u\colon \R\to \C^{2N+1}\) of the ODE
\eq{NLS-OG}{
i\frac d{dt}u - Hu = \mu^2\bigl( u\odot \bar u\odot u\bigr),
}
where \(H\) is a \((2N+1)\times (2N+1)\) Hermitian random matrix, \(\mu>0\) is a constant, and
\[
(u\odot v)_j := u_jv_j
\]
defines the Hadamard product. Here, and throughout, we index \(\C^{2N+1}\) by \(j\in \llbracket-N,N\rrbracket\), where \(\llbracket a,b\rrbracket = [a,b]\cap\Z\).

The equation \eqref{NLS-OG} is readily seen to be globally well-posed for all Hermitian matrices \(H\) by combining Picard's Theorem with conservation of the mass,
\[
\cM(t) := \|u(t)\|_{\ell^2}^2 = \sum_{j=-N}^N|u_j(t)|^2.
\]
We remark that \eqref{NLS-OG} has one further conservation law: the Hamiltonian,
\[
\cH(t) := \<u,Hu\> + \tfrac{\mu^2}2\|u\|_{\ell^4}^4,
\]
where \(\<u,v\> = \sum_{j=-N}^N\bar u_jv_j\) and \(\|u\|_{\ell^p}^p = \sum_{j=-N}^N|u_j|^p\).

Our motivation for investigating the model \eqref{NLS-OG} is twofold. First, the Bohigas--Giannoni--Schmit conjecture \cite{MR730191} suggests that the spectral properties of quantum systems with \linebreak chaotic classical limits should be well-approximated, in a statistical sense, by random matrix ensembles. In this way, one may view \eqref{NLS-OG} as a toy model for \eqref{S} posed on a domain with a chaotic geodesic flow or, more generally, for a nonlinear interaction in a strongly mixing medium. Second, the derivation of the kinetic wave equation appears to require a very fine spectral resolution of the associated linear equation. This is currently only possible when the linear flow is integrable, and as discussed above, deterministic flows have already seen significant progress. In this paper, we turn to an example with random dynamics.

We subsequently take \(H = (h_{jk})\) to be distributed according to the \emph{Gaussian Unitary Ensemble} (GUE), defined as follows: The \(h_{jk}\) are independent for \(-N\leq j\leq k\leq N\), and the rescaled entry \(\sqrt{2N+1}\, h_{jk}\) is either a complex standard normal random variable when \(j<k\), or a real standard normal random variable when \(j = k\). GUE is an \textit{integrable ensemble} in the sense of random matrix theory. Moreover, its eigenvectors are independent from its eigenvalues and form a uniformly distributed orthonormal basis. This allows one to study them using the Weingarten calculus~\cite{MR1959915}, which will prove to be an indispensable ingredient in the ensuing analysis.

We expect that the results presented below can be generalized to other Gaussian random matrix ensembles using the corresponding Weingarten calculus for their eigenvectors~\cite{MR2217291,MR1959915}. Moreover, the universality paradigm in random matrix theory suggests one should be able to extend our results from the archetypal Gaussian ensembles to more general random matrix ensembles, e.g., Wigner ensembles.

In addition to the space-homogeneous, delocalized regime we consider here, we believe the model \eqref{NLS-OG} may be well-suited to a rigorous investigation of the effects of localization, or space-dependence. In~\cite{NST}, a heuristic derivation of a kinetic wave equation was proposed, starting from the nonlinear Schr\"odinger equation on the lattice with a random potential. In the diffusion limit, a porous medium equation was derived, the degenerate diffusion stemming from Anderson localization. In the Physics literature, understanding the combined effects of interaction (nonlinearity) and a disordered medium has received much attention. A kinetic description seems appropriate in the case of weak disorder, see~\cite{TRM,SWDC,SF} for recent publications on this subject. While a fully rigorous approach to these derivations seems out of reach, we believe that considering band matrices in~\eqref{NLS-OG} could provide an entry point to addressing questions about the effects of localization.

\subsection{The kinetic limit in the weakly turbulent regime}

Rather than working directly with the formulation \eqref{NLS-OG}, we instead diagonalize the matrix \(H\), taking \(\Lambda = \diag(\lambda)\) with
\[\lambda = (\lambda_{-N},\dots,\lambda_N)\in \R_{\geq}^{2N+1} := \bigl\{\lambda\in \R^{2n+1}:\lambda_{-N}\leq \dots\leq \lambda_N\bigr\},\] and \(\Psi\in \bbU(2N+1)\) so that
\[
H = \Psi\Lambda\Psi^*.
\]

When \(H\) is a GUE matrix, the vector of eigenvalues \(\lambda\) and the matrix of eigenvectors \(\Psi\) are independent. The vector of eigenvalues \(\lambda\) is distributed with joint density
\eq{Beta_ens}{
\frac 1{Z_N}
\prod\limits_{-N\leq j<k\leq N}
|\lambda_k - \lambda_j|^2 e^{-\frac12(2N+1)\,|\lambda|^2}\,d\lambda,
}
where the partition function \(Z_{N}>0\) is a normalizing constant and \(d\lambda\) is the Lebesgue measure on \(\R_{\geq}^{2N+1}\). The matrix of eigenvectors \(\Psi\) is distributed according to the \emph{Circular Unitary Ensemble} (CUE), i.e., the Haar probability measure on \(\bbU(2N+1)\).

We now introduce
\eq{diagonalized-variable}{
a := \sqrt N\Psi^*u,
}
and write the equation \eqref{NLS-OG} in the form
\eq{NLS}{
i\frac d{dt}a - \Lambda a = \frac{\mu^2}N\Psi^*\left(\Psi a\odot \overline{\Psi a}\odot\Psi a\right).
}
We consider initial data for \eqref{NLS} given by
\eq{Init}{
a_k(t=0) = A(\tfrac kN),
}
where \(A\in\Cont^1[-1,1]\) is a continuously differentiable, complex-valued function. We note that with this choice of data: 
\begin{itemize}
    \item The vector $u(t=0)$ has size $\sim 1$ in $\ell^2$;
    \item Its entries have typical size $N^{-1/2}$, or equivalently $a(t=0)$ has entries of typical size $1$.
\end{itemize}
It is helpful to keep in mind these orders of magnitude, which will also hold for positive time.

Taking \(a(t)\) to be the solution of \eqref{NLS} with initial data \eqref{Init}, we consider the regime where:
\begin{itemize}
    \item $N \to \infty$ (infinite volume limit);
    \item $\mu / \sqrt{N} \to 0$ (weakly nonlinear limit).
\end{itemize}
A heuristic derivation, whose validity we will investigate below, shows that in this regime
\eq{Goal}{
\bbE|a_k(t)|^2\sim \rho(\tfrac t{T_\kin},\tfrac kN),\qtq{where}T_{\kin} := \frac{N^2}{\mu^{4}}
}
and \(\rho\colon[0,\infty)\times [-1,1]\to[0,\infty)\) is a solution of the kinetic wave equation
\eq{KWE}{
\pde{\partial_t\rho(t,k) = \cC[\rho(t)](k),}{\rho(0,k) = |A(k)|^2.}
}
Here, the collision operator $\cC$ is given by
\[
\cC[\rho](k) := \tfrac\pi2\int_{[-1,1]^3}\delta\bigl(\Theta(k,\ell,m,n)\bigr)\rho(k)\rho(\ell)\rho(m)\rho(n)\Bigl[\tfrac1{\rho(k)} - \tfrac1{\rho(\ell)} + \tfrac1{\rho(m)} - \tfrac1{\rho(n)}\Bigr]\,d\ell \, dm\, dn,
\]
with phase function
\eq{DeterministicPhase}{
\Theta(k,\ell,m,n) := \nu(k) - \nu(\ell) + \nu(m) - \nu(n),
}
where $\nu:[-1,1] \to [-2,2]$ is defined by
\eq{nu}{
\int_0^{\nu(k)}d\sigma_{\mr{sc}} = \tfrac k2,
}
and the semicircle density
\eq{Semicircle}{d\sigma_{\mr{sc}}(x) := \tfrac1{2\pi}(4 - x^2)_+^{\frac12}\,dx\qtq{for}x_+ = \max\{x,0\}.}
We remark that \(\nu(k/N)\) represents the deterministic location of the \(k\)\textsuperscript{th} eigenvalue of the random matrix \(H\): see Lemma~\ref{l:LSC} below.

\subsection{Timescales and parameter ranges} Assuming that the solution \(a(t)\) of \eqref{NLS} remains roughly equidistributed in its entries, the relevant timescales for the problem are:
\begin{itemize}
\item The linear timescale, \(T_\lin = 1\), on which nontrivial linear effects occur;
\item The nonlinear timescale, \(T_\nonlin = N/\mu^2\), on which nontrivial nonlinear effects occur;
\item The kinetic timescale, \(T_\kin = N^2/\mu^4\), on which we expect \eqref{Goal} to be valid.
\end{itemize}
The weakly nonlinear regime \(\mu/\sqrt N\ll1\) corresponds to the case that
\[
T_\lin\ll T_\nonlin\ll T_\kin.
\]

On timescales \(T\ll T_\lin\), we expect the linear dynamics to dominate. This is readily seen from the fact that the resonance modulus,
\eq{Osc}{
\Omega(k,\ell,m,n) = \lambda_k - \lambda_\ell + \lambda_m - \lambda_n,
}
satisfies \(|\Omega|\lesssim 1\) with overwhelming probability. This suggests that at best we can hope \eqref{Goal} will hold for timescales \(T\) satisfying
\[
T_{\lin} = 1 \ll T\lesssim T_{\kin}.
\]

As we will discuss in Section~\ref{s:LSC}, the eigenvalues of \(H\) in the bulk of the spectrum fluctuate about their deterministic positions at scales \(\gg 1/N\). If \(T\gtrsim N\), the resonance modulus will not be equidistributed at scale \(1/T\), which prevents us from passing from the empirical distribution of the eigenvalues to the semicircle distribution. As a consequence, we only expect \eqref{Goal} to hold at times \(T\sim T_{\kin}\) if we have
\[
T_{\kin}\ll N.
\]

Combining these heuristics, we conjecture that \eqref{Goal} will hold in the regime where
\[
1\ll T\lesssim T_{\kin}\qtq{and}N^{\frac14}\ll\mu\ll N^{\frac12}.
\]

\subsection{The main result} Our main result proves that \eqref{Goal} does indeed describe the leading order asymptotic behavior in the regime where
\[
1\ll T\ll T_{\kin}^{\frac23}\qtq{and}N^{\frac14}\ll\mu\ll N^{\frac12}.
\]

\begin{theorem}\label{t:main}
Let \(1/4<\beta<1/2\) be fixed and \(A\in\Cont^1[-1,1]\). For each choice of integer \linebreak\(N\geq 1\), vector of eigenvalues \(\lambda\in \R^{2N+1}_{\geq}\), and matrix of eigenvectors \(\Psi\in \bbU(2N+1)\), let \linebreak\(a = a(N,\lambda,\Psi)\in C^\infty(\R;\C^{2N+1})\) be the smooth global solution of \eqref{NLS} corresponding to the choice of initial data \eqref{Init} and parameter \(\mu = N^\beta\).

Taking $\lambda$ and $\Psi$ to be distributed according to~\eqref{Beta_ens} and the Haar measure on $\mathbb{U}(2N+1)$, respectively, for any \(\epsilon>0\) and times \(N^\epsilon\leq t\leq N^{-\epsilon}T_\kin^{2/3}\), we have
\eq{Result}{
\bbE|a_k(t)|^2 = |A\bigl(\tfrac kN\bigr)|^2 + \tfrac t{T_{\kin}}\cC\bigl[|A|^2\bigr]\bigl(\tfrac kN\bigr) + R_k(t),
}
where
\eq{R}{
\tfrac1N\|R(t)\|_{\ell^1}\lesssim_{\beta,A,\epsilon} N^{-\frac\epsilon 4}\tfrac t{T_{\kin}},
}
and \(T_{\kin} = N^2/\mu^4\) is defined as in \eqref{Goal}.
\end{theorem}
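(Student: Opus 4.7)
The plan is to carry out a perturbative (Duhamel) expansion of $a(t)$ about the linear flow, extract the leading $\mu^4$ contribution via Weingarten calculus for Haar-distributed unitaries, and pass to the kinetic limit via the local semicircle law; the remainder will be controlled by balancing Weingarten subleading terms, phase-integral approximation errors, and higher Duhamel iterates.

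\textbf{Step 1: Duhamel expansion in the interaction picture.} Pass to $b(t) = e^{i\Lambda t}a(t)$, so that $|b_k|^2 = |a_k|^2$. The equation becomes
\[
i\partial_t b_k = \tfrac{\mu^2}{N}\sum_{k_1,k_2,k_3,j}\bar\psi_{jk}\psi_{jk_1}\bar\psi_{jk_2}\psi_{jk_3}\,e^{it\Omega(k,k_1,k_2,k_3)}\,b_{k_1}\bar b_{k_2}b_{k_3},
\]
with $\Omega$ as in \eqref{Osc}. Iterating Duhamel gives $b(t) = \sum_{n=0}^M b^{(n)}(t) + R^{(M)}(t)$, where each $b^{(n)}$ is an explicit sum over iteration trees with $2n+1$ factors of the data $A$, $4n$ eigenvector entries, $n$ nested time integrals, and $n$ oscillating phases. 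The target correction $(t/T_\kin)\cC[|A|^2](k/N)$ lives at order $\mu^4$ in $\bbE|b_k|^2$, namely inside $\bbE|b_k^{(1)}|^2 + 2\Re\bbE[\bar b_k^{(0)}b_k^{(2)}]$. The a priori larger ($\mu^2$) cross term $2\Re\bbE[\bar b_k^{(0)}b_k^{(1)}]$, of naive size $\mu^2 t/N \gg t/T_\kin$, must be shown to vanish.

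\textbf{Step 2: Weingarten calculus and extraction of the leading diagrams.} Since $\Lambda$ and $\Psi$ are independent, each $\Psi$-moment is expanded via the Weingarten formula
\[
\bbE_\Psi\Bigl[\prod_a\psi_{i_aj_a}\prod_{a'}\bar\psi_{i'_{a'}j'_{a'}}\Bigr] = \sum_{\sigma,\tau\in S_n}\Wg_{2N+1}(\sigma\tau^{-1})\prod_a\delta_{i_ai'_{\sigma(a)}}\delta_{j_aj'_{\tau(a)}}.
\]
For the $\mu^2$ cross term ($n=2$), the two admissible $\tau$-pairings force $\{k_1 = k, k_3 = k_2\}$ or $\{k_1 = k_2, k_3 = k\}$, both giving $\Omega = 0$; the resulting contribution is a real positive multiple of $-i|A(k/N)|^2$, whose real part vanishes. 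For the two $\mu^4$ diagrams ($n = 4$), I classify the $(4!)^2$ pairings by the cycle type of $\sigma\tau^{-1}$ and extract the leading terms — those of Weingarten order $N^{-4}$ that keep the two vertices block-separated in rows and link them through the external index $k$ in the columns. After summing these, both $\bbE|b^{(1)}|^2$ and $2\Re\bbE[\bar b^{(0)}b^{(2)}]$ collapse to triple sums of the schematic form
\[
\tfrac{\mu^4}{N^2}\sum_{k_1,k_2,k_3}\Phi(\Omega(k,k_1,k_2,k_3);t)\cdot\mc W\bigl(A(k/N),A(k_1/N),A(k_2/N),A(k_3/N)\bigr),
\]
with $\Phi$ equal to the sinc-squared weight $|(e^{it\Omega}-1)/\Omega|^2$ in the first case, and to the oscillatory integral $(e^{it\Omega}-1)/(i\Omega)$ in the second; $\mc W$ runs over a short explicit list of quartic monomials.

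\textbf{Step 3: Phase integrals, semicircle law, and remainder.} For $t \gg 1$, the oscillatory weight concentrates on the resonance: $|\int_0^t e^{is\Omega}ds|^2 \approx 2\pi t\,\delta(\Omega)$ and $\int_0^t e^{is\Omega}ds \approx \pi\delta(\Omega) + \mr{p.v.}(1/\Omega)$. Applying the local semicircle law (Lemma~\ref{l:LSC}) replaces $\lambda_{k_i}$ by $\nu(k_i/N)$ and converts empirical sums into Riemann integrals weighted by $d\sigma_{\mr{sc}}$, turning $\Omega(k,k_1,k_2,k_3)$ into $\Theta(k,\ell,m,n)$ and the triple sum into $\int_{[-1,1]^3}\delta(\Theta)(\cdots)\,d\ell\,dm\,dn$. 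Summing the quartic monomial contributions reproduces exactly $(t/T_\kin)\cC[|A|^2](k/N)$, including the characteristic $[1/\rho(k) - 1/\rho(\ell) + 1/\rho(m) - 1/\rho(n)]$ structure of $\cC$. The remainder $R_k(t)$ collects: (i) subleading Weingarten pairings, $1/N^2$ cheaper than the leading ones; (ii) approximation errors in replacing the sinc-squared weight by $\delta$, eigenvalues by their deterministic positions $\nu(k/N)$, and empirical sums by integrals; and (iii) higher Duhamel iterates $b^{(n)}$ for $n \geq 3$ and the tail $R^{(M)}$. The hardest step is to estimate (ii) and (iii) jointly: eigenvalue fluctuations of order $1/N$ in the bulk, after repeated oscillatory time integration, generate errors that grow with $t$; the time ceiling $t \ll T_\kin^{2/3}$ should emerge from balancing these against the target bound $\|R\|_{\ell^1}/N \lesssim N^{-\epsilon/4}(t/T_\kin)$. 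Carrying through this bookkeeping uniformly over all Weingarten pairings — and reconciling the combinatorics of the Weingarten expansion with the probabilistic local-law estimates — will be the main obstacle.
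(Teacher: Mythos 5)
Your Steps 1--3 reproduce, in outline, the paper's treatment of the leading-order terms (Wick-type cancellation of the $\mu^2$ cross term, Weingarten extraction of the pair-type pairings at order $\mu^4$, sinc-to-delta approximation, eigenvalue rigidity and Riemann-sum passage to $\cC$), and that part of the plan is sound. The genuine gap is the remainder: you never give a mechanism for controlling the tail beyond the finite expansion, and this is where the actual difficulty of the theorem lies. In the stated time range one has $t$ up to $T_\kin^{2/3}\gg T_\nonlin=T_\kin^{1/2}$, so the naive deterministic bound on the $m$\textsuperscript{th} Duhamel iterate is of size $(\mu^2 t/N)^m=(t/T_\nonlin)^m$, which \emph{diverges} with $m$; you cannot bound ``higher iterates and the tail $R^{(M)}$'' by brute-force iteration, nor treat them as just another error to be ``balanced.'' What is needed (and what the paper supplies) is twofold: (a) high-moment bounds, with overwhelming probability, showing each iterate $a^{(m)}$ gains a factor $(T^{4/3}/T_\kin)^{m/2}$ in a Bourgain-type $X^b_T$ norm --- this requires the refined Weingarten estimates (penalties for centered ``atoms,'' smart pairs/triplets) combined with eigenvalue rigidity, organized through a graph-counting argument far beyond the leading-pairing classification you describe; and (b) a closure argument for $a^{\err}=a-a^{\app}$, which solves an equation driven by the \emph{random linearized operator} $\cL a=3\cN[a^{\app},a^{\app},a]$, so one must prove an operator-norm bound $\|\chi_T\cL\|_{X^b_T\to X^{b-1}_T}\ll1$ (the paper does this via a $T^*T$/trace-moment argument, $\bbE\tr[(\fL^*\fL)^n]$) and then run a contraction mapping in $X^b_T$ on a high-probability event, with crude bounds and conservation of mass handling the complementary event before taking expectations. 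None of this is present or sketched in your proposal.

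A secondary inaccuracy: the ceiling $t\ll T_\kin^{2/3}$ does not come from eigenvalue fluctuations interacting with repeated time integration, as you suggest. Rigidity errors enter the leading-order analysis only through terms of size $N^\delta t/N\cdot(t/T_\kin)$, which are harmless in the admissible range. The $T_\kin^{2/3}$ threshold arises from comparing the size of the third and higher iterates, $\bbE\|a^{(m)}\|^2/N\lesssim N^\delta(t^{4/3}/T_\kin)^m$ with $m=3$, against the target accuracy $N^{-\epsilon/4}\,t/T_\kin$ (equivalently $t^3\ll T_\kin^2$), while the contraction for the remainder closes on the longer range $t\lesssim T_\kin^{3/4}$. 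Identifying this as the binding constraint requires exactly the quantitative iterate bounds missing from your plan.
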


The timescale \(T_{\kin}^{2/3}\) obtained in Theorem~\ref{t:main} is nontrivial, in the sense that \(T_{\nonlin} = T_{\kin}^{1/2}\ll T_{\kin}^{2/3}\). While our proof follows a similar strategy to previous work, e.g.,~\cite{BGHS,DH1,DH2,ACG,CG1,CG2}, the model \eqref{NLS} requires multiple innovations to obtain estimates in the random matrix setting. Indeed, as the reader will soon appreciate, some significantly involved analysis is required to make even a modest step towards the conjectured range on which we expect \eqref{Goal} to hold.

We begin our proof of Theorem~\ref{t:main} by iterating Duhamel's formula to obtain an approximate solution to \eqref{NLS}. The first few iterates in this construction are well-approximated by the kinetic wave equation, and are analyzed in detail Section~\ref{s:LOT}. The remainder is bounded by writing each iterate in terms of the initial data. These higher order iterates are analyzed in Section~\ref{s:app}.

A key point of departure from previous results is that all frequencies in our model interact together during each nonlinear interaction. To control the complex behavior of this soup of interacting waves we require several tools from random matrix theory, which are discussed in Section~\ref{s:WG}.

For the eigenvalues, we rely on rigidity estimates derived from the local semicircle law. These enable us to treat the eigenvalues as being located at their deterministic positions, up to small fluctuations.

For the eigenvectors, we crucially rely on the Weingarten calculus: an analog of the Wick calculus in the setting of random unitary matrices. To implement the Weingarten calculus, we introduce a graph-theoretic approach to obtaining bounds. Several of our estimates for CUE matrices appear to be new and we believe they may be of independent interest.

Once we have constructed our approximate solution, we complete the proof of Theorem~\ref{t:main} by estimating the remainder using the contraction principle. This relies on estimates for the linearization of \eqref{NLS} about the approximate solution, which are proved in Section~\ref{s:lin}. Here, we use a \(T^*T\) argument to reduce the problem to similar estimates to those obtained for the approximate solution itself.

The remainder of the paper is structured as follows: In the final part of this introduction we present some properties of the kinetic wave equation \eqref{KWE}. In Section~\ref{s:P} we discuss the construction of the approximate solution, state our main estimates, and then prove Theorem~\ref{t:main}. In Section~\ref{s:WG} we introduce the tools we require from random matrix theory and prove several estimates for random unitary matrices. In Section~\ref{s:LOT} we consider the leading order terms in our approximate solution, from which the kinetic wave equation arises. In Section~\ref{s:app} we prove estimates for the remaining terms in our approximate solution. Finally, in Section~\ref{s:lin} we consider the linearization of \eqref{NLS} about the approximate solution.

\subsection{Properties of the kinetic wave equation} We conclude this introduction by discussing some properties of the kinetic wave equation \eqref{KWE}. From \eqref{nu}, we see that
\eq{nudiff}{
\nu'(k) = \pi \bigl(4-\nu(k)^2\bigr)_+^{-\frac12}\qtq{for}k\in(-1,1).
}
In particular, denoting \(L^p = L^p[-1,1]\) and taking \(q\in L^\infty\) and \(f,g,h\in L^1\), we have
\begin{align*}
&\left|\tfrac\pi2\int_{[-1,1]^4}\delta\bigl(\Theta(k,\ell,m,n)\bigr)q(k)f(\ell)g(m)h(n)\,dk\,d\ell\,dm\,dn\right|\\
&= \left|\tfrac12\int_{[-1,1]^3}\!q\bigl(\nu^{-1}\big[\nu(\ell) - \nu(m) + \nu(n)\bigr]\bigr) f(\ell)g(m)h(n)\,\bigl(4\!-\!\bigl[\nu(\ell)\!-\!\nu(m)\!+\!\nu(n)\bigr]^2 \bigr)^{\frac12}_+ \,d\ell\, dm\,dn \right|\\
&\lesssim \|q\|_{L^\infty}\|f\|_{L^1}\|g\|_{L^1}\|h\|_{L^1}.
\end{align*}
By symmetry and duality, we obtain an estimate for the the collision operator,
\[
\|\cC[\rho] - \cC[\phi]\|_{L^1}\lesssim \bigl(\|\rho\|_{L^1} + \|\phi\|_{L^1}\bigr)^2\|\rho - \phi\|_{L^1}.
\]
As a consequence, the map \(\rho\mapsto \cC[\rho]\) is locally Lipschitz on \(L^1\) and Picard's Theorem gives local well-posedness of \eqref{KWE} for any initial data in \(L^1\). We note that the collision operator maps continuous functions to continuous functions, and hence the flow may be restricted to the subspace \(\Cont[-1,1]\subset L^1\).

Choosing a test function \(f\in \Cont[-1,1]\) and symmetrizing gives us the formula
\begin{align*}
\frac{d}{dt} \int_{-1}^1 f(k) \rho(t,k)\,dk &= \tfrac\pi8\int_{[-1,1]^4} \delta\bigl(\Theta(k,\ell,m,n)\bigr)\rho(t,k)\rho(t,\ell)\rho(t,m)\rho(t,n)\\
&\qquad\qquad\qquad\times\Bigl[\tfrac1{\rho(t,k)}- \tfrac1{\rho(t,\ell)} + \tfrac1{\rho(t,m)} - \tfrac1{\rho(t,n)}\Bigr]\\
&\qquad\qquad\qquad\qquad\times\Bigl[ f(k) - f(\ell) + f(m) - f(n) \Bigr] \,dk \,d\ell \, dm \,dn.
\end{align*}
Choosing successively $f(k)=1$ and $f(k)=\nu(k)$, we obtain the conservation of mass and energy,
\[
\frac{d}{dt} \int_{-1}^1 \rho(t,k)\,dk = 0\qtq{and}\frac{d}{dt} \int_{-1}^1 \nu(k) \rho(t,k)\,dk = 0.
\]

Writing \eqref{KWE} in the form
\[
\p_t\rho(t,k) = g(t,k)\rho(t,k) + h(t,k),
\]
where
\begin{align*}
g(t,k) &= \tfrac\pi2\int_{[-1,1]^3}\delta\bigl(\Theta(k,\ell,m,n)\bigr)\rho(t,\ell)\rho(t,m)\rho(t,n)\Bigl[- \tfrac1{\rho(t,\ell)} + \tfrac1{\rho(t,m)} - \tfrac1{\rho(t,n)}\Bigr]\,d\ell \, dm\, dn,\\
h(t,k) &= \tfrac\pi2\int_{[-1,1]^3}\delta\bigl(\Theta(k,\ell,m,n)\bigr)\rho(t,\ell)\rho(t,m)\rho(t,n)\,d\ell \, dm\, dn,
\end{align*}
we see that if the initial data \(\rho(t=0)\) is non-negative (respectively positive) then \(\rho(t)\) is non-negative (respectively positive) for all times \(t\geq 0\). In particular, conservation of mass ensures that \eqref{KWE} is globally well-posed, forwards in time, for all non-negative initial data in \(L^1\).

Taking \(\rho(t=0)\) to be positive and \(F\in \Cont^1(\R_+)\), another symmetrization gives the expression
\begin{align*}
&\frac{d}{dt} \int_{-1}^1 F\bigl(\rho(t,k)\bigr) \,dk\\
&\qquad =\tfrac\pi8\int_{[-1,1]^4} \delta\bigl(\Theta(k,\ell,m,n)\bigr)\rho(t,k)\rho(t,\ell)\rho(t,m)\rho(t,n)\\
&\qquad \quad\qquad\qquad\quad\times\Bigl[\tfrac1{\rho(t,k)}- \tfrac1{\rho(t,\ell)} + \tfrac1{\rho(t,m)} - \tfrac1{\rho(t,n)}\Bigr]\\
&\qquad \quad\qquad \qquad \qquad\quad\quad\times\Bigl[F'\bigl(\rho(t,k)\bigr) - F'\bigl(\rho(t,\ell)\bigr) + F'\bigl(\rho(t,m)\bigr) - F'\bigl(\rho(t,n)\bigr)\Bigr] \,dk \,d\ell \, dm \,dn.
\end{align*}
Setting $F(\rho) = \log \rho$ results in the $H$-theorem (entropy increases in time) for positive solutions
\begin{align*}
\frac{d}{dt} \int_{-1}^1 \log\rho(t,k) \,dk&=\tfrac\pi8\int_{[-1,1]^4} \delta\bigl(\Theta(k,\ell,m,n)\bigr)\rho(t,k)\rho(t,\ell)\rho(t,m)\rho(t,n)\\
&\quad\qquad\qquad\quad\times\Bigl[\tfrac1{\rho(t,k)}- \tfrac1{\rho(t,\ell)} + \tfrac1{\rho(t,m)} - \tfrac1{\rho(t,n)}\Bigr]^2 \,dk \,d\ell \, dm \,dn\\
&\geq 0.
\end{align*}

The entropy dissipation vanishes on the Rayleigh--Jeans distribution
$$
\phi(k) = \frac\alpha{\beta + \nu(k)},
$$
where $\alpha>0$ and $\beta>2$ are constants. It is natural to conjecture that the Rayleigh--Jeans distributions are maximizers of the entropy for fixed mass and energy, see~\cite{RST} for a proof of this fact in a closely related context, along with a convergence result as $t \to \infty$.

\subsection*{Acknowledgments.} The authors are grateful to Paul Bourgade for several very helpful discussions and providing a number of important references. The authors also thank Beno\^it Collins, Sho Matsumoto and Jonathan Novak for informative discussions about the Weingarten calculus. The authors are very grateful to the anonymous referee for several insightful comments and suggestions.

G. Dubach gratefully acknowledges funding from the European Union's Horizon 2020 research and innovation programme under the Marie Sk{\l}odowska-Curie Grant Agreement No. 754411. P. Germain was supported by the Simons collaborative grant on weak turbulence. B. Harrop-Griffiths is grateful to the Department of Mathematics at UCLA, which he was a member of when this work was completed.

\section{Statement of the main estimates and proof of Theorem~\ref{t:main}}\label{s:P}

In this section, we first introduce some preliminary reductions, identities, and estimates. We then state our main linear and nonlinear estimates, and use these to prove Theorem~\ref{t:main}.

\subsection{Wick ordering} As in the case of the NLS equation considered in, e.g.,~\cite{CG1,CG2,DH1,DH2}, we introduce a ``Wick ordering'' to remove certain resonant terms. Precisely, we introduce a time-dependent rotation
\[
\underline u(t) = e^{it\mu^2\frac{2\cM}{2N+1}} u(t)
\]
so that \eqref{NLS-OG} becomes
\[
i\frac d{dt}\underline u - H\underline u = \mu^2\Bigl[\bigl(\underline u\odot\bar{\underline u}\odot \underline u\bigr) - \tfrac{2\cM}{2N+1} \underline u\Bigr],
\]
where we note that
\(
\cM = \|u\|_{\ell^2}^2 = \|\underline u\|_{\ell^2}^2
\)
is still conserved.

Taking
\eq{diagonalized-variable-WO}{
\underline a:=\sqrt N\Psi^*\underline u,
}
the equation \eqref{NLS} becomes
\eq{NLS-WO}{
i\frac d{dt}\underline a - \Lambda\underline a = \frac{\mu^2}N\Bigl[\Psi^*\bigl(\Psi \underline a\odot \overline{\Psi \underline a}\odot \Psi \underline a\bigr) - \tfrac{2N\cM}{2N+1}\underline a\Bigr].
}
Observing that \(|a_k|^2 = |\underline a_k|^2\), we subsequently drop the underlines and take \eqref{diagonalized-variable-WO} to be our definition of \(a\) instead of \eqref{diagonalized-variable}.

\subsection{Decomposing the solution}  We now write \eqref{NLS-WO} using the Duhamel formula as
\eq{Duhamel}{
a(t) = e^{-it\Lambda}a(0) - i\int_0^t e^{-i(t-s)\Lambda}\cN[a(s),a(s),a(s)]\,ds,
}
where we denote
\begin{align*}
\cN[a,b,c] &= \tfrac{\mu^2}{3N}\Bigl[\Psi^*\bigl(\Psi a(s)\odot\overline{\Psi b(s)}\odot \Psi c(s)\bigr) - \tfrac1{2N+1}\<b,c\> a - \tfrac1{2N+1}\<b,a\> c\bigr]\\
&\quad + \tfrac{\mu^2}{3N}\Bigl[\Psi^*\bigl(\Psi b(s)\odot\overline{\Psi c(s)}\odot \Psi a(s)\bigr)-\tfrac1{2N+1}\<c,a\>b - \tfrac1{2N+1}\<c,b\>a\Bigr]\\
&\quad + \tfrac{\mu^2}{3N}\Bigl[\Psi^*\bigl(\Psi c(s)\odot\overline{\Psi a(s)}\odot \Psi b(s)\bigr)-\tfrac1{2N+1}\<a,b\>c-\tfrac1{2N+1}\<a,c\>b\Bigr],
\end{align*}
and recall that our convention is that the inner product \(\<\cdot,\cdot\>\) is linear in the second variable. We remark that if
\eq{Mom}{
\gamma(k,\ell,m,n) := \sum_{j=-N}^N \overline{\psi_{jk}}\,\psi_{j \ell}\,\overline{\psi_{jm}}\,\psi_{jn} - \tfrac1{2N+1} \bbo_{\{k=\ell,m=n\}}-\tfrac1{2N+1} \bbo_{\{k=n,\ell=m\}}
}
then the \(k\)\textsuperscript{th} entry
\[
\cN[a,b,c]_k = \tfrac{\mu^2}{3N}\sum_{\ell,m,n}\gamma(k,\ell,m,n)\,\bigl[a_\ell\,\overline{ b_m}\, c_n + b_\ell\,\overline{ c_m}\, a_n + c_\ell\,\overline{ a_m}\, b_n\bigr].
\]
For a sufficiently large integer \(M\geq1\), we construct an approximate solution of \eqref{Duhamel} by
\eq{app-sum}{
a^\app(t) = \sum\limits_{m=0}^M a\sbrack m(t),
}
where
\[
a\sbrack 0(t) = e^{-it\Lambda}a(0),
\]
and for \(m\geq 0\)
\eq{inductive am+1}{
a\sbrack{m+1}(t) = -i\sum\limits_{m_1 + m_2 + m_3 = m}\int_0^t e^{-i(t-s)\Lambda}\cN\bigl[a\sbrack{m_1}(s),a\sbrack{m_2}(s),a\sbrack{m_3}(s)\bigr]\,ds.
}

We then introduce the error term
\eq{Error}{
\cE = - i\frac d{dt}a^\app + \Lambda a^\app + \cN[a^\app,a^\app,a^\app],
}
so that the remainder
\eq{Decomp}{
a^\err(t) = a(t) - a^\app(t)
}
satisfies
\eq{Remainder}{
\pde{
i\dfrac d{dt}a^\err - \Lambda a^\err = \cL a^\err + 3\cN[a^\app,a^\err,a^\err] + \cN[a^\err,a^\err,a^\err] + \cE,
}{
a^\err(0) = 0,
}
}
where the real linear operator
\eq{Linearization}{
\cL a = 3\cN\bigl[a^\app,a^\app,a\bigr].
}

\subsection{Feynman diagrams and Duhamel iterates}\label{s:Feyn}

To understand the structure of \(a\sbrack m\), it will be useful to introduce \emph{Feynman diagrams} that encode the expansion of the Duhamel integral using graphs. These diagrams will play an important role in our estimates for the approximate solution and linear operator in Sections~\ref{s:app} and~\ref{s:lin}.

Feynman diagrams originally appeared in derivations of the kinetic wave equation in the work of Lukkarinen and Spohn~\cite{MR2755061}; see also~\cite{MR2333210} for a related application. Due to the particular features of the random matrix setting, we adopt some slightly different conventions to expedite the framework.

A Feynman diagram for \(a\sbrack m\) is a graph \(G\) with \((m+1)^2+1\) vertices and \((m+1)^2\) edges. The shape of the graph is uniquely determined by an \emph{interaction history}, \(\ell = (\ell_1,\dots,\ell_m)\), where each \(\ell_r\in \llbracket1,2(m-r)+1\rrbracket\).

The vertices of the graph represent the linear and nonlinear interactions of waves. We index the vertices by \(v_{r,j}\), where \(r\in \llbracket0,m+1\rrbracket\) and \(j\in \llbracket1,2(m-r)+1\rrbracket\) for \(0\leq r\leq m\), with \(j=1\) when \(r=m+1\). We distinguish four sets of vertices:
\begin{itemize}
\item The output vertex, \(v_{m+1,1}\), which we denote by \tikz[baseline=-2.4,scale=0.15]{\node[outdot] {};};\smallskip
\item The input vertices, \(v_{0,j}\), which we denote by \tikz[baseline=-2.4,scale=0.15]{\node[indot] {};};\smallskip
\item The linear interaction vertices, \(v_{r,j}\) for \(1\leq r\leq m\) and \(j\neq \ell_r\), which we denote by \tikz[baseline=-2.4,scale=0.15]{\node[ghostdot] {};};\smallskip
\item The nonlinear interaction vertices, \(v_{r,\ell_r}\) for \(1\leq r\leq m\), which we denote by \tikz[baseline=-2.4,scale=0.15]{\node[dot] {};}.
\end{itemize}

The edges of the graph represent the linear propagation of waves. We index the edges by \(e_{r,j}\), where \(r\in \llbracket0,m\rrbracket\) and \(j\in \llbracket1,2(m-r)+1\rrbracket\). For \(0\leq r\leq m-1\), the edge \(e_{r,j}\) connects the vertex
\[
v_{r,j}\qtq{to}\begin{cases}v_{r+1,j}&\qtq{if}j<\ell_{r+1},\medskip\\v_{r+1,\ell_{r+1}}&\qtq{if}\ell_{r+1}\leq j\leq\ell_{r+1}+2,\medskip\\v_{r+1,j-2}&\qtq{if}j>\ell_{r+1}+2.\end{cases}
\]
The edge \(e_{m,1}\) connects the vertex \(v_{m,1}\) to \(v_{m+1,1}\).

Given a time \(t>0\) and a frequency \(k\in \llbracket-N,N\rrbracket\), we convert each Feynman diagram \(G=G(\ell)\) into an integral by imposing a set of \emph{Feynman rules} that encode the contribution of each element of the graph. We then sum over all choices of \(\ell\) to obtain an expression for \(a\sbrack m_k(t)\).

Before imposing these rules, we first label each edge, \(e_{r,j}\), of the graph with an integer \(k_{r,j}\in \llbracket-N,N\rrbracket\). This represents the frequency of the corresponding linear wave. We denote the array of frequencies by \(\cK = (k_{r,j})\).

Second, we choose times \(s = (s_0,\dots,s_m)\in \R_+^{m+1}\) so that, with \(t_0 = 0\) and \(t_r = \sum_{j=0}^{r-1} s_j\) for \(r\in\llbracket1, m+1\rrbracket\), we obtain a partition \(0=t_0<t_1<\dots<t_{m+1}=t\). We will subsequently treat time as the vertical direction in our graph. For each \(r\), we locate the vertices \(v_{r,j}\) at time \(t_r\) with the index \(j\) running from left to right. The edges \(e_{r,j}\) then lie in the time interval \((t_r,t_{r+1})\) with the index \(j\) again running from left to right.

We now impose the following Feynman rules for our graph:
\begin{itemize}
\item The output vertex, \(v_{m+1,1}\), contributes a factor of \(\bbo_{\{k= k_{m,1}\}}\,\delta\left(\sum_{r=0}^ms_r - t\right)\).
\smallskip
\item The input vertices, \(v_{0,j}\), contribute factors of
\[
\begin{cases}A\bigl(\tfrac{k_{0,j}}N\bigr)&\text{if }j\text{ is odd},\medskip\\
\overline{ A\bigl(\tfrac{k_{0,j}}N\bigr)}&\text{if }j\text{ is even}.
\end{cases}
\]
\smallskip
\item The linear interaction vertices, \(v_{r,j}\) for \(1\leq r\leq m\) and \(j\neq \ell_r\), contribute factors of
\[
\begin{cases}
\bbo_{\{k_{r-1,j} = k_{r,j}\}}&\qtq{if}j<\ell_r,\medskip\\
\bbo_{\{k_{r-1,j} = k_{r,j-2}\}}&\qtq{if}j>\ell_r+2.
\end{cases}
\]
\smallskip
\item The nonlinear interaction vertices, \(v_{r,\ell_r}\) for \(1\leq r\leq m\), contribute factors of \linebreak\(-i\frac{\mu^2}N\gamma_{r,\ell_r}\), where
\eq{Momm}{
\gamma_{r,\ell_r}(\cK) = \begin{cases} \gamma\bigl(k_{r,\ell_r},k_{r-1,\ell_r},  k_{r-1,\ell_r+1},k_{r-1,\ell_r+2}\bigr)&\qtq{if}\ell_r\text{ is odd},\medskip\\-\overline{\gamma\bigl(k_{r,\ell_r},k_{r-1,\ell_r},  k_{r-1,\ell_r+1},k_{r-1,\ell_r+2}\bigr)}&\qtq{if}\ell_r\text{ is even},\end{cases}
}
and \(\gamma\) is defined as in \eqref{Mom}.
\smallskip
\item The edges \(e_{r,j}\) contribute factors of
\[
\begin{cases}
e^{-is_j\lambda_{k_{r,j}}}&\text{if }j\text{ is odd},\medskip\\
e^{is_j\lambda_{k_{r,j}}}&\text{if }j\text{ is even}.
\end{cases}
\]
\end{itemize}
An expression for \(a\sbrack m_k(t)\) is now obtained by multiplying these contributions together, integrating over \(s\in \R_+^{m+1}\), and summing over \(\ell,\cK\).

As an example, we have the following:
\begin{ex}[The case \(m=2\)]
For \(m=2\) we have 3 possible diagrams:
\begin{figure}[h!]
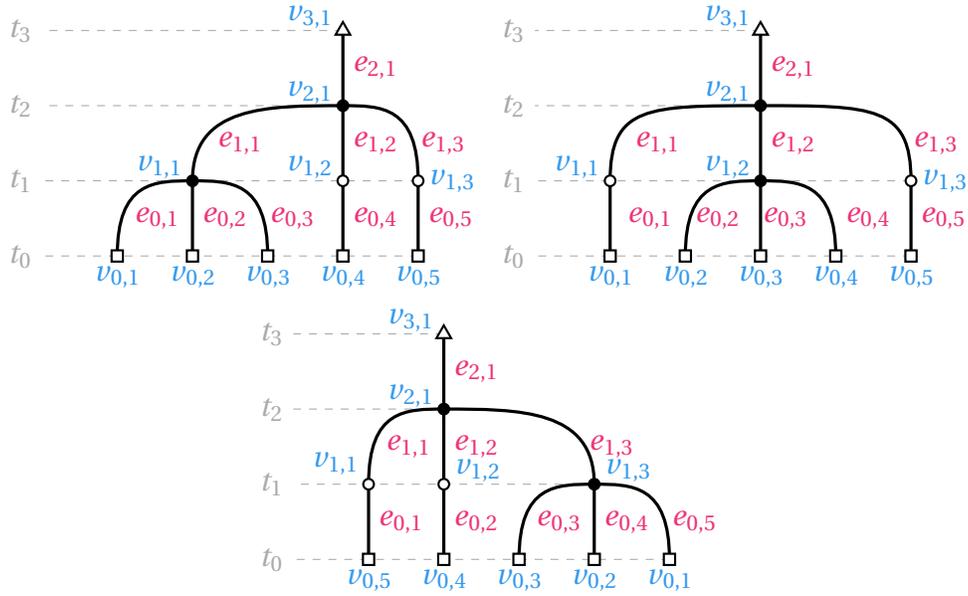

\begin{center}
\tikz{
\draw[dashed,col7] (2,0)--(-3,0) node[left] {\(t_0\)};
\draw[dashed,col7] (2,1)--(-3,1) node[left] {\(t_1\)};
\draw[dashed,col7] (1,2)--(-3,2) node[left] {\(t_2\)};
\draw[dashed,col7] (1,3)--(-3,3) node[left] {\(t_3\)};
\draw[very thick] (1,0)--(1,1) {};
\draw[very thick] (1,2)--(1,3) {};
\draw[very thick] (-1,0)--(-1,1) {};
\draw[very thick] (2,0)--(2,1) {};
\draw[very thick] (1,1)--(1,2) {};
\draw[very thick] (-2,0) .. controls (-2,1) and (-1.5,1) .. (-1,1);
\draw[very thick] (0,0) .. controls (0,1) and (-.5,1) .. (-1,1);
\draw[very thick] (2,1) .. controls (2,2) and (1.5,2) .. (1,2);
\draw[very thick] (-1,1) .. controls (-1,2) and (0,2) .. (1,2);
\node[indot] at (-2,0) {};
\node[below] at (-2,0) {\({\color{col5}v_{0,1}}\)};
\node[indot] at (-1,0) {};
\node[below] at (-1,0) {\({\color{col5}v_{0,2}}\)};
\node[indot] at (0,0) {};
\node[below] at (0,0) {\({\color{col5}v_{0,3}}\)};
\node[indot] at (1,0) {};
\node[below] at (1,0) {\({\color{col5}v_{0,4}}\)};
\node[indot] at (2,0) {};
\node[below] at (2,0) {\({\color{col5}v_{0,5}}\)};
\node[dot] at (-1,1) {};
\node[above left] at (-1,.9) {\({\color{col5}v_{1,1}}\)};
\node[dot] at (1,2) {};
\node[above left] at (1,1.9) {\({\color{col5}v_{2,1}}\)};
\node[ghostdot] at (1,1) {};
\node[above left] at (1,.9) {\({\color{col5}v_{1,2}}\)};
\node[ghostdot] at (2,1) {};
\node[right] at (2,1) {\({\color{col5}v_{1,3}}\)};
\node[right] at (-1.9,.5) {\({\color{col6}e_{0,1}}\)};
\node[right] at (-1,.5) {\({\color{col6}e_{0,2}}\)};
\node[right] at (-.1,.5) {\({\color{col6}e_{0,3}}\)};
\node[right] at (1,.5) {\({\color{col6}e_{0,4}}\)};
\node[right] at (2,.5) {\({\color{col6}e_{0,5}}\)};
\node[right] at (-.8,1.5) {\({\color{col6}e_{1,1}}\)};
\node[right] at (1,1.5) {\({\color{col6}e_{1,2}}\)};
\node[right] at (1.9,1.5) {\({\color{col6}e_{1,3}}\)};
\node[right] at (1,2.5) {\({\color{col6}e_{2,1}}\)};
\node[outdot] at (1,3) {};
\node[above left] at (1,2.9) {\({\color{col5}v_{3,1}}\)};
}
\tikz{
\draw[dashed,col7] (2,0)--(-3,0) node[left] {\(t_0\)};
\draw[dashed,col7] (2,1)--(-3,1) node[left] {\(t_1\)};
\draw[dashed,col7] (0,2)--(-3,2) node[left] {\(t_2\)};
\draw[dashed,col7] (0,3)--(-3,3) node[left] {\(t_3\)};
\draw[very thick] (0,0)--(0,1) {};
\draw[very thick] (0,2)--(0,3) {};
\draw[very thick] (-2,0)--(-2,1) {};
\draw[very thick] (2,0)--(2,1) {};
\draw[very thick] (0,1)--(0,2) {};
\draw[very thick] (-1,0) .. controls (-1,1) and (-0.5,1) .. (0,1);
\draw[very thick] (1,0) .. controls (1,1) and (0.5,1) .. (0,1);
\draw[very thick] (2,1) .. controls (2,2) and (1.5,2) .. (0,2);
\draw[very thick] (-2,1) .. controls (-2,2) and (-1.5,2) .. (0,2);
\node[indot] at (-2,0) {};
\node[below] at (-2,0) {\({\color{col5}v_{0,1}}\)};
\node[indot] at (-1,0) {};
\node[below] at (-1,0) {\({\color{col5}v_{0,2}}\)};
\node[indot] at (0,0) {};
\node[below] at (0,0) {\({\color{col5}v_{0,3}}\)};
\node[indot] at (1,0) {};
\node[below] at (1,0) {\({\color{col5}v_{0,4}}\)};
\node[indot] at (2,0) {};
\node[below] at (2,0) {\({\color{col5}v_{0,5}}\)};
\node[dot] at (0,1) {};
\node[above left] at (-2,.9) {\({\color{col5}v_{1,1}}\)};
\node[dot] at (0,2) {};
\node[above left] at (0,1.9) {\({\color{col5}v_{2,1}}\)};
\node[ghostdot] at (2,1) {};
\node[above left] at (0,.9) {\({\color{col5}v_{1,2}}\)};
\node[ghostdot] at (-2,1) {};
\node[right] at (2,1) {\({\color{col5}v_{1,3}}\)};
\node[right] at (-1.9,.5) {\({\color{col6}e_{0,1}}\)};
\node[right] at (-1,.5) {\({\color{col6}e_{0,2}}\)};
\node[right] at (-.1,.5) {\({\color{col6}e_{0,3}}\)};
\node[right] at (1,.5) {\({\color{col6}e_{0,4}}\)};
\node[right] at (2,.5) {\({\color{col6}e_{0,5}}\)};
\node[right] at (-1.8,1.5) {\({\color{col6}e_{1,1}}\)};
\node[right] at (0,1.5) {\({\color{col6}e_{1,2}}\)};
\node[right] at (1.9,1.5) {\({\color{col6}e_{1,3}}\)};
\node[right] at (0,2.5) {\({\color{col6}e_{2,1}}\)};
\node[outdot] at (0,3) {};
\node[above left] at (0,2.9) {\({\color{col5}v_{3,1}}\)};
}
\tikz{
\draw[dashed,col7] (2,0)--(-3,0) node[left] {\(t_0\)};
\draw[dashed,col7] (1,1)--(-3,1) node[left] {\(t_1\)};
\draw[dashed,col7] (-1,2)--(-3,2) node[left] {\(t_2\)};
\draw[dashed,col7] (-1,3)--(-3,3) node[left] {\(t_3\)};
\draw[very thick] (-1,0)--(-1,1) {};
\draw[very thick] (-1,2)--(-1,3) {};
\draw[very thick] (1,0)--(1,1) {};
\draw[very thick] (-2,0)--(-2,1) {};
\draw[very thick] (-1,1)--(-1,2) {};
\draw[very thick] (2,0) .. controls (2,1) and (1.5,1) .. (1,1);
\draw[very thick] (0,0) .. controls (0,1) and (.5,1) .. (1,1);
\draw[very thick] (-2,1) .. controls (-2,2) and (-1.5,2) .. (-1,2);
\draw[very thick] (1,1) .. controls (1,2) and (0,2) .. (-1,2);
\node[indot] at (2,0) {};
\node[below] at (2,0) {\({\color{col5}v_{0,1}}\)};
\node[indot] at (1,0) {};
\node[below] at (1,0) {\({\color{col5}v_{0,2}}\)};
\node[indot] at (0,0) {};
\node[below] at (0,0) {\({\color{col5}v_{0,3}}\)};
\node[indot] at (-1,0) {};
\node[below] at (-1,0) {\({\color{col5}v_{0,4}}\)};
\node[indot] at (-2,0) {};
\node[below] at (-2,0) {\({\color{col5}v_{0,5}}\)};
\node[dot] at (1,1) {};
\node[above right] at (1,.9) {\({\color{col5}v_{1,3}}\)};
\node[dot] at (-1,2) {};
\node[above left] at (-1,1.9) {\({\color{col5}v_{2,1}}\)};
\node[ghostdot] at (-1,1) {};
\node[above right] at (-1,.9) {\({\color{col5}v_{1,2}}\)};
\node[ghostdot] at (-2,1) {};
\node[above left] at (-2,1) {\({\color{col5}v_{1,1}}\)};
\node[right] at (1.9,.5) {\({\color{col6}e_{0,5}}\)};
\node[right] at (1,.5) {\({\color{col6}e_{0,4}}\)};
\node[right] at (.1,.5) {\({\color{col6}e_{0,3}}\)};
\node[right] at (-1,.5) {\({\color{col6}e_{0,2}}\)};
\node[right] at (-2,.5) {\({\color{col6}e_{0,1}}\)};
\node[right] at (.8,1.5) {\({\color{col6}e_{1,3}}\)};
\node[right] at (-1,1.5) {\({\color{col6}e_{1,2}}\)};
\node[right] at (-1.9,1.5) {\({\color{col6}e_{1,1}}\)};
\node[right] at (-1,2.5) {\({\color{col6}e_{2,1}}\)};
\node[outdot] at (-1,3) {};
\node[above left] at (-1,2.9) {\({\color{col5}v_{3,1}}\)};
}
\end{center}
\caption{Feynman diagrams for \(m=2\) with \(\ell = (1,1),(2,1),(3,1)\), respectively.}
\end{figure}

The corresponding contribution to our integral for the \(\ell = (1,1)\) diagram is
\begin{align*}
&\underbrace{\bbo_{\{k = k_{2,1}\}}\delta(s_0 + s_1 + s_2 - t)}_{{\color{col5}v_{3,1}}}\times \underbrace{e^{-is_2\lambda_{k_{2,1}}}}_{{\color{col6}e_{2,1}}}\times \underbrace{-i\tfrac{\mu^2}N\gamma_{2,1}}_{{\color{col5}v_{2,1}}}\times \underbrace{e^{-is_1\lambda_{k_{1,1}}}}_{{\color{col6}e_{1,1}}}\times \underbrace{e^{is_1\lambda_{k_{1,2}}}}_{{\color{col6}e_{1,2}}}\times \underbrace{e^{-is_1\lambda_{k_{1,3}}}}_{{\color{col6}e_{1,3}}}\\
&\qquad\times  \underbrace{-i\tfrac{\mu^2}N\gamma_{1,1}}_{{\color{col5}v_{1,1}}}\times \underbrace{\bbo_{\{k_{0,4} = k_{1,2}\}}}_{{\color{col5}v_{1,2}}}\times\underbrace{\bbo_{\{k_{0,5} = k_{1,3}\}}}_{{\color{col5}v_{1,3}}}\times \underbrace{e^{-is_0\lambda_{k_{0,1}}}}_{{\color{col6}e_{0,1}}}\times \underbrace{e^{is_0\lambda_{k_{0,2}}}}_{{\color{col6}e_{0,2}}}\times \underbrace{e^{-is_0\lambda_{k_{0,3}}}}_{{\color{col6}e_{0,3}}}\times \underbrace{e^{is_0\lambda_{k_{0,4}}}}_{{\color{col6}e_{0,4}}}\times \underbrace{e^{-is_0\lambda_{k_{0,5}}}}_{{\color{col6}e_{0,5}}}\\
&\qquad \qquad \times  \underbrace{A\bigl(\tfrac{k_{0,1}}N\bigr)}_{{\color{col5}v_{0,1}}}\times  \underbrace{\overline{A\bigl(\tfrac{k_{0,2}}N\bigr)}}_{{\color{col5}v_{0,2}}}\times  \underbrace{A\bigl(\tfrac{k_{0,3}}N\bigr)}_{{\color{col5}v_{0,3}}}\times  \underbrace{\overline{A\bigl(\tfrac{k_{0,4}}N\bigr)}}_{{\color{col5}v_{0,4}}}\times  \underbrace{A\bigl(\tfrac{k_{0,5}}N\bigr)}_{{\color{col5}v_{0,5}}},
\end{align*}
for the \(\ell = (2,1)\) diagram, it is
\begin{align*}
&\underbrace{\bbo_{\{k = k_{2,1}\}}\delta(s_0 + s_1 + s_2 - t)}_{{\color{col5}v_{3,1}}}\times\underbrace{e^{-is_2\lambda_{k_{2,1}}}}_{{\color{col6}e_{2,1}}}\times \underbrace{-i\tfrac{\mu^2}N\gamma_{2,1}}_{{\color{col5}v_{2,1}}}  \times \underbrace{e^{-is_1\lambda_{k_{1,1}}}}_{{\color{col6}e_{1,1}}}\times \underbrace{e^{is_1\lambda_{k_{1,2}}}}_{{\color{col6}e_{1,2}}}\times \underbrace{e^{-is_1\lambda_{k_{1,3}}}}_{{\color{col6}e_{1,3}}}\\
&\qquad\times \underbrace{\bbo_{\{k_{0,1} = k_{1,1}\}}}_{{\color{col5}v_{1,1}}}\times  \underbrace{-i\tfrac{\mu^2}N\gamma_{1,2}}_{{\color{col5}v_{1,2}}}\times\underbrace{\bbo_{\{k_{0,5} = k_{1,3}\}}}_{{\color{col5}v_{1,3}}} \times \underbrace{e^{-is_0\lambda_{k_{0,1}}}}_{{\color{col6}e_{0,1}}}\times \underbrace{e^{is_0\lambda_{k_{0,2}}}}_{{\color{col6}e_{0,2}}}\times \underbrace{e^{-is_0\lambda_{k_{0,3}}}}_{{\color{col6}e_{0,3}}}\times \underbrace{e^{is_0\lambda_{k_{0,4}}}}_{{\color{col6}e_{0,4}}}\times \underbrace{e^{-is_0\lambda_{k_{0,5}}}}_{{\color{col6}e_{0,5}}}\\
&\qquad \qquad \times  \underbrace{A\bigl(\tfrac{k_{0,1}}N\bigr)}_{{\color{col5}v_{0,1}}}\times  \underbrace{\overline{A\bigl(\tfrac{k_{0,2}}N\bigr)}}_{{\color{col5}v_{0,2}}}\times  \underbrace{A\bigl(\tfrac{k_{0,3}}N\bigr)}_{{\color{col5}v_{0,3}}}\times  \underbrace{\overline{A\bigl(\tfrac{k_{0,4}}N\bigr)}}_{{\color{col5}v_{0,4}}}\times  \underbrace{A\bigl(\tfrac{k_{0,5}}N\bigr)}_{{\color{col5}v_{0,5}}},
\end{align*}
and for the \(\ell = (3,1)\) diagram is
\begin{align*}
&\underbrace{\bbo_{\{k = k_{2,1}\}}\delta(s_0 + s_1 + s_2 - t)}_{{\color{col5}v_{3,1}}}\times \underbrace{e^{-is_2\lambda_{k_{2,1}}}}_{{\color{col6}e_{2,1}}}\times \underbrace{-i\tfrac{\mu^2}N\gamma_{2,1}}_{{\color{col5}v_{2,1}}}  \times \underbrace{e^{-is_1\lambda_{k_{1,1}}}}_{{\color{col6}e_{1,1}}}\times \underbrace{e^{is_1\lambda_{k_{1,2}}}}_{{\color{col6}e_{1,2}}}\times \underbrace{e^{-is_1\lambda_{k_{1,3}}}}_{{\color{col6}e_{1,3}}}\\
&\qquad\times \underbrace{\bbo_{\{k_{0,1} = k_{1,1}\}}}_{{\color{col5}v_{1,1}}}\times\underbrace{\bbo_{\{k_{0,2} = k_{1,2}\}}}_{{\color{col5}v_{1,2}}}\times \underbrace{-i\tfrac{\mu^2}N\gamma_{1,3}}_{{\color{col5}v_{1,3}}} \times \underbrace{e^{-is_0\lambda_{k_{0,1}}}}_{{\color{col6}e_{0,1}}}\times \underbrace{e^{is_0\lambda_{k_{0,2}}}}_{{\color{col6}e_{0,2}}}\times \underbrace{e^{-is_0\lambda_{k_{0,3}}}}_{{\color{col6}e_{0,3}}}\times \underbrace{e^{is_0\lambda_{k_{0,4}}}}_{{\color{col6}e_{0,4}}}\times \underbrace{e^{-is_0\lambda_{k_{0,5}}}}_{{\color{col6}e_{0,5}}}\\
&\qquad \qquad \times  \underbrace{A\bigl(\tfrac{k_{0,1}}N\bigr)}_{{\color{col5}v_{0,1}}}\times  \underbrace{\overline{A\bigl(\tfrac{k_{0,2}}N\bigr)}}_{{\color{col5}v_{0,2}}}\times  \underbrace{A\bigl(\tfrac{k_{0,3}}N\bigr)}_{{\color{col5}v_{0,3}}}\times  \underbrace{\overline{A\bigl(\tfrac{k_{0,4}}N\bigr)}}_{{\color{col5}v_{0,4}}}\times  \underbrace{A\bigl(\tfrac{k_{0,5}}N\bigr)}_{{\color{col5}v_{0,5}}}.
\end{align*}
An expression for \(a_k\sbrack 2(t)\) is obtained by summing these expressions, then summing over \(\cK\) and integrating over \(s\).
\end{ex}

It will be useful to derive an explicit expression for the Duhamel iterates. Here, we have the following:

\begin{lemma}\label{l:Profile} For \(m\geq 1\), denote the profile
\eq{Profile}{
f\sbrack m(t) = e^{it\Lambda}a\sbrack m(t).
}
Then, for any \(T>0\) we have the expression
\begin{align}
&f_k^{(m)}(t)\bbo_{\R_\pm}(t)\label{Simba}\\
&\qquad= \frac{ie^{\pm t/T}}{2\pi}\left(\frac{\mu^2}{N} \right)^{m} \sum_{\ell,\mathcal{K}}\Biggl\{\Delta(k,\cK,\ell)\, \bA(\cK)\, \prod_{r=1}^{m}\gamma_{r,\ell_r}(\cK) \int_\R \prod_{r=0}^m \frac1{\alpha + \omega_r(\cK,\ell) \pm \frac{i}{T} } e^{-i\alpha t} \,d\alpha\Biggr\},\notag
\end{align}
valid for all \(t\in \R\), where \(\gamma_{r,\ell_r}(\cK)\) is defined as in \eqref{Momm} and:
\begin{itemize}[topsep=0pt, itemsep=\parskip]
\item The linear interactions
\eq{Pairing}{
\Delta(k,\cK,\ell) = \bbo_{\{k = k_{m,1}\}}\prod_{r=0}^{m-1}\left[\prod_{j=1}^{\ell_{r+1}-1}\bbo_{\{k_{r,j} = k_{r+1,j}\}}\prod_{j=\ell_{r+1}+3}^{2(m-r)+1}\bbo_{\{k_{r,j} = k_{r+1,j-2}\}}\right];
}
\item The modulations
\eq{thetaj}{
\omega_r(\cK,\ell) = \sum_{j=r+1}^m(-1)^{\ell_j-1}\Omega\bigl(k_{j,\ell_j},k_{j-1,\ell_j},  k_{j-1,\ell_j+1},k_{j-1,\ell_j+2}\bigr)\qtq{for}0\leq r\leq m-1,
}
with \(\omega_m(\cK,\ell) = 0\) and \(\Omega(k,\ell,m,n)\) defined as in \eqref{Osc};
\item The initial data
\eq{DataParity}{
\bA(\cK) = A\bigl(\tfrac{k_{0,1}}N\bigr)\,\overline{A\bigl(\tfrac{k_{0,2}}N\bigr)}\,\dots\,\overline{A\bigl(\tfrac{k_{0,2m}}N\bigr)}\,A\bigl(\tfrac{k_{0,2m+1}}N\bigr).
}
\end{itemize}
\end{lemma}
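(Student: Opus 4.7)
My plan is to derive \eqref{Simba} by combining the Feynman-rule bookkeeping from Section~\ref{s:Feyn} with a Fourier inversion on the time simplex. Without loss of generality I focus on $t>0$ (the $t<0$ case being symmetric). First I would verify, by induction on $m$ using the Duhamel recursion \eqref{inductive am+1}, the intermediate simplex representation
\begin{equation*}
f_k^{(m)}(t) = \Bigl(-i\tfrac{\mu^2}{N}\Bigr)^m \sum_{\ell,\cK} \Delta(k,\cK,\ell)\, \bA(\cK) \prod_{r=1}^m \gamma_{r,\ell_r}(\cK) \int_{\{s\in\R_+^{m+1}\,:\,\sum_r s_r = t\}} \exp\Bigl(i \sum_{r=0}^m s_r\,\omega_r(\cK,\ell)\Bigr)\,ds.
\end{equation*}
The algebraic content of this step is that, after collecting the edge phases $e^{\mp i s_r \lambda_{k_{r,j}}}$ at each time slice $r$, imposing the identifications $\bbo_{\{k_{r-1,j}=k_{r,j}\}}$ coming from the linear vertices (exactly those encoded by $\Delta$), and absorbing the profile factor $e^{it\lambda_k}=\prod_r e^{is_r\lambda_{k_{m,1}}}$ (with $k=k_{m,1}$), the net phase at time slice $r$ telescopes into $s_r\,\omega_r(\cK,\ell)$. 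The sign $(-1)^{\ell_j-1}$ in the definition \eqref{thetaj} of $\omega_r$ appears because the phase jump at a nonlinear vertex $(j,\ell_j)$ equals $\pm\Omega(k_{j,\ell_j},k_{j-1,\ell_j},k_{j-1,\ell_j+1},k_{j-1,\ell_j+2})$ depending on whether the outgoing wave is conjugated or not (cf.\ the parity-dependent definition of $\gamma_{r,\ell_r}$ in~\eqref{Momm}).

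Next I would apply the Fourier/convolution identity
\begin{equation*}
\int_{\{s\in\R_+^{m+1}\,:\,\sum_r s_r = t\}} e^{i\sum_r s_r \omega_r}\,ds = \frac{i^{m+1}\,e^{t/T}}{2\pi}\int_\R e^{-i\alpha t}\prod_{r=0}^m \frac{1}{\alpha+\omega_r+i/T}\,d\alpha,
\end{equation*}
valid for $t>0$ and arbitrary real $\omega_0,\dots,\omega_m$. Multiplying both sides by $e^{-t/T}$, the left-hand side becomes the convolution at $t$ of the causal exponentials $h_r(s)=e^{i\omega_r s - s/T}\bbo_{s>0}$, each of which has Fourier transform $(1/T+i(\alpha-\omega_r))^{-1}$; the identity then follows by Fourier inversion together with the substitution $\alpha\mapsto -\alpha$. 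Combining this with the simplex representation, the prefactor $(-i)^m$ from the Feynman rules and the $i^{m+1}$ from the Fourier identity collapse into a single $i$, reproducing the coefficient in~\eqref{Simba}. The case $t<0$ is handled identically by using the retrograde exponentials $h_r(s) = e^{i\omega_r s + s/T}\bbo_{s<0}$, which yields the prescription $-i/T$ and the factor $e^{-t/T}$.

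I expect the first step, the algebraic identification of the collected edge phase as $\sum_r s_r\,\omega_r$, to be the main obstacle. It requires a meticulous tracking of the alternating conjugation signs arising from the parities of $j$ (edges) and $\ell_r$ (nonlinear vertices), together with the $\Delta$-identifications. The cleanest way to organize it is inductively: the three sub-diagrams that make up a diagram for $f^{(m+1)}$ (with $m_1+m_2+m_3=m$ in \eqref{inductive am+1}) carry profiles whose phases have, by the induction hypothesis, the stated telescoped form; adding the outermost $\cN$-vertex and the new propagator slice produces precisely one new modulation $\omega_m$ at the top and shifts the existing ones in exactly the manner prescribed by \eqref{thetaj}. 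Once this combinatorial telescoping is in hand, the remaining Fourier inversion is entirely routine.
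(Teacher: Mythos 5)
Your treatment of the positive-time case is essentially the paper's own proof. The paper likewise obtains \eqref{Simba} for $t>0$ by induction on $m$ from the Duhamel recursion in profile form together with the resolvent identity \eqref{Resolvent}; your derivation of that identity by convolving the causal kernels $e^{i\omega_r s - s/T}\bbo_{\{s>0\}}$ is equivalent to the paper's citation of \cite[Lemma~4.2]{CG1}, and it has the small advantage of making the vanishing of the right-hand side for $t<0$ (hence the validity of the $\bbo_{\R_+}$ form for all $t\in\R$) automatic. The intermediate simplex representation you interpose, with the telescoped phases $\sum_r s_r\,\omega_r(\cK,\ell)$ and the factors $\Delta$, $\bA$, $\gamma_{r,\ell_r}$, is exactly the content of the Feynman rules of Section~\ref{s:Feyn}, and your sign check $(-i)^m\, i^{m+1}=i$ is correct.

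Where you genuinely deviate is the negative-time half, and there your one-sentence claim that it is ``handled identically'' hides the only delicate point. The paper does not rerun the computation with anti-causal kernels; it deduces the $t<0$ case from the $t>0$ case via the symmetry $\widetilde a(t) = -\overline{a(-t)}$, which solves the conjugated system with data $-\overline A$ and coefficients $\overline{\gamma}$. Your direct route is viable, but for $t<0$ each of the $m$ Duhamel integrals $\int_0^t$ reverses orientation, so the iterated expansion equals $(-1)^m$ times the integral over the negative simplex $\{s_r\le 0,\ \sum_r s_r = t\}$ with positive surface measure; this factor must be carried through the anti-causal Fourier identity, and it is not visible in your sketch. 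Tracking it (or running the paper's symmetry argument) produces the prescription $-i/T$ and the factor $e^{-t/T}$, as you say, but with overall prefactor $-\tfrac{i e^{-t/T}}{2\pi}$ rather than the $+\tfrac{i e^{-t/T}}{2\pi}$ displayed in \eqref{Simba}; a direct check at $m=1$ with $\omega_0=0$ confirms the $-i$. This discrepancy is harmless for everything downstream (only squared moduli of these expressions are used in Sections~\ref{s:app} and~\ref{s:lin}), but you should either spell out the orientation bookkeeping in your anti-causal argument or simply adopt the paper's time-reversal symmetry, which avoids it altogether.
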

\begin{proof}
First consider the case that \(t>0\). Writing \eqref{Duhamel} as
\[
f_k(t) = A\left( \tfrac{k}{N}\right) - i\frac{\mu^2}N\sum_{\ell,m,n}\int_0^te^{is\Omega(k,\ell,m,n)}\gamma(k,\ell,m,n)f_\ell(s)\overline{f_m(s)}f_n(s)\,ds,
\]
the identity \eqref{Simba} follows from induction on \(m\) and the identity
\eq{Resolvent}{
\int_{\R^{m+1}_+}\left[\prod_{r=0}^me^{is_r\omega_r}\right]\,\delta\left(\sum_{r=0}^ms_r - t\right)\,ds_0\dots ds_r = \frac{e^{t/T}}{2\pi}i^{m+1}\int_\R \prod_{r=0}^m \frac1{\alpha + \omega_r + \frac{i}{T} } e^{-i\alpha t} \,d\alpha,
}
which is proved in~\cite[Lemma~4.2]{CG1} by writing
\[
\delta\left(\sum_{r=0}^ms_r - t\right) = e^{-\tfrac1T\left(\sum_{r=0}^ms_r - t\right)}\frac1{2\pi}\int_\R e^{i\alpha\left(\sum_{r=0}^ms_r - t\right)}\,d\alpha
\]
and then integrating in \(s_0,\dots,s_r\). We also note that
\[
\int_\R \prod_{r=0}^m \frac1{\alpha + \omega_r + \frac{i}{T} } e^{-i\alpha t} \,d\alpha
\]
vanishes for \(t<0\) which shows that \eqref{Simba} is valid for all times \(t\in \R\).

The case \(t<0\) now follows from the case \(t>0\) and the observation that \(\widetilde a(t) := -\overline{a(-t)}\) satisfies the equation
\[
i\frac d{dt}\widetilde a - \Lambda \widetilde a = \frac{\mu^2}N\Bigl[ \overline\Psi^*\bigl(\overline \Psi \widetilde a\odot\overline{\overline\Psi \widetilde a}\odot \overline\Psi \widetilde a\bigr) - \tfrac{2N\cM}{2N+1}\widetilde a\Bigr].
\]
\end{proof}

\subsection{The \(X^b_T\) spaces} To bound our solution, we introduce a modification of Bourgain's \(X^{s,b}\) spaces that are well-adapted to our problem. Using the unitary normalization of the Fourier transform,
$$
\widehat{f}(\tau) = \tfrac{1}{\sqrt{2\pi}} \int_\R e^{-it\tau} f(t) \,dt,
$$
for a diagonal matrix \(\Lambda\in M_{2N+1}(\R)\), $b \in \mathbb{R}$, and $T \geq 1$, we define the space \(X^b_T\) with norm
$$
\| a \|_{X^b_T}^2 := \left\| \left( \tfrac{1}{T} + |\tau| \right)^b \widehat{e^{it \Lambda} a} (\tau) \right\|_{L^2_\tau \ell^2}^2 = \sum_{k=-N}^N\left\| \left( \tfrac{1}{T} + |\tau| \right)^b \widehat a_k (\tau-\lambda_k) \right\|_{L^2_\tau }^2.
$$

The following properties follow directly from the definition, see, e.g.~\cite{MR2233925}:
\begin{lemma} Let \(\Lambda\in M_{2N+1}(\R)\) be a diagonal matrix, $b \in \mathbb{R}$, $T \geq 1$, and \(\chi_T(t) = \chi(\tfrac tT)\) for some \(\chi\in \Schwartz(\R)\). Then we have the following estimates:
\begin{itemize}
\item[(i)] (Free solution) If \(f\in \C^{2N+1}\) then
\eq{free bird}{
\left\| \chi_T\, e^{-it\Lambda} f \right\|_{X^b_T} \lesssim_{b,\chi} T^{\frac{1}{2}-b} \| f \|_{\ell^2}.
}
\item[(ii)] (Time restriction) If \(a\in X^b_T\) then
\eq{Truncation}{
\left\| \chi_T\, a \right\|_{X^b_T} \lesssim_{b,\chi}  \| a \|_{X_T^b}.
}
\item[(iii)] (Time continuity) If \(b>\frac12\) and \(a\in X^b_T\) then \(\chi_T\,a\in \Cont(\R;\C^{2N+1})\) and
\eq{Cont}{
\| \chi_T\,a \|_{L^\infty_t \ell^2} \lesssim_{b,\chi} T^{b-\frac{1}{2}} \| a \|_{X^b_T}
}
\item[(iv)] (Hyperbolic regularity) If $a\in X^b_T$ is a solution of
$$
\pde{
\displaystyle
i \frac{d}{dt} a - \Lambda a = F,}{
a(0) = 0,
}
$$
where \(F\in X^{b-1}_T\) then
\begin{equation}
\label{hyperbolicregularity}
\left\| \chi_T\, a \right\|_{X^b_T} \lesssim_{b,\chi} \| F \|_{X^{b-1}_T}.
\end{equation}
\end{itemize}
\end{lemma}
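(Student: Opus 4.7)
The four assertions are classical features of Bourgain-type spaces adapted here to a finite-dimensional, matrix-valued setting; the plan is to verify each in turn. Throughout, I would introduce the profile $g_k(t) := e^{i\lambda_k t} a_k(t)$, so that
\[
\| a \|_{X^b_T}^2 = \sum_{k=-N}^N \left\| \bigl(\tfrac{1}{T}+|\tau|\bigr)^b \widehat{g_k}(\tau) \right\|_{L^2_\tau}^2,
\]
and note that $|a_k(t)| = |g_k(t)|$ and that $\chi_T$, being scalar, commutes with $e^{it\Lambda}$.

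For (i), the profile of $\chi_T\, e^{-it\Lambda} f$ is simply $\chi_T(t) f_k$, whose Fourier transform equals $T\,\widehat{\chi}(T\tau)\, f_k$. Substituting $u = T\tau$ in the weighted $L^2$ integral produces the factor $T^{1-2b}\int (1+|u|)^{2b}|\widehat{\chi}(u)|^2\,du$, and summing $|f_k|^2$ over $k$ gives $\|f\|_{\ell^2}^2$ times $T^{1-2b}$; taking square roots yields \eqref{free bird}. For (ii), since $\chi_T$ commutes with $e^{it\Lambda}$, the estimate reduces to showing that the convolution operator with kernel $\widehat{\chi_T}$ is bounded on $L^2\bigl((\tfrac{1}{T}+|\tau|)^{2b}\,d\tau\bigr)$. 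Using the pointwise bound
\[
\bigl(\tfrac{1}{T}+|\tau|\bigr)^b \lesssim_b \bigl(\tfrac{1}{T}+|\sigma|\bigr)^b + |\tau-\sigma|^b \qquad (b\ge 0),
\]
with the symmetric version for $b<0$, I would split the convolution into two pieces and apply Young's inequality. The key observation is that both $\|\widehat{\chi_T}\|_{L^1_\tau}$ and $\bigl\||\tau|^b \widehat{\chi_T}(\tau)\bigr\|_{L^1_\tau}$ are uniformly bounded in $T$, as follows from the substitution $u=T\tau$ together with $\chi \in \Schwartz(\R)$.

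For (iii), Fourier inversion and Cauchy--Schwarz give
\[
\bigl|g_k(t)\bigr| \leq \tfrac{1}{\sqrt{2\pi}}\left\|\bigl(\tfrac{1}{T}+|\tau|\bigr)^{-b}\right\|_{L^2_\tau}\,\left\|\bigl(\tfrac{1}{T}+|\tau|\bigr)^{b}\widehat{g_k}\right\|_{L^2_\tau},
\]
and the substitution $u=T\tau$ shows the first factor equals $T^{b-\frac12}$ times a finite constant whenever $b>\frac12$; squaring and summing in $k$ yields the $L^\infty_t \ell^2$ bound, after which multiplication by $\chi_T$ is harmless. Continuity of $\chi_T a$ follows because each $\widehat{g_k}\in L^1_\tau$, hence each $g_k$ is continuous; since the ambient space $\C^{2N+1}$ is finite-dimensional, coordinate-wise continuity gives $\ell^2$-continuity.

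For (iv), solving $i\tfrac{d}{dt}a-\Lambda a = F$ with $a(0)=0$ via Duhamel gives $g_k(t) = -i\int_0^t e^{i\lambda_k s}F_k(s)\,ds$, i.e., the profile of $a$ is the primitive of the profile of $-iF$. The plan is the standard Bourgain-space argument: introduce a smooth Fourier cutoff in the modulation variable $\tau$ at scale $1/T$ to split $F = F_{\mathrm{hi}} + F_{\mathrm{lo}}$. For $F_{\mathrm{hi}}$, division by $i\tau$ at the level of Fourier transforms solves the equation and trades one derivative in modulation for the gain $(\tfrac{1}{T}+|\tau|)^{-1}$ needed to pass from $X^{b-1}_T$ to $X^b_T$. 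For $F_{\mathrm{lo}}$, one Taylor-expands the Duhamel propagator in $\tau$ and uses that on time scales $|t|\lesssim T$ the factor $\chi_T(t)\int_0^t ds$ contributes at worst $T$, which is compensated by the weight $(\tfrac{1}{T}+|\tau|)^{-1}\sim T$ on the support of $F_{\mathrm{lo}}$. This is the step I expect to be the main obstacle: the separate handling of the two modulation regimes, and in particular the careful bookkeeping of how the truncation $\chi_T$ interacts with the low-modulation piece, is the most delicate of the four, though it is ultimately a routine adaptation of the Ginibre--Velo--Tao framework (see \cite{MR2233925}) to the present finite-dimensional setting with $T$-dependent weights.
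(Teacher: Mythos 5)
The paper offers no proof of this lemma at all: it states that the properties ``follow directly from the definition'' and cites Tao's book \cite{MR2233925}, so what you have written is precisely the standard argument that the paper is deferring to. Your proofs of (i)--(iii) are correct as sketched: the scaling computation $\widehat{\chi_T}(\tau)=T\widehat{\chi}(T\tau)$ gives \eqref{free bird}; for \eqref{Truncation} the weight splitting plus Young's inequality works, with the small additional observation that the second piece also uses $(\tfrac1T+|\sigma|)^{-b}\lesssim T^{b}$ to cancel the factor $T^{-b}$ hidden in $\bigl\||\tau|^b\widehat{\chi_T}\bigr\|_{L^1_\tau}$; and the Cauchy--Schwarz argument for \eqref{Cont} correctly produces $T^{b-\frac12}$ exactly when $b>\tfrac12$. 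For (iv), one caveat in your high-modulation step: ``division by $i\tau$'' alone does not produce the solution with $a(0)=0$; Duhamel gives, besides the term with Fourier multiplier $1/(i\tau)$, a $t$-independent correction $-\int_{|\tau|\gtrsim 1/T}\widehat{h}_k(\tau)/(i\tau)\,d\tau$ (with $h=e^{it\Lambda}F$), which must be handled by \eqref{free bird} together with Cauchy--Schwarz; the $\tau$-integral there converges only for $b>\tfrac12$, which is the implicit restriction under which \eqref{hyperbolicregularity} is stated in the cited reference and the only regime in which the paper uses it. Your low-modulation Taylor expansion balances exactly as you describe ($T^{n}\cdot T^{\frac12-b}\cdot T^{1-n}\cdot T^{b-\frac32}=1$), so with that one piece of bookkeeping added the plan closes.
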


\subsection{The main estimates} We are now in a position to state our main estimates.

We subsequently assume that the initial data \(A\in \Cont^1[-1,1]\) has been fixed and ignore the dependence of the implicit constants on \(A\). We take \(\chi\in \Test(\R)\) to be an even, real-valued bump function that is identically \(1\) on \([-1,1]\) and supported on \([-2,2]\). We denote the rescaling \(\chi_T(t) = \chi(\frac tT)\) and similarly ignore the dependence of the implicit constants on \(\chi\).

It will be useful to introduce the following definition from~\cite{MR3068390}:
\begin{definition}[Stochastic domination]\label{d:SD}
For random variables \(X,Y\) depending on \(N\) we write \[X\prec Y\] if, for any \(K,\delta>0\), we have
\[
\bbP[X>N^\delta Y] = \bigO_{\delta,K}\bigl(N^{-K}\bigr)\quad\text{as}\quad N\rightarrow\infty,
\]
i.e., for any \(\delta>0\) and sufficiently large \(N\gg1\) we have \(X\leq N^\delta Y\) with overwhelming probability. If the implicit constant depends on some deterministic parameter \(\alpha\), we indicate this by writing \(X\prec_\alpha Y\). Finally, we write \(X = \bigO_\prec(Y)\) if \(|X|\prec Y\).
\end{definition}
We remark that the relation \(\prec\) is transitive and behaves like an inequality with respect to arithmetic operations; see~\cite[Lemma~4.4]{MR3068390}.

Our first estimate considers the leading order terms in the approximation: \(a\sbrack 0\), \(a\sbrack 1\), and \(a\sbrack 2\). Here, we have the following estimate, which is proved in Section~\ref{s:LOT}:

\begin{proposition}[The leading order terms]\label{p:LOT}
For any \(\delta>0\) and \(1\leq t\leq N\) we have
\begin{align}\label{LOT}
&\tfrac1N\left\|\bbE\bigl|a_k\sbrack 0(t) + a_k\sbrack 1(t)\bigr|^2 + 2\Re\bbE\Bigl[\overline{a_k\sbrack0(t)}a_k\sbrack 2(t)\Bigr] - |A(\tfrac kN)|^2 - \tfrac t{T_{\kin}}\cC\bigl[|A|^2\bigr](\tfrac kN)\right\|_{\ell_k^1}\\
&\qquad\lesssim_\delta\tfrac{t}{T_\kin} \left(\tfrac1{\sqrt t} + N^\delta\tfrac tN\right).\notag
\end{align}
\end{proposition}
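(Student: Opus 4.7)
Since $a\sbrack 0_k(t) = e^{-it\lambda_k}A(k/N)$, the term $|A(k/N)|^2$ on the left of \eqref{LOT} equals $|a\sbrack 0_k(t)|^2$ deterministically. Expanding $|a\sbrack 0 + a\sbrack 1|^2$, the cross-term $2\Re\bbE[\overline{a\sbrack 0_k}\,a\sbrack 1_k]$ is governed by $\bbE[\gamma]$; the Wick subtraction in \eqref{Mom} is precisely calibrated so that the single-unitary Weingarten formula gives $\bbE[\gamma] = \bigO(N^{-2})$. Summed against the triple data/phase factor, this cross-term is $\bigO(\mu^2 t / N^2)$, which a direct comparison of powers of $N$ and $t$ shows is absorbable into the error $\frac{t}{T_\kin}(t^{-1/2} + N^\delta t/N)$. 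The main task is thus to show that $\bbE|a\sbrack 1_k|^2 + 2\Re\bbE[\overline{a\sbrack 0_k}\,a\sbrack 2_k]$ approximates $\tfrac{t}{T_\kin}\cC[|A|^2](k/N)$.

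\textbf{Weingarten reduction.} Apply Lemma~\ref{l:Profile} to express $a\sbrack 1_k$ and $a\sbrack 2_k$ as explicit index sums involving products of $\gamma$-factors and time integrals, then take expectation in $\Psi$ first (conditional on $\Lambda$), using the Weingarten calculus of Section~\ref{s:WG}. The leading $\bigO(N^{-2})$ part of $\bbE_\Psi[\gamma\bar\gamma]$ (respectively $\bbE_\Psi[\gamma_1\gamma_2]$) arises from the planar pair contractions of the internal indices, and the Wick subtractions in \eqref{Mom} are engineered to cancel exactly the trivial (mass-current) pairings that would otherwise dominate. For $\bbE|a\sbrack 1_k|^2$ the surviving contraction yields the gain term $|A(\ell)|^2|A(m)|^2|A(n)|^2$ with sign $+$; for $\bbE[\overline{a\sbrack 0_k}a\sbrack 2_k]$ the three distinct surviving contractions (corresponding to the three $m=2$ Feynman diagrams displayed in the preceding example, one for each choice of internal vertex to contract with the external index $k$) produce the three loss terms with signs matching $-\tfrac{1}{\rho(\ell)},\ +\tfrac{1}{\rho(m)},\ -\tfrac{1}{\rho(n)}$. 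All other Weingarten contractions are $\bigO(\mu^4 t / N^3)$ per site and fit into the error budget.

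\textbf{Time asymptotics and rigidity.} After the Weingarten reduction, both contributions reduce to the common oscillatory kernel
\[
\int_0^t\!\int_0^t e^{i(s_1-s_2)\Omega(k,\ell,m,n)}\,ds_1\,ds_2 \;=\; \frac{4\sin^2\!\bigl(t\,\Omega(k,\ell,m,n)/2\bigr)}{\Omega(k,\ell,m,n)^2},
\]
assembled with signs matching $\bigl[\tfrac{1}{\rho(k)} - \tfrac{1}{\rho(\ell)} + \tfrac{1}{\rho(m)} - \tfrac{1}{\rho(n)}\bigr]$. Weak approximation of this kernel by $2\pi t\,\delta(\Omega)$, via a smooth cutoff at $|\Omega|\sim t^{-1/2}$ and the $\Cont^1$ regularity of the data, yields the collision integral with relative error $\bigO(t^{-1/2})$, giving the first term of the stated bound. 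Using Lemma~\ref{l:LSC}, each eigenvalue satisfies $\lambda_j = \nu(j/N) + \bigO_\prec(1/N)$ in the bulk; since the quasi-$\delta$-kernel has effective width $t^{-1}$, substituting the classical positions for the $\lambda_j$ costs $\prec t/N$ per term, giving the $N^\delta t/N$ contribution. Finally, passing from the Riemann sum $N^{-3}\sum_{\ell,m,n}$ to $\int_{[-1,1]^3}$ using $\Cont^1$ regularity of $A$, together with the change of variables $k\mapsto\nu(k)$ that produces the Jacobian $(4-\nu^2)_+^{1/2}$ from \eqref{nudiff} (reorganizing $\delta(\Omega)$ into $\delta(\Theta)$), recognizes $\cC[|A|^2](k/N)$ in its stated form.

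\textbf{Main obstacle.} The principal difficulty is the combinatorial Weingarten bookkeeping in the reduction step: identifying exactly the planar contractions that produce the three loss terms with the correct alternating signs, verifying that the Wick subtractions in \eqref{Mom} annihilate the would-be-dominant trivial pairings that correspond to mass-current terms, and showing that the remaining non-planar and subleading Weingarten contributions are small enough to fit the error budget. A secondary technical hurdle is the quantitative $t^{-1/2}$ rate in the weak-delta limit of Step~3, which is sharp only because the initial data is merely $\Cont^1$, and must be proved by a delicate tail analysis balancing the low-$\Omega$ and high-$\Omega$ regimes.
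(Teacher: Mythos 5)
Your plan is essentially the paper's own proof: expand the square, dispose of the quartic cross term, use Weingarten asymptotics for the pair correlations of $\gamma$ to reduce $\bbE|a_k\sbrack 1|^2+2\Re\bbE[\overline{a_k\sbrack 0}a_k\sbrack 2]$ to the kernel $4\sin^2(t\Omega/2)/\Omega^2$ carrying the collision signs, replace $\Omega$ by $\Theta$ via eigenvalue rigidity, pass from the Riemann sum to the integral, and finish with a quantitative weak-$\delta$ limit at rate $t^{-1/2}$ (this is the route of Lemmas~\ref{l:mu2}--\ref{l:Delta}). Two intermediate claims are off, though fixably so. First, the cross term $2\Re\bbE[\overline{a_k\sbrack 0}a_k\sbrack 1]$ is not merely $\bigO(\mu^2 t/N^2)$: the only surviving Weingarten pairings give a purely imaginary contribution, so its real part vanishes identically (Lemma~\ref{l:mu2}); your absorption into $\tfrac t{T_\kin}(t^{-1/2}+N^\delta t/N)$ requires $t\lesssim\mu^4$, which holds under the Theorem's standing assumptions but is not among the hypotheses of the proposition. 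Second, the leading order of $\bbE[\gamma\overline{\gamma}]$ is $(2N+1)^{-3}$, not $N^{-2}$ (Corollary~\ref{c:Special}); with $N^{-2}$ the main term would come out a factor of $N$ too large, so make sure this is only a slip. Also, the $t^{-1/2}$ rate is forced by the $\Cont^{0,\frac12}$ regularity of the semicircle Jacobian $\sqrt{4-x^2}$ at $x=\pm2$, not by $A$ being merely $\Cont^1$.

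The one genuine gap is the rigidity step. The bound $\lambda_j=\nu(\tfrac jN)+\bigO_\prec(1/N)$ is a bulk statement; near the spectral edge the fluctuation in \eqref{Rigid} is $\bigO_\prec\bigl(N^{-2/3}(N+1-|j|)^{-1/3}\bigr)$, which multiplied by $t\le N$ can be of size $N^{1/3}$, far exceeding the kernel width $t^{-1}$, and both the $\ell^1_k$ norm and the sums over $\ell,m,n$ include these indices. The paper's Lemma~\ref{l:OmegaCancels} therefore runs a separate edge analysis, using $2-\nu(\kappa)\sim(1-\kappa)^{2/3}$ from \eqref{nu-asy} so that the spacing of the classical locations near the edge matches the rigidity scale; moreover, in both regimes it exploits the decay of $\phi'$ so that the substitution cost is effectively confined to the near-resonant indices (a proportion $\bigO(1/t)$ in the bulk, $\bigO(t^{-3/2})$ at the edge). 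A naive Lipschitz-constant-times-number-of-terms count, which is what ``costs $\prec t/N$ per term'' amounts to when summed over all triples, yields a relative error of order $t^2/N$ rather than $t/N$. As written, your sketch neither addresses the edge nor carries out this resonant-set accounting, so the step producing the $N^\delta\,t/N$ error term is incomplete.
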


Our second estimate is proved in Section~\ref{s:app}. This gives bounds for \(a\sbrack m\) that will primarily be used to control higher order error terms:

\begin{proposition}[The Duhamel iterates]\label{p:app} For all \(m\geq 1\), \(\frac12<b<\frac12+m\), and \(1\leq T\leq N\) we have the estimate
\eq{AppXb}{
\tfrac1N\|a\sbrack m\|_{X^b_T}^2\prec_{b,m} \left(\tfrac{T^{4/3}}{T_{\kin}}\right)^m.
}
Further, for all \(\delta>0\) we have the refinements
\begin{align}
\tfrac1N\bbE\|a\sbrack 1\|_{X^b_T}^2 &\lesssim_{b,\delta}N^\delta \tfrac T{T_{\kin}},\label{AppXb-1}\\
\tfrac1N\bbE\|a\sbrack 2\|_{X^b_T}^2 &\lesssim_{b,\delta}N^\delta \left(\tfrac T{T_{\kin}}\right)^2.\label{AppXb-2}
\end{align}

\end{proposition}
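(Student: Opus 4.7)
\medskip
\noindent\textbf{Proof plan.} My strategy is to use the diagrammatic formula \eqref{Simba} of Lemma~\ref{l:Profile} to expand each $a^{(m)}$ as a sum over Feynman diagrams, compute the $X^b_T$ norm by opening the square on its Fourier-side definition, and reduce the expectation to combinatorial contributions that can be controlled using the Weingarten calculus from Section~\ref{s:WG} and the rigidity/local semicircle law estimates from Section~\ref{s:LSC}.

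First, I would take the space-time Fourier transform of \eqref{Simba} (after a smooth time cutoff chosen to give convergence in the regularization variable $T$): the $\alpha$-integral becomes, up to signs and boundary terms, a product $\prod_{r=0}^{m}(\tau+\omega_r(\cK,\ell)\pm i/T)^{-1}$ of resolvent factors in the dual variable $\tau$. Since $\widehat{f^{(m)}_k}(\tau)=\widehat{a^{(m)}_k}(\tau-\lambda_k)$ one has $\|a^{(m)}\|_{X^b_T}^2=\sum_k\int(\tfrac1T+|\tau|)^{2b}|\widehat{f^{(m)}_k}(\tau)|^2\,d\tau$, and opening the square produces a sum over a pair of diagrams $(\ell,\cK)$ and $(\ell',\cK')$ with an integrand of schematic form
\[
\Bigl(\tfrac{\mu^2}{N}\Bigr)^{2m}\bA(\cK)\,\overline{\bA(\cK')}\,\Delta\,\Delta'\prod_{r=1}^{m}\gamma_{r,\ell_r}(\cK)\,\overline{\gamma_{r,\ell'_r}(\cK')}\cdot\bigl(\tfrac1T+|\tau|\bigr)^{2b}\prod_{r=0}^{m}\frac{1}{(\tau+\omega_r\pm i/T)(\tau+\omega'_r\mp i/T)}.
\]

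Next I would take the expectation, exploiting the independence of $\Lambda$ and $\Psi$ under GUE to factor $\bbE=\bbE_{\lambda}\,\bbE_{\Psi}$. The eigenvector expectation $\bbE_{\Psi}\bigl[\prod_r\gamma_{r,\ell_r}(\cK)\,\overline{\gamma_{r,\ell'_r}(\cK')}\bigr]$ is a pure Weingarten quantity: expanding it via the tools of Section~\ref{s:WG} yields a sum over pairs of permutations, each term imposing a pairing among the entries of $\cK\cup\cK'$ and carrying an explicit power of $N^{-1}$ coming from the Weingarten function. On the eigenvalue side, I would pass to the deterministic phase $\Theta$ of \eqref{DeterministicPhase} using rigidity: in the bulk, $\lambda_k=\nu(k/N)+\bigO_\prec(1/N)$, so each $\omega_r$ can be replaced by its deterministic counterpart up to an error of size $\prec 1/N$, and the remaining sum over free frequencies can be converted to an integral against the semicircle density.

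The $T^{4/3}/T_\kin$ scaling is then extracted by estimating the $\tau$-integrals: each pair $(\tau+\omega_r\pm i/T)^{-1}(\tau+\omega'_r\mp i/T)^{-1}$ concentrates on $|\tau+\omega_r|,|\tau+\omega'_r|\lesssim 1/T$, so one needs the modulations to align to resolution $1/T$. Because the joint density of the $\omega_r$'s is smooth only on scales $\gtrsim 1/N$ and we need resolution $1/T$ with $T\ll N$, a dyadic slicing argument combined with the rigidity control contributes an additional $T^{1/3}$ per nonlinear vertex on top of the naive $T$ factor. Combined with the prefactor $(\mu^2/N)^{2m}$, the Weingarten $N^{-1}$-powers, and the count of surviving free frequencies, this yields the in-expectation bound $(T^{4/3}/T_\kin)^m$ up to an $N^\delta$ loss, which a high-moment argument and Markov's inequality promote to the stochastic-domination bound \eqref{AppXb}. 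For the refined estimates \eqref{AppXb-1}--\eqref{AppXb-2} I would handle the $m=1,2$ diagrams of Section~\ref{s:Feyn} directly: their Weingarten expansions are explicit and the modulation integrals can be computed by hand without slicing, eliminating the $T^{1/3}$ overhead and yielding the sharper scalings $T/T_\kin$ and $(T/T_\kin)^2$.

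The principal obstacle is the combinatorial analysis for general $m$: the Weingarten expansion produces many non-leading pairing topologies, and one must check that \emph{every} such topology is controlled by the same bound $(T^{4/3}/T_\kin)^m$, i.e., that the $N^{-1}$-powers from the Weingarten function always compensate for any extra freedom gained by a given pairing. Carrying out this bookkeeping--- in particular the interplay between non-crossing pairings of the eigenvectors, the $\Delta,\Delta'$ identifications, and the eigenvalue resolvent integrals---is the core technical content, and is the reason for the graph-theoretic approach developed in Section~\ref{s:WG}.
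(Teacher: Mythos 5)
Your setup (expanding via Lemma~\ref{l:Profile}, opening the square of the $X^b_T$ norm, factoring $\bbE=\bbE_\lambda\bbE_\Psi$, and promoting an expectation bound to \eqref{AppXb} through high moments of $\|\cdot\|_{X^b_T}^2$ and Markov) coincides with the paper's starting point. But there is a genuine gap at the heart of the argument: your explanation of where the exponent $4/3$ comes from is not correct, and the ingredient that actually produces it is absent from your plan. In the paper, factors of $T$ arise only from \emph{degenerate} vertices, i.e.\ pairing topologies in which a resolvent denominator $\alpha+\omega_r\pm i/T$ does not depend on any newly summed frequency, so that it can only be bounded by $T$ rather than summed to $N$ via the local-law estimate \eqref{L1}; the bound $(T^{4/3}/T_\kin)^{m}$ is then obtained because exactly these topologies are tied to degree-zero vertices, which force near-``atomic'' pairings of the $\gamma$-factors and hence are penalized on the eigenvector side beyond the generic $N^{-3}$ per pair --- this is the content of Theorem~\ref{thm:moment_bound} (smart pairs/triplets), which yields the refined bound $N^{-\frac{10}{3}m\ttn+\frac13 n_0}$ used in Lemma~\ref{l:Osci}, and the four-case balancing of $T^{D}$ against this penalty is what produces $T^{4/3}$ per vertex pair. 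If you only use the generic Weingarten estimates (Corollary~\ref{cor:moment_bound} or Proposition~\ref{p:WGBD}), the configurations with many degenerate vertices give at best $(T^{3/2}/T_\kin)^{m}$, so your bookkeeping cannot close at $4/3$ without this extra mechanism, which your proposal never identifies.

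Your proposed substitute mechanism --- that the joint density of the $\omega_r$'s is smooth only at scale $1/N$ while resolution $1/T$ is needed, and that a ``dyadic slicing plus rigidity'' argument therefore costs an extra $T^{1/3}$ per vertex --- does not hold up: for $T\le N$ the resolution $1/T$ is \emph{coarser} than $1/N$, so no loss of this kind arises from the eigenvalue statistics, and indeed the paper never replaces $\omega_r$ by the deterministic phase $\Theta$ or passes to semicircle integrals in this proposition (that step is performed only for the leading-order terms of Proposition~\ref{p:LOT}); the only eigenvalue inputs here are rigidity, giving $|\omega_r|\prec_m1$, and the local-law sum bound of Corollary~\ref{c:l}. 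Finally, the refinements \eqref{AppXb-1}--\eqref{AppXb-2} are not a mere explicit computation: for $m=2$ one problematic coloring with three degenerate vertices and a degree-three vertex survives the counting and must be excluded by a structural argument (pairing of input frequencies forcing the graphs of Figure~\ref{f:bad case}, then a reflection symmetry showing the same coloring admits a better count), a point your ``compute by hand'' step glosses over.
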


The linear term is bounded in Section~\ref{s:lin}, relying on similar estimates to the proof of Proposition~\ref{p:app}:

\begin{proposition}[The linear operator]\label{p:Lin}
Let \(M\geq 0\) and \(\cL\) be defined as in \eqref{Linearization}. Then, if \(\frac12<b\leq \frac56\) and \(1\leq T\leq \min\{N,T_\kin\}\) we have the estimate
\eq{Lin}{
\|\chi_T\,\cL\|_{X_T^b\to X^{b-1}_T}\prec_{M,b} (TN)^{2b-1}\left(\tfrac{T^{4/3}}{T_{\kin}}\right)^{\frac12}.
}
\end{proposition}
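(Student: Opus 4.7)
The strategy is the $T^*T$ duality argument signaled in the introduction, which reduces the operator-norm estimate on $\chi_T\cL$ to quartic bounds in $a^\app$ of the same type as those underlying Proposition~\ref{p:app}. First I decompose $a^\app = \sum_{m=0}^M a\sbrack m$, so that
\[
\cL = \sum_{0\leq m_1, m_2 \leq M}\cL_{m_1,m_2}, \qquad \cL_{m_1,m_2} a := 3\cN\bigl[a\sbrack{m_1}, a\sbrack{m_2}, a\bigr],
\]
and since $M$ is fixed it suffices to prove \eqref{Lin} for each $\cL_{m_1,m_2}$. By duality, $\|\chi_T\cL_{m_1,m_2}\|_{X^b_T \to X^{b-1}_T}$ equals the supremum of $|\langle g, \chi_T\cL_{m_1,m_2}a\rangle|$ over unit vectors $a \in X^b_T$ and $g \in X^{1-b}_T$, and expanding via the multiplier \eqref{Mom} writes this pairing as a sum over $(k,\ell,m,n)\in\llbracket -N, N\rrbracket^4$ that is multilinear in $a\sbrack{m_1}$, $\overline{a\sbrack{m_2}}$, $a$, and $g$.

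The $T^*T$ step is to apply Cauchy--Schwarz in $(a, g)$, which eliminates the test functions at the cost of squaring the operator norm and leaves a quartic form in $a\sbrack{m_1}$ and $a\sbrack{m_2}$, decorated with Fourier weights from the duality between $X^b_T$ and $X^{1-b}_T$. This quartic form has a natural description in the Feynman-diagram language of Section~\ref{s:Feyn}: the four factors of $a\sbrack{m_i}$ together with the two ``doubled'' test slots assemble into a higher-order diagram in which $\gamma$ plays the role of a nonlinear vertex. I bound this diagram along the same lines as the proof of Proposition~\ref{p:app}, using Weingarten calculus (Section~\ref{s:WG}) for the expectation of the $\gamma$ products and eigenvalue rigidity for the resulting phase integrals; the leading topology then saturates at (the square of) the $m=1$ case of \eqref{AppXb}, producing the factor $\bigl(T^{4/3}/T_\kin\bigr)^{1/2}$ after taking square roots. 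The prefactor $(TN)^{2b-1}$ records the Fourier-weight mismatch between $X^b_T$ and $X^{b-1}_T$ under the $T^*T$ composition, converted into powers of $T$ and $N$ by using that frequencies live in $\llbracket -N, N\rrbracket$ and that $\chi_T$ localizes time to scale $T$.

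The main obstacle is the combinatorics of the $T^*T$-doubled diagrams: the pairings between $a\sbrack{m_1}$, $a\sbrack{m_2}$ and their dualized copies produce topologies that do not appear in the analysis of a single Duhamel iterate, and each new topology must be shown to contribute at worst the target factor. I expect the restriction $b\leq 5/6$ to enter precisely at this stage as the threshold beyond which the Fourier-weight penalties can no longer be absorbed by the Weingarten and rigidity gains without invoking dispersive estimates unavailable in the random-matrix setting. The hypothesis $T\leq\min\{N, T_\kin\}$ ensures both that the eigenvalue-rigidity scale $1/N$ remains finer than the time-frequency resolution $1/T$, and that the Duhamel-iterate bound of Proposition~\ref{p:app} is in the subcritical regime on which its proof relies.
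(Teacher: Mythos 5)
Your overall architecture (decompose $\cL$ according to the Duhamel indices of $a^\app$, pass to a $T^*T$/kernel formulation, and estimate the resulting expression by the same Weingarten-plus-rigidity diagrammatic machinery as for the iterates) matches the paper, but the core analytic step as you describe it has a genuine gap. Applying Cauchy--Schwarz in $(a,g)$ to the dual pairing is exactly the Hilbert--Schmidt bound, i.e.\ the first trace moment $\tr[\fL^*\fL]$ of the kernel operator. That quantity sums over the output frequency index with no decay, so even with the optimal Weingarten and oscillation bounds one gets $\bbE\,\tr[\fL^*\fL]\lesssim N^{1+\delta}\bigl(\tfrac{T^{4/3}}{T_\kin}\bigr)^{m_1+m_2+1}$, and after one square root the operator-norm bound is $\prec N^{1/2}\bigl(\tfrac{T^{4/3}}{T_\kin}\bigr)^{(m_1+m_2+1)/2}$. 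The extra $N^{1/2}$ is fatal: in the proof of Theorem~\ref{t:main} the linear operator must be bounded by $N^{\epsilon/6}\bigl(\tfrac{T^{4/3}}{T_\kin}\bigr)^{1/2}$, with essentially no power of $N$ to spare. The paper removes this loss by computing $\bbE\,\tr\bigl[(\fL^*\fL)^{\ttn}\bigr]$ for \emph{arbitrarily large} $\ttn$: the closed-chain structure of the trace makes the frequency loss a single global factor $N^{1+\delta}$ independent of $\ttn$ (this is the role of $q=1$ in Lemma~\ref{l:Osci2} versus Lemma~\ref{l:Osci}), so Markov's inequality and the $2\ttn$-th root yield the $N$-free bound \eqref{targetestimate}. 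Your proposal never invokes high trace moments, and without them the claimed factor $\bigl(\tfrac{T^{4/3}}{T_\kin}\bigr)^{1/2}$ is not reachable.

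Two secondary points. First, the paper proves the sharp bound only at the duality-symmetric level $X^{1/2}_T\to X^{-1/2}_T$ (Lemma~\ref{l:target}) and then interpolates with a crude $X^0_T\to X^0_T$ bound obtained as in Lemma~\ref{l:Nonlinear}; the prefactor $(TN)^{2b-1}$ and the restriction $b\le 5/6$ arise from this interpolation step (one needs $\mu^2T^{2b-1}\le N\bigl(\tfrac{T^{4/3}}{T_\kin}\bigr)^{1/2}=\mu^2T^{2/3}$, i.e.\ $2b-1\le 2/3$), not from Fourier-weight penalties inside the Weingarten analysis as you conjecture; working directly at exponent $b$ in the kernel computation, as you propose, would complicate the trace estimates without gaining anything. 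Second, since $\cL$ is only real linear, the decomposition must separate the complex-linear and antilinear parts (the operators $\cL_{\mfi\mfj}$ and $\cL_{\mfi\mfj}'$ in the paper); your $\cL_{m_1,m_2}a=3\cN[a\sbrack{m_1},a\sbrack{m_2},a]$ mixes $a$ and $\bar a$, which obstructs the kernel/$T^*T$ formulation until the split is made.
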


For the nonlinear terms we use the following (deterministic) bound:

\begin{lemma}[The nonlinear terms]\label{l:Nonlinear}
For \(\frac12<b<1\) we have the estimate
\eq{NL}{
\|\chi_T\,\cN[u,v,w]\|_{X^{b-1}_T}\lesssim_b T^{2b} \tfrac{\mu^2}N \|u\|_{X^b_T}\|v\|_{X^b_T}\|w\|_{X^b_T}.
}
\end{lemma}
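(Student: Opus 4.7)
The statement is purely deterministic and does not exploit any oscillation or cancellation in $\gamma$, so it should reduce to a pointwise trilinear $\ell^2$ bound combined with the standard embeddings of $X^b_T$. The plan is in three steps: (i) replace the $X^{b-1}_T$ norm by an $L^2_t\ell^2$ norm using $b<1$; (ii) upgrade from $L^2_t$ to $L^\infty_t$ via the compact support of $\chi_T$; (iii) estimate $\cN$ pointwise in time by a crude trilinear bound using unitarity of $\Psi$, and then invoke the continuous embedding $X^b_T\hookrightarrow L^\infty_t\ell^2$ from \eqref{Cont}.

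For step (i), observe that $b-1<0$, so $(1/T+|\tau|)^{b-1}\leq T^{1-b}$ uniformly in $\tau$. By Plancherel and the fact that $e^{it\Lambda}$ is an isometry on $\ell^2$,
\[
\|\chi_T\cN[u,v,w]\|_{X^{b-1}_T}\leq T^{1-b}\|\chi_T\cN[u,v,w]\|_{L^2_t\ell^2}.
\]
For step (ii), since $\chi_T$ is supported in $[-2T,2T]$, Cauchy--Schwarz in time gives
\[
\|\chi_T\cN[u,v,w]\|_{L^2_t\ell^2}\lesssim T^{1/2}\|\chi_T\cN[u,v,w]\|_{L^\infty_t\ell^2}.
\]

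The substance of step (iii) is the deterministic pointwise estimate
\[
\|\cN[a,b,c]\|_{\ell^2}\lesssim \tfrac{\mu^2}{N}\|a\|_{\ell^2}\|b\|_{\ell^2}\|c\|_{\ell^2},
\]
valid uniformly in $\Psi\in\bbU(2N+1)$. For the main cubic term, unitarity of $\Psi$ gives $\|\Psi^*(\Psi a\odot\overline{\Psi b}\odot\Psi c)\|_{\ell^2}=\|\Psi a\odot\overline{\Psi b}\odot\Psi c\|_{\ell^2}$, and this Hadamard product satisfies
\[
\|\Psi a\odot\overline{\Psi b}\odot\Psi c\|_{\ell^2}\leq \|\Psi b\|_{\ell^\infty}\|\Psi c\|_{\ell^\infty}\|\Psi a\|_{\ell^2}\leq \|a\|_{\ell^2}\|b\|_{\ell^2}\|c\|_{\ell^2},
\]
using $\|\Psi b\|_{\ell^\infty}\leq\|\Psi b\|_{\ell^2}=\|b\|_{\ell^2}$. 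The Wick-type correction terms $\tfrac{1}{2N+1}\langle\cdot,\cdot\rangle\,\cdot$ are even smaller by Cauchy--Schwarz.

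To convert the resulting pointwise-in-$t$ bound back into an $X^b_T$ estimate on $u,v,w$, pick a second cutoff $\widetilde\chi\in\Test(\R)$ that equals $1$ on $\text{supp}(\chi)$; then $\widetilde\chi_T\chi_T=\chi_T$ and, since $\cN$ acts pointwise in time,
\[
\chi_T\cN[u,v,w]=\chi_T\cN[\widetilde\chi_T u,\widetilde\chi_T v,\widetilde\chi_T w].
\]
Applying step (iii) pointwise and then the embedding \eqref{Cont} (with $\widetilde\chi$ in place of $\chi$) to each factor,
\[
\|\widetilde\chi_T u\|_{L^\infty_t\ell^2}\lesssim_b T^{b-1/2}\|u\|_{X^b_T},
\]
and similarly for $v,w$. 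Combining the four steps, the total power of $T$ is $(1-b)+\tfrac12+3(b-\tfrac12)=2b$, giving \eqref{NL}. The argument is essentially a chain of triangle-type bounds; the only point requiring care is matching time weights between the Fourier-based $X^b_T$ norm and the physical-side $L^\infty_t\ell^2$ bound, which is handled by the auxiliary cutoff $\widetilde\chi$. No deeper obstacle is anticipated, since this is the deterministic trilinear estimate that will later be combined with the genuinely subtle probabilistic bounds of Propositions~\ref{p:app} and~\ref{p:Lin}.
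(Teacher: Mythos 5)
Your proposal is correct, and it reaches \eqref{NL} by a genuinely different and more elementary route than the paper. The paper's proof goes through an $L^4_t\ell^4$ bound for $\chi_T\Psi e^{-it\Lambda}$ (see \eqref{Strichartz}), transfers it to $\|\chi_T\Psi u\|_{L^4_t\ell^4}\lesssim_b T^{b-\frac14}\|u\|_{X^b_T}$, dualizes to place $\Psi^*(\Psi u\odot\overline{\Psi v}\odot\Psi w)$ in $X^{b'}_T$ for $b'<-\tfrac12$, and then interpolates with an $X^0_T=L^2_t\ell^2$ bound to land in $X^{b-1}_T$, treating the Wick correction terms separately via \eqref{Truncation} and \eqref{Cont}. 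You instead discard the temporal weight outright using $b-1<0$, so that $\bigl(\tfrac1T+|\tau|\bigr)^{b-1}\leq T^{1-b}$, pass to $L^2_t\ell^2$ by Plancherel and unitarity of $e^{it\Lambda}$, then to $L^\infty_t\ell^2$ by the compact support of $\chi_T$, and conclude with the crude deterministic bound $\|\cN[a,b,c]\|_{\ell^2}\lesssim\tfrac{\mu^2}{N}\|a\|_{\ell^2}\|b\|_{\ell^2}\|c\|_{\ell^2}$ (the same bound the paper invokes later in the proof of Theorem~\ref{t:main}) together with \eqref{Cont} applied to each factor through the auxiliary cutoff $\widetilde\chi_T$; the exponent bookkeeping $(1-b)+\tfrac12+3(b-\tfrac12)=2b$ is right, and the hypothesis $\tfrac12<b<1$ enters exactly where it should (the lower bound for \eqref{Cont}, the upper bound for the weight estimate). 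Since the paper's $L^4_t\ell^4$ input is itself just the embedding $\ell^2\subset\ell^4$ — there is no genuine dispersive gain in this model — the two routes yield the same powers of $T$ (indeed, your argument adapted to a target $X^{b'}_T$ with $b'<-\tfrac12$ reproduces the paper's intermediate exponent $T^{3b-b'-1}$ as well), so nothing is lost by your shortcut; it is simply shorter and avoids the duality and interpolation machinery.
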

\bpf
We first use the embedding \(\ell^2\subset \ell^4\) with the fact that \(\|\Psi e^{-it\Lambda}\|_{\ell^2\to\ell^2} = 1\) to estimate
\eq{Strichartz}{
\|\chi_T\Psi e^{-it\Lambda}f\|_{L^4_t\ell^4}^4\lesssim T\|f\|_{\ell^2}^4\qtq{for any}f\in \C^{2N+1}.
}
Taking the inverse Fourier transform, we have
\[
u_k(t) = \tfrac1{\sqrt{2\pi}}\int_\R e^{it\tau}e^{-it\lambda_k}\widehat u_k(\tau - \lambda_k)\,d\tau.
\]
Setting \(g_k(\tau) = \widehat u_k(\tau - \lambda_k)\) and applying \eqref{Strichartz} followed by the Cauchy--Schwarz inequality gives us
\[
\|\chi_T\,\Psi u\|_{L^4_t\ell^4} \lesssim \int_\R \|\chi_T \Psi e^{-it\Lambda}g(\tau)\|_{L^4_t\ell^4}\,d\tau\lesssim T^{\frac14} \int_\R\|g\|_{\ell^2}\,d\tau\lesssim_b T^{b-\frac14}\|u\|_{X^b_T}.
\]
By duality, for any \(b'<-\frac12\) and \(h\in \C^{2N+1}\) we have the bound
\[
\|\chi_T\,\Psi^*h\|_{X^{b'}_T}\lesssim_{b'} T^{-b'-\frac14}\|h\|_{L^{4/3}_t\ell^{4/3}}.
\]
Combining the above bounds and using that \(\chi_T = \chi_T\chi_{2T}\), we obtain
\begin{align*}
\|\chi_T\,\Psi^*(\Psi u\odot\overline{\Psi v}\odot\Psi w)\|_{X^{b'}_T}&\lesssim_{b'} T^{-b'-\frac14}\|\chi_{2T}\,\Psi u\|_{L^4_t\ell^4} \|\chi_{2T}\,\Psi v\|_{L^4_t\ell^4} \|\chi_{2T}\,\Psi w\|_{L^4_t\ell^4}\\
&\lesssim_{b,b'} T^{3b-b'-1}\|u\|_{X^b_T}\|v\|_{X^b_T}\|w\|_{X^b_T}.
\end{align*}
As \(X_T^0 = L^2_t\ell^2\), we may use the embedding \(\ell^2\subset \ell^\infty\) and \eqref{Cont} to yield the bound
\begin{align*}
\|\chi_T\,\Psi^*(\Psi u\odot\overline{\Psi v}\odot\Psi w)\|_{X^0_T} &\lesssim \|\chi_{2T}\,\Psi u\|_{L^2_t\ell^2}\|\chi_{2T}\, \Psi v\|_{L^\infty_t\ell^\infty}\|\chi_{2T}\, \Psi w\|_{L^\infty_t\ell^\infty}\\
&\lesssim \|\chi_{2T}\,u\|_{L^2_t\ell^2}\|\chi_{2T}\,v\|_{L^\infty_t\ell^2}\|\chi_{2T}\,w\|_{L^\infty_t\ell^2}\\
&\lesssim_b T^{3b-1}\|u\|_{X^b_T}\|v\|_{X^b_T}\|w\|_{X^b_T}.
\end{align*}
We then interpolate between these two estimates to get
\[
\|\chi_T\,\Psi^*(\Psi u\odot\overline{\Psi v}\odot\Psi w)\|_{X^{b-1}_T}\lesssim_b T^{2b}\|u\|_{X^b_T}\|v\|_{X^b_T}\|w\|_{X^b_T}.
\]
To complete the proof of \eqref{NL}, we apply \eqref{Truncation} and \eqref{Cont} to estimate
\begin{align*}
\left\|\chi_T\,\<v,u\>w\right\|_{X_T^{b-1}} &\lesssim \| \chi_{2T}\,u\|_{L^\infty_t\ell^2}\|\chi_{2T}\, v\|_{L^\infty_t\ell^2}\|\chi_{2T}\, w\|_{X_T^{b-1}}\lesssim_b T^{2b} \|u\|_{X_T^b}\|v\|_{X_T^b}\|w\|_{X_T^b}.
\end{align*}
The remaining terms in \(\cN[u,v,w]\) are bounded using symmetric estimates.
\epf

\subsection{Proof of Theorem~\ref{t:main}} We now complete the proof of Theorem~\ref{t:main}. Let \(\frac14<\beta<\frac12\) and \(\epsilon>0\) be fixed as in the hypothesis, and choose some fixed
\[
\tfrac12<b<\min\bigl\{1-\beta,\tfrac12 + \tfrac\epsilon{24}\bigr\}.
\]
We subsequently ignore the dependence of the implicit constants on \(\beta,\epsilon,b\).

Denoting the sample space by \(\Omega\), for each \(\omega\in \Omega\), let \(\Lambda = \Lambda(\omega)\), \(\Psi = \Psi(\omega)\), and take \(a = a(\omega)\in \Cont(\R;\C^{2N+1})\) to be the corresponding (unique, global) solution of \eqref{NLS-WO}. Let \(M = \lceil\frac94+\frac6\epsilon\rceil\) and define \(a^\app\), \(a^\err\) as in \eqref{app-sum},~\eqref{Decomp}, respectively.

We first prove an estimate for the remainder term \(a^\err\). Given \(N\geq 1\) and \(1\leq T\leq N\), let \(E_N\subseteq \Omega\) be the event that
\begin{align}
\tfrac 1N\|a\sbrack m\|_{X_T^b}^2 \leq N^{\frac\epsilon3} \left(\tfrac{T^{4/3}}{T_{\kin}}\right)^m\text{ for all \(m\in \llbracket1,M\rrbracket\)}\qtq{and}
\|\chi_T\cL\|_{X_T^b\to X_T^{b-1}} \leq N^{\frac\epsilon6} \left(\tfrac{T^{4/3}}{T_{\kin}}\right)^{\frac12}.\label{d2}
\end{align}
Applying Propositions~\ref{p:app} and~\ref{p:Lin}, where we use that \(T\leq N\) and \(4b-2<\epsilon/6\) in the latter case, we see that \eqref{d2} holds with overwhelming probability. In particular, for any \(K>0\) we have
\eq{Overwhelmed}{
\bbP\bigl[E_N^c\bigr] \lesssim_K N^{-K},
}
provided \(N\gg1\) is sufficiently large. Let us also note that, as \(T\leq N\) and \(\mu = N^{\beta}\), Lemma~\ref{l:Nonlinear} and our choice of \(b\) ensure that
\begin{equation}
\|\chi_T \cN[u,v,w]\|_{X_T^{b-1}}\leq N \|u\|_{X_T^b}\|v\|_{X_T^b}\|w\|_{X_T^b},\label{d3}
\end{equation}
provided \(N\gg1\) is sufficiently large.

We first consider the contribution of \(E_N\). If \(\omega\in E_N\) and \(1\leq T\leq N^{-\epsilon}T_{\kin}^{3/4}\), we define the operator
\begin{align*}
&\Upsilon_T(a)(t)\\
&:=\!-i\chi_T(t)\!\int_0^t\!\!\!e^{-i(t-s)\Lambda}\chi_T(s)\Bigl\{\cL a(s)\!+\!3\cN[a^\app(s),a(s),a(s)]\!+\!\cN[a(s),a(s),a(s)]\!+\!\cE(s)\Bigr\}\,ds.
\end{align*}
We claim that for sufficiently large \(N\gg1\) the map \(\Upsilon_T\) is a contraction on the ball
\[
\cB_T = \Bigl\{a\in X_T^b:\|a\|_{X_T^b}\leq N^{-\frac32-\frac\epsilon2}\Bigr\}.
\]

To prove this, first apply \eqref{d2} to bound
\[
\|\chi_T\,\cL a\|_{X_T^{b-1}}\leq N^{-\frac\epsilon2} \|a\|_{X_T^b},
\]
where we note that our hypothesis on \(T\) ensures that \(\frac{T^{4/3}}{T_{\kin}}\leq T^{-\frac 43\epsilon}\). Second, by applying \eqref{free bird} to get
\eq{a0 opt}{
\|\chi_{2T}\,a\sbrack 0\|_{X_T^b}\lesssim N^{\frac12},
}
and \eqref{d2} with \eqref{Truncation} to bound
\eq{am opt}{
\|\chi_{2T}\,a\sbrack m\|_{X_T^b}\leq N^{\frac12+\frac{1-4m}6\epsilon}\qtq{for}m\in \llbracket 1,M\rrbracket,
}
we obtain the estimate
\[
\|\chi_{2T}\,a^\app\|_{X_T^b}\lesssim N^{\frac12}.
\]
As a consequence, the inequality \eqref{d3} gives us
\begin{align*}
\|\chi_T\,\cN[a^\app,a,a]\|_{X_T^{b-1}} \lesssim N^{\frac32}\|a\|_{X_T^b}^2\qtq{and}\|\chi_T\,\cN[a,a,a]\|_{X_T^{b-1}} \lesssim N\|a\|_{X_T^b}^3.
\end{align*}
Third, the error \(\cE\) defined in \eqref{Error} can be written as
\[
\cE = \sum_{(m_1,m_2,m_3)\in S_M}\cN\bigl[a\sbrack{m_1},a\sbrack{m_2},a\sbrack{m_3}\bigr],
\]
where
\[
S_M = \bigl\{(m_1,m_2,m_3)\in \llbracket 0,M\rrbracket^3 : m_1+m_2+m_3\geq M\bigr\}.
\]
Using that \(\frac52 + \frac\epsilon 2-\frac23\epsilon M\leq -\frac32-\epsilon\), we may apply \eqref{d3}, \eqref{a0 opt}, and \eqref{am opt} to bound
\[
\|\chi_T \,\cE\|_{X_T^{b-1}}\lesssim N^{-\frac32 - \epsilon}.
\]
Combining these estimates with \eqref{hyperbolicregularity}, for \(a\in \cB_T\) we have
\[
\|\Upsilon_T(a)\|_{X_T^b} \lesssim N^{-\frac 32-\epsilon}.
\]
Applying similar estimates for the difference of two solutions, for all \(a,\widetilde a\in \cB_T\) we have
\[
\|\Upsilon_T(a) - \Upsilon_T(\widetilde a)\|_{X_T^b}\lesssim N^{-\frac \epsilon2}\|a - \widetilde a\|_{X_T^b}.
\]
As a consequence, for all sufficiently large \(N\gg1\) the map \(\Upsilon_T\) is a contraction on the ball \(\cB_T\), as claimed.

We now apply Banach's Fixed Point Theorem to show that for sufficiently large \(N\gg1\), each \(\omega\in E_N\),  and each \(1\leq T\leq N^{-\epsilon}T_\kin^{3/4}\) there exists a unique fixed point \(a^\err_T\in \cB_T\) of \(\Upsilon_T\). By uniqueness of solutions to the ODE \eqref{NLS}, we see that \(a^\err(t) = a_T^\err(t)\) for all \(|t|\leq T\). Consequently, recalling our choice of \(b\), the estimate \eqref{Cont} shows that
\eq{aerr-1}{
\tfrac1N\bbE\left[\bbo_{E_N}\|a^\err(t)\|_{\ell^2}^2\right]\leq N^{-4}\qtq{for all}|t|\leq N^{-\epsilon}T_{\kin}^{3/4}.
}

To control the contribution of the event \(E_N^c\), we first use the embedding \(\ell^2\subset\ell^\infty\) to bound
\[
\|\cN[a,b,c]\|_{\ell^2}\lesssim \tfrac{\mu^2}N\|a\|_{\ell^2}\|b\|_{\ell^2}\|c\|_{\ell^2}.
\]
Noting that \(\|a\sbrack 0(t)\|_{\ell^2}\lesssim \sqrt N\), we may then use the definition \eqref{inductive am+1} and induction on \(m\geq 0\) to obtain the crude estimate
\[
\|a\sbrack m(t)\|_{\ell^2}\lesssim_m (\mu t)^m\sqrt N.
\]
Combining this with the conservation of mass, which gives us the bound \(\|a(t)\|_{\ell^2}\lesssim \sqrt N\), we have
\[
\|a^{\err}(t)\|_{\ell^2}\lesssim \<\mu^2 t\>^M\sqrt N.
\]
In particular, with \(\mu = N^\beta\), the estimate \eqref{Overwhelmed} gives us the bound
\[
\tfrac1N\bbE\left[\bbo_{E_N^c}\|a^\err(t)\|_{\ell^2}^2\right]\lesssim_K N^{-K}\qtq{for all}|t|\leq N^{-\epsilon} T_\kin^{3/4}\qtq{and any}K>0.
\]
After possibly increasing the size of \(N\gg1 \), we may combine this bound with \eqref{aerr-1} to get
\eq{aerr}{
\tfrac1N\bbE\|a^\err(t)\|_{\ell^2}^2\leq N^{-4}\qtq{for all}|t|\leq N^{-\epsilon}T_{\kin}^{3/4},
}
i.e., we no longer need to condition on the event \(E_N\).

Let us now take \(N^\epsilon\leq t\leq N^{-\epsilon}T_{\kin}^{3/4}\). From Proposition~\ref{p:app} and \eqref{Cont}, for any \(\delta>0\) we may bound
\eq{mgeq3}{
\sum_{m=3}^M \tfrac1N\bbE \|a\sbrack m(t)\|_{\ell^2}^2\lesssim_\delta N^\delta\frac{t^4}{T_{\kin}^3}.
}
Similarly, applying Propositions~\ref{p:LOT},~\ref{p:app}, and \eqref{Cont}, for any \(\delta>0\) we have
\eq{LOB}{
\frac1N\left\|\bbE\bigl|a_k\sbrack 0(t) + a_k\sbrack 1(t) + a_k\sbrack 2(t)\bigr|^2 - \bigl|A\bigl(\tfrac kN\bigr)\bigr|^2 - \tfrac t{T_{\kin}}\cC\bigl[|A|^2\bigr]\bigl(\tfrac kN\bigr)\right\|_{\ell_k^1}\lesssim_{\delta} \frac t{T_{\kin}}N^{\delta-\frac\epsilon2}.
}
Combining the estimates \eqref{aerr}, \eqref{mgeq3}, and \eqref{LOB} with the fact that \(\frac t{T_{\kin}}\geq \frac1N\), we obtain
\[
\LHS{R}\lesssim_{\delta}\frac{t}{T_\kin}\left[N^{\delta-\frac\epsilon2} + N^\delta\frac{t^3}{T_{\kin}^2} + N^{-3}\right].
\]
The estimate \eqref{R} now follows from taking \(\delta = \epsilon/4\).\qed

\section{Random Matrix estimates}\label{s:WG}

In this section we establish estimates concerning the random matrix \(H\). A property of the Gaussian Unitary Ensemble (GUE) is that its eigenvectors and eigenvalues form two independent systems, which we can study separately. The vector of eigenvalues, \(\lambda\), is distributed according to \eqref{Beta_ens}. The matrix of eigenvectors, \(\Psi = (\psi_{jk})\), is distributed according to the Haar measure on \(\bbU(2N+1)\), also known as the circular unitary ensemble ($\rm{CUE}$). Section \ref{s:Wgcalc} presents a method to deal with general expressions involving entries of a ${\rm{CUE}}(d)$ matrix, and can be read independently from the rest of the paper. This method is then applied to particular expressions involving entries of $\Psi$ in Section \ref{s:application_Wg}. Finally, Section \ref{s:LSC} presents the application of the local semicircle law to our estimates involving the eigenvalues of $H$.

\subsection{Weingarten calculus}\label{s:Wgcalc}
Given two integers \(q, d \geq 1\), the expectation of a product of $2q$ terms from a Haar-distributed unitary matrix of size $d \times d$ is given explicitly by the following result of Collins~\cite[Theorem~2.1]{MR1959915} in the so called \textit{stable range} $d \geq q$:

\begin{theorem}[Fundamental Theorem of Weingarten calculus]\label{t:WG} If $\Psi = (\psi_{jk})_{j,k=1}^d$ is a Haar-distributed unitary matrix, then for any choice of indices $(j_l, k_l, j_l', k_l')_{l=1}^q$ with $1 \leq q \leq d$,
\begin{multline}\label{Weingarten}
\bbE\left[ \psi_{j_1k_1} \cdots \psi_{j_qk_q}  \
    \overline{\psi_{j_1'k_1'}} \cdots  \overline{\psi_{j_q'k_q'}} \right]
    = \!
    \sum_{\sigma,\tau \in \mathfrak{S}_q} 
    \delta_{j_1,j_{\tau(1)}'} \! \cdots \delta_{j_q,j_{\tau(q)}'} \
    \delta_{k_1,k_{\sigma(1)}'} \! \cdots \delta_{k_q,k_{\sigma(q)}'}
    \Wg(\sigma \tau^{-1}, d).
\end{multline}
\end{theorem}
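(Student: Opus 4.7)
The plan is representation-theoretic, exploiting the bi-invariance of Haar measure on \(\bbU(d)\) together with Schur--Weyl duality. I would view the moment tensor
\[
T := \bbE\bigl[\Psi^{\otimes q} \otimes \overline{\Psi}^{\otimes q}\bigr]
\]
as an element of \(V^{\otimes 2q} \otimes (V^*)^{\otimes 2q}\) with \(V = \C^d\), the \(V\)-factors recording the row indices \((j, j')\) and the \(V^*\)-factors the column indices \((k, k')\). Since \(\Psi\) and \(U\Psi V\) have the same distribution for any deterministic \(U, V \in \bbU(d)\), the tensor \(T\) must lie in the space of joint invariants under two commuting diagonal \(\bbU(d)\) actions: one on the \(V\)-factors (standard on the \(\psi\)-copies, complex-conjugate on the \(\bar\psi\)-copies) and one, analogously, on the \(V^*\)-factors.

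The heart of the argument, and what I expect to be the main obstacle, is to identify this space of joint invariants. By Schur--Weyl duality, the \(\bbU(d)\)-invariants in \(V^{\otimes q} \otimes (V^*)^{\otimes q}\) are spanned by the permutation tensors
\[
T_{j, j'}^{\tau} := \delta_{j_1, j_{\tau(1)}'} \cdots \delta_{j_q, j_{\tau(q)}'}, \qquad \tau \in \mathfrak{S}_q,
\]
with an analogous family indexed by \(\sigma \in \mathfrak{S}_q\) on the column side. Consequently \(T\) admits an expansion
\[
T = \sum_{\sigma, \tau \in \mathfrak{S}_q} c(\sigma, \tau) \, T_{j, j'}^{\tau} \, T_{k, k'}^{\sigma}
\]
for some coefficients \(c(\sigma, \tau)\), which is the announced formula modulo the identification of \(c\) with \(\Wg\).

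To see that \(c(\sigma, \tau)\) depends only on \(\sigma\tau^{-1}\), I would invoke the symmetry that the scalar monomial \(\psi_{j_1 k_1} \cdots \psi_{j_q k_q}\) is invariant under simultaneous relabeling \((j_l, k_l) \mapsto (j_{\pi(l)}, k_{\pi(l)})\) for any \(\pi \in \mathfrak{S}_q\). Tracking this through the expansion yields \(c(\sigma, \tau) = c(\sigma\pi, \tau\pi)\) for every \(\pi\), which forces \(c\) to be a function of \(\sigma\tau^{-1}\) alone. This defines \(\Wg(\cdot, d)\).

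Finally, the stable-range hypothesis \(q \leq d\) ensures that the family \(\{T^{\tau}\}_{\tau \in \mathfrak{S}_q}\) is linearly independent: their Gram matrix \(G(\sigma, \tau) = d^{\#(\sigma\tau^{-1})}\), where \(\#(\cdot)\) counts cycles, is non-degenerate. Uniqueness of the expansion then follows, and one identifies \(\Wg\) concretely with a suitably normalized entry of \(G^{-1}\), which admits an explicit formula in terms of irreducible characters of \(\mathfrak{S}_q\). Non-degeneracy failing outside the stable range is precisely why the constraint \(q \leq d\) is imposed.
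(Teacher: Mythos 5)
The first thing to say is that the paper does not prove Theorem~\ref{t:WG} at all: it is imported verbatim from Collins \cite{MR1959915}, and even $\Wg$ itself is defined only by reference. So the comparison is with the literature rather than with anything in the paper, and your sketch is essentially the classical invariant-theoretic proof (Collins, Collins--\'Sniady): bi-invariance of Haar measure places the moment tensor in the space of joint invariants, the first fundamental theorem of invariant theory (Schur--Weyl) spans the invariants of $V^{\otimes q}\otimes (V^*)^{\otimes q}$ by the permutation tensors $T^\tau$, the relabelling symmetry gives $c(\sigma,\tau)=c(\sigma\pi,\tau\pi)$ and hence dependence on $\sigma\tau^{-1}$ only, and invertibility of the Gram matrix $d^{\mathscr{C}(\sigma\tau^{-1})}$ in the stable range $q\le d$ makes the coefficients well defined. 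All of that is sound.

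The one step you assert rather than prove is the identification of the coefficient function $c$ with $\Wg(\cdot,d)$, i.e.\ with the convolution inverse of $\sigma\mapsto d^{\mathscr{C}(\sigma)}$, and this is precisely where unitarity of $\Psi$ --- as opposed to mere bi-invariance in distribution --- must enter: a complex Ginibre matrix is also bi-unitarily invariant, and its moments admit an expansion of exactly the same invariant shape, but with Wick-type coefficients supported on $\sigma=\tau$; so invariance plus uniqueness cannot by themselves tell you what $c$ is. The standard way to finish is a short contraction: pair both sides of \eqref{Weingarten} with a fixed permutation tensor, i.e.\ set $j'_{\tau_0(l)}=j_l$ and $k'_{\sigma_0(l)}=k_l$ and sum over all indices. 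Using $\Psi^*\Psi=I$, the left-hand side collapses to the deterministic value $d^{\mathscr{C}(\sigma_0\tau_0^{-1})}$, while the right-hand side becomes $\sum_{\sigma,\tau}c(\sigma\tau^{-1})\,d^{\mathscr{C}(\tau\tau_0^{-1})}\,d^{\mathscr{C}(\sigma\sigma_0^{-1})}$; this is the identity $G\ast c\ast G=G$ in $\C[\mathfrak{S}_q]$, and invertibility of $G$ for $q\le d$ forces $c=G^{-1}$, which is exactly the Weingarten function whose properties \eqref{WgAsy} and \eqref{WgID} the paper goes on to use. With that computation added, your argument is complete and coincides with the standard proof.
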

The rational function \(\Wg(\ccn, \cdot )\) depends on the cycle structure of the permutation \(\ccn \in \mf S_q\), and thus also implicitly on $q$; we refer the reader to \cite{MR1959915, Collins2021} for a definition. Of particular interest is the leading order term, which is separated from lower order terms by a $d^{-2}$ gap. Indeed,
\eq{WgAsy}{
    \mathrm{Wg}(\ccn, d) = d^{ -2q + \mathscr{C}(\ccn)}
    \Moeb (\ccn)
    \left( 1 + \bigO (d^{-2})\right)
}
where \(\mathscr{C}(\ccn)\) denotes the number of cycles\footnote{Note that equation \eqref{WgAsy} is equivalent to~\cite[Corollary~2.7]{MR2217291} with the relation $ |\ccn| = q - \mathscr{C}(\ccn) $ between $\mathscr{C}(\ccn)$ and \(|\ccn|\), the number of transpositions needed to generate $\ccn$.} of \(\ccn\), and the M\oe bius function is defined as
\eq{Moeb_function}{
{\Moeb} (\ccn) :=
\prod_{i=1}^{\mathscr{C}(\ccn)} (-1)^{C_i-1}\frac{(2C_i-2)!}{C_i!(C_i-1)!},
}
with $C_1, \dots, C_{\mathscr{C}(\ccn)}$ the respective sizes of the cycles of $\ccn$. In particular, the case $\sigma=\tau$ in \eqref{Weingarten} corresponds to an exact pairing of edges, and yields a contribution of the same leading order as one would obtain from Wick's formula when considering Gaussian variables of variance $1/d$, namely:
\eq{WgID}{
\Wg({\mr{Id}}_{\mathfrak{S}_q},d) = d^{-q} + \bigO( d^{-q-2}).
}
To facilitate the computation of general bounds, we introduce a structure of directed bipartite graphs that match the elements of Weingarten's formula \eqref{Weingarten}. For a term of the form 
\eq{Expression}{
\psi_{j_1k_1} \dots \psi_{j_qk_q}  \
    \overline{\psi_{j_1'k_1'}} \dots  \overline{\psi_{j_q'k_q'}},
}
we represent each distinct row index from the set \(\{j_1,\dots,j_q,j_1',\dots,j_q'\}\) as a black vertex \tikz[baseline=-2.4,scale=0.15]{\node[dot] {};} and each distinct column index from the set \(\{k_1,\dots,k_q,k_1',\dots,k_q'\}\) as a white vertex \tikz[baseline=-2.4,scale=0.15]{\node[ghostdot] {};}. The random variables \(\psi_{j_rk_r}\) are represented as edges directed from the black vertex \(j_r\) to the white vertex \(k_r\), whereas the conjugated random variables \(\overline{\psi_{j_r'k_r'}}\) are represented as edges directed from the white vertex \(k_r'\) to the black vertex \(j_r'\). We refer to the corresponding directed graph $G$ as the \emph{Weingarten graph}\footnote{This is not to be confused with the different notion of Weingarten graphs used in \cite{CollinsMatsumoto2017}, where vertices represent permutations.} of the expression \eqref{Expression}. Figure \ref{fig:Wg_graph_ex} gives three basic examples. Reciprocally, we will use the shorthand $\psi_G$ to denote a term such as \eqref{Expression}, where the structure of the indices is encoded in the graph $G$. 

\begin{figure}[t!]
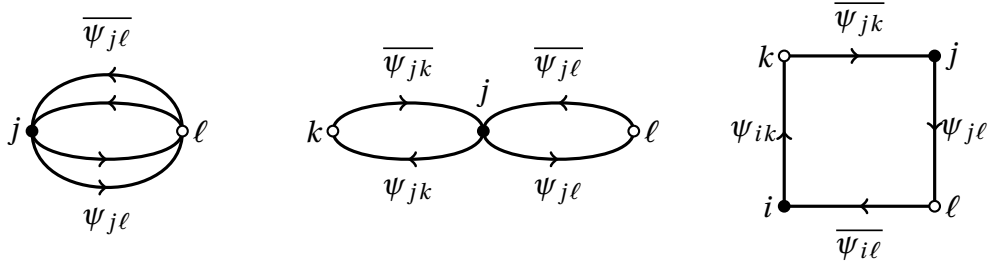

\tikz{
    \begin{scope}[shift={(-6,0)}]
\draw[very thick,->-] (0,0) .. controls (0,-1) and (2,-1) .. (2,0);
\draw[very thick,->-] (2,0) .. controls (2,1) and (0,1) .. (0,0);
\draw[very thick,->-] (0,0) .. controls (0,-.5) and (2,-.5) .. (2,0);
\draw[very thick,->-] (2,0) .. controls (2,.5) and (0,.5) .. (0,0);
\node[ghostdot] at (2,0) {};
\node[right] at (2,0) {\(\ell\)};
\node[dot] at (0,0) {};
\node[left] at (0,0) {\(j\)};
\node at (1,-1.2) {\(\psi_{j\ell}\)};
\node at (1,1.3) {\(\overline{\psi_{j\ell}}\)};
\end{scope}
\draw[very thick,->-] (0,0) .. controls (0,-.5) and (-2,-.5) .. (-2,0);
\draw[very thick,->-] (-2,0) .. controls (-2,.5) and (0,.5) .. (0,0);
\draw[very thick,->-] (0,0) .. controls (0,-.5) and (2,-.5) .. (2,0);
\draw[very thick,->-] (2,0) .. controls (2,.5) and (0,.5) .. (0,0);
\node[ghostdot] at (2,0) {};
\node[right] at (2,0) {\(\ell\)};
\node[ghostdot] at (-2,0) {};
\node[left] at (-2,0) {\(k\)};
\node[dot] at (0,0) {};
\node at (0,.5) {\(j\)};
\node at (1,-.8) {\(\psi_{j\ell}\)};
\node at (-1,-.8) {\(\psi_{jk}\)};
\node at (1,.9) {\(\overline{\psi_{j\ell}}\)};
\node at (-1,.9) {\(\overline{\psi_{jk}}\)};
\begin{scope}[shift={(4,-1)}]
\draw[very thick,->-] (0,0)--(0,2);
\draw[very thick,->-] (0,2)--(2,2);
\draw[very thick,->-] (2,2)--(2,0);
\draw[very thick,->-] (2,0)--(0,0);
\node[dot] at (0,0) {};
\node[left] at (0,0) {\(i\)};
\node[dot] at (2,2) {};
\node[right] at (2,2) {\(j\)};
\node[ghostdot] at (2,0) {};
\node[right] at (2,0) {\(\ell\)};
\node[ghostdot] at (0,2) {};
\node[left] at (0,2) {\(k\)};
\node at (-.4,1) {\(\psi_{ik}\)};
\node at (2.4,1) {\(\psi_{j \ell}\)};
\node at (1,2.5) {\(\overline{\psi_{jk}}\)};
\node at (1,-.5) {\(\overline{\psi_{i \ell}}\)};
\end{scope}
}
\vspace{-.1in}
\caption{Weingarten graphs associated to \( |\psi_{jl}|^4 \), \( |\psi_{jk}|^2 |\psi_{j\ell}|^2 \) and \( \psi_{ik} \overline{\psi_{jk}} \psi_{j\ell} \overline{\psi_{i \ell}} \), where \(i\neq j\) and \(k \neq \ell\). Note that identities between row and column vertices, such as $j=l$, do not appear on the graph -- nor do they make a difference to the corresponding computation.}
\label{fig:Wg_graph_ex}
\end{figure}

Given a Weingarten graph \(G\), we define a \emph{circuit covering} of \(G\) to be a partition of the edges of $G$ in directed circuits.  The reason for considering circuit coverings is that they are equivalent to the pairs of permutations $(\sigma,\tau) \in \mathfrak{S}_q^2$ that give a nonzero contribution in \eqref{Weingarten}.

\begin{fact}\label{fact:correspondence}
Let \(G\) be the Weingarten graph of \eqref{Expression}. Then, the pairs of permutations \( (\sigma,\tau) \in \mf S_q^2 \) meeting the admissibility conditions
\eq{admissible}{
 j_l = j_{\tau(l)}',
\qquad k_l = k_{\sigma(l)}'
\qquad \text{for all}
\quad l=1, \dots, q,
}
are in a natural one-to-one correspondence with the circuit coverings of \(G\), in such a way that each $n$-cycle of $\cc$ corresponds to a $2n$-circuit in $G$.
\end{fact}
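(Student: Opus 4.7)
The plan is to construct the claimed bijection explicitly in both directions and then read off the cycle-structure correspondence from the construction itself.

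First, I would go from admissible pairs to circuit coverings. Given $(\sigma, \tau) \in \mathfrak{S}_q^2$ satisfying \eqref{admissible}, fix any index $m_0 \in \{1,\dots, q\}$ and build a directed walk in $G$ starting with the $\bar\psi$-edge labeled $m_0$ (which ends at the black vertex $j'_{m_0}$). The admissibility identity $j_\ell = j'_{\tau(\ell)}$ guarantees that the unique $\psi$-edge starting at $j'_{m_0}$ and consistent with the encoding is the one labeled $\tau^{-1}(m_0)$. Following it, we reach the white vertex $k_{\tau^{-1}(m_0)}$, which by $k_\ell = k'_{\sigma(\ell)}$ equals $k'_{\sigma\tau^{-1}(m_0)}$, the source of $\bar\psi_{\sigma\tau^{-1}(m_0)}$. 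Iterating, the sequence of $\bar\psi$-edge labels visited is exactly the orbit of $m_0$ under $\sigma\tau^{-1}$; it closes after $n$ steps, where $n$ is the length of the cycle of $\sigma\tau^{-1}$ containing $m_0$, producing a directed circuit of length $2n$ alternating between $\bar\psi$- and $\psi$-edges. Running this construction on each cycle of $\sigma\tau^{-1}$ yields a partition of the $2q$ edges of $G$ into directed circuits.

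Second, I would construct the inverse map. Given a circuit covering, within each circuit the $\psi$- and $\bar\psi$-edges alternate by construction (since $\psi$-edges go black-to-white and $\bar\psi$-edges go white-to-black). For each $\psi$-edge index $\ell$, define $\tau(\ell)$ to be the index of the $\bar\psi$-edge immediately preceding $\psi_\ell$ in its circuit and $\sigma(\ell)$ the index of the $\bar\psi$-edge immediately following it. The shared black vertex between $\psi_\ell$ and its predecessor gives $j_\ell = j'_{\tau(\ell)}$; the shared white vertex between $\psi_\ell$ and its successor gives $k_\ell = k'_{\sigma(\ell)}$. Since every $\bar\psi$-edge lies in a unique circuit with a unique predecessor and successor, both $\sigma$ and $\tau$ are well-defined elements of $\mathfrak{S}_q$.

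These two constructions are mutually inverse by direct inspection, and the cycle structure is automatic from the first construction: each $n$-cycle of $\sigma\tau^{-1}$ produces a circuit traversing $n$ $\bar\psi$-edges and $n$ $\psi$-edges, i.e., a $2n$-circuit, and conversely every $2n$-circuit gives rise to an $n$-cycle of $\sigma\tau^{-1}$ via the rule $m \mapsto \sigma\tau^{-1}(m) = $ index of the $\bar\psi$-edge two steps ahead of $\bar\psi_m$. There is no serious analytic obstacle here; the statement is essentially combinatorial, and the only real care required is bookkeeping conventions — in particular, checking the order in which vertices are traversed, so that the composition $\sigma\tau^{-1}$ appears rather than $\tau^{-1}\sigma$ or an inverse.
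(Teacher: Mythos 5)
Your proof is correct and follows essentially the same route as the paper: the alternation of $\psi$- and $\bar\psi$-edges forced by the bipartite structure, with $\tau$ encoding the white-to-black steps and $\sigma$ the black-to-white steps, so that tracking the $\bar\psi$-labels along a circuit gives precisely an orbit of $\cc$. Your write-up just makes both directions of the correspondence more explicit than the paper's terser argument, and your bookkeeping (predecessor giving $\tau$, successor giving $\sigma$, hence $2n$-circuits from $n$-cycles of $\cc$) matches the paper's conventions.
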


\bpf
The bipartite structure of the Weingarten graph forces the $\psi$-edges and $\overline{\psi}$-edges of a circuit to alternate. As illustrated on Figure \ref{fig:Wg_covering_pairs}, $\sigma$ and $\tau$ encode the two kinds of steps: if $\overline{\psi_{j_r' k_r'}}$ follows $\psi_{j_l k_l}$ in the circuit covering, then the admissibility condition $k_l = k_r'$ is implied and the step is encoded by $\sigma(l) = r$; similarly, if $\psi_{j_m k_m}$ follows $\overline{\psi_{j_r' k_r'}}$, then the admissibility condition $j_m = j_r'$ is implied and is the step is encoded by $r=\tau(m)$. Reciprocally, the sequences of edges obtained by applying $\sigma$ and $\tau^{-1}$ alternatively forms a circuit covering if and only if the admissibility conditions are met. It follows by construction that each cycle of size $n$ in $\cc$ corresponds to a circuit of size $2n$ on the graph $G$.
\epf

\begin{figure}[t!]
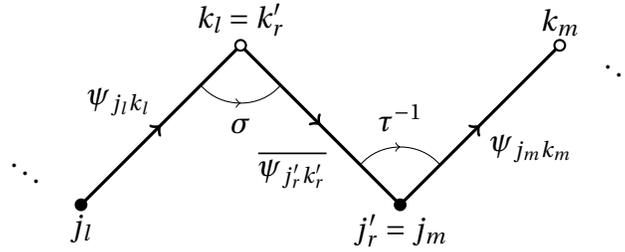

\begin{center}
\tikz[rotate=45,scale=3]{
\draw[very thick,->-] (-1,1)--(0,1);
\draw[very thick,->-] (0,1)--(0,0);
\draw[very thick,->-] (0,0)--(1,0);
\draw[->-] (-.25,1) to[bend right=50] (0,.75);
\node[below] at (-.2,.8) {$\sigma$};
\draw[->-] (0,.25) to[bend left=50] (.25,0);
\node[above] at (.2,.2) {$\tau^{-1}$};
\node at (-1.05,1.3) {$\ddots$};
\node[dot] at (-1,1) {};
\node[below] at (-1,1) {$j_l$};
\node[above left] at (-.5,1) {$\psi_{j_l k_l}$};
\node[ghostdot] at (0,1) {};
\node[above] at (0,1) {$k_l = k'_r$};
\node[below left] at (0,.4) {$\overline{\psi_{j_r' k_r'}}$};
\node[dot] at (0,0) {};
\node[below] at (0,0) {$j_r'=j_m$};
\node[below right] at (.5,0) {$\psi_{j_m k_m}$};
\node[ghostdot] at (1,0) {};
\node[above] at (1,0) {$k_m$};
\node at (1.12,-.25) {$\ddots$};
}
\end{center}
\vspace{-.1in}
\caption{Illustration of the correspondence between circuits and admissible pairs $\ccpair$: the edge $\psi_{j_l k_l}$ can be followed by $\overline{\psi_{j_r' k_r'}}$ in the circuit if and only if $k_l = k_{r'}$, and so on.}
\label{fig:Wg_covering_pairs}
\end{figure}

We call pairs $\ccpair \in \mathfrak{S}_q^2$ that verify the admissibility conditions \eqref{admissible} \textit{covering pairs} of $\psi_G$, and denote their set by $\CC(\psi_G)$. In light of Fact \ref{fact:correspondence}, we will simultaneously think of these as pairs of permutations and as circuit coverings of the graph $G$.

\begin{proposition}\label{p:WGBD}
Let \(G\) be a Weingarten graph with $E=2q$ edges, \(V\) vertices, and \(c\) connected components. Then, we have the general bounds
\eq{WGBD}{
\bbE\left[ \psi_G \right]  = \bigO_q \left( d^{-q} \right)
\qquad \text{and} \qquad
\bbE\left[ \psi_G \right]  = \bigO_q \left( d^{c-V} \right)
}
\end{proposition}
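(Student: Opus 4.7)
The plan is to combine the Weingarten formula~\eqref{Weingarten}, the asymptotic expansion~\eqref{WgAsy}, and the correspondence of Fact~\ref{fact:correspondence} to reduce each bound in~\eqref{WGBD} to a combinatorial estimate on the number of cycles of \(\sigma\tau^{-1}\).

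First I would note that Theorem~\ref{t:WG} and Fact~\ref{fact:correspondence} give
\[
\bbE[\psi_G] = \sum_{(\sigma,\tau)\in \CC(\psi_G)} \Wg(\sigma\tau^{-1}, d),
\]
with \(|\CC(\psi_G)|\leq (q!)^2 = \bigO_q(1)\), while~\eqref{WgAsy} yields \(|\Wg(\omega,d)|\lesssim_q d^{-2q + \mathscr{C}(\omega)}\) uniformly in \(\omega\in \mathfrak{S}_q\). After the triangle inequality, both bounds thus reduce to uniform estimates on \(\mathscr{C}(\sigma\tau^{-1})\) for \((\sigma,\tau)\in\CC(\psi_G)\). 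The first bound in~\eqref{WGBD} then follows immediately from the trivial fact \(\mathscr{C}(\omega)\leq q\) (with equality only at the identity), since this gives \(|\Wg(\omega,d)| = \bigO_q(d^{-q})\) for every covering pair.

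The second bound is the more delicate one and reduces to the graph-theoretic identity \(\mathscr{C}(\sigma\tau^{-1})\leq 2q - V + c\). My proof would proceed as follows: by Fact~\ref{fact:correspondence}, the \(\mathscr{C}(\sigma\tau^{-1})\) cycles of \(\sigma\tau^{-1}\) are in bijection with the circuits \(C_1,\dots,C_{\mathscr{C}(\sigma\tau^{-1})}\) of an edge partition of \(G\). Forgetting directions, each \(C_i\) induces an Eulerian subgraph of the underlying undirected multigraph, hence a non-zero element of its \(\mathbb{F}_2\)-cycle space \(\mathcal{Z}(G)\). Since the \(C_i\) are pairwise edge-disjoint and non-empty, any nontrivial \(\mathbb{F}_2\)-linear combination is their symmetric difference, which is again non-empty; the \(C_i\) are therefore linearly independent in \(\mathcal{Z}(G)\). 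Because \(\dim\mathcal{Z}(G)\) equals the first Betti number \(E - V + c = 2q - V + c\), we conclude \(\mathscr{C}(\sigma\tau^{-1})\leq 2q - V + c\), and substituting into~\eqref{WgAsy} gives \(|\Wg(\sigma\tau^{-1},d)| = \bigO_q(d^{c-V})\); summing over \(\CC(\psi_G)\) yields the bound.

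The only subtle point is this cycle-space argument, which identifies the first Betti number of \(G\) as the right invariant controlling \(\mathscr{C}(\sigma\tau^{-1})\); once it is in place, the two estimates in~\eqref{WGBD} follow directly from~\eqref{WgAsy} and the \(q\)-dependent finiteness of \(\CC(\psi_G)\).
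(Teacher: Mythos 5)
Your argument is correct, and its overall structure matches the paper's: both reduce \eqref{WGBD} via Theorem~\ref{t:WG}, \eqref{WgAsy}, and Fact~\ref{fact:correspondence} to bounding \(\mathscr{C}(\sigma\tau^{-1})\) over covering pairs, the first bound following from the trivial inequality \(\mathscr{C}(\sigma\tau^{-1})\leq q\). Where you differ is in the proof of the key combinatorial inequality \(\mathscr{C}(\sigma\tau^{-1})\leq E-V+c\): the paper deletes one edge from each circuit of the covering, observes that this changes neither \(V\) nor \(c\) (the rest of each circuit keeps its endpoints connected) while reducing \(E\) by the number of circuits, and then applies the elementary inequality \(V-E\leq c\) to the pruned graph; you instead note that the edge sets of the circuits are edge-disjoint, nonempty, even-degree subgraphs, hence linearly independent elements of the \(\mathbb{F}_2\)-cycle space, whose dimension is the first Betti number \(E-V+c\). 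The two routes are equivalent in substance (the cycle-space dimension is itself usually proved by an Euler-type count), but yours makes the invariant controlling the estimate conceptually explicit, while the paper's edge-deletion argument is more elementary and self-contained; your use of the triangle inequality together with \(|\CC(\psi_G)|\leq (q!)^2\) is also slightly cleaner than the paper's appeal to ``the leading order''. One small point worth stating explicitly if you write this up: each circuit uses every edge at most once (the covering is a partition of the edge set), which is what guarantees both even degrees and edge-disjointness.
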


\bpf
It follows from Theorem \ref{t:WG}, \eqref{WgAsy} and Fact \ref{fact:correspondence} that the leading order is given by $d^{-2q+n}$ with $n$ being the maximal number of circuits in a circuit covering, or equivalently the maximal number of cycles of $\cc$ with $\ccpair \in \CC(\psi_G)$. This is at most $d^{-q}$, because $n = \mathscr{C}(\cc) \leq q$.
We now prove that this is also bounded by $d^{c-V}$. In a connected graph, the inequality $V - E \leq 1$ holds, so that more generally we obtain
\begin{equation}\label{graph_Euler_eq1}
V - E \leq c
\end{equation}
by summing over the connected components. Now, consider the graph $\wt{G}$ obtained by removing one edge in each circuit. The fact that each edge appears in exactly one circuit ensures, by inspection, that $\wt{G}$ has $\wt{c}=c$ connected components, $\wt{V}=V$ vertices and $\wt{E}=E-n$ edges. Applying \eqref{graph_Euler_eq1} to $\wt{G}$,
\begin{equation}\label{graph_Euler_eq2}
    V - E + n = \wt{V}-\wt{E} \leq \wt{c} = c.
\end{equation}
This implies that $-2q+n \leq c-V$, which concludes the proof of the second bound.
\epf

Circuits of size $2$ (corresponding to fixed points of $\sigma \tau^{-1}$) play a particular role in maximizing the number of circuits in a circuit covering. We refer to such a circuit between vertex $a$ and vertex $b$ as the $2$-circuit $(a,b)$.  The next lemma simplifies the identification of the leading order by allowing us to choose $2$-circuits first, at no risk to the final estimate. Note that no such statement holds for circuits of size $4$ or more, so that the greedy strategy (i.e., choosing the smallest possible circuits first) is not optimal in general.

\begin{proposition}\label{banana_principle}
If the expression $|\psi_{ab}|^2 \psi_G$ has at least one covering pair, then it has a covering pair with largest possible number of circuits that includes the $2$-circuit $(a,b)$. 
Equivalently: if $\CC(|\psi_{ab}|^2 \psi_G) \neq \emptyset$, then
\begin{equation}
\max_{\ccpair \in \mathrm{\CC}(|\psi_{ab}|^2 \psi_G)} \mathscr{C}(\cc)
= 1+ \max_{\ccpair \in \mathrm{\CC}(\psi_G)} \mathscr{C} (\cc).
\end{equation}
\end{proposition}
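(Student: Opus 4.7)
The factor $|\psi_{ab}|^2 = \psi_{ab}\overline{\psi_{ab}}$ contributes exactly two edges to the Weingarten graph: a $\psi$-edge from the black vertex $a$ to the white vertex $b$, and a $\bar\psi$-edge from $b$ to $a$. These two edges together form a natural $2$-circuit which we wish to realize inside a maximum covering. My plan is to pick an arbitrary maximal covering pair $\ccpair\in\CC(|\psi_{ab}|^2\psi_G)$, distinguish the two cases according to whether $\psi_{ab}$ and $\overline{\psi_{ab}}$ lie in the same circuit or in different circuits of the covering, and perform a graph surgery to produce another maximal covering that explicitly contains the $2$-circuit $(a,b)$.

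\textbf{Case 1: both special edges lie in the same circuit $C$.} Writing $C$ as
\[
a\xrightarrow{\psi_{ab}} b \xrightarrow{W_1} b \xrightarrow{\overline{\psi_{ab}}} a \xrightarrow{W_2} a,
\]
where $W_1, W_2$ are the (possibly empty) sub-walks between and after the two special edges, I would argue that each nonempty $W_i$ is itself a closed alternating sub-circuit, since it starts and ends at the same vertex and inherits alternation from $C$. Hence if at least one of $W_1, W_2$ is nonempty, $C$ may be split into the $2$-circuit $(a,b)$ together with the nonempty $W_i$'s, strictly increasing the number of circuits and contradicting maximality. Therefore $W_1=W_2=\emptyset$ and $C$ already equals the $2$-circuit.

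\textbf{Case 2: the two special edges lie in distinct circuits $C_1, C_2$.} Writing
\[
C_1:\; a\xrightarrow{\psi_{ab}}b\xrightarrow{W_1}a, \qquad C_2:\; b\xrightarrow{\overline{\psi_{ab}}}a\xrightarrow{W_2}b,
\]
I would replace $C_1, C_2$ in the covering by the $2$-circuit $(a,b)$ together with the concatenation $b\xrightarrow{W_1}a\xrightarrow{W_2}b$, which preserves the total number of circuits. The main technical point is verifying that this concatenation is a valid alternating directed circuit: the walk $W_1$ begins with a $\bar\psi$-edge (leaving the white vertex $b$) and ends with a $\bar\psi$-edge (entering the black vertex $a$), whereas $W_2$ begins and ends with $\psi$-edges, so the required alternation holds both at the junction $a$ and at the closing point $b$. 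This alternation check is the only delicate step in the argument.

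In either case we obtain a maximal covering of $|\psi_{ab}|^2\psi_G$ containing the $2$-circuit $(a,b)$, which establishes the first assertion. Deleting this $2$-circuit then yields a covering of $\psi_G$ with one fewer circuit, and conversely adjoining the $2$-circuit to any covering of $\psi_G$ yields a covering of $|\psi_{ab}|^2\psi_G$ with one more circuit; combining these two inequalities produces
\[
\max_{\ccpair\in\CC(|\psi_{ab}|^2\psi_G)}\mathscr{C}(\cc) \;=\; 1 + \max_{\ccpair\in\CC(\psi_G)}\mathscr{C}(\cc),
\]
as claimed, with all remaining bookkeeping routine.
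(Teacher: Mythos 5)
Your argument is correct and follows essentially the same route as the paper: in the case where $\psi_{ab}$ and $\overline{\psi_{ab}}$ lie in distinct circuits, your surgery (keep the $2$-circuit $(a,b)$ and concatenate the two leftover walks $W_1$, $W_2$) is exactly the paper's proof, and the closing two-inequality bookkeeping for the displayed equality is routine and fine. Your Case 1, where both special edges sit in a single circuit and maximality forces that circuit to be the $2$-circuit itself, is a small completeness point the paper leaves implicit, and your splitting argument there is valid since directedness and the bipartite structure make any closed directed walk an admissible circuit.
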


\begin{proof}
As illustrated in Figure \ref{fig:banana}, if $\ccpair$ is a maximal covering pair of $|\psi_{ab}|^2 \psi_G$ which does not include the $2$-circuit $(a,b)$, then $\psi_{ab}$ is part of a circuit $(a,b,L_1)$ and $\overline{\psi_{ab}}$ is part of another circuit $(b,a,L_2)$. But then, considering instead the $2$-circuit $(a,b)$ and the concatenation of $L_1$ and $L_2$ yields a circuit-covering with same number of circuits (assumed maximal) that includes the $2$-circuit $(a,b)$.
\end{proof}

\begin{figure}[t!]
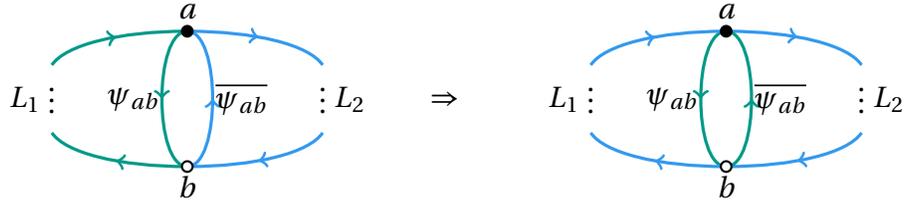

\tikz[rotate=-90,scale=.9]{
\draw[col3,very thick,->-] (0,0) .. controls (0,-.5) and (2,-.5) .. (2,0);
\draw[col5,very thick,->-] (0,0) .. controls (0,1) and (.2,1.8) .. (.5,2);
\draw[col5,very thick,->-] (1.5,2) .. controls (1.8,1.8) and (2,1) .. (2,0);
\draw[col5,very thick,->-] (2,0) .. controls (2,.5) and (0,.5) .. (0,0);
\draw[col3,very thick,->-] (.5,-2) .. controls (.2,-1.8) and (0,-1) .. (0,0);
\draw[col3,very thick,->-] (2,0) .. controls (2,-1) and (1.8,-1.8) .. (1.5,-2);
\node[ghostdot] at (2,0) {};
\node at (2.3,0) {\(b\)};
\node[dot] at (0,0) {};
\node at (-.3,0) {\(a\)};
\node at (1,-.8) {\(\psi_{ab}\)};
\node at (1,.8) {\(\overline{\psi_{ab}}\)};
\node at (.9,2) {\(\vdots\)};
\node at (1,2.4) {\(L_2\)};
\node at (.9,-2) {\(\vdots\)};
\node at (1,-2.4) {\(L_1\)};
\node at (1,3.8) {\(\Rightarrow\)}
}
\hspace{.3in}
\tikz[rotate=-90,scale=.9]{
\draw[col3,very thick,->-] (0,0) .. controls (0,-.5) and (2,-.5) .. (2,0);
\draw[col5,very thick,->-] (0,0) .. controls (0,1) and (.2,1.8) .. (.5,2);
\draw[col5,very thick,->-] (1.5,2) .. controls (1.8,1.8) and (2,1) .. (2,0);
\draw[col3,very thick,->-] (2,0) .. controls (2,.5) and (0,.5) .. (0,0);
\draw[col5,very thick,->-] (.5,-2) .. controls (.2,-1.8) and (0,-1) .. (0,0);
\draw[col5,very thick,->-] (2,0) .. controls (2,-1) and (1.8,-1.8) .. (1.5,-2);
\node[ghostdot] at (2,0) {};
\node at (2.3,0) {\(b\)};
\node[dot] at (0,0) {};
\node at (-.3,0) {\(a\)};
\node at (1,-.8) {\(\psi_{ab}\)};
\node at (1,.8) {\(\overline{\psi_{ab}}\)};
\node at (.9,2) {\(\vdots\)};
\node at (1,2.4) {\(L_2\)};
\node at (.9,-2) {\(\vdots\)};
\node at (1,-2.4) {\(L_1\)};
}
\vspace{-.1in}
\caption{Illustration of the proof of Proposition \ref{banana_principle}: if the first configuration is realizable, so is the second one, which has the same number of circuits \textit{and} includes the $2$-circuit $(a,b)$.}
\label{fig:banana}
\end{figure}

\begin{remark}\label{cycle_notations}
In the rest of this section, we use the following notations to relate cycle structures of permutations of different sizes:
\begin{itemize}
\item if $\sigma \in \mathfrak{S}_q$, then $\fix \sigma \in \mathfrak{S}_{q+1}$ stands for the permutation obtained by extending $\sigma$ by a fixed point;
\item $\cyc \sigma \in \mathfrak{S}_{q+2}$ stands for the extension of $\sigma \in \mathfrak{S}_q$ by a transposition;
\item such notations are used with multiplicative convention, so that $\fix^{\! k} \cyc^{\! l} \sigma \in \mathfrak{S}_{q+k+2l}$ stands for the extension of $\sigma \in \mathfrak{S}_q$ by $k$ fixed points and $l$ transpositions;
\item in the same spirit, $(i, \fix)$ stands for the transposition between $i$ and one extra point added to the domain, so that $(i,\fix) \sigma$ means that this extra point is inserted between $\sigma^{-1}(i)$ and $i$.
\end{itemize}
We use the shorthand $i \in \sigma$ for sums over all indices in the domain of $\sigma$. An extra point is sometimes denoted by $\fixhat$ to avoid an ambiguity. Note that $\Wg(\sigma, \cdot)$ only depends on the cycle structure of $\sigma$, and not on any particular labelling of indices.
\end{remark}

The next theorem deals with expressions involving centered terms of order $2$, i.e., factors $|\psi_{ab}|^2-\frac{1}{d}$, which we call \textit{atoms}. Having these terms centered amounts to `penalizing' circuit coverings that contain the corresponding $2$-circuits. In this context, we take $\At \ccpair$ to be the set of $2$-circuits corresponding to atoms implied by $\ccpair$, and $\at \ccpair = |\At \ccpair|$. We also define the functions $\rho_K (\cdot)$ on permutations with at least $K$ fixed points, by the following formulas. For odd terms,
\begin{equation}\label{def_rho_odd}
    \rho_{2k+1}(\fix^{\!2k+1}\ccn) =
    (2k)!! \sum_{l=0}^k \frac{(2k-2l-1)!!}{(2k-2l)!!}
    \left( 
    4l+
    \sum_{i=1}^{\mathscr{C}(\omega)}
    \frac{4 C_i^2 - 2C_i }{C_i+1}
    \right),
\end{equation}
and for even terms,
\begin{equation}\label{def_rho_even}
\rho_{2k} (\fix^{\!2k} \ccn) = (2k-1)!! 
\end{equation}
where $n!!$ is the semifactorial of the natural number $n$ (product of all smaller integers of same parity), and the convention that $(-1)!!=1$.

\begin{theorem}\label{thm:Wg_penalty}
If $(a_l, b_l)_{l=1}^L, (j_r,k_r,j_r',k_r')_{r=1}^R$ are indices and $\psi_G = \prod_r \psi_{j_r k_r} \overline{\psi_{j_r' k_r'}}$, then
\begin{equation}
\bbE
\left[ \prod_{l=1}^L \left( |\psi_{a_l b_l}|^2 - \frac{1}{d} \right)
    \psi_G
    \right]
    =
    \hspace{-.1in}
    \sum_{\ccpair \in \CC( \prod_l |\psi_{a_l b_l}|^2 \psi_G)}
    \hspace{-.3in} 
    d^{ - 2 \left\lfloor \frac{\at\ccpair+1}{2}    \right\rfloor }
    \rho_{\at\ccpair} (\cc)
    \Wg(\cc , d ) 
    \left( 1 + \bigO(d^{-1})\right).
\end{equation}
\end{theorem}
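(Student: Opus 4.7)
The plan is to combine inclusion-exclusion on the atoms with a reorganization by covering pairs of the full graph, then perform a careful asymptotic analysis of the Weingarten function.

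By the binomial theorem and Theorem~\ref{t:WG},
\begin{equation*}
\bbE\left[\prod_l \left(|\psi_{a_l b_l}|^2 - \tfrac1d\right)\psi_G\right] = \sum_{T \subseteq \{1,\dots,L\}}(-d^{-1})^{|T|}\sum_{\ccpair' \in \CC(\prod_{l \notin T}|\psi_{a_l b_l}|^2 \psi_G)} \Wg\bigl(\sigma'(\tau')^{-1}, d\bigr).
\end{equation*}
For any covering pair $\ccpair$ of the full graph $\prod_l |\psi_{a_l b_l}|^2 \psi_G$, the atoms in $\At(\ccpair)$ correspond to fixed points of $\cc$, and removing the 2-circuits for any subset $T \subseteq \At(\ccpair)$ yields a covering pair of the graph where those atoms have been deleted. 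Conversely, every covering pair of a reduced graph extends uniquely to one of the full graph by adding back the removed atoms as 2-circuits. Exchanging the order of summation via this bijection gives
\begin{equation*}
\bbE\left[\prod_l \left(|\psi_{a_l b_l}|^2 - \tfrac1d\right)\psi_G\right] = \sum_{\ccpair \in \CC}\sum_{T \subseteq \At(\ccpair)} (-d^{-1})^{|T|}\,\Wg\bigl(\cc^{(T)}, d\bigr),
\end{equation*}
where $\cc^{(T)}$ denotes $\cc$ with the $|T|$ fixed points indexed by $T$ deleted.

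Writing $a = \at(\ccpair)$ and $\cc = \fix^a \omega$, the expansion~\eqref{WgAsy} together with the fact that fixed points contribute $1$ to $\Moeb$ gives $\Wg(\cc^{(t)}, d) = d^{t}\Wg(\cc, d)\bigl(1 + \bigO(d^{-2})\bigr)$. Summing against the binomial coefficients yields $\Wg(\cc, d)\sum_{t=0}^a \binom{a}{t}(-1)^t = 0$ for $a \geq 1$, so the leading contribution cancels identically. One therefore expands the ratio $\Wg(\cc^{(t)}, d)/\Wg(\cc, d)$ to order $d^{-2}$ (and, for odd $a$, a further order to $d^{-4}$); the resulting alternating sums of subleading Weingarten coefficients produce $\rho_a(\cc)$ after explicit combinatorial identification.

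The main obstacle lies in this combinatorial identification. For even $a = 2k$, the surviving subleading correction amounts to a sum of Wick-type pairings of the atoms and collapses, after careful bookkeeping of how cycles transform under fixed-point removal, to the constant $\rho_{2k}(\cc) = (2k-1)!!$. For odd $a = 2k+1$, a further cancellation at order $d^{-2}$ is needed, and the sub-subleading contribution must be computed; the unpaired atom then couples to the cycles of $\omega$, producing the cycle-length-dependent weights $(4C_i^2 - 2C_i)/(C_i+1)$ in~\eqref{def_rho_odd} together with the inner sum over the number $l$ of paired atoms. The required subleading expansion of the Weingarten function can be extracted either from its character expansion or from the Jucys-Murphy element representation, and the final bookkeeping forms the technical heart of the argument.
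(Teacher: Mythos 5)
Your reorganization step is sound and in fact parallels the paper's: expanding the product of centered atoms, exchanging sums, and matching covering pairs of the reduced graphs with covering pairs of the full graph reduces the theorem to evaluating, for each $\ccpair$ with $a=\at\ccpair$, the alternating sum $\sum_{t=0}^{a}\binom{a}{t}\bigl(-\tfrac1d\bigr)^{t}\Wg\bigl(\cc^{(t)},d\bigr)$, where $\cc^{(t)}$ is $\cc$ with $t$ of the atom fixed points deleted. This is exactly the content of the paper's Lemma~\ref{lem1}. The gap is that you do not prove this evaluation, you assert it. Observing that the leading order cancels is immediate, but your claim that it then suffices to expand $\Wg(\cc^{(t)},d)/\Wg(\cc,d)$ ``to order $d^{-2}$ (and, for odd $a$, to $d^{-4}$)'' is not even numerically adequate: the asserted penalty is $d^{-2\lfloor (a+1)/2\rfloor}$, so already for $a=4$ you must \emph{verify cancellation} of the relative $d^{-2}$ corrections and compute the $d^{-4}$ ones, for $a=6$ cancellation through $d^{-4}$, and so on --- in general roughly $a/2$ orders of the $1/d^{2}$ expansion of $\Wg$, together with the nontrivial structural fact that the order-$d^{-2j}$ coefficient depends on the number of fixed points with low enough polynomial degree that the $a$-th finite difference annihilates it for small $j$. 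None of this is established, and the ``explicit combinatorial identification'' of $\rho_{2k}(\cdot)=(2k-1)!!$ and of the cycle-dependent odd weights in \eqref{def_rho_odd} is precisely the part you defer; there is also a small bookkeeping point (repeated atoms and coverings that only ``implicitly'' allow an atom) that your bijection glosses over and the paper handles by its exclusion convention.

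The paper avoids any use of subleading Weingarten asymptotics. Lemma~\ref{lem1} is proved by strong induction on the number of atoms, with the exact Collins--Matsumoto orthogonality relation \eqref{one_step_down_cc}, $\Wg(\fix\sigma,d)-\tfrac1d\Wg(\sigma,d)=-\tfrac1d\sum_{i\in\sigma}\Wg\bigl((i,\fix)\sigma,d\bigr)$, as the engine: each first difference is converted \emph{exactly} into $d^{-1}$ times Weingarten functions of permutations in which the extra point is absorbed into a cycle, and iterating via Pascal's rule yields both the penalty exponent $d^{-2\lfloor (K+1)/2\rfloor}$ and the recursions $\rho_{2k}(\fix^{\!2k}\sigma)=(2k-1)\rho_{2k-2}(\fix^{\!2k-2}\cyc\sigma)$ and $\rho_{2k+1}(\fix^{\!2k+1}\sigma)=2k\,\rho_{2k-1}(\fix^{\!2k-1}\cyc\sigma)+(2k-1)!!\,\rho_{1}(\fix\sigma)$ that produce \eqref{def_rho_even} and \eqref{def_rho_odd}. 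Without this identity, or an equivalently precise and proved input about the genus expansion of $\Wg$, your outline does not yield the stated coefficients or even the stated power of $d$; supplying that step is the missing core of the argument.
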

To the best of our knowledge, the first result of this kind was obtained in the recent work of Bordenave and Collins \cite{BordenaveCollins2020} for products of centered terms. In principle, their approach could be extended to also include a non-centered term $\psi_G$, which would lead to a generalization of Theorem \ref{thm:Wg_penalty} with atoms of any size, and to a combinatorial understanding of the multiplicative penalty, which in this case reduces to the exponent $-2 \left\lfloor \frac{\at+1}{2} \right\rfloor$ and the constant $\rho_{\at}(\cc)$.

\begin{lemma}\label{lem1}
For any integer $K \geq 1$ and any permutation $\sigma \in \mathfrak{S}_{q}$,
\begin{equation}\label{induction_hyp_2} 
\sum_{k=0}^K \binom{K}{k}  \left(- \frac{1}{d} \right)^{k} \Wg (\fix^{\! K-k} \sigma, d ) =  d^{-2 \left\lfloor \frac{K+1}{2} \right\rfloor} 
\rho_K(\fix^{\! K}\sigma)
\Wg(\fix^{\! K} \sigma, d)
\left( 1 + \bigO \left( d^{-1} \right) \right),
\end{equation}
with $\rho_K(\cdot)$ defined by \eqref{def_rho_odd}, \eqref{def_rho_even}.
\end{lemma}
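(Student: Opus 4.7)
The plan is to prove the identity by induction on $K$, using a fixed-point reduction formula for the Weingarten function as the main tool. Specifically, I would first establish (from unitarity $\sum_k |\psi_{jk}|^2 = 1$ applied inside \eqref{Weingarten}) the recursion
\[
d \cdot \Wg(\fix \ccn, d) + \sum_{i \in \ccn}\Wg((i, \fix)\ccn, d) = \Wg(\ccn, d),
\]
which peels off a fixed point at the cost of either a cycle extension (the $(i,\fix)\ccn$ terms) or a factor of $d$ (the first term).

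For the base case $K=1$, this reduction with $\ccn = \sigma$ yields directly
\[
S_1 = \Wg(\fix\sigma, d) - \tfrac{1}{d}\Wg(\sigma, d) = -\tfrac{1}{d}\sum_{i \in \sigma}\Wg((i, \fix)\sigma, d).
\]
Since $(i,\fix)\sigma$ has the same cycle count as $\sigma$ but with $i$'s cycle extended by one element, the leading asymptotic \eqref{WgAsy} combined with a short computation of the M\oe bius ratio $\Moeb((i,\fix)\sigma)/\Moeb(\sigma) = -\frac{2(2c_i - 1)}{c_i + 1}$ (where $c_i$ is the size of $i$'s cycle in $\sigma$), grouped by cycles, recovers $\rho_1(\fix\sigma) = \sum_i \frac{4C_i^2 - 2C_i}{C_i+1}$ from \eqref{def_rho_odd} at $k=0$, giving the claim.

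For the inductive step, I would apply the reduction to each $\Wg(\fix^{K-k}\sigma, d)$ with $K-k \geq 1$ in $S_K$, and then split the resulting sum $\sum_{i \in \fix^{K-k-1}\sigma}$ by whether $i$ lies in a cycle of $\sigma$ (producing a modified permutation $\sigma_i$ with $i$'s cycle extended) or among the $K-k-1$ pre-existing fixed points (contributing a multiplicity $(K-k-1)$ together with a 2-cycle attached to $\sigma$, i.e.\ $\cyc \fix^{K-k-2}\sigma$). Reorganizing via Pascal's identity $\binom{K}{k} = \binom{K-1}{k-1} + \binom{K-1}{k}$, the (a)-type terms proportional to $\tfrac{1}{d}\Wg(\fix^{K-k-1}\sigma, d)$ combine telescopically, while the (b)- and (c)-type terms can be recognized as lower-order sums $S_{K-1}(\sigma_i)$ and $S_{K-2}(\cyc \sigma)$, to which the inductive hypothesis applies. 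Gathering the resulting leading orders produces the claimed formula for $S_K$.

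The main obstacle is the combinatorial identification of $\rho_K$ in closed form. The parity-dependent structure of the result reflects the dominant interaction pattern: for even $K = 2k$, the surviving contribution comes from pairing up all $K$ fixed points into $k$ disjoint transpositions, yielding the clean double-factorial $(2k-1)!!$; for odd $K = 2k+1$, one fixed point cannot be paired and must merge either with a cycle of $\omega$ or with an internal pairing parameterised by $l$, producing the elaborate nested sum in \eqref{def_rho_odd}. Verifying that the semifactorial coefficients $\frac{(2k-2l-1)!!}{(2k-2l)!!}$ and the cycle weights $\frac{4C_i^2-2C_i}{C_i+1}$ emerge exactly from the induction, while correctly tracking sign cancellations from the M\oe bius function at each cycle extension and each 2-cycle creation, is the most delicate part of the proof.
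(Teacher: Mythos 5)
Your plan is correct and essentially reproduces the paper's own proof: the same one-step fixed-point reduction (which the paper imports from Collins--Matsumoto, Proposition 2.2, rather than rederiving it from unitarity), the same base case via the M\oe bius ratio giving $\rho_1$, and the same strong induction in which Pascal's identity converts the sum into differences, the reduction splits the resulting terms into $(i,\fix)\sigma$-type contributions (handled by the hypothesis at step $K-1$) and $\cyc\sigma$-type contributions with multiplicity $(K-1)$ (handled at step $K-2$), with the parity analysis producing $(2k-1)!!$ in the even case and the nested formula in the odd case. There is no substantive difference in approach.
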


\begin{proof}
The essential input is a particular case of \cite[Proposition~2.2]{CollinsMatsumoto2017}, which we rewrite as
\begin{equation}\label{one_step_down_cc}
\Wg ( \fix \sigma, d) 
-
\frac{1}{d} \Wg(\sigma,d)  
=
- \frac{1}{d} \sum_{i \in \sigma} \Wg ( (i,\fix) \sigma  , d)
\end{equation}
with the notations introduced above.
We first treat the initial cases. For $K=1$, the left hand side can directly be rewritten using \eqref{one_step_down_cc}, as
\begin{equation}
\Wg(\fix \sigma, d) - \frac{1}{d} \Wg (\sigma,d)
= - \frac1d \sum_{i \in \sigma} \Wg ((i,\fix) \sigma) 
= d^{-2} \rho_1(\fix \sigma) \Wg (\fix \sigma,d) \left( 1 + \bigO (d^{-1}) \right)
\end{equation}
where the last estimate comes from comparing leading order terms with \eqref{WgAsy}, which gives the value
\begin{equation}\label{rho_1}
    \rho_1(\fix \sigma) = - \sum_{j \in \sigma} \frac{\Moeb((j,\fix) \sigma)}{\Moeb( \fix \sigma)}
=\sum_{i=1}^{\mathscr{C}(\sigma)}  C_i \times \frac{ 4C_i - 2 }{C_i +1} ,
\end{equation}
noting that a cycle of size $C_i$ corresponds to $C_i$ terms with the same contribution in the first sum. For $K=2$, the expression at stake is
\begin{equation}
\Wg( \fix^{\!2} \sigma,d) - \frac{2}{d} \Wg(\fix \sigma,d)  + \frac{1}{d^2} \Wg( \sigma,d),
\end{equation}
which we rewrite as a difference of differences
\begin{equation}
\left( \Wg( \fix^{\!2} \sigma,d) - \frac{1}{d} \Wg(\fix \sigma,d)\right)
-
\frac{1}{d} \left( \Wg(\fix \sigma,d)  - \frac{1}{d} \Wg( \sigma,d) \right).
\end{equation}
Each term can be simplified according to \eqref{one_step_down_cc}, yielding
\begin{equation}
- \frac{1}{d} \Wg( \cyc \sigma ,d) - \frac{1}{d} \sum_{i \in \sigma} \left( \Wg( \fix (i,\fixhat) \sigma ,d)  -  \frac{1}{d} \Wg( (i,\fixhat) \sigma ,d)  \right),
\end{equation}
and these last differences can be dealt with using \eqref{one_step_down_cc} again, yielding a double sum of terms with low contribution. The final form of the expression is
\begin{equation}
- \frac{1}{d} \Wg( \cyc \sigma ,d) 
+
\frac{1}{d^2} \sum_{i \in \sigma} \sum_{j \in (i,\fixhat) \sigma} \Wg( (j, \fix) (i,\fixhat) \sigma ,d)
=
d^{-2} \Wg(\fix^{\!2} \sigma,d) \left( 1 + \bigO \left( d^{-1}\right) \right),
\end{equation}
which comes from comparing leading order terms with \eqref{WgAsy}. This gives $\rho_2(\fix^{\! 2} \sigma) = 1$. Note that we have only lost a factor $d^{-2}$ in leading order, which is the same penalty as for $K=1$. We will see that this exponent then decreases every two steps, so that the the resulting exponent is $-2 \left\lfloor \frac{K+1}{2} \right\rfloor$. We proceed by strong induction on $K$, with the induction hypothesis that the claim holds for $K-2$ and $K-1$ (and for all permutations in any case). We will use the property of binomial coefficients through the formula
\begin{equation}
\sum_{k=0}^K \binom{K}{k} a_k = \sum_{k=0}^{K-1} \binom{K-1}{k} (a_{k} + a_{k+1}),
\end{equation}
which yields
$$
\sum_{k=0}^K \binom{K}{k}  \left(- \frac{1}{d} \right)^{k} \Wg (\fix^{\! K-k} \sigma, d )
= \sum_{k=0}^{K-1} \binom{K-1}{k} \left(- \frac{1}{d} \right)^{k}
\left(  \Wg (\fix^{\!K-k} \sigma, d ) - \frac{1}{d} \Wg (\fix^{\! K-k-1} \sigma, d ) \right) ,
$$
in which we replace the differences using \eqref{one_step_down_cc}, which creates two kinds of terms:
\begin{equation}\label{two_terms}
\sum_{k=0}^{K-1} \binom{K-1}{k} 
\left(- \frac{1}{d} \right)^{k+1}
\left( 
(K-k-1) \Wg ( \fix^{\! K-k-1} \cyc \sigma, d )
+
\sum_{i \in \sigma}   \Wg ( \fix^{\! K-k-1} (i, \fixhat) \sigma, d ) \right).
\end{equation}
For the first terms, we rewrite
\begin{equation}
    (K-k-1) \binom{K-1}{k} = (K-1) \binom{K-2}{k},
\end{equation}
and then use the induction hypotheses at step $K-2$ for the permutation $\cyc \sigma$. This part of \eqref{two_terms} becomes
\begin{multline}\label{first_term}
-\frac{1}{d} (K-1)
\sum_{k=0}^{K-2} \binom{K-2}{k} \left(- \frac{1}{d} \right)^{k} \Wg ( \fix^{\! K-2-k} \cyc \sigma, d ) \\
= d^{-2 \left\lfloor \frac{K+1}{2} \right\rfloor } (K-1) \rho_{K-2}(\fix^{\! K-2} \cyc \sigma) \Wg (\fix^{\! K} \sigma ,d) \left(1+\bigO\left(d^{-1}\right) \right).
\end{multline}
The second terms in \eqref{two_terms} are dealt with using the induction hypotheses at step $K-1$ for the permutations $(i, \fixhat) \sigma$, yielding
\begin{multline}\label{second_term}
-\frac{1}{d}
\sum_{i \in \sigma}
 \sum_{k=0}^{K-1}
\binom{K-1}{k} 
\left(- \frac{1}{d} \right)^{k}
\Wg ( \fix^{\! K-k-1} (i, \fixhat) \sigma, d ) \\
 =
-  d^{- 2 \left\lfloor \frac{K}{2} \right\rfloor -1} 
\sum_{i \in \sigma}
\rho_{K-1}(\fix^{\! K-1} (i, \fixhat)  \sigma)
\Wg (\fix^{\! K-1} (i,\fixhat) \sigma ,d) \left(1+\bigO\left(d^{-1}\right) \right), 
\end{multline}
which is a $ \bigO \left( d^{- 2 \left\lfloor \frac{K+2}{2} \right\rfloor} \Wg (\fix^{\! K} \sigma,d) \right)$, so either the same order as \eqref{first_term} if $K$ is odd, or strictly smaller if $K$ is even. Accordingly, the general formula for the coefficients $\rho_K$ is found by distinguishing the odd and even terms. When $K=2k$, only the first term \eqref{first_term} contributes, yielding
\begin{equation}\label{even_rec}
\rho_{2k} (\fix^{\! 2k} \sigma) = (2k-1) \rho_{2k-2} (\fix^{\! 2k-2} \cyc \sigma).
\end{equation}
Together with the fact that $\rho_2 \equiv 1$, it follows immediately that $\rho_{2k} (\fix^{\! 2k} \sigma) = (2k-1)!!$ and does not depend on the structure of $\sigma$, whence \eqref{def_rho_even}. When $K=2k+1$, both terms \eqref{first_term} and \eqref{second_term} contribute, which implies 
\begin{equation}
\rho_{2k+1} (\fix^{\! 2k+1} \sigma) 
= 2k \rho_{2k-1} (\fix^{\! 2k-1} \cyc \sigma) - \sum_{i \in \sigma} \rho_{2k} (\fix^{\! 2k} (i,\fixhat) \sigma) \frac{\Moeb((i,\fixhat) \sigma)}{\Moeb(\fix \sigma)}.
\end{equation}
Replacing the even terms $\rho_{2k}$ by their value, and then recognizing $\rho_1$ from \eqref{rho_1}, we write:
\begin{align*}
\rho_{2k+1} (\fix^{\! 2k+1} \sigma) 
& = 2k \rho_{2k-1} (\fix^{\! 2k-1} \cyc \sigma) - \sum_{i \in \sigma} (2k-1)!! \frac{\Moeb((i,\fixhat) \sigma)}{\Moeb(\fix \sigma)}\\
& = 2k \rho_{2k-1} (\fix^{\! 2k-1} \cyc \sigma) +  (2k-1)!! \rho_1 (\fix \sigma).
\end{align*}
The solution of this recurrence is given by the formula
\begin{equation}
    \rho_{2k+1}(\fix^{\! 2k+1} \sigma ) = (2k)!! \sum_{l=0}^k \frac{(2k-2l-1)!!}{(2k-2l)!!} \rho_1(\fix \cyc^{\! l} \sigma),
\end{equation}
and the definition \eqref{def_rho_odd} follows from the fact that $\rho_1 (\fix \cyc^{\!l} \sigma) = 4l+ \rho_1 (\fix  \sigma)$.
\end{proof}

We can now complete the proof of Theorem \ref{thm:Wg_penalty}, which follows directly from Lemma \ref{lem1} once the terms are regrouped appropriately.

\begin{proof}[Proof of Theorem \ref{thm:Wg_penalty}]
We illustrate the argument by treating the first few cases in details. For $L=1$, one may distinguish between covering pairs $\ccpair \in \CC(|\psi_{ab}|^2 \psi_G)$ that do not include the $2$-circuit $(a,b)$ and those that do. The latter can be decomposed as a circuit covering of $\Psi_G$ \textit{and} the $2$-circuit $(a,b)$ (which yields a fixed point that we denote as $\fix$, according to Remark \ref{cycle_notations}), and then compared to the centered terms:
\begin{align*}
\bbE \left[ \left( |\psi_{ab}|^2 - \frac{1}{d} \right) \Psi_G \right]
& = \bbE \left( |\psi_{ab}|^2 \Psi_G \right) - \frac{1}{d} \bbE \left( \Psi_G \right) \\
& =
\sum_{\ccpair \in \CC(|\psi_{ab}|^2 \Psi_G)}
\hspace{-.3in} \Wg(\cc,d)
- d^{-1} \hspace{-.2in}
\sum_{\ccpair \in \CC(\Psi_G)}
\hspace{-.25in} \Wg(\cc,d) \\
& =
\sum_{\substack{\ccpair \in \CC(|\psi_{ab}|^2 \Psi_G)\\ (a,b) \notin \At\ccpair}} \hspace{-.25in} \Wg(\cc,d)
 +
 \hspace{-.2in}
\sum_{\ccpair \in \CC(\Psi_G)} \hspace{-.05in}
\left( \Wg( \fix \cc,d)
- \frac{1}{d}  \Wg( \cc, d) \right)
\end{align*}
The first terms do not suffer a penalty and are kept untouched (which corresponds to $\rho_0 (\ccn) =1$).
The second terms suffer a multiplicative penalty of $ d^{-2} $ from Lemma \ref{lem1}, so that the total contribution is
\begin{equation}
\sum_{\substack{\ccpair \in \CC(|\psi_{ab}|^2 \Psi_G)\\ (a,b) \notin \At\ccpair}}
\hspace{-.2in}
\Wg( \cc, d)
 \ + \hspace{-.1in}
\sum_{\substack{\ccpair \in \CC(|\psi_{ab}|^2 \Psi_G)\\ (a,b) \in \At\ccpair}}
d^{-2} \rho_1(\cc)  \Wg( \cc, d)  \left( 1 + \bigO \left( d^{-1}\right) \right),
\end{equation}
which is consistent with the claim, as $\at = 0$ in the first sum and $\at=1$ in the second. We now treat the case $L=2$, by first expanding:
\begin{align*}
&\bbE \left[ \left( |\psi_{a_1 b_1}|^2 - \frac{1}{d} \right)
 \left( |\psi_{a_2b_2}|^2 - \frac{1}{d} \right)
 \Psi_G \right]
 \\
 &\qquad=  
\bbE \left[ \left( |\psi_{a_1 b_1}|^2 |\psi_{a_2b_2}|^2 - \frac{1}{d} |\psi_{a_1 b_1}|^2 - \frac{1}{d} |\psi_{a_2b_2}|^2 + \frac{1}{d^2} \right)
 \Psi_G \right]
\end{align*}
 and then rearranging the subsequent Weingarten terms depending on whether the structure of the circuit covering explicitly \textit{excludes} the presence of such atoms. In other words, we consider a covering pair to include an atom $(a_l,b_l)$ if it either implies it explicitly, or allows it implicitly (i.e., the atom not being in the support). This leads to the following decomposition:
\begin{align*}
 & \sum_{ \substack{\ccpair \in \CC(|\psi_{a_1 b_1}|^2 |\psi_{a_2b_2}|^2 \Psi_G) \\ (a_1,b_1), (a_2,b_2) \notin \At\ccpair}} \Wg(\cc,d) 
 & \!\!\!\!\!\!\footnotesize{\text{(terms excluding both atoms)}} \\
 & +  \sum_{\substack{\ccpair \in \CC(|\psi_{a_2b_2}|^2 \Psi_G) \\ (a_2,b_2) \notin \At\ccpair}} \left( \Wg(\fix \cc,d) - \frac{1}{d} \Wg(\cc,d) \right) 
 & \!\!\!\!\!\!\footnotesize{\text{(terms excluding \((a_2,b_2)\) only)}}  \\
 & +  \sum_{\substack{\ccpair \in \CC(|\psi_{a_1 b_1}|^2 \Psi_G) \\ (a_1,b_1) \notin \At\ccpair}} \left( \Wg(\fix \cc,d) - \frac{1}{d} \Wg(\cc,d) \right) 
 & \!\!\!\!\!\!\footnotesize{\text{(terms excluding \((a_1,b_1)\) only) }} \\
 & +  \sum_{\ccpair \in \CC(\Psi_G)} \hspace{-.05in} \left( \Wg( \fix^{\!2} \cc,d) - \frac{2}{d} \Wg(\fix \cc,d)  + \frac{1}{d^2} \Wg( \cc,d) \right) 
 & \!\!\!\!\!\!\footnotesize{\text{(terms excluding no atom) }}
\end{align*}
Note that these four groups correspond to a number of atoms $\at\ccpair$ of $0$, $1$, $1$ and $2$ respectively. The first sum is kept unchanged. The second and third are treated as above, using Lemma \ref{lem1} for $K=\at\ccpair=1$. The last sum is treated with Lemma \ref{lem1} for $K=\at\ccpair=2$, which brings down the leading order contributions by a factor $d^{- \lfloor (K +1) /2 \rfloor} = d^{-2}$. More generally, for any choice of $K$ atoms (among $L$ possible atoms), the terms that do not strictly exclude the corresponding circuits but do exclude the $L-K$ others are grouped together and treated using Lemma \ref{lem1}, getting a multiplicative penalty of $d^{- \lfloor (K+1)/2  \rfloor} \rho_K(\cc)$ to the leading order term, where $K=\at\ccpair$, which is the claim. \end{proof}

For practical purposes, the results stated above can be summed up as the following method, which can be applied when it comes to identifying the leading order of an expression involving $\psi$ and $\overline{\psi}$ terms, as well as centered terms of order $2$:
\begin{enumerate}
    \item\label{rule1} The leading order term of $\bbE \psi_G$ is obtained by maximizing the number of circuits in a circuit-covering of $G$ (Theorem \ref{t:WG}, \eqref{WgAsy}, and Fact \ref{fact:correspondence}); an elementary bound is given by Proposition \ref{p:WGBD}.
    \item\label{rule2} If $\psi_G$ contains terms $|\psi_{ab}|^2$, it is always `safe' to take the corresponding $2$-circuits, in the sense that the total number of circuits could not be strictly increased by not taking them (Proposition \ref{banana_principle});
    \item\label{rule3} \textbf{but}, if the expression at stake includes atoms, i.e., centered terms $|\psi_{ab}|^2-1/d$, taking the corresponding $2$-circuits implies a multiplicative penalty of at least $d^{-1}$ for each such circuit taken (Theorem \ref{thm:Wg_penalty}).
\end{enumerate}

To put it in one sentence: in order to identify the leading order, one wants to maximize the number of circuits, taking as many non-centered $2$-circuits as possible, but avoiding atoms (unless it is impossible to do so). In principle, the remaining combinatorics in the choice of larger circuits can still be extremely complex. However, in the cases at stake here, these elementary rules will be sufficient to identify the leading order with certainty.

\subsection{Application to bounds on moments of $\gamma$ terms}\label{s:application_Wg}

We will now apply the general results of Section \ref{s:Wgcalc} to the particular expressions at stake here, namely, the expectation of products of terms such as
\begin{equation}\label{gamma_def2}
\gamma(k,l,m,n) := \sum_{j=1}^d \overline{\psi_{jk}} \psi_{jl} \overline{\psi_{jm}} \psi_{jn}
- \frac1d \delta_{kl} \delta_{mn}
- \frac1d \delta_{kn} \delta_{lm}.
\end{equation}
Note that, as $d$ goes to infinity, we can always assume we are in the stable range of Weingarten calculus and use the above asymptotics. For simplicity, we will refer to an expression such as \eqref{gamma_def2} as a gamma term `of type $klmn$', where the letters $k,l,m,n$ are chosen so as to indicate relevant identities of indices. \\

A preliminary remark is that any $2$-circuit that can be obtained within a single gamma term can be disregarded, as coming from an atom in the sense of section \ref{s:Wgcalc} (see Theorem \ref{thm:Wg_penalty}). Indeed, if at most one $2$-circuit can be formed, then the gamma term is of type $kkmn$ with $m \neq n$ (up to reordering and renaming indices), and therefore it can be written as
\begin{equation}\label{gamma_aacd}
\gamma(k,k,m,n) = \sum_{j=1}^d |\psi_{jk}|^2 \overline{\psi_{jm}} \psi_{jn}
= \sum_{j=1}^d \left( |\psi_{jk}|^2-\frac1d \right) \overline{\psi_{jm}} \psi_{jn}
\end{equation}
thanks to orthogonality of columns $m$ and $n$. Else, if two $2$-circuits can be formed, then (still up to a choice of indices) the gamma term is either of type $kkmm$ with $k \neq m$, or of type $kkkk$. In the first case,
\begin{equation}\label{gamma_aabb}
\gamma(k,k,m,m) = \sum_{j=1}^d 
|\psi_{jk}|^2 |\psi_{jm}|^2
- \frac1d
= \sum_{j=1}^d 
\left( |\psi_{jk}|^2 - \frac1d \right) \left(|\psi_{jm}|^2 -\frac1d \right),
\end{equation}
thanks to columns $k$ and $m$ having norm $1$. The last case is that of a term of type $kkkk$, in which case the term is not exactly, but still \textit{essentially} centered, in the sense that it is centered up to a small constant. Namely:
\begin{equation}\label{gamma_kkkk}
\gamma(k,k,k,k) = \sum_{j=1}^d  |\psi_{jk}|^4
- \frac2d
= \sum_{j=1}^d   \left( |\psi_{jk}|^4 - \bbE  |\psi_{jk}|^4 \right) - \frac{2}{d(d+1)}.
\end{equation}
For the sake of simplicity, we introduce the exactly centered terms
\begin{equation}\label{true_gamma_kkkk}
\widetilde{\gamma}(k,k,k,k) 
:= 
\sum_{j=1}^d   \left( |\psi_{jk}|^4 - \bbE  |\psi_{jk}|^4 \right).
\end{equation}
Our proof strategy regarding these centered $\widetilde{\gamma}$ terms can be summarized as follows.
\begin{remark}\label{strategy}
Expectation of products involving terms such as \eqref{true_gamma_kkkk} can be bounded in the same spirit thanks to the result of Bordenave and Collins (Proposition 8 and Theorem 10 in \cite{BordenaveCollins2020}). Similarly to Theorem \ref{thm:Wg_penalty}, the consequence is that a multiplicative penalty is inflicted to coverings pairs that isolate a part of the graph corresponding to a centered term $|\psi_{jk}|^4 - \bbE (|\psi_{jk}|^4)$ from the rest of the graph. Our proofs will proceed by first estimating expressions involving $\widetilde{\gamma}(k,k,k,k)$ terms, and then checking that replacing them by the actual value $\gamma(k,k,k,k)$ as in \eqref{gamma_kkkk} does not affect the result, owing to the small difference (of order $d^{-2}$) between the two terms.
\end{remark}

We first illustrate our method by evaluating moments of gamma terms of types $klmn$ and $kkmn$, with distinct $k,l,m,n$.

\begin{proposition}\label{prop:moments_klmn} Let \(\gamma\) be defined as in \eqref{gamma_def2}. Then, for distinct $k,l,m,n$ we have, when $d \rightarrow \infty$:
\begin{equation}
\bbE \left( \gamma (k,l,m,n)^{p} \overline{\gamma (k,l,m,n)^{q}} \right) = 
\begin{cases} p! \ d^{-3p} + \bigO_p(d^{-3p-1})  &\qtq{if} p=q,\smallskip\\
0 & \qtq{otherwise.}\end{cases}
\end{equation}
and the same holds if $l=k$.
\end{proposition}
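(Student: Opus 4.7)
The plan is to expand each $\gamma$-factor as a polynomial in the entries of $\Psi$---using the rewriting $\gamma(k,k,m,n)=\sum_j|\psi_{jk}|^2\overline{\psi_{jm}}\psi_{jn}$ from \eqref{gamma_aacd} in the $l=k$ case---apply the Weingarten formula of Theorem~\ref{t:WG}, and isolate the leading-order contribution using the graph-theoretic framework developed in Section~\ref{s:Wgcalc}. The main obstacle will be extracting the precise combinatorial factor $p!$ and controlling all other admissible $(\sigma,\tau)$ pairs at the required order $\bigO(d^{-3p-1})$.

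For the vanishing case $p\neq q$, the $\psi$-factors of $\gamma(k,l,m,n)^p\overline{\gamma(k,l,m,n)}^q$ carry the column multiset $\{l,n\}^{\sqcup p}\sqcup\{k,m\}^{\sqcup q}$ in the distinct case (and $\{k,n\}^{\sqcup p}\sqcup\{k,m\}^{\sqcup q}$ when $l=k$), while the $\overline{\psi}$-factors carry $\{k,m\}^{\sqcup p}\sqcup\{l,n\}^{\sqcup q}$ (resp.\ $\{k,m\}^{\sqcup p}\sqcup\{k,n\}^{\sqcup q}$). Admissibility of the column permutation $\sigma$ in \eqref{Weingarten} forces these two multisets to coincide; a direct letter-count shows this is possible only if $p=q$, so the expectation vanishes.

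For $p=q$, Weingarten produces a sum over admissible pairs $(\sigma,\tau)\in\mathfrak{S}_{4p}^2$ and over the row indices $(j_1,\dots,j_p,j_1',\dots,j_p')$; by Fact~\ref{fact:correspondence} and the asymptotic \eqref{WgAsy}, each pair contributes at order $d^{-8p+\mathscr{C}(\sigma\tau^{-1})+f}$, where $f$ is the number of independent row values left by the identifications imposed by $\tau$. The key structural observation (in the distinct case) is that within any single $\gamma$-factor the $\psi$-columns $\{l,n\}$ are disjoint from the $\overline{\psi}$-columns $\{k,m\}$ (and symmetrically for any $\overline{\gamma}$-factor), so no $2$-circuit can be internal to a single factor; every $2$-circuit must pair a $\psi$-edge of some $\gamma_r$ with a $\overline{\psi}$-edge of some $\overline{\gamma}_s$ of matching column, which in turn forces $j_r=j_s'$. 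Reaching the maximum $\mathscr{C}=4p$ of only $2$-circuits therefore requires pairing the $\gamma$-factors with the $\overline{\gamma}$-factors via a bijection $\pi\in\mathfrak{S}_p$, with $j_r=j_{\pi(r)}'$; for each such $\pi$ the covering is unique, $\sigma=\tau$, $f=p$, and $\Wg(\mathrm{id},d)=d^{-4p}(1+\bigO(d^{-2}))$. Summing over the $p!$ bijections and over the $(d)_p=d^p+\bigO(d^{p-1})$ distinct row values produces $p!\,d^{-3p}+\bigO(d^{-3p-1})$.

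The main obstacle is showing that every other admissible $(\sigma,\tau)$ satisfies $\mathscr{C}(\sigma\tau^{-1})+f\leq 5p-1$, which is the heart of the argument. I would proceed by a case analysis on the shape of the row-identification pattern induced by $\tau$: any row $j_r$ or $j_s'$ that is not bijectively paired forces at least one circuit of length $\geq 4$, while any merged vertex pooling three or more indices leaves unbalanced $\psi$- and $\overline{\psi}$-edges at some column that cannot all be closed into $2$-circuits. Both losses can be quantified by applying the cycle-rank inequality from the proof of Proposition~\ref{p:WGBD} to the reduced Weingarten graph obtained after collapsing the identifications, combined with the bipartite parity constraint (circuits must have even length). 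For the $l=k$ case the same scheme goes through after rewriting $\gamma(k,k,m,n)=\sum_j(|\psi_{jk}|^2-\tfrac1d)\overline{\psi_{jm}}\psi_{jn}$, so that internal $2$-circuits at column $k$ become atoms in the sense of Section~\ref{s:Wgcalc} and are penalized by Theorem~\ref{thm:Wg_penalty}, leaving the same leading constant $p!\,d^{-3p}$.
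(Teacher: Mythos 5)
Your plan follows the paper's proof quite closely: expand, apply the Weingarten/circuit-covering framework of Section~\ref{s:Wgcalc}, kill the case \(p\neq q\) by column balance (the paper phrases this as the in/out degree at the vertex \(n\)), identify the perfect pairings \(\pi\in\mathfrak S_p\) as the only configurations reaching order \(d^{-3p}\) with combinatorial factor \(p!\), and handle \(l=k\) by rewriting with centered atoms so that internal \(2\)-circuits at column \(k\) are penalized via Theorem~\ref{thm:Wg_penalty}. All of those completed steps are correct, and your structural observations (in the distinct case a \(2\)-circuit is never internal to a single factor and forces \(j_r=j_s'\); a row class that is not a clean \(j\)--\(j'\) pair leaves edges that must sit in circuits of length \(\geq 4\)) are exactly the right ones.

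The one real weakness is in how you propose to quantify the step you correctly single out as the heart of the matter, namely \(\mathscr C(\cc)+f\leq 5p-1\) off the perfect pairings. The cycle-rank inequality from the proof of Proposition~\ref{p:WGBD} (number of circuits \(\leq E-V+c\)) is far too lossy here: applied to the collapsed graph, which has only four column vertices and is connected through them, it gives a bound of order \(d^{-3}\) uniformly in \(p\), nowhere near \(d^{-3p-1}\). What actually closes the argument — and what the paper does in a few lines — is the sharper structure-specific count: if \(\ell\) denotes the number of \(j\)--\(j'\) identifications exploited for \(2\)-circuits, then there are at most \(4\ell\) unpenalized \(2\)-circuits, the remaining \(8p-8\ell\) edges lie in circuits of length \(\geq 4\) (hence at most \(2p-2\ell\) further circuits), and there are at most \(2p-\ell\) free row indices, giving a contribution \(O(d^{(2p-\ell)-8p+(2p+2\ell)})=O(d^{-4p+\ell})\leq O(d^{-3p})\), with equality forcing \(\ell=p\) and an exact edge-by-edge pairing. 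Your own qualitative observations are precisely the input to this count, so you should carry it out explicitly (e.g., bounding the number of \(2\)-circuits at each merged row class by four times the minimum of its \(j\)- and \(j'\)-multiplicities) rather than appeal to the cycle-rank bound, which cannot see the length-\(\geq 4\) constraint or the fact that there are only four column vertices.
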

It follows in particular that $d^{3/2} \gamma (k,l,m,n)$ converges in distribution to a complex normal variable with mean $0$ and variance $1$, and likewise for $d^{3/2} \gamma (k,k,m,n)$.

\begin{proof}
We apply the method outlined in Section \ref{s:Wgcalc} to the expression
 \begin{equation}\notag
 \bbE \left( \gamma (k,l,m,n)^{p} \overline{\gamma (k,l,m,n)^{q}} \right)
 =
 \!\!\!\!\sum_{j_1,\dots,j_{p}, j_{1}', \dots, j_{q}'} \!\!\!\!
 \bbE \left[
 \prod_{i=1}^p
\overline{\psi_{j_i k}} {\psi_{j_i l}} \overline{\psi_{j_i m}} {\psi_{j_i n}}
 \times
 \prod_{i=1}^q
{\psi_{j_{i}' k}} \overline{\psi_{j_{i}' l}} {\psi_{j_{i}' m}} \overline{\psi_{j_{i}' n}}
 \right].
 \end{equation}
 The associated Weingarten graphs are represented in Figure \ref{f:moment_bound12}. First, note that if $p \neq q$ then this expectation is zero, for instance by considering that vertex $n$ has $p$ incoming edges and $q$ outgoing edges, which imposes $p=q$ for a circuit covering of the whole graph to exist.
 \begin{figure}[h!]
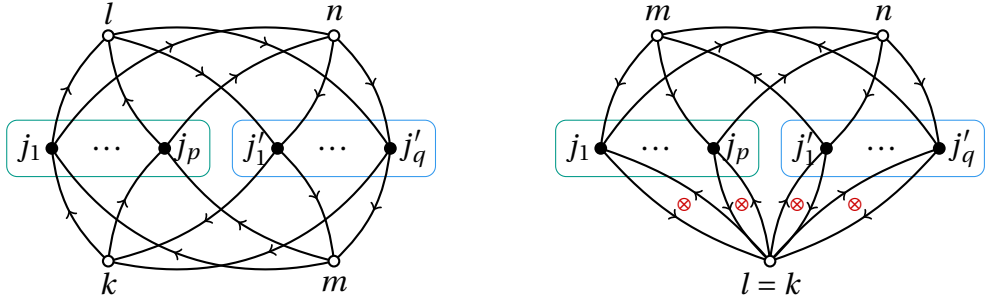

\tikz[scale=1.5]{
\draw[col3, rounded corners] (-1.9, -.25) rectangle (-.1, .25) {};
\draw[col5, rounded corners] (.1, -.25) rectangle (1.9, .25) {};
\draw[thick,->-] (-1,-1) to[bend left=20] (-1.5,0);
\draw[thick,->-] (-1.5,0) to[bend left=35] (1,1) ;
\draw[thick,->-] (-1.5,0) to[bend left=20] (-1,1);
\draw[thick,->-] (1,-1) to[bend left=35] (-1.5,0);
\draw[thick,->-] (-1,-1) to[bend left=20] (-.5,0);
\draw[thick,->-] (-.5,0) to[bend left=20] (1,1) ;
\draw[thick,->-] (-.5,0) to[bend left=20] (-1,1);
\draw[thick,->-] (1,-1) to[bend left=20] (-.5,0);
\draw[thick,->-] (.5,0) to[bend left=20] (-1,-1);
\draw[thick,->-] (1,1) to[bend left=20] (.5,0) ;
\draw[thick,->-] (-1,1) to[bend left=20] (.5,0);
\draw[thick,->-] (.5,0) to[bend left=20] (1,-1);
\draw[thick,->-] (1.5,0) to[bend left=35] (-1,-1);
\draw[thick,->-] (1,1) to[bend left=20] (1.5,0) ;
\draw[thick,->-] (-1,1) to[bend left=35] (1.5,0);
\draw[thick,->-] (1.5,0) to[bend left=20] (1,-1);
\node[ghostdot] at (-1,-1) {};
\node at (-1,-1.2) {$k$};
\node[ghostdot] at (-1,1) {};
\node at (-1,1.2) {$l$};
\node[ghostdot] at (1,-1) {};
\node at (1,-1.2) {$m$};
\node[ghostdot] at (1,1) {};
\node at (1,1.2) {$n$};
\node[dot] at (-.5,0) {};
\node at (-.3,0) {$j_p$};
\node at (-1,0) {$\dots$};
\node[dot] at (-1.5,0) {};
\node at (-1.7,0) {$j_1$};
\node[dot] at (.5,0) {};
\node at (.3,0) {$j_1'$};
\node at (1,0) {$\dots$};
\node[dot] at (1.5,0) {};
\node at (1.7,0) {$j_q'$};
}
\qquad \qquad
\tikz[scale=1.5]{
\draw[col3, rounded corners] (-1.9, -.25) rectangle (-.1, .25) {};
\draw[col5, rounded corners] (.1, -.25) rectangle (1.9, .25) {};
\draw[thick,->-] (-1,1) to[bend right=20] (-1.5,0);
\draw[thick,->-] (-1,1) to[bend right=20] (-.5,0);
\draw[thick,->-] (-1.5,0) to[bend left=35] (1,1);
\draw[thick,->-] (-.5,0) to[bend left=20] (1,1);
\draw[thick,->-] (1,1) to[bend left=20] (1.5,0);
\draw[thick,->-] (1,1) to[bend left=20] (.5,0);
\draw[thick,->-] (1.5,0) to[bend right=35] (-1,1);
\draw[thick,->-] (.5,0) to[bend right=20] (-1,1);
\draw[thick,->-] (-1.5,0) to[bend right=15] (0,-1);
\draw[thick,->-] (0,-1) to[bend right=15] (-1.5,0);
\draw[thick,->-] (-.5,0) to[bend right=20] (0,-1);
\draw[thick,->-] (0,-1) to[bend right=20] (-.5,0);
\draw[thick,->-] (.5,0) to[bend left=20] (0,-1);
\draw[thick,->-] (0,-1) to[bend left=20] (.5,0);
\draw[thick,->-] (1.5,0) to[bend left=15] (0,-1);
\draw[thick,->-] (0,-1) to[bend left=15] (1.5,0);
\node[ghostdot] at (-1,1) {};
\node at (-1,1.2) {$m$};
\node[ghostdot] at (1,1) {};
\node at (1,1.2) {$n$};
\node[ghostdot] at (0,-1) {};
\node at (0,-1.2) {$l=k$};
\node[dot] at (-.5,0) {};
\node at (-.3,0) {$j_p$};
\node at (-1,0) {$\dots$};
\node[dot] at (-1.5,0) {};
\node at (-1.7,0) {$j_1$};
\node[dot] at (.5,0) {};
\node at (.3,0) {$j_1'$};
\node at (1,0) {$\dots$};
\node[dot] at (1.5,0) {};
\node at (1.7,0) {$j_q'$};
\node at (-.73,-.5) {\color{red!80!black} \scalebox{.8}{$\otimes$} \color{black}};
\node at (-.22,-.5) {\color{red!80!black} \scalebox{.8}{$\otimes$} \color{black}};
\node at (.27,-.5) {\color{red!80!black} \scalebox{.8}{$\otimes$} \color{black}};
\node at (.78,-.5) {\color{red!80!black} \scalebox{.8}{$\otimes$} \color{black}};
}
\caption{Weingarten graphs considered in the proof of Proposition \ref{prop:moments_klmn}. The crossed out circles in the second graph indicate that the $2$-circuits corresponding to atoms involve a penalty, according to Theorem \ref{thm:Wg_penalty}.}
\label{f:moment_bound12}
\end{figure}
 Then, note that a $2$-circuit that is not an atom is only made possible by an equality $j_i = j_r'$, and each such equality enables four $2$-circuits in the corresponding terms. Proposition \ref{banana_principle} ensures that these can be taken first (and in any order) when building an optimal circuit covering. Let $\ell$ be the number of pairs $j=j'$ used in picking the $2$-circuits (yielding $4\ell$ of them). It follows that the number of distinct $j$-indices (corresponding to distinct vertices in the graph) is bounded by $2p-\ell$. The remaining edges can at best be used in $4$-circuits, and a quick count yields at most $2p-2l$ of these. 
The total contribution is therefore bounded by:
\begin{equation}\notag
\underbrace{d^{2p-\ell}}_{\text{free indices } j}
\times
\underbrace{d^{-8p} }_{-2q}
\times
\underbrace{d^{4\ell} }_{2\text{-circuits}}
\times
\underbrace{d^{2p-2\ell} }_{\text{other circuits}}
= d^{-4p+\ell} \leq d^{-3p}.
\end{equation}
The only situation in which $d^{-3p}$ is reached is with $\ell=p$, exactly $p$ free $j$-indices, and the other edges all optimally used in $4$-circuits. This is the case if and only if the edges are matched in exact pairs, and such a configuration only depends on the way we pair the indices $j$ and $j'$, so that the overall contribution is
\begin{equation}\label{Gaussian_moment}
\bbE \left| \gamma \right|^{2p} = p! \ d^{-3p} + \bigO(d^{-3p - 1}),
\end{equation}
as claimed.
\end{proof}

 It is actually true of all gamma terms that their fluctuations are of order $\bigO(d^{-3/2})$, and it can be checked in the same way that once renormalized, these terms converge in distribution when $d \rightarrow \infty$ to either a real or a complex normal distribution, depending on the values of $k,l,m,n$, with a variance that also depends on the type. 
 
 Our purpose is now to bound the expectation of a product of gamma terms when it contains terms of different types. The general idea is that such an expectation will be higher when terms are combined in certain ways (typically: paired). This motivates the following definition of what we consider to be `smart combinations'.
 \begin{definition}\label{smart_def}
 Given a particular identification of $j$-indices (i.e. specifying which indices are equal and which are not) and a circuit covering of the resulting Weingarten graph, we say that
\begin{enumerate}
    \item a smart pair is a configuration where a pair of gamma terms form four $2$-circuits with one common index $j$;
    \item a smart triplet is when a triple $\gamma_1, \gamma_2, \gamma_3$ forms six $2$-circuits with one index $j$;
    \item a smart $2$-chain is when a pair $\gamma_1, \gamma_2$ that forms three circuits (two $2$-circuits and one $4$-circuits) with two distinct $j$'s.
    \item a smart $3$-chain is a triple $\gamma_1 \gamma_2 \gamma_3$ that allows $4$ circuits (two $2$-circuits and two $4$-circuits) with three distinct $j$'s.
\end{enumerate}
\end{definition}
By inspection, smart pairs only occur when pairing a $\gamma$ term with its corresponding $\overline{\gamma}$ term, and smart $2$-chains only occur with terms of type $kkkk$. Figure \ref{f:smart_guys} represents a possible configuration for a smart triplet and the only possible configurations for smart $2$-chain and smart $3$-chain respectively.

\begin{figure}[b!]
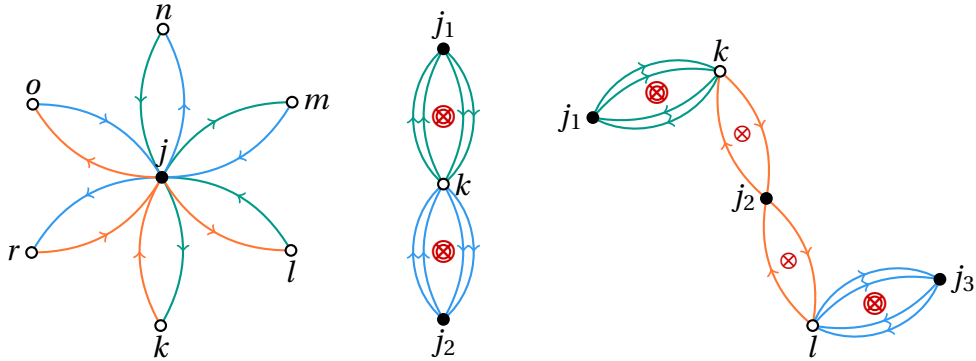

\tikz[scale=1,rotate=30]{
\draw[thick,->-, col3] (0,0) to[bend left=30] (-1,-1.7);
\draw[thick,->-,col3] (1,-1.7) to[bend right=30] (0,0);
\draw[thick,->-,col3] (0,0) to[bend left=30] (2,0);
\draw[thick,->-,col3] (1,1.7) to[bend right=30] (0,0);
\draw[thick,->-, col5] (-1,1.7) to[bend left=30] (0,0);
\draw[thick,->-,col5] (0,0) to[bend right=30] (-2,0);
\draw[thick,->-,col5] (2,0) to[bend left=30] (0,0);
\draw[thick,->-,col5] (0,0) to[bend right=30] (1,1.7);
\draw[thick,->-, col4] (0,0) to[bend left=30] (-1,1.7);
\draw[thick,->-,col4] (-2,0) to[bend right=30] (0,0);
\draw[thick,->-,col4] (0,0) to[bend right=30] (1,-1.7);
\draw[thick,->-,col4] (-1,-1.7) to[bend left=30] (0,0);
\node[ghostdot] at (-1,-1.7) {};
\node[below] at (-1,-1.7) {$k$};
\node[ghostdot] at (-1,1.7) {};
\node[above] at (-1,1.7) {$o$};
\node[ghostdot] at (2,0) {};
\node[right] at (2,0) {$m$};
\node[ghostdot] at (1,-1.7) {};
\node[below] at (1,-1.7) {$l$};
\node[ghostdot] at (1,1.7) {};
\node[above] at (1,1.7) {$n$};
\node[ghostdot] at (-2,0) {};
\node[left] at (-2,0) {$r$};
\node[dot] at (0,0) {};
\node[above] at (0,0) {$j$};
}
\qquad
\tikz[scale=1.8,rotate=0]{
\draw[thick,->-, col3] (0,0) to[bend left=30] (0,1);
\draw[thick,->-,col3] (0,0) to[bend left=50] (0,1);
\draw[thick,->-,col3] (0,1) to[bend left=30] (0,0);
\draw[thick,->-,col3] (0,1) to[bend left=50] (0,0);
\draw[thick,->-, col5] (0,0) to[bend left=30] (0,-1);
\draw[thick,->-,col5] (0,0) to[bend left=50] (0,-1);
\draw[thick,->-,col5] (0,-1) to[bend left=30] (0,0);
\draw[thick,->-,col5] (0,-1) to[bend left=50] (0,0);
\node[dot] at (0,1) {};
\node[above] at (0,1) {$j_1$};
\node[dot] at (0,-1) {};
\node[below] at (0,-1) {$j_2$};
\node[ghostdot] at (0,0) {};
\node[right] at (0,0) {$k$};
\draw[red!80!black, thick] (0,.5) circle(.05);
\node at (0.025,.5) {\color{red!80!black} \scalebox{1.4}{$\otimes$} \color{black}};
\draw[red!80!black, thick] (0,-.5) circle(.05);
\node at (0.025,-.5) {\color{red!80!black} \scalebox{1.4}{$\otimes$} \color{black}};
}
\qquad
\tikz[scale=1.8,rotate=20]{
\draw[thick,->-, col3] (-1,1) to[bend left=30] (0,1);
\draw[thick,->-,col3] (-1,1) to[bend left=50] (0,1);
\draw[thick,->-,col3] (0,1) to[bend left=30] (-1,1);
\draw[thick,->-,col3] (0,1) to[bend left=50] (-1,1);
\draw[thick,->-, col4] (0,0) to[bend left=30] (0,1);
\draw[thick,->-,col4] (0,1) to[bend left=30] (0,0);
\draw[thick,->-,col4] (0,-1) to[bend left=30] (0,0);
\draw[thick,->-,col4] (0,0) to[bend left=30] (0,-1);
\draw[thick,->-, col5] (1,-1) to[bend left=30] (0,-1);
\draw[thick,->-,col5] (1,-1) to[bend left=50] (0,-1);
\draw[thick,->-,col5] (0,-1) to[bend left=30] (1,-1);
\draw[thick,->-,col5] (0,-1) to[bend left=50] (1,-1);
\node[dot] at (-1,1) {};
\node[left] at (-1,1) {$j_1$};
\node[ghostdot] at (0,1) {};
\node[above] at (0,1) {$k$};
\node[dot] at (0,0) {};
\node[left] at (0,0) {$j_2$};
\node[ghostdot] at (0,-1) {};
\node[below] at (0,-1) {$l$};
\node[dot] at (1,-1) {};
\node[right] at (1,-1) {$j_3$};
\node at (.02,.5) {\color{red!80!black} $\otimes$ \color{black}};
\node at (.02,-.5) {\color{red!80!black} $\otimes$ \color{black}};
\draw[red!80!black, thick] (-.4925,1.0075) circle(.05);
\node at (-.47,1) {\color{red!80!black} \scalebox{1.4}{$\otimes$} \color{black}};
\draw[red!80!black, thick] (.4875,-1.0025) circle(.05);
\node at (.51,-1.01) {\color{red!80!black} \scalebox{1.4}{$\otimes$} \color{black}};
}
\caption{From left to right: a smart triplet involving terms of type $klmn$, $nmor$ and $rolk$; a smart $2$-chain involving terms of type $kkkk$; a smart $3$-chain involving terms of type $kkkk$, $kkll$ and $llll$. 
Note that imposing $j_1=j_2=j_3$ transforms the smart $3$-chain into another smart triplet.}
\label{f:smart_guys}
\end{figure}

\begin{theorem}[Moment bound for gamma terms: general case]\label{thm:moment_bound}
We consider a product $\gamma_1 \cdots \gamma_{p} $ of gamma terms, denoting by $p_2^*$ the largest number of smart pairs that can be obtained, and by $p_3^*$ the largest number of smart triplets that can be obtained once $p_2^*$ smart pairs are removed. Then,
\begin{equation}\label{penalty_count}
\bbE \left[ \gamma_1 \cdots \gamma_{p} \right] = 
\bigO_{p} \left( d^{-\frac32 p - \frac16 (3p_3^*) - \frac14 (p-2p_2^* - 3p_3^*) } \right).
\end{equation}
\end{theorem}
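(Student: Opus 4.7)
The plan is to apply the graph-theoretic Weingarten framework of Section~\ref{s:Wgcalc} and reduce the moment bound to a combinatorial inequality on Weingarten graphs. First I would expand each $\gamma_r$ via \eqref{gamma_def2} and partition the multiple sum over the internal summation indices $j_1,\ldots,j_p$ according to which $j_r$'s coincide. For each partition $\pi$ of $\{1,\ldots,p\}$ with $\nu(\pi)$ blocks, the associated Weingarten graph $G_\pi$ has $\nu(\pi)$ black vertices, at most $4p$ white vertices, and $4p$ directed edges, and the $j$-summation contributes a factor $d^{\nu(\pi)}\bigl(1+O(d^{-1})\bigr)$.

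Second, following the discussion around \eqref{gamma_aacd}--\eqref{gamma_kkkk}, I would identify every two-circuit lying inside a single $\gamma_r$ as an atom in the sense of Theorem~\ref{thm:Wg_penalty} and apply that theorem to obtain, up to $O_p(1)$,
\begin{equation*}
\bbE[\gamma_1\cdots\gamma_p]\ =\ \sum_{\pi}\sum_{\ccpair\in\CC(G_\pi)} d^{\,\nu(\pi)-4p+\mathscr{C}(\cc)-2\lfloor(\at\ccpair+1)/2\rfloor}.
\end{equation*}
The type-$kkkk$ contributions are slightly special because the internal atom of \eqref{gamma_kkkk} is not exactly centered; following Remark~\ref{strategy}, I would first establish the claim with each such term replaced by the centered $\widetilde\gamma(k,k,k,k)$ of \eqref{true_gamma_kkkk} — so that the Bordenave--Collins refinement directly applies — and then reinstate the $O(d^{-2})$ discrepancy, which is always absorbed by the right-hand side of \eqref{penalty_count}.

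The proof then reduces to verifying the combinatorial inequality
\begin{equation*}
\nu(\pi)+\mathscr{C}(\cc)-2\bigl\lfloor\tfrac{\at\ccpair+1}{2}\bigr\rfloor\ \le\ \tfrac{9p}{4}+\tfrac{p_2^*}{2}+\tfrac{p_3^*}{4}
\end{equation*}
uniformly in $\pi$ and $\ccpair$. The three tools of Section~\ref{s:Wgcalc} — Proposition~\ref{banana_principle} on free two-circuits, the Euler-type inequality $V-E\le c$ used in the proof of Proposition~\ref{p:WGBD}, and Theorem~\ref{thm:Wg_penalty} itself — would be assembled block by block of $\pi$. A smart pair block contributes $\nu+\mathscr{C}=1+4$, i.e.\ $5/2$ per $\gamma$; a smart triplet block contributes $1+6$, i.e.\ $7/3$ per $\gamma$; every other block configuration must be shown to give at most $9/4$ per $\gamma$ once the atom penalty is subtracted.

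The main obstacle is this last case analysis: after greedily extracting the $p_2^*$ smart pairs and $p_3^*$ smart triplets, I have to show that no covering of the remaining $p-2p_2^*-3p_3^*$ gamma terms exceeds $9/4$ per $\gamma$. My approach is to contract each extracted smart structure to a single vertex in $G_\pi$ and reapply the graph inequality \eqref{graph_Euler_eq2} to the residual graph; any block-covering that would saturate $9/4$ per remaining $\gamma$ must either reveal a new smart pair or triplet (contradicting the maximality of $p_2^*$ or $p_3^*$) or force enough atoms to pay the $-1/4$ deficit. Smart $2$- and $3$-chains (which occur only among type-$kkkk$ blocks and come closest to the $9/4$ threshold) will need to be isolated as base cases and checked by direct inspection, since their circuit structure is rigid and the relevant inequalities become equalities.
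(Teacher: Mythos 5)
Your outline reproduces the skeleton of the paper's own argument (expand over identifications of the $j$-indices, apply the penalized Weingarten calculus, replace $\gamma(k,k,k,k)$ by $\widetilde\gamma(k,k,k,k)$ and restore the $O(d^{-2})$ discrepancy at the end, then verify a combinatorial inequality on exponents), and your reduction to the inequality $\nu(\pi)+\mathscr{C}(\cc)-\text{penalty}\le \tfrac94 p+\tfrac12 p_2^*+\tfrac14 p_3^*$ is indeed the right target. However, there is a genuine gap at the point where the theorem is actually delicate. Your displayed formula declares \emph{every} $2$-circuit internal to a single gamma term to be a penalized atom. That is correct for types $kkmn$ and $kkmm$, where the term factors through genuine atoms $|\psi_{jk}|^2-\tfrac1d$, but not for type $kkkk$: there the centered object is the whole four-edge block $|\psi_{jk}|^4-\bbE|\psi_{jk}|^4$, and the Bordenave--Collins penalty is incurred only when the covering \emph{isolates that entire block}; a single internal $2$-circuit whose remaining two edges leave the block costs nothing. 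Taken literally, your formula would give $\bbE\bigl[\widetilde\gamma(k,k,k,k)^2\bigr]=O(d^{-5})$, whereas the smart $2$-chain (two blocks, $\nu=2$, three circuits, no penalty) shows the true order is $d^{-3}$ -- consistent with the theorem, since this configuration is counted in $p_2^*$, but inconsistent with the quantity you propose to bound. So the sum you plan to estimate is not the expectation.

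This mis-assignment then undermines your key step: the claim that every non-smart block contributes at most $\tfrac94$ per gamma is false. An isolated $kkkk$ term that takes its free internal $2$-circuit and sends its other two edges into a $4$-circuit contributes $1+1+\tfrac12=\tfrac52$ per gamma by itself; the deficit is recovered only from the partner terms sitting in \emph{other} blocks (which is also why the smart chains, your "base cases", span several blocks and cannot be handled by a per-block count). This cross-block compensation is exactly where the paper's proof does its real work: the classification of isolated $kkkk$ terms into wasted, linked, $2$-chained and $3$-chained ($m_0,m_1,m_2,m_3$), the constraint $m_1\le\tfrac12(\ell_1-m_0-2m_2-3m_3)$, the subtraction $-\tfrac12 m_0$ for wasted edges, and a global rather than blockwise circuit count. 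Your plan to contract extracted smart structures, reapply the Euler-type inequality, and invoke maximality of $p_2^*,p_3^*$ does not, as stated, produce this bookkeeping: near-saturating configurations need not reveal a new smart pair or triplet (the offending $kkkk$ term may chain to a term that is not part of any smart structure, or to a non-isolated term), so you would have to redo the paper's accounting of these cases in some form before the argument closes.
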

\noindent This is to say that the best possible contribution (of order $d^{-\frac32 p}$) can be reached with exact pairs only; in other situations, smart triplets terms yield a $-\frac16$ penalty to the exponent, and all other terms a penalty of $-\frac14$.

\begin{proof}
As explained above (after the definition \eqref{true_gamma_kkkk}), we perform the computation with terms $\gamma(k,k,k,k)$ replaced by $\widetilde{\gamma}(k,k,k,k)$, and check in the end that this does not make a difference. After expanding the product as a sum over indices $j_1, \dots, j_{p}$ and applying Theorem \ref{thm:Wg_penalty}, we consider the order of the contribution of a given circuit-covering for a specific partition of the $j$-indices. For such configuration, we denote by $\ell_n$ the number of groups of $n$ equal $j$-indices, so that $\ell_1$ corresponds to isolated $j$'s, $\ell_2$ corresponds to terms being paired (but not necessarily optimally so), etc. As this is a partition of all $j$-indices, we have
\begin{equation}\label{partition}
\sum_{n \geq 1} n \ell_n = p.
\end{equation}
Note that a $2$-circuit can be created without penalty only if either two $j$'s  are equal, \textit{or} else in a term of type $kkkk$ --- these terms represent the main difficulty here and require careful treatment (see last bullet point below). We review the different ways in which $2$-circuits can appear, referring to smart pairs, smart triplets and smart chains as introduced in Definition \ref{smart_def}:
\begin{itemize}
    \item Among $\ell_2$ pairs of $j$-indices, we denote by $p_2$ those that correspond to smart pairs, i.e., two gamma terms yielding \textbf{four} $2$-circuits. The other $\ell_2 - p_2$ pairs allow at most \textbf{three} $2$-circuits each.
    \item Among $\ell_3$ triplets of $j$-indices, we denote by $p_3$ those that correspond to smart triplets, leading to \textbf{six} $2$-circuits for each smart triplet. The other $\ell_3 - p_3$ triplets allow at most \textbf{five} $2$-circuits each.
    \item Groups of $n$ $j$-indices with $n \geq 4$ induce at most $2 \ell_n$ circuits (this bound is achieved in the optimal situation where all circuits are $2$-circuits); the gain we expect here comes solely from the missing $j$-indices.
    \item The last possibility of a $2$-circuit is in a term of type $kkkk$ with isolated $j$-index. We call $m$ the number of such terms, and distinguish them according to what happens to the other two edges.
    \begin{itemize}
        \item If the remaining two edges also form a $2$-circuit, this is cancelled by a penalty, and so we say these two edges are `wasted'.
        \item If they form an $n$-circuit with a non-isolated term and $n \geq 4$ (or an isolated term and $n \geq 6$), the contribution is equivalent to that of an $(n-2)$-circuit and a (penalized) $2$-circuit. Again, we say these two edges are wasted.
        \item The only case in which these edges are not considered wasted is if they form a $4$-circuit with two other edges from another isolated term, which is necessarily a $kkmn$-type term; $2$-chains and $3$-chains are possible examples of such a configuration, which happen to be optimal and near-optimal, respectively. We denote by $m_2$ the number of $2$-chains and by $m_3$ the number of $3$-chains. By inspection, in any other possible case, such $kkkk$ terms cannot exceed a half of the total number of isolated terms.
    \end{itemize}
    We denote by $m_0$ the number of isolated terms of type $kkkk$ that lead, or are equivalent, to `wasting two edges', and by $m_1$ those that are linked to other isolated terms. The above considerations imply
    \begin{equation}\label{kkkk_bound}
    m= m_0 + m_1 + 2 m_2+ 2 m_3, \quad \text{ and } \quad m_1 \leq \frac12 (\ell_1-m_0- 2 m_2- 3 m_3).
    \end{equation}
\end{itemize}

We now compute the order of the contributions:
\begin{itemize}
\item The number of free $j$-indices is $\sum_{n \geq 1} \ell_n$, the number of parts in the partition. 
Using \eqref{partition}, we obtain
$$
\sum_{n \geq 4}  \ell_n \leq \frac14 \sum_{n \geq 4} n \ell_n = \frac14 (p - \ell_1 - 2 \ell_2 - 3\ell_3),
$$
and thus the bound
\begin{equation}
\# \{ \text{free } j \text{ indices} \}
 = \sum_{n \geq 1} \ell_n 
\leq \frac14 p + \frac34 \ell_1 + \frac12 \ell_2 + \frac14 \ell_3.
\end{equation}
\item The number of (unpenalized) $2$-circuits is bounded by 
$$
4p_2 + 3(\ell_2-p_2) 
+ 6 p_3 + 5(\ell_3 - p_3)
+ 2 \sum_{n \geq 4} n \ell_n + m,
$$
which yields after simplifications, again using \eqref{partition}:
\begin{equation}
    \# \{ 2\text{-circuits} \} \leq 2p + p_2 -\ell_2+ p_3 - \ell_3 - 2 \ell_1 + m.
\end{equation}
\item The edges that are neither wasted nor part of any $2$-circuits can at best form $4$-circuits. By inspection, starting with $4p$ edges and discounting $2m_0$ wasted edges,  the total number of circuits is bounded by 
\begin{equation}
\# \{ \text{circuits} \} 
\leq p + \frac12 \#\{2\text{-circuits}\} - \frac12 m_0
\leq 2p + \frac12 (p_2 - \ell_2) + \frac12 (p_3 - \ell_3) - \ell_1 + \frac12 m - \frac12 m_0.
\end{equation}
\end{itemize}
According to the method outlined in Section \ref{s:Wgcalc}, the contribution of such a configuration is bounded by 
\begin{equation}\label{first_bound}
\underbrace{d^{\frac14 p + \frac34 \ell_1 + \frac12 \ell_2 + \frac14 \ell_3}}_{\text{free indices } j}
\times
\underbrace{d^{-4p} }_{-2q}
\times
\underbrace{d^{
2p + \frac12 (p_2 - \ell_2) + \frac12 (p_3 - \ell_3) - \ell_1 + \frac12 m - \frac12 m_0
} }_{\text{total number of circuits}}.
\end{equation}
After rearranging, the exponent becomes
\begin{equation}\label{second_bound}
-\frac32 p 
- \frac14 (p- \ell_1 -2\ell_2-3\ell_3)
- \frac14 (2\ell_2 - 2p_2)
- \frac13 (3\ell_3 - 3p_3)
- \frac16 (3p_3)
+ \frac12 (m-m_0)
- \frac12 \ell_1.
\end{equation}
Using \eqref{kkkk_bound}, we get the following bound on the exponent:
\begin{align*}\label{last_bound}
-\frac32 p 
- \frac16 \hspace{-.1in} \underbrace{(3 p_3 + 3 m_3)}_{\text{ \tiny smart triplets and 3-chains}} \hspace{-.1in}
&- \frac14 \underbrace{(p- \ell_1 -2\ell_2-3\ell_3)}_{\text{ \tiny terms in groups of size } n \geq 4}
\\
&- \frac14 \underbrace{(2\ell_2 - 2p_2)}_{\text{\tiny non-optimal pairs}}
- \frac13 \underbrace{(3\ell_3 - 3p_3)}_{\text{\tiny non-optimal triplets}}
- \frac14 \hspace{-.1in} \underbrace{(\ell_1-2m_2-3m_3)}_{\text{\tiny isolated terms outside smart chains}}, \hspace{-.1in}
\end{align*}
which means, in concrete terms, the following penalties from the optimal $- \frac32 p$ exponent:
\begin{itemize}
    \item[--] no penalty for smart pairs and smart $2$-chains;
    \item[--] penalties of $-1/6$ for smart triplets and smart $3$-chains;
    \item[--] penalties of $-1/4$ for unpaired and untripled terms, as well as for non-optimally paired terms;
    \item[--] a larger penalty of $-1/3$ for terms in non-optimal triplets, which we simply bound by $-\frac14$.
\end{itemize}
The claim follows directly, with the following remarks: smart $2$-chains are automatically counted as smart pairs when maximizing $p_2$ (so that $p_2^*$ includes both $p_2$ and $m_2$); similarly, smart $3$-chains are counted as smart triplets when maximizing $p_3$, so that $p_3^*$ includes both $p_3$ and $m_3$.

The above argument proves the result holds when replacing $\gamma(k,k,k,k)$ terms by the corresponding $\widetilde{\gamma}(k,k,k,k)$. To conclude the proof (according to the strategy outlined in Remark \ref{strategy}), we check that this replacement does not affect the result, owing to the small constant difference (of order $d^{-2}$) between $\gamma$ and $\widetilde{\gamma}$. Indeed, replacing each $\widetilde{\gamma}(k,k,k,k)$ by the true value $\gamma(k,k,k,k)$ in the product $\gamma_1 \cdots \gamma_p$ amounts, after expanding, to adding a certain number of products where the same $\widetilde{\gamma}(k,k,k,k)$ terms are simply replaced by a constant of order $d^{-2}$. All of these terms (which number is bounded in a way that depends only on $p$) have smaller order than the estimate: indeed, the contribution of each gamma term to the exponent is between $-\frac32$ and $-\frac32-\frac14=-\frac{14}{8}$, so is always larger than $-2$.
\end{proof}

We note that Theorem \ref{thm:moment_bound} implies the following two estimates on the moments of any gamma term and correlation of two gamma terms, respectively:

\begin{corollary}\label{cor:moment_bound} For any values of $k,l,m,n$ we have the following moment estimates, when $d \rightarrow \infty$:
\begin{equation}
\bbE |\gamma (k,l,m,n)|^{2p} = \bigO_p(d^{-3p}).
\end{equation}
\end{corollary}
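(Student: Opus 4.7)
The plan is to derive the bound as an immediate corollary of Theorem \ref{thm:moment_bound} applied to the product
$$
|\gamma(k,l,m,n)|^{2p} = \gamma(k,l,m,n)^p \, \overline{\gamma(k,l,m,n)}^p,
$$
regarded as a product of $2p$ gamma terms. The estimate will follow once I confirm that the optimal smart-pair count for this product is $p_2^* = p$.

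First I would pair each of the $p$ copies of $\gamma(k,l,m,n)$ with one of the $p$ copies of $\overline{\gamma(k,l,m,n)}$, identifying their summation indices $j = j'$. In the corresponding Weingarten graph, the eight edges at the common black vertex $j$ decompose into four $2$-circuits, one toward each of the white vertices $k, l, m, n$ (the $\psi_{jk}$ edge cancels with $\overline{\psi_{jk}}$, and so on). By the remark immediately following Definition \ref{smart_def}, this is exactly the configuration of a smart pair. The construction is insensitive to any coincidences among the indices $k, l, m, n$, since repeated white vertices just accumulate multiple $2$-circuits between the same two points. Hence all $2p$ terms can be arranged into smart pairs, giving $p_2^* = p$, and consequently $p_3^* = 0$ with no residual unmatched terms.

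Substituting into \eqref{penalty_count} (with total count $2p$ in place of $p$) would then yield
$$
\bbE |\gamma(k,l,m,n)|^{2p} = O_p\bigl(d^{-\frac{3}{2}(2p)}\bigr) = O_p(d^{-3p}),
$$
as claimed. The only delicate point I anticipate is the fully degenerate case $k = l = m = n$, in which $\gamma(k,k,k,k)$ must be handled via its centered counterpart $\widetilde{\gamma}(k,k,k,k)$ together with the deterministic discrepancy of order $d^{-2}$; but this is precisely the manoeuvre already carried out at the end of the proof of Theorem \ref{thm:moment_bound}, so it should transfer without modification. Beyond this bookkeeping, I do not foresee a genuine obstacle.
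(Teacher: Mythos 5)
Your proposal is correct and takes essentially the same route as the paper, which presents the corollary as an immediate consequence of Theorem \ref{thm:moment_bound} applied to the $2p$ factors in $\gamma(k,l,m,n)^p\,\overline{\gamma(k,l,m,n)}^p$. In fact your verification that $p_2^*=p$ (and the separate treatment of the $kkkk$ case) is not needed for the upper bound: the penalty terms in \eqref{penalty_count} are nonpositive, so the exponent is at most $-\tfrac32(2p)=-3p$ whatever $p_2^*$ and $p_3^*$ turn out to be, and Theorem \ref{thm:moment_bound} already covers gamma terms of every type.
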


\begin{corollary}\label{c:Special}
We have
\[
\bbE\left[\gamma(k,l,m,n)\gamma(k',l',m',n')\right] =\begin{cases} \sigma_{k,l,m,n}^2 d^{-3} + \bigO(d^{-4})&\qtq{if}\gamma(k',l',m',n')=\overline{\gamma(k,l,m,n)},\smallskip\\
\bigO(d^{-4})&\qtq{otherwise.}\end{cases}
\]
where $\sigma_{k,l,m,n}^2 > 0$, with \(\sigma_{k,l,m,n}^2 = 1\) if \(k,l,m,n\) are distinct.
\end{corollary}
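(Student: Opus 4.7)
My plan is to apply Theorem~\ref{thm:moment_bound} with $p = 2$, setting $\gamma_1 := \gamma(k,l,m,n)$ and $\gamma_2 := \gamma(k',l',m',n')$. Since $p = 2$, no smart triplet can form ($p_3^* = 0$), and by the remark following Definition~\ref{smart_def} a smart pair ($p_2^* = 1$) is possible only when $\gamma_2 = \overline{\gamma_1}$. This dichotomy yields exactly the two cases in the statement.

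In the matching case $\gamma_2 = \overline{\gamma_1}$, Theorem~\ref{thm:moment_bound} directly provides the upper bound $O(d^{-3})$; to identify the positive leading coefficient I would apply Theorem~\ref{t:WG} to $\bbE|\gamma_1|^2$. The only Weingarten configurations contributing at order $d^{-3}$ are those with $j_1 = j_2 = j$ (yielding a factor $d$ from the row-vertex sum) and $\sigma\tau^{-1} = \mathrm{id}_{\mf S_4}$, the unique permutation of $\mf S_4$ with four cycles, which produces $\Wg(\mathrm{id}_{\mf S_4}, d) = d^{-4}(1 + O(d^{-2}))$ with positive leading coefficient $\Moeb(\mathrm{id}) = 1$. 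The coefficient $\sigma^2_{k,l,m,n}$ therefore equals the strictly positive integer counting the permutations $\sigma \in \mf S_4$ that satisfy the column-matching constraints between $\gamma_1$ and $\overline{\gamma_1}$; when $k,l,m,n$ are distinct this count equals $1$, recovering the $p = 1$ case of Proposition~\ref{prop:moments_klmn}.

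In the non-matching case, $p_2^* = p_3^* = 0$ and Theorem~\ref{thm:moment_bound} yields only $O(d^{-7/2})$, weaker than the claimed $O(d^{-4})$. To obtain the sharper bound I would directly enumerate the circuit coverings of the two-gamma Weingarten graph. If $j_1 = j_2$ but no smart pair exists, then at most three two-circuits can form and the remaining edges must lie in circuits of length $\geq 4$, so the graph admits at most three circuits in total, giving a contribution of $d \cdot d^{-8} \cdot d^{3} = d^{-4}$. If $j_1 \neq j_2$, every circuit must alternate between the two row vertices and therefore has length $\geq 4$, yielding at most two four-circuits and a contribution of $d^2 \cdot d^{-8} \cdot d^{2} = d^{-4}$. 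Summing over these bounded families gives $\bbE[\gamma_1 \gamma_2] = O(d^{-4})$. The main obstacle is precisely this refinement beyond Theorem~\ref{thm:moment_bound}: its blanket $-\tfrac14$ penalty per unpaired term, while sharp for large products, loses a factor of $d^{-1/2}$ at $p = 2$, and only a direct inspection of the circuit structure of the two-gamma Weingarten graph recovers it.
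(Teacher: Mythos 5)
Your dichotomy via smart pairs, and your observation that the boxed statement of Theorem~\ref{thm:moment_bound} only yields \(\bigO(d^{-7/2})\) in the non-conjugate case, are both accurate (the paper indeed derives the corollary from the moment-bound machinery rather than giving a separate argument, and the sharper \(d^{-4}\) requires rerunning the penalty count at \(p=2\) rather than quoting the theorem). However, your identification of the constant, ``\(\sigma^2_{k,l,m,n}\) equals the number of column-matching permutations,'' is wrong as soon as indices repeat, and it contradicts Proposition~\ref{prop:moments_klmn} itself: for the type \(kkmn\) (with \(m\neq n\)) the count is \(2\), while the proposition (``the same holds if \(l=k\)'') gives \(\sigma^2=1\). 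The point is that the \(\gamma\)'s are (essentially) centered: pairings in which the two \(k\)-edges close up inside a single gamma term are atoms and are penalized by Theorem~\ref{thm:Wg_penalty}, and moreover configurations with \(j_1\neq j_2\) and three circuits enter at order \(d^{-3}\) with the negative M\oe bius sign of a transposition. Concretely, for \(\bbE|\gamma(k,k,m,n)|^2\) the \(j_1=j_2\), \(\sigma\tau^{-1}=\mathrm{id}\) terms give \(2d^{-3}\) and the \(j_1\neq j_2\) three-circuit terms give \(-d^{-3}\), summing to \(d^{-3}\); so your claim that only \(j_1=j_2\), \(\sigma\tau^{-1}=\mathrm{id}\) contributes at leading order, and with it your positivity argument for the non-distinct types, does not go through as written.

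The non-conjugate case has the same gap. The counts ``at most three \(2\)-circuits when \(j_1=j_2\)'' and ``when \(j_1\neq j_2\) every circuit alternates between the two row vertices, hence has length \(\geq 4\)'' are only valid when neither gamma term has a column index repeated between a \(\psi\)-slot and a \(\overline{\psi}\)-slot. Take \(\gamma_1=\gamma(k,k,m,m)\) and \(\gamma_2=\gamma(l,l,n,n)\) with \(k,m,l,n\) distinct, so certainly \(\gamma_2\neq\overline{\gamma_1}\): the uncentered Weingarten sums admit a covering by four within-gamma \(2\)-circuits, and your raw count gives \(d\cdot d^{-8}\cdot d^{4}=d^{-3}\) for \(j_1=j_2\) and \(d^{2}\cdot d^{-8}\cdot d^{4}=d^{-2}\) for \(j_1\neq j_2\), far worse than \(d^{-4}\). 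The bound \(\bigO(d^{-4})\) only emerges after exploiting that these \(2\)-circuits are atoms of the centered gammas, via \eqref{gamma_aabb}, \eqref{gamma_kkkk}, Theorem~\ref{thm:Wg_penalty}, and the treatment of the deterministic shift of the \(kkkk\) type in Remark~\ref{strategy} (in this example the whole \(d^{-4}\) comes from \(\bbE\gamma_1\,\bbE\gamma_2\), each factor being \(\bigO(d^{-2})\)). So the missing idea in both halves is the atom-penalty mechanism: one must redo the circuit count for \(p=2\) \emph{with} the penalties (and, for the exact value of \(\sigma^2\), a type-by-type computation as in Proposition~\ref{prop:moments_klmn}), not a pure circuit enumeration of the uncentered expression.
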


\subsection{The semicircle law}\label{s:LSC} A fundamental result from random matrix theory is the \emph{semicircle law}, which is to say that, under generic assumptions, and in particular for GUE, the empirical distribution of the eigenvalues converges almost surely as \(N\to\infty\) to the semicircle distribution \eqref{Semicircle}.

In fact, we have some rather stronger results, the proof of which can be found in, e.g.,~\cite{benaychgeorges2018lectures}:

\begin{lemma}[Local semicircle law and eigenvalue rigidity]\label{l:LSC} Let the vector of eigenvalues, \(\lambda\), be distributed according to the density \eqref{Beta_ens}. Then:
\begin{itemize}[topsep=0pt, itemsep=\parskip]
\item We have the local semicircle law
\eq{LocalSC}{
\#\bigl\{\lambda_k\in I\bigr\} = (2N+1)\int_I\,d\sigma_{\mr{sc}} + \bigO_\prec(1),
}
uniformly for all intervals \(I\subseteq \R\).

\item If we define \(\nu(\kappa)\) for \(\kappa\in[-1,1]\) as in \eqref{nu} then we have the eigenvalue rigidity estimate
\eq{Rigid}{
\lambda_k = \nu\bigl(\tfrac kN\bigr) + \bigO_\prec \left(N^{-\frac23}\bigl(N+1-|k|\bigr)^{-\frac13}\right).
}
\end{itemize}
\end{lemma}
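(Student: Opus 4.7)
The plan is to derive both statements from quantitative control of the Stieltjes transform of the empirical spectral measure, following the classical resolvent-based approach. I would set $m_N(z) := \tfrac{1}{2N+1}\operatorname{tr}(H-z)^{-1}$ for $z = E + i\eta$ in the upper half-plane and use the Schur complement formula to express the diagonal resolvent entries $G_{jj}(z)$ in terms of the minor resolvent $G^{(j)}$ obtained by deleting row/column $j$ and a quadratic form $\mathbf{h}_j^* G^{(j)} \mathbf{h}_j$ in the deleted Gaussian column. Independence of $\mathbf{h}_j$ from $G^{(j)}$ together with Hanson--Wright-type concentration yields that these quadratic forms are close to $\tfrac{1}{2N+1}\operatorname{tr} G^{(j)} \approx m_N(z)$, so that $m_N(z)$ approximately satisfies the defining equation $m_{\mr{sc}}(z)^2 + z\, m_{\mr{sc}}(z) + 1 = 0$ of the semicircle Stieltjes transform. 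A stability/continuity bootstrap, descending from the trivial scale $\eta \sim 1$ down to $\eta \gtrsim N^{-1+\delta}$ and simultaneously tracking the off-diagonal entries $G_{jk}(z)$, would then yield the optimal local law $|m_N(z) - m_{\mr{sc}}(z)| \prec \tfrac{1}{N\eta}$ on this spectral domain.

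From this resolvent estimate, the local semicircle law \eqref{LocalSC} follows by applying the Helffer--Sj\"ostrand functional calculus to smooth approximations of $\bbo_I$: the $\bigO_\prec(1)$ error reflects the regularization at the optimal scale $\eta \sim 1/N$ below which the local law degenerates. I would then deduce the rigidity estimate \eqref{Rigid} by applying \eqref{LocalSC} to intervals of the form $(-\infty, \nu(k/N)]$: by the definition \eqref{nu} we have $(2N+1)\int_{-\infty}^{\nu(k/N)} d\sigma_{\mr{sc}} = k + N$, so the local law pins the rank of $\lambda_k$ within $\bigO_\prec(1)$ of its deterministic value. Inverting the cumulative semicircle distribution converts this count-level statement into a bound on $|\lambda_k - \nu(k/N)|$, and the non-uniform prefactor $N^{-2/3}(N+1-|k|)^{-1/3}$ emerges because the local spacing of the semicircle quantiles dilates from bulk scale $1/N$ (at $|k|\ll N$) to edge scale $N^{-2/3}$ (at $|k|\sim N$), as a consequence of the $\sqrt{4-x^2}$ vanishing of $d\sigma_{\mr{sc}}$ near $\pm 2$.

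The main technical obstacle is implementing the self-consistent equation at near-optimal scale $\eta \gtrsim N^{-1+\delta}$: one must simultaneously control all diagonal and off-diagonal resolvent entries with high probability, running a delicate self-improving bootstrap that preserves the stochastic domination \(\prec\) through the entire chain of estimates (including the Hanson--Wright step, whose output feeds back into the definition of the event on which the next iteration is performed). The edge regime is particularly subtle, as the stability of the fixed-point equation for $m_{\mr{sc}}$ degenerates at $\pm 2$ and additional square-root type improvements, localized in a boundary layer of width $N^{-2/3}$, are required to obtain the sharp edge exponent implicit in \eqref{Rigid}.
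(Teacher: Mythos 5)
Your outline is essentially the paper's approach: the paper does not prove Lemma~\ref{l:LSC} at all but cites the standard local semicircle law literature (Benaych-Georges--Knowles), and the resolvent/self-consistent-equation argument you sketch --- Schur complement, concentration of the quadratic forms, bootstrap in $\eta$ down to $N^{-1+\delta}$, Helffer--Sj\"ostrand for the counting estimate, and inversion of the semicircle quantiles (with the edge layer treated separately) for rigidity --- is precisely the proof given there. The sketch is correct as a plan, with the acknowledged caveat that the bootstrap and the edge analysis (including the improved bound excluding eigenvalues far outside $[-2,2]$, needed to pass from the rank estimate to \eqref{Rigid} at the spectral edge) constitute the bulk of the technical work.
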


Our proof of Theorem~\ref{t:main} will rely on the following corollary of Lemma~\ref{l:LSC}:

\begin{corollary}\label{c:l}
Let \(\alpha\in \R\) and \(1\leq T\leq N\). Then,
\begin{align}
\sum_{k=-N}^N\frac1{\frac1T + |\lambda_k-\alpha|} &\prec N \log\bigl(16 + |\alpha|\bigr) ,\label{L1}
\end{align}
uniformly in \(\alpha\).
\end{corollary}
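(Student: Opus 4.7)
The plan is to perform a dyadic decomposition of the sum according to the distance $|\lambda_k - \alpha|$, and to count the eigenvalues in each shell using the local semicircle law \eqref{LocalSC}; eigenvalue rigidity \eqref{Rigid} will be used only to truncate the range of dyadic indices.

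First I would choose $J \in \N$ with $2^{J-1}/T \geq |\alpha|+3 > 2^{J-2}/T$, so that $J \lesssim 1 + \log_+\bigl(T(|\alpha|+3)\bigr)$. Define the shells
\[
I_0 := \{x : |x-\alpha|\leq 1/T\},\qquad I_j := \{x : 2^{j-1}/T < |x-\alpha|\leq 2^j/T\}\quad(1\leq j\leq J).
\]
Eigenvalue rigidity \eqref{Rigid} ensures $\max_k|\lambda_k|\leq 2 + \bigO_\prec(N^{-2/3})$, so that with overwhelming probability no $\lambda_k$ lies outside $\bigcup_{j=0}^J I_j$. On each $I_j$ the summand is bounded by $T$ for $j=0$ and by $T/2^{j-1}$ for $j\geq 1$, while the local semicircle law \eqref{LocalSC}, combined with $\|d\sigma_{\mr{sc}}/dx\|_{L^\infty}\leq 1/\pi$ and $|I_j|\lesssim 2^j/T$, yields
\[
\#\{k:\lambda_k\in I_j\} \leq C\,\tfrac{N\,2^j}{T} + \bigO_\prec(1),
\]
uniformly in $j$ and $\alpha$ (the uniformity in $\alpha$ being essential and directly supplied by \eqref{LocalSC}).

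Next I would combine these two estimates shell by shell. Shell $I_0$ contributes at most $T\cdot(CN/T + \bigO_\prec(1)) = CN + \bigO_\prec(T)$. For $j\geq 1$, shell $I_j$ contributes at most $(T/2^{j-1})\cdot(CN\,2^j/T + \bigO_\prec(1)) = 2CN + \bigO_\prec(T/2^{j-1})$. Summing over $0\leq j\leq J$ gives
\[
\sum_{k=-N}^N\frac1{\tfrac1T + |\lambda_k-\alpha|} \leq CN(J+1) + \bigO_\prec(T) \lesssim N\bigl[\log T + \log(|\alpha|+3)\bigr] + \bigO_\prec(N),
\]
using the geometric sum $\sum_{j\geq 1} T/2^{j-1}\lesssim T\leq N$.

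Finally, since $T\leq N$, the quantity $N\log T \leq N\log N$ is deterministically bounded by $N^{1+\delta}$ for every $\delta>0$, hence $N\log T \prec N$. Because $\log(16+|\alpha|)\geq \log 16>0$, we have $N\prec N\log(16+|\alpha|)$ and $N\log(|\alpha|+3)\lesssim N\log(16+|\alpha|)$ deterministically. Combining these observations gives the claimed bound. The only non-routine point is the uniformity in $\alpha$, for which I rely entirely on the uniformity clause in \eqref{LocalSC}; everything else amounts to a bookkeeping exercise that absorbs the $\log T$ factor into the definition of $\prec$.
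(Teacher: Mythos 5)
Your proof is correct. It rests on the same two pillars as the paper's argument — the local semicircle law \eqref{LocalSC}, used uniformly over intervals to count eigenvalues near $\alpha$, and the absorption of the harmless $\log T\leq\log N$ factor into the definition of $\prec$ — but the bookkeeping is organized differently. The paper decomposes via a partition of unity at the finest scale $1/N$, so that each window carries $\bigO_\prec(1)$ eigenvalues and the summand is bounded by $T$, $N/|j|$, or $1$ according to the distance from $\alpha$; the logarithm then arises from the harmonic sum $\sum N/|j|$ over the medium range, and the far region is handled by \eqref{LocalSC} itself through $\#\{|\lambda_k|\geq 8\}\prec 1$. You instead use dyadic shells at scales $2^j/T$ centered at $\alpha$, count $\lesssim N2^j/T+\bigO_\prec(1)$ eigenvalues per shell, and extract the logarithm from the number of shells, using rigidity \eqref{Rigid} only to truncate the shell index. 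Your version is slightly more economical (each shell contributes $O(N)$ plus a geometrically summable error, so no harmonic sum is needed), while the paper's $1/N$-scale decomposition is the one that generalizes directly to the sharper pointwise counting used elsewhere in Section~3; note also that your truncation via rigidity could equally be done with \eqref{LocalSC} alone, as in the paper, so the corollary need not invoke \eqref{Rigid} at all. In both arguments the uniformity in $\alpha$ comes from the interval-uniform formulation of \eqref{LocalSC}, exactly as you point out, so no union bound over the (possibly $\alpha$-dependent) number of shells is required.
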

\bpf
Choose a partition of unity
\(
1 = \sum_{j\in \Z}\eta(\omega - j),
\)
where \(\eta\in \Test(\R)\) is a non-negative function supported in \([-2,2]\). We then set \(\eta_{j,N}(x) = \eta(Nx-j)\), which localizes to the region \(x = \frac jN + \bigO(\frac1N)\). Setting \(x_k = \lambda_k-\alpha\), we estimate the small, medium, and large choices of \(|j|\) as
\[
\frac1{\frac1T + |x_k|}\lesssim \sum_{|j|\leq 4\frac NT}T\,\eta_{j,N}(x_k) + \sum_{4\frac NT< |j|\leq (16 +|\alpha|)N}\frac N{|j|}\,\eta_{j,N}(x_k) + \sum_{|j|> (16+|\alpha|)N}\eta_{j,N}(x_k).
\]

Let us note that
\[
\sum_k \eta_{j,N}(x_k) \lesssim \#\Bigl\{\lambda_k\in \bigl[\alpha + \tfrac {j-2}N,\alpha + \tfrac{j+2}N\bigr]\Bigr\},
\]
From \eqref{LocalSC} and the fact that \(\sigma_{\mr{sc}}\) is supported on \([-2,2]\), we see that the contribution of the large choices of \(|j|\) should be negligible. Precisely,
\[
\sum_k\sum_{|j|> (16+|\alpha|)N}\eta_{j,N}(x_k) \lesssim \#\bigl\{|\lambda_k|\geq 8\bigr\}\prec 1.
\]
For the small and medium choices of \(|j|\), where \(|j|\leq (16+|\alpha|)N\), we again apply \eqref{LocalSC} to estimate
\[
\sum_k \eta_{j,N}(x_k)\prec 1.
\]

Combining these bounds, we may sum to obtain
\[
\sum_k\frac1{\frac1T + |x_k|}\prec N\Bigl[1 + \log T + \log\bigl(16 + |\alpha|\bigr)\Bigr],
\]
and the estimate \eqref{L1} now follows from the fact that, as \(1\leq T\leq N\), we have \(\log T\prec 1\).
\epf

\section{The leading order terms}\label{s:LOT}

In this section we investigate the leading order terms in our expansion and prove Proposition~\ref{p:LOT}. We subsequently denote the conditional expectation \(\bbE_\lambda[\,\cdot\,] = \bbE[\,\cdot\,|\,\lambda\,]\).

Recalling the definition of the profile \eqref{Profile}, the modulation \eqref{Osc}, and the interaction coefficient \eqref{Mom}, we may use the Duhamel formula \eqref{Duhamel} to obtain the expressions
\begin{align}
f_k\sbrack 0(t) &= a_k\sbrack 0 = A\left(\tfrac kN\right),\\
f_k\sbrack 1(t) &= -i\frac{\mu^2}N\sum_{\ell,m,n}\left(\int_0^t e^{is\Omega(k,\ell,m,n)}\,ds\right)\gamma(k,\ell,m,n)a_\ell\sbrack 0\overline{a_m\sbrack 0}a_n\sbrack 0,\label{fk1}\\
f_k\sbrack 2(t)&=-2\frac{\mu^4}{N^2}\sum_{\ell,m,n,o,p,q}\left(\int_0^t\int_0^se^{is\Omega(k,\ell,m,n)}e^{i\sigma\Omega(\ell,o,p,q)}\,d\sigma\,ds\right)\\
&\qquad\qquad\qquad\qquad\qquad\qquad\qquad\qquad\times\gamma(k,\ell,m,n)\gamma(\ell,o,p,q)a_o\sbrack 0\overline{a_p\sbrack 0}a_q\sbrack 0\overline{a_m\sbrack 0}a_n\sbrack 0\notag\\
&\quad + \frac{\mu^4}{N^2}\sum_{\ell,m,n,o,p,q}\left(\int_0^t\int_0^s e^{is\Omega(k,\ell,m,n)}e^{-i\sigma\Omega(m,o,p,q)}\,d\sigma\,\,ds\right)\notag\\
&\qquad\qquad\qquad\qquad\qquad\qquad\qquad\qquad\times\gamma(k,\ell,m,n)\overline{\gamma(m,o,p,q)}a_\ell\sbrack 0\overline{a_o\sbrack 0}a_p\sbrack 0\overline{a_q\sbrack 0}a_n\sbrack 0.\notag
\end{align}
We may then write
\begin{align}\label{ExpansionLOT}
&\bbE_\lambda\bigl|a_k\sbrack 0 + a_k\sbrack 1\bigr|^2 + 2\Re\bbE_\lambda\Bigl[\overline{a_k\sbrack 0}a_k\sbrack2\Bigr]\\
&\qquad = \bbE_\lambda \bigl|f_k\sbrack 0\bigr|^2 + \bbE_\lambda\bigl|f_k\sbrack 1\bigr|^2 + 2\Re\bbE_\lambda\Bigl[\overline{f_k\sbrack 0}f_k\sbrack 1\Bigr] + 2\Re\bbE_\lambda\Bigl[\overline{f_k\sbrack 0}f_k\sbrack 2\Bigr].\notag
\end{align}
Our first lemma rules out the quartic (in the initial data) terms in \eqref{ExpansionLOT}:

\begin{lemma}\label{l:mu2}
We have
\eq{mu2cancels}{
2\Re\bbE_\lambda\Bigl[\overline{f_k\sbrack 0}f_k\sbrack 1\Bigr] = 0.
}
\end{lemma}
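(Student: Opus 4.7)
The plan is built on the observation that $f_k^{(0)} = A(k/N)$ is deterministic (independent of both $\lambda$ and $\Psi$), so the conditional expectation $\bbE_\lambda[\overline{f_k^{(0)}} f_k^{(1)}]$ acts nontrivially only on the factor $\gamma(k,\ell,m,n)$ appearing in the expansion \eqref{fk1}. I would pull the deterministic quantities $\overline{A(k/N)}$, the three initial-data factors $A(\ell/N)\,\overline{A(m/N)}\,A(n/N)$, and the time integral (which depends only on $\lambda$, not on $\Psi$) out of the $\Psi$-expectation. This reduces the entire question to identifying the structure of $\bbE[\gamma(k,\ell,m,n)]$.

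Next, I would evaluate $\bbE[\gamma(k,\ell,m,n)]$ by applying the $q=2$ case of Weingarten's formula \eqref{Weingarten} (or, equivalently, by direct computation from $\sum_k |\psi_{jk}|^2 = 1$ together with the standard CUE fourth moments). The admissibility conditions \eqref{admissible} immediately force the only nonvanishing configurations of indices to satisfy either $\{k=\ell,\,m=n\}$ or $\{k=n,\,\ell=m\}$, while the explicit subtractions in the definition \eqref{Mom} of $\gamma$ keep the answer small but manifestly real. Hence $\bbE[\gamma(k,\ell,m,n)]\in\R$, and it vanishes outside these two trivial pairings.

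The final step is the essentially tautological observation that in either trivial pairing one has $\Omega(k,\ell,m,n) = \lambda_k - \lambda_\ell + \lambda_m - \lambda_n \equiv 0$, so that $\int_0^t e^{is\Omega}\,ds = t \in \R$, and the collapsed data factor $\overline{A(k/N)}\,A(\ell/N)\,\overline{A(m/N)}\,A(n/N)$ reduces to a nonnegative real number of the form $|A(k/N)|^2|A(m/N)|^2$ (or $|A(k/N)|^2|A(\ell/N)|^2$, or $|A(k/N)|^4$ when the two pairings coincide). Combining these three observations, $\bbE_\lambda[\overline{f_k^{(0)}} f_k^{(1)}]$ equals $-i\frac{\mu^2}{N}$ times a real sum, so $2\Re\bbE_\lambda[\overline{f_k^{(0)}} f_k^{(1)}] = 0$. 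There is no genuine obstacle here: the lemma simply encodes the classical fact that the trivial-resonance contribution at order $\mu^2$ is purely reactive (a phase shift), and the only substantive calculation is the short Weingarten identification of $\bbE[\gamma]$.
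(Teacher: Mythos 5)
Your proposal is correct and follows essentially the same route as the paper's proof: reduce to $\bbE\gamma(k,\ell,m,n)$, use Weingarten to see it vanishes outside the pairings $k=\ell,m=n$ and $k=n,\ell=m$ and is real there, note that $\Omega\equiv 0$ on these pairings so the time integral is just $t$ and the data factor collapses to moduli squared, leaving $-i\tfrac{\mu^2}{N}$ times a real sum whose real part vanishes. Nothing further is needed.
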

\bpf
Observe that
\[
\overline{f_k\sbrack 0}f_k\sbrack 1 = -i\frac{\mu^2}N\sum_{\ell,m,n}\left(\int_0^t e^{is\Omega(k,\ell,m,n)}\,ds\right)\gamma(k,\ell,m,n)\overline{a_k\sbrack 0}a_\ell\sbrack 0\overline{a_m\sbrack 0}a_n\sbrack 0.
\]
Recalling \eqref{Weingarten}, we see that
\[
\bbE\gamma(k,\ell,m,n) = 0,
\]
unless \(k=\ell,m=n\) or \(k=n,\ell=m\). This leads to the simplification
\begin{align*}
\bbE_\lambda\Bigl[\overline{f_k\sbrack 0}f_k\sbrack 1\Bigr] &= -2i\frac{\mu^2 t}N\sum_\ell\bbE\gamma(k,k,\ell,\ell)\bigl|a_k\sbrack 0\bigr|^2\bigl|a_\ell\sbrack 0\bigr|^2+i\frac{\mu^2 t}N\bbE\gamma(k,k,k,k)\bigl|a_k\sbrack 0\bigr|^4.
\end{align*}
However, this term is clearly imaginary, so taking the real part we obtain \eqref{mu2cancels}.
\epf

Next, we apply Corollary~\ref{c:Special} to obtain the following:

\begin{lemma}\label{l:mu4}
We have
\begin{align}
\bbE_\lambda\Bigl[\bigl|f_k\sbrack 1\bigr|^2\Bigr]
&= \frac{2\mu^4}{N^2(2N+1)^3} \sum_{\substack{\ell,m,n}} \left|\int_0^t e^{is\Omega(k,\ell,m,n)}\,ds\right|^2\bigl|a_\ell\sbrack 0\bigr|^2\bigl|a_m\sbrack 0\bigr|^2\bigl|a_n\sbrack 0\bigr|^2 + \bigO\left(\tfrac{t}{T_\kin} \tfrac tN\right),\label{mu4-1}\\
2\Re\bbE_\lambda\Bigl[\overline{f_k\sbrack 0}f_k\sbrack 2\Bigr]&
= - \frac{8\mu^4}{N^2(2N+1)^3}\Re\sum_{\substack{\ell,m,n}}\left(\int_0^t\int_0^s e^{i(s-\sigma)\Omega(k,\ell,m,n)}\,d\sigma\,ds\right)\bigl|a_k\sbrack 0\bigr|^2\bigl|a_n\sbrack 0\bigr|^2\bigl|a_m\sbrack 0\bigr|^2\notag\\
&\quad + \frac{4\mu^4}{N^2(2N+1)^3}\Re\sum_{\substack{\ell,m,n}}\left(\int_0^t\int_0^s e^{i(s-\sigma)\Omega(k,\ell,m,n)}\,d\sigma\,ds\right)\bigl|a_k\sbrack 0\bigr|^2\bigl|a_\ell\sbrack 0\bigr|^2\bigl|a_n\sbrack 0\bigr|^2\label{mu4-2} \\
& \quad + \bigO\left(\tfrac{t}{T_\kin} \tfrac tN\right).\notag
\end{align}
\end{lemma}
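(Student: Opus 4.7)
The plan is to substitute the Duhamel expressions for $f_k^{(1)}$ and $f_k^{(2)}$, apply the conditional expectation $\bbE_\lambda$ (which averages only over $\Psi$), and evaluate the resulting expectations of products of $\gamma$-factors using Corollary~\ref{c:Special}. The time integrals and $a^{(0)}$-factors remain as deterministic weights, so the task reduces to identifying the index configurations at which the Weingarten-type expectations contribute at leading order $d^{-3} = (2N+1)^{-3}$.

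For $\bbE_\lambda|f_k^{(1)}|^2$, expansion of \eqref{fk1} gives
\begin{equation*}
\bbE_\lambda|f_k^{(1)}|^2 = \tfrac{\mu^4}{N^2}\!\!\!\sum_{\ell,m,n,\ell',m',n'}\!\!\! I(\ell,m,n)\,\overline{I(\ell',m',n')}\,\bbE[\gamma(k,\ell,m,n)\overline{\gamma(k,\ell',m',n')}]\,a_\ell\bar a_m a_n\bar a_{\ell'}a_{m'}\bar a_{n'},
\end{equation*}
where $I(\ell,m,n) := \int_0^t e^{is\Omega(k,\ell,m,n)}\,ds$ and $a_j := A(j/N)$. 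Corollary~\ref{c:Special} gives the kernel as $d^{-3} + O(d^{-4})$ precisely when the two $\gamma$-objects coincide, and $O(d^{-4})$ otherwise. Since $\gamma(k,\ell,m,n)$ is invariant under the swap of its $\psi$-indices (the commutativity identity $\gamma(k,\ell,m,n)=\gamma(k,n,m,\ell)$), the two generic matchings are $(\ell',m',n')=(\ell,m,n)$ and $(\ell',m',n')=(n,m,\ell)$. Both contribute the identical integrand $|I(\ell,m,n)|^2|a_\ell|^2|a_m|^2|a_n|^2$ (the phase equality uses $\Omega(k,n,m,\ell)=\Omega(k,\ell,m,n)$), producing the overall factor of $2$ in \eqref{mu4-1}.

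The derivation of \eqref{mu4-2} follows the same template applied to the two terms of $f_k^{(2)}$. For the first, the kernel $\bbE[\gamma(k,\ell,m,n)\gamma(\ell,o,p,q)]$ has generic matchings $(o,p,q)\in\{(k,n,m),(m,n,k)\}$; both produce the integrand $|a_k|^2|a_m|^2|a_n|^2 e^{i(s-\sigma)\Omega(k,\ell,m,n)}$ using $\Omega(\ell,o,p,q)=-\Omega(k,\ell,m,n)$, and combined with the coefficient $-2\mu^4/N^2$, multiplication by $\overline{a_k^{(0)}}$, and the $2\Re$ operation they yield the $-8\mu^4/(N^2(2N+1)^3)$ term. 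For the second, the kernel $\bbE[\gamma(k,\ell,m,n)\overline{\gamma(m,o,p,q)}]$ has matchings $(o,p,q)\in\{(\ell,k,n),(n,k,\ell)\}$, both producing $|a_k|^2|a_\ell|^2|a_n|^2 e^{i(s-\sigma)\Omega(k,\ell,m,n)}$ (via the identity $\Omega(m,o,p,q)=\Omega(k,\ell,m,n)$ at these matchings), yielding the $+4\mu^4/(N^2(2N+1)^3)$ term.

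The main technical obstacle is the error estimate $O\bigl(\tfrac{t}{T_\kin}\tfrac{t}{N}\bigr)=O\bigl(\tfrac{\mu^4 t^2}{N^3}\bigr)$. A naive bound on the non-matched contributions using $|I|\leq t$ and the $O(d^{-4})$ per-term estimate is too weak by several powers of $N$, since the sum ranges over six indices. The sharpening exploits the fact that the $O(d^{-4})$ non-leading contributions to $\bbE[\gamma\bar\gamma']$ and $\bbE[\gamma\gamma']$ arise only in the presence of additional index coincidences beyond those minimally required for a smart pair, which restricts the non-matched sums to lower-dimensional sub-lattices. Combining this with the oscillatory bound $|I(\ell,m,n)|\lesssim\min(t,|\Omega(k,\ell,m,n)|^{-1})$ and the sum estimate of Corollary~\ref{c:l} (built on the eigenvalue rigidity of Lemma~\ref{l:LSC}), each class of sub-leading configurations is bounded by $O(\mu^4 t^2/N^3)$. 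Carrying out the full case analysis of these sub-leading Weingarten pairings constitutes the principal technical work of the lemma.
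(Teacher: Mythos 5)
Your identification of the leading terms is correct and matches the paper's: after expanding and applying $\bbE_\lambda$, only exact pairings of the two $\gamma$-factors survive at order $(2N+1)^{-3}$, and the two admissible matchings (coming from $\gamma(k,\ell,m,n)=\gamma(k,n,m,\ell)$ and $\overline{\gamma(k,\ell,m,n)}=\gamma(\ell,k,n,m)$) produce the prefactors $2$, $-8$, $+4$ exactly as you computed. The gap is in the error estimate, which you yourself defer as ``the principal technical work'' and for which the route you sketch is misdirected. The correct bookkeeping, and the one the paper uses, is elementary: by the Weingarten formula the expectation vanishes unless the column indices pair off, so only configurations with $\#\{k,\ell,m,n,\ell',m',n'\}\le 4$, i.e.\ at most three free summation indices, contribute --- your assertion that the naive bound is short ``by several powers of $N$ since the sum ranges over six indices'' overlooks precisely this constraint. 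Once it is imposed, the trivial bound $|I|\le t$ together with the dichotomy of Corollary~\ref{c:Special} closes everything: configurations with at most $3$ distinct indices have at most two free indices and kernel $O(N^{-3})$, giving $\tfrac{\mu^4}{N^2}\,t^2\,N^2\,N^{-3}=\tfrac{\mu^4t^2}{N^3}=\tfrac t{T_{\kin}}\tfrac tN$; configurations with $4$ distinct indices that are not exact pairings, e.g.\ $\gamma(k,\ell,m,m)\overline{\gamma(k,\ell,p,p)}$, still carry three free indices --- they are \emph{not} confined to lower-dimensional sub-lattices, contrary to the mechanism you invoke --- but their kernel is $O(N^{-4})$, which again yields $\tfrac{\mu^4t^2}{N^3}$; finally, relaxing the distinctness constraint in the main term costs one more error of the same size.

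In particular, neither the oscillatory bound $|I(\ell,m,n)|\lesssim\min\bigl(t,|\Omega|^{-1}\bigr)$ nor Corollary~\ref{c:l} and eigenvalue rigidity enter this lemma: the estimate is purely a count of free indices against the powers of $N^{-1}$ supplied by the Weingarten kernel. Relying on Corollary~\ref{c:l} would in fact weaken the statement, since that bound holds only with overwhelming probability in $\lambda$, whereas the error in \eqref{mu4-1}--\eqref{mu4-2} is a bound uniform in $\lambda$ (the conditional expectation $\bbE_\lambda$ averages only over $\Psi$); the $\lambda$-dependent, rigidity-based corrections are handled separately afterwards, in Lemma~\ref{l:OmegaCancels}.
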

\bpf
We present the computation in detail for \eqref{mu4-1}. The argument for \eqref{mu4-2} is identical.

Using the expression \eqref{fk1}, we compute that
\begin{align*}
\bigl|f_k\sbrack 1\bigr|^2 &= \frac{\mu^4}{N^2}\sum_{\ell,m,n,o,p,q}\left(\int_0^t e^{is\Omega(k,\ell,m,n)}\,ds\right)\left(\int_0^t e^{-is\Omega(k,o,p,q)}\,ds\right)\\
&\quad\qquad\qquad\qquad\qquad\qquad\qquad\qquad\times\gamma(k,\ell,m,n)\overline{\gamma(k,o,p,q)}a_\ell\sbrack 0\overline{a_m\sbrack 0}a_n\sbrack 0\overline{a_o\sbrack 0}a_p\sbrack 0\overline{a_q\sbrack 0}.
\end{align*}
Recalling \eqref{Weingarten}, we see that after taking the conditional expectation we only obtain a non-zero contribution when indices are paired, which implies that $\#\{k,\ell,m,n,o,p,q\} \leq 4$. In particular, we may decompose
\[
\bbE_\lambda\Bigl[\bigl|f_k\sbrack 1\bigr|^2\Bigr] = E_1 + E_2 + E_3 + E_4,
\]
where
\begin{align*}
E_r &= \frac{\mu^4}{N^2}\sum_{r=1}^4 \sum_{\substack{\ell,m,n,o,p,q\\\#\{k,\ell,m,n,o,p,q\} = r}}\left(\int_0^t e^{is\Omega(k,\ell,m,n)}\,ds\right)\left(\int_0^t e^{-is\Omega(k,o,p,q)}\,ds\right)\\
&\quad\qquad\qquad\qquad\qquad\qquad\qquad\qquad\times\bbE\Bigl[\gamma(k,\ell,m,n)\overline{\gamma(k,o,p,q)}\Bigr]a_\ell\sbrack 0\overline{a_m\sbrack 0}a_n\sbrack 0\overline{a_o\sbrack 0}a_p\sbrack 0\overline{a_q\sbrack 0}.
\end{align*}

In the cases \(r=1,2,3\), Corollary~\ref{c:Special} ensures that
\[
\bbE\Bigl[\gamma(k,\ell,m,n)\overline{\gamma(k,o,p,q)}\Bigr] = \bigO(N^{-3}).
\]
We then use the trivial bound
\[
\left|\left(\int_0^t e^{is\Omega(k,\ell,m,n)}\,ds\right)\left(\int_0^t e^{-is\Omega(k,o,p,q)}\,ds\right)\right|\leq t^2
\]
to estimate
\[
|E_r|\lesssim \frac t{T_{\kin}}\frac t N,
\]
where we note that as \(k\) is fixed the sum over \(\ell,m,n,o,p,q\) has dimension \(r-1\).

When \(r=4\) we have
\begin{align*}
E_4 &= \frac{2\mu^4}{N^2}\!\!\!\!\!\sum_{\substack{\ell,m,n\\\#\{k,\ell,m,n\} = 4}}\!\!\!\!\!\left|\int_0^t e^{is\Omega(k,\ell,m,n)}\,ds\right|^2\bbE|\gamma(k,\ell,m,n)|^2\bigl|a_\ell\sbrack 0\bigr|^2\bigl|a_m\sbrack 0\bigr|^2\bigl|a_n\sbrack 0\bigr|^2\\
&\quad + \frac{4\mu^4}{N^2}\!\!\!\!\!\sum_{\substack{\ell,m,p\\\#\{k,\ell,m,p\}=4}}\!\!\!\!\!\left(\int_0^t e^{is\Omega(k,\ell,m,m)}\,ds\right)\left(\int_0^t e^{-is\Omega(k,\ell,p,p)}\,ds\right)\\
&\qquad\qquad\qquad\qquad\qquad\qquad\qquad\qquad\times\bbE\left[\gamma(k,\ell,m,m)\overline{\gamma(k,\ell,p,p)}\right]\bigl|a_\ell\sbrack 0\bigr|^2\bigl|a_m\sbrack 0\bigr|^2\bigl|a_p\sbrack 0\bigr|^2.
\end{align*}
For distinct \(k,\ell,m,n\), Corollary~\ref{c:Special} gives us
\[
\bbE|\gamma(k,\ell,m,n)|^2 = (2N+1)^{-3} + \bigO(N^{-4}),
\]
whereas, for distinct \(k,\ell,m,p\), we have
\[
\bbE\left[\gamma(k,\ell,m,m)\overline{\gamma(k,\ell,p,p)}\right] = \bigO(N^{-4}).
\]
These yield
$$
E_4 = \frac{2\mu^4}{N^2(2N+1)^3} \!\!\!\!\!\sum_{\substack{\ell,m,n\\\#\{k,\ell,m,n\} = 4}}\!\!\!\!\! \left|\int_0^t e^{is\Omega(k,\ell,m,n)}\,ds\right|^2\bigl|a_\ell\sbrack 0\bigr|^2\bigl|a_m\sbrack 0\bigr|^2\bigl|a_n\sbrack 0\bigr|^2 + \bigO\left(\frac{t}{T_\kin} \frac tN\right),
$$
where we have used that
\[
\left|\int_0^t e^{is\Omega(k,\ell,m,n)}\,ds\right|^2\leq t^2
\]
to bound the remainder terms. Finally, we observe that the condition $\#\{k,\ell,m,n\} = 4$ in the sum can be relaxed, up to an error term of size $\bigO\left(\tfrac{t}{T_\kin} \tfrac tN\right)$. This gives the desired estimate.
\epf

We now apply Lemmas~\ref{l:mu2} and \ref{l:mu4} to the identity \eqref{ExpansionLOT} to obtain
\begin{align}\label{HalfWay}
\bbE_\lambda\bigl|a_k\sbrack 0 + a_k\sbrack 1\bigr|^2 + 2\Re\bbE_\lambda\Bigl[\overline{a_k\sbrack 0}a_k\sbrack2\Bigr] &=  \left|A\left( \tfrac{k}{N} \right)\right|^2 + I_k + \bigO\left(\tfrac{t}{T_\kin} \tfrac tN\right),
\end{align}
where, after integrating in time and symmetrizing the middle term,
\begin{align*}
I_k :&= \frac{2\mu^4}{N^2(2N+1)^3}\sum_{\ell,m,n}\left|\int_0^t e^{is\Omega(k,\ell,m,n)}\,ds\right|^2\bigl|a_\ell\sbrack 0\bigr|^2\bigl|a_m\sbrack 0\bigr|^2\bigl|a_n\sbrack 0\bigr|^2\\
&\quad - \frac{8\mu^4}{N^2(2N+1)^3}\Re\sum_{\ell,m,n}\left(\int_0^t\int_0^s e^{i(s-\sigma)\Omega(k,\ell,m,n)}\,d\sigma\,ds\right)\bigl|a_k\sbrack 0\bigr|^2\bigl|a_m\sbrack 0\bigr|^2\bigl|a_n\sbrack 0\bigr|^2\\
&\quad + \frac{4\mu^4}{N^2(2N+1)^3}\Re\sum_{\ell,m,n}\left(\int_0^t\int_0^s e^{i(s-\sigma)\Omega(k,\ell,m,n)}\,d\sigma\,ds\right)\bigl|a_k\sbrack 0\bigr|^2\bigl|a_\ell\sbrack 0\bigr|^2\bigl|a_n\sbrack 0\bigr|^2\\
&= \frac{8t^2\mu^4}{N^2(2N+1)^3}\sum_{\ell,m,n}\phi\bigl(t\Omega(k,\ell,m,n)\bigr)f\bigl(\tfrac kN,\tfrac\ell N,\tfrac mN,\tfrac nN\bigr),
\end{align*}
with
\begin{align*}
\phi(x) &=\begin{cases} \dfrac{\sin^2(x/2)}{x^2} &\qtq{if}x\neq 0,\smallskip\\ 1/4 &\qtq{if}x=0,\end{cases}
\end{align*}
and
\begin{align*}
f(k,\ell,m,n) &= |A(k)|^2|A(\ell)|^2|A(m)|^2|A(n)|^2\Bigl[\tfrac1{|A(k)|^2} - \tfrac1{|A(\ell)|^2} + \tfrac1{|A(m)|^2} - \tfrac1{|A(n)|^2}\Bigr].
\end{align*}

We now approximate the eigenvalues by their deterministic locations using the following:
\begin{lemma}\label{l:OmegaCancels}
For \(1\leq t\leq N\) we have
\eq{OmegaCancels}{
I_k = \frac{8t^2\mu^4}{N^2(2N+1)^3}\sum_{\ell,m,n}\phi\left(t\Theta\bigl(\tfrac kN,\tfrac \ell N,\tfrac mN,\tfrac nN\bigr)\right)f\bigl(\tfrac kN,\tfrac\ell N,\tfrac mN,\tfrac nN\bigr) + I_k^{\err,1},
}
where \(\Theta\) is defined as in \eqref{DeterministicPhase} and
\eq{err1}{
\tfrac1N\sum_k|I_k^{\err,1}|\prec \tfrac t{T_{\kin}}\tfrac tN.
}
\end{lemma}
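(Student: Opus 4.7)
The plan is to exploit the eigenvalue rigidity estimate~\eqref{Rigid} to show that $\Omega(k,\ell,m,n)$ is close to $\Theta(k/N,\ell/N,m/N,n/N)$. Set $\omega_j := \lambda_j - \nu(j/N)$ and $\delta := \omega_k - \omega_\ell + \omega_m - \omega_n$, so that $\Omega = \Theta + \delta$. Rigidity gives $|\omega_j| \prec N^{-2/3}(N+1-|j|)^{-1/3}$, which after summation (using $\sum_{r=1}^{N+1} r^{-1/3} \sim N^{2/3}$) yields $\sum_j |\omega_j| \prec 1$. The proof combines two facts about the bump $\phi$: the Lipschitz bound $|\phi(x) - \phi(y)| \lesssim |x-y|$, and the derivative decay $|\phi'(y)| \lesssim (1+y^2)^{-1}$, which refines the Lipschitz bound far from zero. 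Since $f$ is bounded uniformly in $A$, the task reduces to showing $\sum_{k,\ell,m,n} |\phi(t\Omega) - \phi(t\Theta)| \prec N^3$; this translates into~\eqref{err1} after multiplication by the prefactor $\tfrac{t^2\mu^4}{N^5}$.

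First I would decompose the sum over $(k,\ell,m,n)$ into a resonance region $R_I = \{|t\Theta| \leq 1\}$ and its complement $R_{II}$. On $R_I$, the Lipschitz bound gives $|\phi(t\Omega) - \phi(t\Theta)| \lesssim t|\delta|$. Using $|\delta| \leq |\omega_k|+|\omega_\ell|+|\omega_m|+|\omega_n|$ and the fact that $\bbo_{\{|t\Theta|\leq 1\}}$ is symmetric in $(k,\ell,m,n)$, I would reduce to controlling $4t \sum_\ell |\omega_\ell| \sup_\ell \sum_{k,m,n} \bbo_{\{|t\Theta|\leq 1\}}$. The inner sum is $\prec N^3/t$ by a straightforward semicircle density count for the deterministic quantiles $\nu(j/N)$, so the $R_I$ contribution is $\prec N^3$.

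On $R_{II}$, I would split further into case (A), $|\delta| \leq |\Theta|/2$, and case (B), the complement. In case (A), $\Omega$ and $\Theta$ share a sign and both have magnitude $\geq |\Theta|/2 > 1/(2t)$, so the segment $[t\Theta,t\Omega]$ is bounded away from zero and $|\phi'|$ is controlled by $C/(t\Theta)^2$ throughout. This yields $|\phi(t\Omega) - \phi(t\Theta)| \lesssim |\delta|/(t\Theta^2)$. A dyadic decomposition $|\Theta| \sim 2^r/t$, with shell size $\lesssim N^3 \cdot 2^r/t$ per fixed $\ell$, gives $\sum_{k,m,n,\,|\Theta|>1/t} 1/\Theta^2 \prec tN^3$, so the $R_{II}$(A) contribution is bounded by $(4/t) \sum_\ell |\omega_\ell| \cdot tN^3 \prec N^3$.

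Finally, in case (B), $|\delta| > 1/(2t)$ forces at least one index $j \in \{k,\ell,m,n\}$ to lie in the set $B := \{j : |\omega_j| \geq 1/(8t)\}$. Rigidity translates this to $N+1-|j| \prec t^3/N^2$, confining such indices to a neighborhood of the spectral edge of cardinality $|B| \prec t^3/N^2$. Bounding crudely by $|\phi(t\Omega) - \phi(t\Theta)| \leq \phi(t\Omega) + \phi(t\Theta)$ and using Corollary~\ref{c:l} (together with $\phi(x) \lesssim 1/(1+x^2)$) to obtain $\sum_{k,m,n}[\phi(t\Omega)+\phi(t\Theta)] \prec N^3/t$ per fixed $\ell$, the case (B) contribution is $\prec |B| \cdot N^3/t \prec t^2 N$, which is $\leq N^3$ thanks to the hypothesis $t \leq N$. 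This last case is the subtlest point of the proof: it is where both the refined edge rigidity from~\eqref{Rigid} and the time cutoff $t \leq N$ are essential. Summing the three contributions yields $\sum_{k,\ell,m,n}|\phi(t\Omega) - \phi(t\Theta)||f| \prec N^3$ and hence \eqref{err1}.
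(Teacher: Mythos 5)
Your argument is correct, but it takes a genuinely different route from the paper's. The paper replaces one eigenvalue at a time: it reduces \eqref{err1} to the single-variable estimate \(\sup_{\alpha}\sum_k|\phi(\alpha+t\lambda_k)-\phi(\alpha+t\nu(\tfrac kN))|\prec 1\), with the shift \(\alpha\) absorbing the other three frequencies (so the quadruple sum is handled by telescoping and a trivial factor \(N^3\)), and then proves this bound deterministically on the rigidity event via the Mean Value Theorem and the decay \(|\phi'(x)|\lesssim (1+|x|^2)^{-1}\), splitting into bulk and edge and counting the \(O(N/t)\), respectively \(O(N/t^{3/2})\), near-resonant indices using \eqref{nu-sim} and the edge asymptotics \eqref{nu-asy} of \(\nu\). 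You instead keep all four indices at once, compare \(\Omega\) to \(\Theta\) through the total fluctuation \(\delta\), and decompose according to the size of \(t\Theta\) and of \(\delta/\Theta\); the spectral edge enters only through the cardinality bound \(\#\{j:|\omega_j|\gtrsim 1/t\}\prec t^3/N^2\), and you invoke Corollary~\ref{c:l} to control \(\sum\phi(t\Omega)\) on that exceptional set. Both routes use the edge-improved exponent in \eqref{Rigid} and the hypothesis \(t\le N\) in an essential way, and both land on the same \(\prec N^3\) bound for the quadruple sum, which is exactly what \eqref{err1} requires after inserting the prefactor \(t^2\mu^4/N^5\). What the paper's version buys is a clean, uniform-in-\(\alpha\) one-dimensional estimate that telescopes painlessly across the four positions; what yours buys is that the fine behavior of \(\nu\) near \(\pm1\) is never needed (only the bounded density of \(\sigma_{\mathrm{sc}}\), to count quantiles in intervals), at the price of importing the local-law bound of Corollary~\ref{c:l} (together with \(\max_j|\lambda_j|\prec 1\) to tame its logarithmic factor), which the paper only uses in later sections. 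To make your write-up fully rigorous you should note the standard bookkeeping: rigidity must be upgraded to hold simultaneously for all \(k\) (a union bound, as in the paper's event \(E_N\)), the arithmetic of \(\prec\) justifies multiplying the random factor \(\sum_j|\omega_j|\prec1\) against the deterministic counts, and in your case (B) the set \(B\) may simply be empty when \(t\ll N^{2/3}\), in which case that contribution vanishes.
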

\bpf
It suffices to prove that
\eq{Enough}{
\sup_{\alpha\in \R}\sum_k\left|\phi\left(\alpha + t\lambda_k\right) - \phi\left(\alpha + t\nu\bigl(\tfrac kN\bigr)\right)\right|\prec 1.
}
The estimate \eqref{err1} then follows from applying \eqref{Enough} to \(I^{\err,1}\).

Given \(\delta>0\) and \(N\geq 1\), let \(E_N\) be the event that for all \(k\in \llbracket-N,N\rrbracket\) we have
\eq{lk approx}{
\left|\lambda_k - \nu\bigl(\tfrac kN\bigr)\right|\leq N^{\frac \delta4-\frac23}\bigl(N+1 - |k|\bigr)^{-\frac13},
}
and recall from \eqref{Rigid} that for all \(K\geq 1\) and sufficiently large \(N\gg1\) we have
\eq{complement of this thing}{
\bbP[E_N^c]\lesssim_{\delta,K} N^{-K}.
}
Taking \(\omega\in E_N\) and \(\lambda = \lambda(\omega)\), we will prove that
\eq{calculus}{
\sup_\alpha\sum_k \left|\phi\left(\alpha + t\lambda_k\right) - \phi\left(\alpha + t\nu\bigl(\tfrac kN\bigr)\right)\right|\lesssim N^{\frac{3\delta} 4}.
}
This shows that for sufficiently large \(N\gg_\delta1\), the event that
\[
\sup_\alpha\sum_n\left|\phi\left(\alpha + t\lambda_k\right) - \phi\left(\alpha + t\nu\bigl(\tfrac kN\bigr)\right)\right|\leq N^\delta
\]
is a superset of \(E_N\), and the estimate \eqref{Enough} now follows from \eqref{complement of this thing}.

To prove \eqref{calculus}, we first use \eqref{nu} to show that \(\nu\colon[-1,1]\to[-2,2]\) is a continuous, strictly increasing, odd function, which is smooth on \((-1,1)\) and has graph between the lines \(\nu = \pi \kappa/2\) and \(\nu=2\kappa\). Consequently,
\eq{nu-sim}{
\nu(\kappa)\sim \kappa\qtq{for}|\kappa|\leq 1.
}
Moreover, from \eqref{nudiff} we have the asymptotics
\eq{nu-asy}{
2 - \nu(\kappa) = \bigl(\tfrac{3\pi}4(1 - \kappa)\bigr)^{\frac23} + \bigO\bigl((1-\kappa)^{\frac53}\bigr)\qtq{as}\kappa\nearrow 1.
}

We now consider the bulk of the spectrum, where \(|k|\leq N/2\). If \(|\alpha + t\nu(k/N)|\geq \frac12 t|\nu(k/N)|\) we apply the Mean Value Theorem to bound
\[
\left|\phi\left(\alpha + t\lambda_k\right) - \phi\left(\alpha + t\nu\bigl(\tfrac kN\bigr)\right)\right|\lesssim 
\frac{t N^{\frac{3\delta}4-1}}{\left(1 + \frac tN|k|\right)^2},
\]
where we have used \eqref{nu-sim} to show that
\[
|\phi'(x)|\lesssim \frac1{1 + |x|^2} \lesssim \frac{N^{\frac\delta 2}}{\left(1 + \tfrac tN|k|\right)^2}\qtq{whenever}x = \alpha+t\nu\left(\tfrac kN\right) + \bigO\left( t N^{\frac\delta4-1}\right).
\]

Conversely, if \(|\alpha + t\nu(k/N)| < \frac12 t|\nu(k/N)|\) we simply bound
\[
\left|\phi\left(\alpha + t\lambda_k\right) - \phi\left(\alpha + t\nu\bigl(\tfrac kN\bigr)\right)\right|\lesssim 
t N^{\frac\delta4-1}.
\]
However, from \eqref{nu-sim} we see that there are at most \(\bigO(N/t)\) choices of \(|k|\leq N/2\) for which \(|\alpha + t\nu(k/N)|< \frac12t|\nu(k/N)|\).

Combining these estimates, we may sum to obtain
\eq{case1}{
\sup_\alpha\sum_{|k|\leq \frac N2}\left|\phi\left(\alpha + t\lambda_k\right) - \phi\left(\alpha + t\nu\bigl(\tfrac kN\bigr)\right)\right|\lesssim N^{\frac{3\delta} 4}.
}

By symmetry, it remains to consider the right-hand edge of the spectrum, where \(\ell = N-k\) satisfies \(0\leq  \ell <N/2\). Proceeding as before, if \(|\alpha + t\nu(1-\ell/N)|\geq \frac12 t|\nu(1-\ell/N)|\) we may bound
\[
\left|\phi\left(\alpha + t\lambda_{N-\ell}\right) - \phi\left(\alpha + t\nu\bigl(1-\tfrac \ell N\bigr)\right)\right|\lesssim 
\frac{t N^{\frac{3\delta}4-\frac23}(1 + \ell)^{-\frac13}}{\left(1 + \frac{t^{3/2}}N\ell\right)^{\frac43} },
\]
where we have used \eqref{nu-asy} to show that
\[
|\phi'(x)|\lesssim \frac{N^{\frac\delta 2}}{\left(1 + \frac{t^{3/2}}N\ell\right)^{\frac43}}\qtq{whenever}x = \alpha + t\nu\left(1-\tfrac \ell N\right) + \bigO\left( t N^{\frac\delta4-\frac23}(1+\ell)^{-\frac13}\right).
\]
Conversely, another application of \eqref{nu-asy} shows there are at most \(\bigO(N/t^{3/2})\) choices of \(\ell\) for which \(|\alpha + t\nu(1-\ell/N)|< \frac12 t|\nu(1-\ell/N)|\), where we have
\[
\left|\phi\left(\alpha + t\lambda_{N-\ell}\right) - \phi\left(\alpha + t\nu\bigl(1-\tfrac \ell N\bigr)\right)\right|\lesssim 
t N^{\frac\delta2-\frac23}(1 + \ell)^{-\frac13}.
\]
Combining these estimates, we may sum to obtain
\eq{case2}{
\sup_\alpha\sum_{0\leq \ell<N/2}\left|\phi\left(\alpha + t\lambda_{N-\ell}\right) - \phi\left(\alpha + t\nu\bigl(1-\tfrac \ell N\bigr)\right)\right|\lesssim N^{\frac{3\delta} 4}.
}

The estimate \eqref{calculus} now follows from combining \eqref{case1} and \eqref{case2}.
\epf

Our next task is to pass from the Riemann sum to an integral:

\begin{lemma}\label{l:Rsum}
For \(1\leq t\leq N\) we have
\begin{align}
I_k &= \frac{8t^2N\mu^4}{(2N+1)^3}\int_{[-1,1]^3}\phi\bigl(t\Theta(\tfrac kN,\ell,m,n)\bigr)f\bigl(\tfrac kN,\ell,m,n\bigr)\,d\ell\,dm\,dn + I^{\err,1}_k + I^{\err,2}_k,\label{Rsum}
\end{align}
where \(I^{\err,1}\) is defined as in \eqref{OmegaCancels} and we have the estimate
\eq{err2}{
|I_k^{\err,2}|\lesssim \tfrac t{T_{\kin}}\tfrac t N.
}
\end{lemma}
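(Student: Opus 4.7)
The goal is to treat $I_k^{\err,2}$ as the discrepancy between the Riemann sum appearing on the right-hand side of \eqref{OmegaCancels} and the integral in \eqref{Rsum}. Setting
$$g(\ell,m,n) := \phi\bigl(t\Theta(\tfrac kN, \ell, m, n)\bigr)\,f\bigl(\tfrac kN, \ell, m, n\bigr),$$
subtracting the two expressions for $I_k$ yields
$$I_k^{\err,2} = \frac{8t^2N\mu^4}{(2N+1)^3}\left[\frac{1}{N^3}\sum_{\ell,m,n=-N}^{N} g\bigl(\tfrac\ell N,\tfrac m N,\tfrac n N\bigr) - \int_{[-1,1]^3} g\,d\ell\,dm\,dn\right].$$
Since the prefactor is $\lesssim t^2\mu^4/N^2$ and the target bound is of size $t^2\mu^4/N^3 = \tfrac{t}{T_\kin}\cdot\tfrac tN$, it suffices to show the three-dimensional Riemann sum error in brackets is $\bigO(1/N)$.

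I would handle this by iterating, separately in $\ell$, $m$, and $n$, the elementary one-dimensional estimate
$$\Bigl|\tfrac 1N \sum_{\ell=-N}^{N} h(\tfrac\ell N) - \int_{-1}^{1} h\Bigr| \lesssim \tfrac1N\bigl(\mathrm{TV}(h) + \|h\|_{L^\infty}\bigr).$$
The factor $f(k/N,\ell,m,n)$, once the bracket in its definition is distributed, collapses to the polynomial
$$|A(\ell)|^2|A(m)|^2|A(n)|^2 - |A(\tfrac kN)|^2|A(m)|^2|A(n)|^2 + |A(\tfrac kN)|^2|A(\ell)|^2|A(n)|^2 - |A(\tfrac kN)|^2|A(\ell)|^2|A(m)|^2,$$
which is free of singularities, uniformly bounded, and uniformly $C^1$ in each variable since $A \in \Cont^1[-1,1]$; hence it contributes only $\bigO(1)$ to the relevant total variations.

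The main obstacle is bounding $\mathrm{TV}_\ell\bigl(\phi(t\Theta(\tfrac kN,\cdot,m,n))\bigr)$ uniformly in $m,n$, because the formal derivative $t\nu'(\ell)\,\phi'(t\Theta)$ involves the singular factor $\nu'(\ell) = \pi(4-\nu(\ell)^2)_+^{-1/2}$. This is resolved by the change of variables $x = \nu(\ell)$, which pushes Lebesgue measure on $[-1,1]$ forward to $2\,d\sigma_{\mathrm{sc}}$ on $[-2,2]$ and absorbs the offending Jacobian:
$$\int_{-1}^{1} t|\nu'(\ell)|\,|\phi'(t\Theta(\tfrac kN,\ell,m,n))|\,d\ell = \int_{-2}^{2} t|\phi'(t(c-x))|\,dx \leq \|\phi'\|_{L^1(\R)} \lesssim 1,$$
where $c = \nu(k/N) + \nu(m) - \nu(n)$, using that $|\phi'(y)| = \bigO(1/(1+y^2))$ is integrable. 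The same trick bounds the TV in $m$ and $n$, and controls the TV of the partial integrals $\int g\,d\ell$ and $\iint g\,d\ell\,dm$ in the remaining variables after each round of Riemann sum approximation, since $\int_{-1}^{1}|\nu'(m)|\,dm$ is finite. Combining these uniform TV bounds with three successive applications of the 1D Riemann sum inequality yields the desired $\bigO(1/N)$ bound, and multiplying by the prefactor gives \eqref{err2}.
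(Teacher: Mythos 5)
Your proposal is correct, and at the top level it follows the same strategy as the paper: convert the triple sum to an integral one variable at a time by one-dimensional Riemann-sum estimates, exploiting that the expanded bracket in $f$ is a singularity-free $\Cont^1$ expression in the $|A|^2$ factors. The implementations differ at the key quantitative point, though. The paper proves the weighted estimate \eqref{Riemann} with the norm $\|h\|_X$, whose weight $(1-|\kappa|)^{1/3}$ is tuned to the edge blow-up $|\nu'(\kappa)|\lesssim(1-|\kappa|)^{-1/3}$, and applies it to $h(\ell)=\sum_{m,n}\phi(t\Theta)f$ with the inner variables still discrete, asserting $\|h\|_X\lesssim N^2$; note that a crude sup bound using only the boundedness of $\phi'$ would give $\|h\|_X\lesssim N^2t$ and hence only $|I_k^{\err,2}|\lesssim \tfrac t{T_{\kin}}\tfrac{t^2}N$, so the paper's bound tacitly uses the decay of $\phi'$ together with the near-equidistribution of the points $\nu(\tfrac mN)$ to gain a factor $t^{-1}$ in the inner sums. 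Your total-variation route makes exactly this gain explicit and self-contained: the substitution $x=\nu(\ell)$ absorbs the Jacobian so that the variation in the active variable is $\le\|\phi'\|_{L^1(\R)}=O(1)$, while for the outer rounds the bounded density $\tfrac1\pi\sqrt{4-x^2}$ of the pushforward gives $\int_{-1}^1|\phi'(t\Theta)|\,d\ell\lesssim t^{-1}$, so the variation of the partial integrals is controlled by $\int_{-1}^1\nu'(m)\,dm<\infty$, uniformly in the frozen variables. Combined with the elementary TV Riemann bound (whose $\|h\|_{L^\infty}/N$ term correctly handles the boundary cells and the $(2N+1)$-versus-$2N$ mismatch), this yields the $O(1/N)$ bound for the bracketed discrepancy and hence \eqref{err2}. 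In short, you trade the paper's single weighted-$\Cont^1$ lemma for three rounds of TV bookkeeping, and in exchange the cancellation of the factor $t$ — which is essential for the stated error $\tfrac t{T_{\kin}}\tfrac tN$ — is carried out explicitly rather than left implicit.
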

\bpf
Let us first show that if \(h\in \Cont[-1,1]\cap\Cont^1(-1,1)\) has finite norm
\[
\|h\|_X := \sup_{|\kappa|\leq 1}\left\{|h(\kappa)| + \bigl(1 - |\kappa|\bigr)^{\frac13}|h'(\kappa)|\right\},
\]
then we may bound
\eq{Riemann}{
\left|\tfrac1{N}\sum_{k=-N}^N h\left(\tfrac kN\right) - \int_{-1}^1 h(\kappa)\,d\kappa\right|\lesssim \tfrac1N\|h\|_X.
}

Denoting the interval \(J_k = [\frac kN-\frac1{2N},\frac kN + \frac1{2N}]\), we apply the Mean Value Theorem to estimate
\[
|h(\kappa) - h(\tfrac kN)|\lesssim \|h\|_X \left(1-\tfrac1{2N} - |\kappa|\right)^{-\frac13} \tfrac 1N\qtq{for}\kappa\in J_k\qtq{with}|k|\leq N-1.
\]
As a consequence,
\[
\int_{-1}^1 h(\kappa)\,d\kappa = \sum_{k=1-N}^{N-1} \int_{-1}^1h(\kappa)\bbo_{J_k}(\kappa)\,d\kappa + \bigO\left(\tfrac1N\|h\|_X\right) = \tfrac1N\sum_{k=1-N}^{N-1}h\bigl(\tfrac kN\bigr) + \bigO\left(\tfrac1N\|h\|_X\right),
\]
from which we obtain \eqref{Riemann}.

Now consider
\[
h(\ell):= \sum_{m,n}\phi\bigl(t\Theta(\tfrac kN,\ell,\tfrac mN,\tfrac nN)\bigr)f\bigl(\tfrac kN,\ell,\tfrac mN,\tfrac nN\bigr).
\]
From \eqref{nudiff} and \eqref{nu-asy}, we see that
\[
|\nu'(\kappa)|\lesssim (1 - |\kappa|)^{-\frac13}.
\]
As \(\phi,\phi'\) are bounded, it is then clear that
\[
\|h\|_X\lesssim N^2
\]
and hence we may apply \eqref{Riemann} to obtain
\begin{align*}
&\frac{8t^2\mu^4}{N^2(2N+1)^3}\sum_{\ell,m,n} \phi\bigl(t\Theta(\tfrac kN,\tfrac\ell N,\tfrac mN,\tfrac nN)\bigr)f\bigl(\tfrac kN,\tfrac\ell N,\tfrac mN,\tfrac nN\bigr)\\
&\qquad= \frac{8t^2\mu^4}{N(2N+1)^3}\sum_{m,n}\int_{-1}^1 \phi\bigl(t\Theta(\tfrac kN,\ell,\tfrac mN,\tfrac nN)\bigr)f\bigl(\tfrac kN,\ell,\tfrac mN,\tfrac nN\bigr)\,d\ell + \bigO\left(\tfrac t{T_{\kin}}\tfrac tN\right).
\end{align*}

The remaining terms in \(I^{\err,2}\) are estimated similarly to obtain \eqref{err2}.
\epf

Finally, we take the limit as \(t\to\infty\) using the following lemma:
\begin{lemma}\label{l:Delta}
Let \(h\in \Cont^{0,\frac12}[-2,2]\). Then for \(t>0\) we have
\eq{Delta}{
\left|\int_{-2}^2 \phi(tx) h(x)\,dx - \frac \pi{2t} h(0)\right|\lesssim  t^{-\frac32}\|h\|_{\Cont^{0,\frac12}}.
}
\end{lemma}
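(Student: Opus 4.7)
The plan is to exploit the fact that $\phi$ is (up to normalization) a Fejér-type kernel: it is non-negative, integrable on $\R$, and $\int_{\R}\phi(y)\,dy = \tfrac{\pi}{2}$. Rescaling will concentrate the mass of $\phi(t\cdot)$ at the origin, and the Hölder regularity of $h$ will quantify the error.

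First I would change variables via $y = tx$ to rewrite
\[
\int_{-2}^{2}\phi(tx)h(x)\,dx = \tfrac{1}{t}\int_{-2t}^{2t}\phi(y)h(y/t)\,dy,
\]
and represent the target as $\tfrac{\pi}{2t}h(0) = \tfrac{1}{t}\int_{\R}\phi(y)h(0)\,dy$. Subtracting, I split the resulting expression into a \emph{core} piece $\tfrac{1}{t}\int_{-2t}^{2t}\phi(y)\bigl[h(y/t)-h(0)\bigr]dy$ and a \emph{tail} piece $-\tfrac{h(0)}{t}\int_{|y|>2t}\phi(y)\,dy$.

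For the core piece, use the Hölder bound $|h(y/t)-h(0)|\leq \|h\|_{\Cont^{0,\frac12}}|y/t|^{1/2}$ together with the pointwise estimate $\phi(y)\lesssim\min(1,|y|^{-2})$. The integrand $\phi(y)|y|^{1/2}$ is integrable on $\R$ (behaving like $|y|^{1/2}$ near the origin and like $|y|^{-3/2}$ at infinity), so this term is bounded by $\|h\|_{\Cont^{0,\frac12}}t^{-3/2}$ times an absolute constant. For the tail piece, the bound $\phi(y)\lesssim |y|^{-2}$ gives $\int_{|y|>2t}\phi(y)\,dy\lesssim t^{-1}$, hence this term is $\bigO(\|h\|_\infty t^{-2})$, which is dominated by $t^{-3/2}\|h\|_{\Cont^{0,\frac12}}$ once $t\geq 1$. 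The regime $t\leq 1$ is handled trivially since $\phi$ is uniformly bounded, so that both sides of \eqref{Delta} are $\bigO(t^{-1}\|h\|_\infty)\leq t^{-3/2}\|h\|_{\Cont^{0,\frac12}}$.

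No step is genuinely delicate here; the only mild point is verifying the normalization $\int_\R\phi = \pi/2$ (a standard computation, e.g.\ via Plancherel applied to $\mathbf 1_{[-1/2,1/2]}$, whose Fourier transform is $\sin(\xi/2)/\xi$). The $\tfrac12$-Hölder exponent is precisely what makes $\phi(y)|y|^{1/2}$ integrable at infinity, so the $t^{-3/2}$ rate is sharp for this argument.
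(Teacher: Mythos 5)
Your proof is correct and follows essentially the same route as the paper's: the identical decomposition into the term $\tfrac1t\int_{\R}\phi\,h(0)=\tfrac{\pi}{2t}h(0)$, a core term controlled by the H\"older bound $|h(y/t)-h(0)|\lesssim\|h\|_{\Cont^{0,1/2}}\sqrt{|y|/t}$ together with $\phi(y)\lesssim(1+|y|^2)^{-1}$, and a tail term of size $\bigO(t^{-2}\|h\|_\infty)$. Your explicit treatment of the regime $t\leq 1$ is a small extra care the paper glosses over, but it changes nothing essential.
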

\bpf
We write
\begin{align*}
\int_{-2}^2 \phi(tx) h(x)\,dx = \frac 1t\int_\R \phi(x) h(0)\,dx + \frac 1t\int_{|x|\leq 2t} \!\!\!\phi(x) \left[h(\tfrac xt) - h(0)\right]\,dx - \frac1t\int_{|x|>2t}\!\!\!\phi(x)h(0)\,dx.
\end{align*}
For the first term, we compute that
\[
\frac 1t\int_\R \phi(x) h(0)\,dx = \frac \pi{2t} h(0).
\]
For the second and third terms, we estimate
\[
\left|h(\tfrac xt) - h(0)\right|\lesssim \|h\|_{\Cont^{0,\frac12}}\sqrt{\tfrac{|x|}t}\qtq{and}|h(0)|\lesssim  \|h\|_{\Cont^{0,\frac12}}.
\]
Using that
\[
|\phi(x)|\lesssim \frac1{1 + |x|^2},
\]
we may bound
\begin{align*}
\left|\frac 1t\int_{|x|\leq 2t} \phi(x) \left[h(\tfrac xt) - h(0)\right]\,dx\right| \lesssim t^{-\frac32}\|h\|_{\Cont^{0,\frac12}}\qtq{and}\left|\frac1t\int_{|x|>2t}\phi(x)h(0)\,dx\right| \lesssim t^{-2}\|h\|_{\Cont^{0,\frac12}},
\end{align*}
which completes the proof of \eqref{Delta}.
\epf

Finally, we complete the:

\bpf[Proof of Proposition~\ref{p:LOT}]
Using \eqref{nudiff}, we see that \(\nu^{-1}\in \Cont^1[-2,2]\) satisfies
\[
\frac d{dx}\nu^{-1}(x) = \tfrac1\pi\sqrt{4-x^2}\in \Cont^{0,\frac12}[-2,2].
\]
As a consequence, we may apply Lemma~\ref{l:Delta} to bound
\begin{align*}
&\int_{-1}^1 \phi\Bigl(t\Theta\bigl(\tfrac kN,\ell,m,n\bigr)\Bigr)f\bigl(\tfrac kN,\ell,m,n\bigr)\,dn\\
&\qquad= \frac1\pi\int_{-2}^2\phi\Bigl(t\bigl(\nu(\tfrac kN) - \nu(\ell) + \nu(m) - x\bigr)\Bigr)f\bigl(\tfrac kN,\ell,m,\nu^{-1}(x)\bigr)\sqrt{4 - x^2}\,dx\\
&\qquad= \frac1{2 t}\int_{-2}^2\delta\bigl(\nu(\tfrac kN) - \nu(\ell) + \nu(m) - x\bigr)f\bigl(\tfrac kN,\ell,m,\nu^{-1}(x)\bigr)\sqrt{4 - x^2}\,dx + \bigO\left(t^{-\frac32}\right)\\
&\qquad = \frac \pi{2t}\int_{-1}^1\delta\Bigl(\Theta\bigl(\tfrac kN,\ell,m,n\bigr)\Bigr)f\bigl(\tfrac kN,\ell,m,n\bigr)\,dn + \bigO\left(t^{-\frac32}\right).
\end{align*}
Combining this with Lemmas~\ref{l:OmegaCancels} and~\ref{l:Rsum}, we have
\[
I_k = \tfrac t{T_\kin}\cC\bigl[|A|^2\bigr]\bigl(\tfrac kN\bigr) + I_k^{\err,1} + I_k^{\err,2} + I_k^{\err,3},
\]
where \(I_k^{\err,1}\) is defined as in \eqref{OmegaCancels}, \(I_k^{\err,2}\) as in \eqref{Rsum}, and the final remainder term satisfies
\[
|I_k^{\err,3}|\lesssim \frac t{T_{\kin}} \left(\frac1{\sqrt t} + \frac1N\right).
\]
In particular, the estimates \eqref{err1} and \eqref{err2} show that for any \(\delta>0\) we have
\eq{err3}{
\sum_{j=1}^3 \frac1N\|\bbE I^{\err,j}\|_{\ell^1}\lesssim_\delta \frac t{T_{\kin}}\left(\frac1{\sqrt t} + N^\delta\frac tN\right).
}

To complete the proof, we take the expectation of \eqref{HalfWay} to get
\begin{align*}
\bbE\bigl|a_k\sbrack 0 + a_k\sbrack 1\bigr|^2 + 2\Re\bbE\Bigl[\overline{a_k\sbrack 0}a_k\sbrack 2\Bigr] &=  \left|A\left( \tfrac{k}{N} \right)\right|^2 + \bbE I_k + \bigO\left(\tfrac{t}{T_\kin} \tfrac tN \right).
\end{align*}
We then apply the estimate \eqref{err3} to complete the proof of \eqref{LOT}.
\epf

\section{The approximate solution}\label{s:app}

In this section we estimate the Duhamel iterates \(a\sbrack m\) and prove Proposition~\ref{p:app}. Applying Lemma~\ref{l:Profile} and taking \(\phi_{T,\pm}(t) = \frac{e^{\pm t/T}}{2\pi}\chi_T(t)\), we compute that
\begin{align}\label{Nala}
\|\chi_T\bbo_{\R_\pm }a\sbrack m\|_{X^b_T}^2 &= \frac{\mu^{4m}}{N^{2m}}\sum_{\ell',\cK',\ell,\cK}\left[\sum_k\Delta(k,\cK',\ell')\Delta(k,\cK,\ell)\right]\prod_{r=1}^{m}\overline{\gamma_{r,\ell_r'}(\cK')}\gamma_{r,\ell_r}(\cK)\\
&\quad \qquad\times\int_{\R^3} \frac{\left(\tfrac1T + |\tau|\right)^{2b}\overline{\widehat\phi_{T,\pm}(\tau - \alpha')}\widehat\phi_{T,\pm}(\tau - \alpha)}{\prod_{r=0}^m \left(\alpha' + \omega_r(\cK',\ell') \mp \frac{i}{T}\right) \left(\alpha + \omega_r(\cK,\ell) \pm \frac{i}{T}\right)}  \,d\alpha'\,d\alpha\,d\tau\notag\\
&\quad \qquad\qquad\qquad\qquad\qquad\times\overline{\bA\bigl(k_{0,1}',\dots,k_{0,2m+1}'\bigr)}\bA\bigl(k_{0,1},\dots,k_{0,2m+1}\bigr) .\notag
\end{align}
We subsequently consider estimates for only the ``\(+\)'' case of \eqref{Nala} and write \(\phi_T = \phi_{T,+}\). The corresponding estimates in the ``\(-\)'' case are identical.

To simplify the subsequent expressions, we denote \(\ttL = (\ell',\ell)\), \(\ttK = (\cK',\cK)\), and
\begin{align*}
\Identifications(\ttL,\ttK) &= \sum_k\Delta(k,\cK',\ell')\Delta(k,\cK,\ell),\\
\Interactions(\ttL,\ttK) &= \frac{\mu^{4m}}{N^{2m}}\prod_{r=1}^{m}\overline{\gamma_{r,\ell_r'}(\cK')}\gamma_{r,\ell_r}(\cK),\\
\Oscillations(\ttL,\ttK) & =\int_{\R^3}\frac{\left(\tfrac1T + |\tau|\right)^{2b} \overline{\widehat\phi_T(\tau - \alpha')}\widehat\phi_T(\tau-\alpha)}{\prod_{r=0}^m \left(\alpha' + \omega_r(\cK',\ell') - \frac{i}{T} \right)\left(\alpha + \omega_r(\cK,\ell) + \frac{i}{T}\right)}  \,d\alpha'\,d\alpha\,d\tau,\\
\Data(\ttK) &=  \overline{\bA\bigl(k_{0,1}',\dots,k_{0,2m+1}'\bigr)}\bA\bigl(k_{0,1},\dots,k_{0,2m+1}\bigr).
\end{align*}
This allows us to write \eqref{Nala} in the compact form
\[
\|\chi_T\bbo_{\R_+ }a\sbrack m\|_{X^b_T}^2 = \sum_{\ttL,\ttK}\Identifications\cdot \Interactions \cdot \Oscillations\cdot \Data.
\]

Let us now fix the choice of interaction history \(\ttL\) and consider the dimension of the sum over the frequencies \(\ttK\). On the support of \(\Identifications(\ttL,\ttK)\) there can be at most \(6m+1\) distinct frequencies:
\begin{itemize}
\item The \(4m+2\) input frequencies \(k_{0,1}',\dots,k_{0,2m+1}',k_{0,1},\dots,k_{0,2m+1}\);
\item The \(2m-2\) internal frequencies \(k_{1,\ell_1'}',\dots,k_{m-1,\ell_{m-1}'}',k_{1,\ell_1},\dots,k_{m-1,\ell_{m-1}}\);
\item The output frequency \(k_{m,1}'=k_{m,1} \).
\end{itemize}

This is best understood by introducing an \emph{interaction graph} \(G = G(\ttL)\) obtained from the Feynman diagrams (as defined in Section~\ref{s:Feyn}) for \(a\sbrack m\), corresponding to the interaction history \(\ell\), and \(\overline{a\sbrack m}\), corresponding to the interaction history \(\ell'\). To construct \(G\), let us assume that the Feynman diagram for \(a\sbrack m\) is on the right and the Feynman diagram for \(\overline{a\sbrack m}\) is on the left. Objects in the graph of \(\overline{a\sbrack m}\) are denoted with primes, so the vertices are \(v_{r,j}'\), etc. We now construct the interaction graph \(G\) by first identifying the output vertices \(v_{m+1,1}'\) and \(v_{m+1,1}\) (denoted by \tikz[baseline=-2.4,scale=0.15]{\node[outdot] {};} in each Feynman diagram), and then removing all vertices connected to exactly two edges. The graph \(G\) will then have \(6m+1\) edges, each corresponding to one of the \(6m+1\) possible distinct frequencies. An example is given in Figure~\ref{f:deg1}.

\begin{figure}[h!]
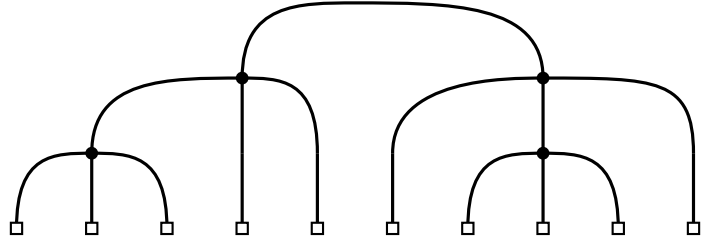

\begin{center}
\tikz{
\draw[very thick] (1,0)--(1,1) {};
\draw[very thick] (1,1)--(1,2) {};
\draw[very thick] (-1,0)--(-1,1) {};
\draw[very thick] (2,0)--(2,1) {};
\draw[very thick] (-2,0) .. controls (-2,1) and (-1.5,1) .. (-1,1);
\draw[very thick] (0,0) .. controls (0,1) and (-.5,1) .. (-1,1);
\draw[very thick] (2,1) .. controls (2,2) and (1.5,2) .. (1,2);
\draw[very thick] (-1,1) .. controls (-1,2) and (0,2) .. (1,2);
\draw[very thick] (5,0)--(5,1) {};
\draw[very thick] (3,0)--(3,1) {};
\draw[very thick] (7,0)--(7,1) {};
\draw[very thick] (5,1)--(5,2) {};
\draw[very thick] (4,0) .. controls (4,1) and (4.5,1) .. (5,1);
\draw[very thick] (6,0) .. controls (6,1) and (5.5,1) .. (5,1);
\draw[very thick] (7,1) .. controls (7,2) and (6.5,2) .. (5,2);
\draw[very thick] (3,1) .. controls (3,2) and (4.5,2) .. (5,2);
\draw[very thick] (5,2) .. controls (5,3) and (3.75,3) .. (2.5,3) {};
\draw[very thick] (1,2) .. controls (1,3) and (1.75,3) .. (2.5,3) {};
\node[indot] at (-2,0) {};
\node[indot] at (-1,0) {};
\node[indot] at (0,0) {};
\node[indot] at (1,0) {};
\node[indot] at (2,0) {};
\node[indot] at (3,0) {};
\node[indot] at (4,0) {};
\node[indot] at (5,0) {};
\node[indot] at (6,0) {};
\node[indot] at (7,0) {};
\node[dot] at (-1,1) {};
\node[dot] at (1,2) {};
\node[dot] at (5,1) {};
\node[dot] at (5,2) {};
}
\end{center}
\caption{The interaction graph \(G\) in the case \(m=2\) with interaction history \(\ttL = (\ell',\ell) = \bigl((1,1),(2,1)\bigr)\).}\label{f:deg1}
\end{figure}

We will prove Proposition~\ref{p:app} using Markov's inequality. As a consequence, we consider the expected value of 
\[
\|\chi_T\bbo_{\R_+ }a\sbrack m\|_{X^b_T}^{2\ttn}\qtq{for an integer}\ttn\geq 1.
\]
To write this sum in a compact form, we denote the interaction history and frequencies of the \(\tti\)\textsuperscript{th} copy of \(\|\chi_T\bbo_{\R_+ }a\sbrack m\|_{X^b_T}^2\) in this product by \(\ttL\sprack\tti\) and \(\ttK\sprack\tti\), respectively. We then take \(\vec\ttL = (\ttL\sprack1,\dots,\ttL\sprack\ttn)\) and \(\vec\ttK = (\ttL\sprack1,\dots,\ttK\sprack\ttn)\).

Setting \(\Identifications\sprack\tti = \Identifications(\ttL\sprack\tti,\ttK\sprack\tti)\), etc., we obtain the expression
\eq{Crostata}{
\|\chi_T\bbo_{\R_+ }a\sbrack m\|_{X^b_T}^{2\ttn} = \sum_{\vec \ttL,\vec\ttK} \prod_{\tti=1}^\ttn \Identifications\sprack\tti\cdot\Interactions\sprack \tti\cdot\Oscillations\sprack\tti\cdot\Data\sprack\tti.
}

Let \(G\sprack\tti\) to be the interaction graph for the \(\tti\)\textsuperscript{th} copy of \(\|\chi_T\bbo_{\R_+ }a\sbrack m\|_{X^b_T}^2\) in this product, with interaction history \(\ttL\sprack\tti\). For each choice of interaction histories \(\vec\ttL\) we take the interaction graph \(G\) for the product \(\|\chi_T\bbo_{\R_+ }a\sbrack m\|_{X^b_T}^{2n}\) to be the union of the interaction graphs \(G\sprack\tti\), which we assume are ordered so that \(G\sprack1\) is on the left and \(G\sprack\ttn\) is on the right: see Figure~\ref{f:union} for an example. The graph \(G = \bigcup_{\tti=1}^\ttn G\sprack\tti\) will then have \(\ttn(6m+1)\) edges, each corresponding to one of the \(\ttn(6m+1)\) possible distinct frequencies.

\begin{figure}[h!]
\begin{center}
\tikz[scale=.75]{
\draw[very thick] (1,0)--(1,1) {};
\draw[very thick] (1,1)--(1,2) {};
\draw[very thick] (-1,0)--(-1,1) {};
\draw[very thick] (2,0)--(2,1) {};
\draw[very thick] (-2,0) .. controls (-2,1) and (-1.5,1) .. (-1,1);
\draw[very thick] (0,0) .. controls (0,1) and (-.5,1) .. (-1,1);
\draw[very thick] (2,1) .. controls (2,2) and (1.5,2) .. (1,2);
\draw[very thick] (-1,1) .. controls (-1,2) and (0,2) .. (1,2);
\draw[very thick] (5,0)--(5,1) {};
\draw[very thick] (3,0)--(3,1) {};
\draw[very thick] (7,0)--(7,1) {};
\draw[very thick] (5,1)--(5,2) {};
\draw[very thick] (4,0) .. controls (4,1) and (4.5,1) .. (5,1);
\draw[very thick] (6,0) .. controls (6,1) and (5.5,1) .. (5,1);
\draw[very thick] (7,1) .. controls (7,2) and (6.5,2) .. (5,2);
\draw[very thick] (3,1) .. controls (3,2) and (4.5,2) .. (5,2);
\draw[very thick] (5,2) .. controls (5,3) and (3.75,3) .. (2.5,3) {};
\draw[very thick] (1,2) .. controls (1,3) and (1.75,3) .. (2.5,3) {};
\node[indot] at (-2,0) {};
\node[indot] at (-1,0) {};
\node[indot] at (0,0) {};
\node[indot] at (1,0) {};
\node[indot] at (2,0) {};
\node[indot] at (3,0) {};
\node[indot] at (4,0) {};
\node[indot] at (5,0) {};
\node[indot] at (6,0) {};
\node[indot] at (7,0) {};
\node[dot] at (-1,1) {};
\node[dot] at (1,2) {};
\node[dot] at (5,1) {};
\node[dot] at (5,2) {};
}
\quad
\tikz[scale=.75]{
\begin{scope}[xscale=-1]
\draw[very thick] (1,0)--(1,1) {};
\draw[very thick] (1,1)--(1,2) {};
\draw[very thick] (-1,0)--(-1,1) {};
\draw[very thick] (2,0)--(2,1) {};
\draw[very thick] (-2,0) .. controls (-2,1) and (-1.5,1) .. (-1,1);
\draw[very thick] (0,0) .. controls (0,1) and (-.5,1) .. (-1,1);
\draw[very thick] (2,1) .. controls (2,2) and (1.5,2) .. (1,2);
\draw[very thick] (-1,1) .. controls (-1,2) and (0,2) .. (1,2);
\draw[very thick] (5,0)--(5,1) {};
\draw[very thick] (3,0)--(3,1) {};
\draw[very thick] (7,0)--(7,1) {};
\draw[very thick] (5,1)--(5,2) {};
\draw[very thick] (4,0) .. controls (4,1) and (4.5,1) .. (5,1);
\draw[very thick] (6,0) .. controls (6,1) and (5.5,1) .. (5,1);
\draw[very thick] (7,1) .. controls (7,2) and (6.5,2) .. (5,2);
\draw[very thick] (3,1) .. controls (3,2) and (4.5,2) .. (5,2);
\draw[very thick] (5,2) .. controls (5,3) and (3.75,3) .. (2.5,3) {};
\draw[very thick] (1,2) .. controls (1,3) and (1.75,3) .. (2.5,3) {};
\node[indot] at (-2,0) {};
\node[indot] at (-1,0) {};
\node[indot] at (0,0) {};
\node[indot] at (1,0) {};
\node[indot] at (2,0) {};
\node[indot] at (3,0) {};
\node[indot] at (4,0) {};
\node[indot] at (5,0) {};
\node[indot] at (6,0) {};
\node[indot] at (7,0) {};
\node[dot] at (-1,1) {};
\node[dot] at (1,2) {};
\node[dot] at (5,1) {};
\node[dot] at (5,2) {};
\end{scope}
}
\end{center}
\caption{The interaction graph for \(m=2\), \(\ttn=2\), with \(\vec \ttL = (\ttL\sprack1,\ttL\sprack2) =\bigl( ({\ell\sprack1}',{\ell\sprack 1}),({\ell\sprack2}',{\ell\sprack2})\bigr) = \Bigl(\bigl((1,1), (2,1)\bigr),\bigl((2,1),(3,1)\bigr)\Bigr)\).}\label{f:union}
\end{figure}

Given a fixed choice of \(\vec\ttL\), we define a \emph{coloring} of the corresponding interaction graph \(G = G(\vec\ttL)\) to be an assignment of a color to each edge. We allow adjacent edges to have the same color and consider two colorings to be the same if they are identical up to permuting the colors themselves. Note that any coloring may have at most \(\ttn(6m+1)\) distinct colors.

\begin{figure}[h!]
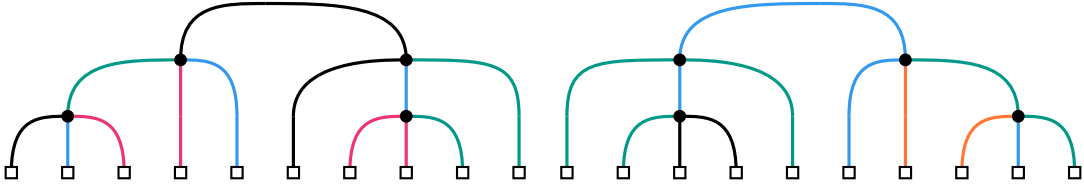

\begin{center}
\tikz[scale=.75]{
\draw[very thick,col6] (1,0)--(1,1) {};
\draw[very thick,col6] (1,1)--(1,2) {};
\draw[very thick,col5] (-1,0)--(-1,1) {};
\draw[very thick,col5] (2,0)--(2,1) {};
\draw[very thick,col1] (-2,0) .. controls (-2,1) and (-1.5,1) .. (-1,1);
\draw[very thick,col6] (0,0) .. controls (0,1) and (-.5,1) .. (-1,1);
\draw[very thick,col5] (2,1) .. controls (2,2) and (1.5,2) .. (1,2);
\draw[very thick,col3] (-1,1) .. controls (-1,2) and (0,2) .. (1,2);
\draw[very thick,col6] (5,0)--(5,1) {};
\draw[very thick,col1] (3,0)--(3,1) {};
\draw[very thick,col3] (7,0)--(7,1) {};
\draw[very thick,col5] (5,1)--(5,2) {};
\draw[very thick,col6] (4,0) .. controls (4,1) and (4.5,1) .. (5,1);
\draw[very thick,col3] (6,0) .. controls (6,1) and (5.5,1) .. (5,1);
\draw[very thick,col3] (7,1) .. controls (7,2) and (6.5,2) .. (5,2);
\draw[very thick,col1] (3,1) .. controls (3,2) and (4.5,2) .. (5,2);
\draw[very thick,col1] (5,2) .. controls (5,3) and (3.75,3) .. (2.5,3) {};
\draw[very thick,col1] (1,2) .. controls (1,3) and (1.75,3) .. (2.5,3) {};
\node[indot] at (-2,0) {};
\node[indot] at (-1,0) {};
\node[indot] at (0,0) {};
\node[indot] at (1,0) {};
\node[indot] at (2,0) {};
\node[indot] at (3,0) {};
\node[indot] at (4,0) {};
\node[indot] at (5,0) {};
\node[indot] at (6,0) {};
\node[indot] at (7,0) {};
\node[dot] at (-1,1) {};
\node[dot] at (1,2) {};
\node[dot] at (5,1) {};
\node[dot] at (5,2) {};
}
\quad
\tikz[scale=.75]{
\begin{scope}[xscale=-1]
\draw[very thick,col4] (1,0)--(1,1) {};
\draw[very thick,col4] (1,1)--(1,2) {};
\draw[very thick,col5] (-1,0)--(-1,1) {};
\draw[very thick,col5] (2,0)--(2,1) {};
\draw[very thick,col3] (-2,0) .. controls (-2,1) and (-1.5,1) .. (-1,1);
\draw[very thick,col4] (0,0) .. controls (0,1) and (-.5,1) .. (-1,1);
\draw[very thick,col5] (2,1) .. controls (2,2) and (1.5,2) .. (1,2);
\draw[very thick,col3] (-1,1) .. controls (-1,2) and (0,2) .. (1,2);
\draw[very thick,col1] (5,0)--(5,1) {};
\draw[very thick,col3] (3,0)--(3,1) {};
\draw[very thick,col3] (7,0)--(7,1) {};
\draw[very thick,col5] (5,1)--(5,2) {};
\draw[very thick,col1] (4,0) .. controls (4,1) and (4.5,1) .. (5,1);
\draw[very thick,col3] (6,0) .. controls (6,1) and (5.5,1) .. (5,1);
\draw[very thick,col3] (7,1) .. controls (7,2) and (6.5,2) .. (5,2);
\draw[very thick,col3] (3,1) .. controls (3,2) and (4.5,2) .. (5,2);
\draw[very thick,col5] (5,2) .. controls (5,3) and (3.75,3) .. (2.5,3) {};
\draw[very thick,col5] (1,2) .. controls (1,3) and (1.75,3) .. (2.5,3) {};
\node[indot] at (-2,0) {};
\node[indot] at (-1,0) {};
\node[indot] at (0,0) {};
\node[indot] at (1,0) {};
\node[indot] at (2,0) {};
\node[indot] at (3,0) {};
\node[indot] at (4,0) {};
\node[indot] at (5,0) {};
\node[indot] at (6,0) {};
\node[indot] at (7,0) {};
\node[dot] at (-1,1) {};
\node[dot] at (1,2) {};
\node[dot] at (5,1) {};
\node[dot] at (5,2) {};
\end{scope}
}
\end{center}
\caption{A coloring of the interaction graph from Figure~\ref{f:union} by \(5\) distinct colors.}\label{f:deg2}
\end{figure}

We identify each coloring \(C\) of \(G\) with the set of all frequencies \(\vec\ttK\) for which edges with the same color have the same frequency and edges with distinct colors have distinct frequencies. In this way, we may treat each distinct color as a distinct frequency. Moreover, we have
\[
\left\{\vec\ttK:\prod_{\tti=1}^\ttn\Identifications\sprack\tti\neq 0\right\} = \bigcup\Bigl\{C:C\text{ is a coloring of }G(\vec\ttL)\Bigr\}.
\]

Now fix a coloring \(C\) of \(G\) by \(p\in \llbracket1,\ttn(6m+1)\rrbracket\) distinct colors. Enumerate the \(p\) distinct frequencies (each corresponding to a distinct color) by \(K_1,\dots,K_p\) and enumerate the \(2m\ttn\) interaction vertices (denoted by \tikz[baseline=-2.4,scale=0.15]{\node[dot] {};}) in \(G\) in the order
\[
(V_1,\dots,V_{2mn}) = \left(v_{1,\ell_1}\sprack\ttn,\dots,v_{m,\ell_m}\sprack\ttn,{v_{1,\ell_1'}\sprack\ttn\!}',\dots,{v_{m,\ell_m'}\sprack\ttn\!\!\!\!}'\,\,\,,\dots,v_{1,\ell_1}\sprack1,\dots,v_{m,\ell_m}\sprack1,{v_{1,\ell_1'}\sprack1\!}',\dots,{v_{m,\ell_m'}\sprack1\!\!\!\!}'\,\,\,\right),
\]
which corresponds to working up the rightmost tree from bottom to top, then the second tree from the right from bottom to top, and so on. Now review each interaction vertex in the order \(V_1,V_2,\dots,V_{2m\ttn}\) and examine the frequencies of the three edges immediately below it. Having examined all such edges, examine the frequencies of the remaining edges from right to left. We write \(K_j\leq V_s\) if \(K_j\) does not appear in this process after we have examined the three edges immediately below \(V_s\). Otherwise, we write \(K_j>V_s\).

Mirroring our enumeration of the interaction vertices, we enumerate the modulations in the order
\begin{align*}
(\varpi_1,\dots,&\varpi_{2m\ttn})\\
= \biggl( & \omega_0(\cK\sprack\ttn,\ell\sprack\ttn),\dots,\omega_{m-1}(\cK\sprack\ttn,\ell\sprack\ttn),\omega_0({\cK\sprack\ttn}',{\ell\sprack\ttn}'),\dots,\omega_{m-1}({\cK\sprack\ttn}',{\ell\sprack\ttn}'),\dots\\
&\quad\dots,\omega_0(\cK\sprack1,\ell\sprack1),\dots,\omega_{m-1}(\cK\sprack1,\ell\sprack1),\omega_0({\cK\sprack1}',{\ell\sprack1}'),\dots,\omega_{m-1}({\cK\sprack1}',{\ell\sprack1}')\biggr).
\end{align*}
We recall here that by \eqref{thetaj} the modulations \(\omega_m = 0\) and hence we do not include these in our list.

From \eqref{thetaj}, we see that each \(\varpi_r\) can be written as a linear combination of the frequencies \(K_j\). By inspection of \eqref{thetaj}, we immediately obtain the following consequence of our enumeration procedure:

\begin{lemma}\label{l:independence}
Fix an interaction history \(\vec\ttL\) and coloring \(C\) of the associated interaction graph \(G(\vec\ttL)\). Enumerating the distinct frequencies, vertices, and modulations as above, if \(K_j\leq V_s\) and \(r>s\) then the modulation \(\varpi_r\) does not depend on the frequency \(K_j\).
\end{lemma}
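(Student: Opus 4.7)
The plan is to show that every edge of the interaction graph $G$ whose frequency enters $\varpi_r$ must be examined strictly after step $s$ of the enumeration procedure; since $K_j \leq V_s$ forbids the color $K_j$ from appearing at any later step, this immediately yields the desired independence.

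By direct inspection of \eqref{thetaj}, I would first note that $\varpi_r$ depends on frequencies only via the edges adjacent to the interaction vertices $V_r, V_{r+1}, \ldots$ lying in the same tree as $V_r$. For each such vertex $V_j$ (with $j \geq r$ in the enumeration and in that tree), four edges of $G$ are relevant: the three immediately below $V_j$, and the single one immediately above. The three below-edges are handled at once: they are examined at step $j \geq r > s$ of the main loop, so a color $K_j$ appearing on any of them would violate $K_j \leq V_s$.

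The main work is to track the above-edge in $G$ after the collapse of linear (degree-two) vertices. This edge corresponds to a chain in the original Feynman diagram starting above $V_j$, and I would distinguish two cases according to where the chain terminates. If it hits another interaction vertex $V_{j'}$ within the same tree (so $j < j' \leq$ top of the tree), then in $G$ the edge appears as one of the three below-edges of $V_{j'}$, and is therefore examined at step $j' > j \geq r > s$. If instead the chain climbs to the (removed) output vertex and then descends through the opposite tree of the same copy to reach an interaction vertex $V_{j'}$, then in $G$ the edge enters $V_{j'}$ from above; it is not a below-edge of any interaction vertex, and is therefore examined only in the final right-to-left sweep, which is carried out after every step of the main loop and in particular after step $s$.

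Combining, every edge contributing to $\varpi_r$ is examined strictly after step $s$, so $K_j \leq V_s$ prevents color $K_j$ from appearing on any of them, and $\varpi_r$ is independent of $K_j$. The one subtle point — and where the argument would need to be written out carefully — is the cross-tree case: one must recognise that when the above-chain of an interaction vertex passes through the removed output vertex, it reaches the opposite tree of the same copy from above, and thereby escapes the three-below examination at both endpoints, landing only in the final sweep.
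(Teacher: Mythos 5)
Your argument is correct and is essentially the paper's own reasoning made explicit: the paper dismisses the lemma as immediate ``by inspection of \eqref{thetaj},'' and your inspection — three below-edges of each $V_{j}$ with $j\geq r$ handled by the main loop, the above-edge handled either as a below-edge of a later vertex in the same tree or (for the top vertex, via the merged output edge) in the final right-to-left sweep — is exactly the verification needed. The only cosmetic remark is that your cross-tree case occurs only for the top vertex of a tree, and the ``descent'' there is a single step since the opposite tree's top vertex is itself an interaction vertex; this does not affect the conclusion.
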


We define the \emph{degree} of the vertex \(V_s\) to be the number of distinct frequencies \(K_j\) satisfying \(V_{s-1}<K_j\leq V_s\), where all frequencies are \(> V_0\) by convention. Informally, the degree of a vertex is the number of frequencies immediately below it that do not appear later in our reviewing process. For \(d=0,1,2,3\), let $n_d$ denote the number of vertices of degree $d$. As there are \(2m\ttn\) interaction vertices, we have
\eq{Dee}{
n_0 + n_1 + n_2 + n_3 = 2m\ttn.
}

Denote the number of distinct output frequencies by \(q = \#\{k_{m,1}\sprack\tti\}\in \llbracket1,\ttn\rrbracket\). As every output frequency is \(>V_{2m\ttn}\), and there are \(p\) distinct frequencies in total, we have
\eq{Aye}{
n_1 + 2n_2 + 3 n_3 = p-q.
}

An example of this counting procedure is given in Figure~\ref{f:deg3}.

\begin{figure}[h!]
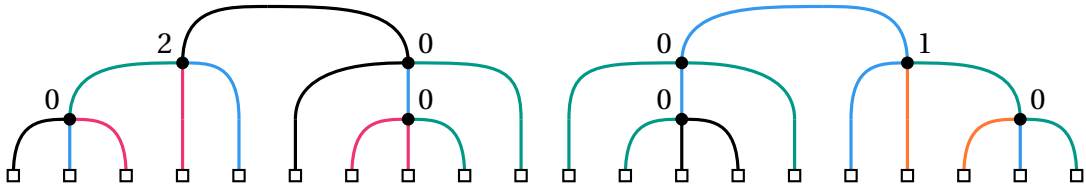

\begin{center}
\tikz[scale=.75]{
\draw[very thick,col6] (1,0)--(1,1) {};
\draw[very thick,col6] (1,1)--(1,2) {};
\draw[very thick,col5] (-1,0)--(-1,1) {};
\draw[very thick,col5] (2,0)--(2,1) {};
\draw[very thick,col1] (-2,0) .. controls (-2,1) and (-1.5,1) .. (-1,1);
\draw[very thick,col6] (0,0) .. controls (0,1) and (-.5,1) .. (-1,1);
\draw[very thick,col5] (2,1) .. controls (2,2) and (1.5,2) .. (1,2);
\draw[very thick,col3] (-1,1) .. controls (-1,2) and (0,2) .. (1,2);
\draw[very thick,col6] (5,0)--(5,1) {};
\draw[very thick,col1] (3,0)--(3,1) {};
\draw[very thick,col3] (7,0)--(7,1) {};
\draw[very thick,col5] (5,1)--(5,2) {};
\draw[very thick,col6] (4,0) .. controls (4,1) and (4.5,1) .. (5,1);
\draw[very thick,col3] (6,0) .. controls (6,1) and (5.5,1) .. (5,1);
\draw[very thick,col3] (7,1) .. controls (7,2) and (6.5,2) .. (5,2);
\draw[very thick,col1] (3,1) .. controls (3,2) and (4.5,2) .. (5,2);
\draw[very thick,col1] (5,2) .. controls (5,3) and (3.75,3) .. (2.5,3) {};
\draw[very thick,col1] (1,2) .. controls (1,3) and (1.75,3) .. (2.5,3) {};
\node[indot] at (-2,0) {};
\node[indot] at (-1,0) {};
\node[indot] at (0,0) {};
\node[indot] at (1,0) {};
\node[indot] at (2,0) {};
\node[indot] at (3,0) {};
\node[indot] at (4,0) {};
\node[indot] at (5,0) {};
\node[indot] at (6,0) {};
\node[indot] at (7,0) {};
\node[dot] at (-1,1) {};
\node[dot] at (1,2) {};
\node[dot] at (5,1) {};
\node[dot] at (5,2) {};
\node[above left] at (-1,1) {$0$};
\node[above left] at (1,2) {$2$};
\node[above right] at (5,1) {$0$};
\node[above right] at (5,2) {$0$};
}
\quad
\tikz[scale=.75]{
\begin{scope}[xscale=-1]
\draw[very thick,col4] (1,0)--(1,1) {};
\draw[very thick,col4] (1,1)--(1,2) {};
\draw[very thick,col5] (-1,0)--(-1,1) {};
\draw[very thick,col5] (2,0)--(2,1) {};
\draw[very thick,col3] (-2,0) .. controls (-2,1) and (-1.5,1) .. (-1,1);
\draw[very thick,col4] (0,0) .. controls (0,1) and (-.5,1) .. (-1,1);
\draw[very thick,col5] (2,1) .. controls (2,2) and (1.5,2) .. (1,2);
\draw[very thick,col3] (-1,1) .. controls (-1,2) and (0,2) .. (1,2);
\draw[very thick,col1] (5,0)--(5,1) {};
\draw[very thick,col3] (3,0)--(3,1) {};
\draw[very thick,col3] (7,0)--(7,1) {};
\draw[very thick,col5] (5,1)--(5,2) {};
\draw[very thick,col1] (4,0) .. controls (4,1) and (4.5,1) .. (5,1);
\draw[very thick,col3] (6,0) .. controls (6,1) and (5.5,1) .. (5,1);
\draw[very thick,col3] (7,1) .. controls (7,2) and (6.5,2) .. (5,2);
\draw[very thick,col3] (3,1) .. controls (3,2) and (4.5,2) .. (5,2);
\draw[very thick,col5] (5,2) .. controls (5,3) and (3.75,3) .. (2.5,3) {};
\draw[very thick,col5] (1,2) .. controls (1,3) and (1.75,3) .. (2.5,3) {};
\node[indot] at (-2,0) {};
\node[indot] at (-1,0) {};
\node[indot] at (0,0) {};
\node[indot] at (1,0) {};
\node[indot] at (2,0) {};
\node[indot] at (3,0) {};
\node[indot] at (4,0) {};
\node[indot] at (5,0) {};
\node[indot] at (6,0) {};
\node[indot] at (7,0) {};
\node[dot] at (-1,1) {};
\node[dot] at (1,2) {};
\node[dot] at (5,1) {};
\node[dot] at (5,2) {};
\node[above right] at (-1,1) {$0$};
\node[above right] at (1,2) {$1$};
\node[above left] at (5,1) {$0$};
\node[above left] at (5,2) {$0$};
\end{scope}
}
\end{center}
\caption{The degrees of each vertex in the graph from Figure~\ref{f:deg2}. Here, we have \(p=5\), \(q=2\), and \((n_0,n_1,n_2,n_3) = (6,1,1,0)\).}\label{f:deg3}
\end{figure}

With these conventions set, we now proceed to estimate each term appearing in the summand on $\RHS{Crostata}$. For the data, we simply use that \(A\) is bounded to obtain
\eq{DataBound}{
\sup_{\vec\ttK}\left|\prod_{\tti=1}^\ttn\Data\sprack\tti\right|\lesssim_{A,m,\ttn} 1.
}

For the interactions, we have the following:
\begin{lemma}\label{l:Osci}
Let \(m,n\geq 1\). Fix an interaction history \(\vec\ttL\) and coloring \(C\) of the associated interaction graph \(G(\vec\ttL)\). Then, for each choice of \(\vec\ttK\in C\) we have the estimate
\eq{OsciBound}{
\left|\bbE\left[\prod_{\tti=1}^\ttn\Interactions\sprack\tti\right]\right| \lesssim_{m,\ttn}\frac1{T_\kin^{m\ttn}}\min\bigl\{N^{q-p},N^{-3m\ttn},N^{-\frac{10}3m\ttn+\frac13n_0}\bigr\}.
}
Further, \(\LHS{OsciBound}\) is only nonzero if each of the input frequencies
\eq{IPF1}{
k_{0,1}\sprack 1,\dots,k_{0,2m+1}\sprack 1,{k_{0,2}\sprack 1}',\dots,{k_{0,2m}\sprack 1\!\!\!\!}'\,\,\,,\dots,k_{0,1}\sprack\ttn,\dots,k_{0,2m+1}\sprack\ttn,{k_{0,2}\sprack \ttn}',\dots,{k_{0,2m}\sprack\ttn\!\!\!\!}'\,\,\,,
}
is paired with one of the input frequencies
\eq{IPF2}{
k_{0,2}\sprack 1,\dots,k_{0,2m}\sprack1,{k_{0,1}\sprack 1}',\dots,{k_{0,2m+1}\sprack 1\!\!\!\!\!\!\!\!\!\!}'\,\,\,\,\,\,\,\,\,,\dots,k_{0,2}\sprack \ttn,\dots,k_{0,2m}\sprack \ttn,{k_{0,1}\sprack \ttn}',\dots,{k_{0,2m+1}\sprack \ttn\!\!\!\!\!\!\!\!\!\!}'\,\,\,\,\,\,\,\,\,,
}
with paired frequencies being equal. In particular, we must have that \(p\leq 4m\ttn\).
\end{lemma}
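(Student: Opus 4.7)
The plan is to prove \eqref{OsciBound} by first writing $\prod_\tti \Interactions\sprack\tti = T_\kin^{-m\ttn} \prod \gamma$, with the product running over the $2m\ttn$ gamma factors; the three claimed estimates then reduce to $|\bbE[\prod \gamma]| \lesssim N^{q-p}$, $N^{-3m\ttn}$, and $N^{-\frac{10}{3}m\ttn + \frac{n_0}{3}}$, which I would prove separately and combine by taking the minimum.

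The two estimates $N^{-3m\ttn}$ and $N^{-\frac{10}{3}m\ttn + \frac{n_0}{3}}$ will both follow from Theorem~\ref{thm:moment_bound}: applied with $p = 2m\ttn$, it yields the exponent $-\tfrac{7m\ttn}{2} + \tfrac{p_2^*}{2} + \tfrac{p_3^*}{4}$, so it suffices to control this combination. The bound $N^{-3m\ttn}$ drops out of the elementary constraint $2p_2^* + 3p_3^* \leq 2m\ttn$ (smart pairs and triplets consume disjoint gamma factors), which gives $\tfrac{p_2^*}{2} + \tfrac{p_3^*}{4} \leq \tfrac{m\ttn}{2}$. The refined bound requires the sharper inequality $6p_2^* + 3p_3^* \leq 2m\ttn + 4n_0$, which I expect to be the main difficulty of the proof.

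To prove this sharper inequality, I would argue that smart pairs and triplets are concentrated at vertices of degree zero. If a vertex $V_s$ of positive degree participates in a smart pair or triplet, the fresh column indices it introduces must be matched by identical indices elsewhere in the product; but by Lemma~\ref{l:independence} such fresh frequencies cannot appear in the modulations $\varpi_r$ for any $r > s$, and a careful combinatorial analysis of how smart pairings can propagate on the Feynman tree, together with the identity $n_0 + n_1 + n_2 + n_3 = 2m\ttn$ from \eqref{Dee}, should yield the claim. The estimate $N^{q-p}$ I would instead establish directly from the graph-theoretic bound $\bbE[\psi_G] = O(d^{c-V})$ of Proposition~\ref{p:WGBD}, applied to the Weingarten graph of $\prod \gamma$ at fixed $\vec\ttK$ and fixed row-index configuration: there are exactly $p$ column vertices, and summing over admissible row configurations while using that the connected-component count is controlled by the $q$ distinct output frequencies produces the correct exponent.

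For the input-frequency pairing assertion, I would carry out a parity analysis of the Feynman rules. Using \eqref{Momm} together with the propagation rule encoded in \eqref{Pairing} (which preserves the parity of the position index), one shows by induction on the level $r$ that an input at an odd position in $a\sbrack m$ is always consumed by a $\psi$ factor and one at an even position by a $\bar\psi$, with the roles swapped on the $\overline{a\sbrack m}$ side; the internal edges of the interaction graph then pair one $\psi$ with one $\bar\psi$ of equal column value automatically. The Weingarten formula~\eqref{Weingarten} therefore forces the remaining $\psi$ and $\bar\psi$ factors arising from the inputs listed in \eqref{IPF1} and \eqref{IPF2} to pair into equal frequency values. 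The bound $p \leq 4m\ttn$ follows by direct counting: after pairing, the $(4m+2)\ttn$ input frequencies contribute at most $(2m+1)\ttn$ distinct values, and together with the $(2m-2)\ttn$ internal and $\ttn$ output frequencies this totals exactly $4m\ttn$.
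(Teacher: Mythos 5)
Most of your architecture coincides with the paper's proof: the bound $N^{-3m\ttn}$ via the Weingarten moment machinery (the paper gets it from H\"older plus Corollary~\ref{cor:moment_bound}, you from Theorem~\ref{thm:moment_bound} together with the trivial bound $2p_2^*+3p_3^*\leq 2m\ttn$ -- these are equivalent), the bound $N^{q-p}$ via Proposition~\ref{p:WGBD} applied to the Weingarten graph at fixed row indices (with the $N^{d}$ choices of rows having $d$ distinct values cancelling the $N^{-d}$ from the vertex count, and the component count at most $q$), and the pairing statement plus $p\leq 4m\ttn$ via the Kronecker deltas in \eqref{Weingarten}, after cancelling the internal and output frequencies, which occur exactly once as a $\psi$-column and once as a $\overline\psi$-column. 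These parts are essentially the paper's argument and are sound as sketches.

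The genuine gap is the refined bound $N^{-\frac{10}{3}m\ttn+\frac13 n_0}$, which you correctly identify as the main difficulty and then leave unproven. Your reduction to the inequality $6p_2^*+3p_3^*\leq 2m\ttn+4n_0$ is the right target, but your proposed route -- ``fresh column indices must be matched elsewhere,'' an appeal to Lemma~\ref{l:independence}, and ``a careful combinatorial analysis \dots should yield the claim'' -- is not a proof, and Lemma~\ref{l:independence} is in any case about the modulations $\varpi_r$ and plays no role in the Weingarten combinatorics of the $\gamma$ terms. No analysis of how pairings ``propagate on the tree,'' and no constraint on $p_3^*$ beyond the trivial one, is needed. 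The paper's argument is short: by the observation following Definition~\ref{smart_def}, a smart pair can only arise from a $\gamma$ term matched with its exact conjugate, so the two vertices carry the same four frequencies; hence every frequency on the three edges below the earlier (in the enumeration) vertex of the pair reappears at the edges of the later vertex, which are reviewed afterwards, so that earlier vertex has degree zero. Since distinct smart pairs use disjoint $\gamma$ terms, this gives $p_2^*\leq n_0$, and combined with $2p_2^*+3p_3^*\leq 2m\ttn$ this yields precisely $-\tfrac72 m\ttn+\tfrac12 p_2^*+\tfrac14 p_3^*\leq -\tfrac{10}{3}m\ttn+\tfrac13 n_0$, i.e.\ your inequality. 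Without this step (or an equivalent), the estimate \eqref{OsciBound} is not established, and it is exactly this $n_0$-dependence that is later needed in the proof of Proposition~\ref{p:app}.
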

\bpf
We proceed by estimating \(\LHS{OsciBound}\) using Theorem~\ref{thm:moment_bound}, Corollary~\ref{cor:moment_bound}, and Proposition~\ref{p:WGBD}.

\underline{Application of Corollary~\ref{cor:moment_bound}.} Applying H\"older's inequality, we estimate
\[
\left|\bbE\left[\prod_{\tti=1}^\ttn\Interactions\sprack \tti\right]\right|\leq \frac1{T_{\kin}^{m\ttn}}\prod_{\tti=1}^\ttn \prod_{r=1}^m \left(\bbE|\gamma_{r,{\ell_r\sprack \tti}'}|^{2m\ttn}\right)^{\frac1{2m\ttn}}\left(\bbE|\gamma_{r,\ell_r\sprack \tti}|^{2m\ttn}\right)^{\frac1{2m\ttn}}.
\]
Corollary~\ref{cor:moment_bound} then gives us
\eq{CS bound}{
\left|\bbE\left[\prod_{\tti=1}^\ttn\Interactions\sprack\tti\right]\right|\lesssim_{m,\ttn}\frac1{T_{\kin}^{m\ttn}} N^{-3m\ttn}.
}

\underline{Application of Theorem~\ref{thm:moment_bound}.} Let us denote the maximal number of smart pairs that can be obtained from the \(2m\ttn\) choices of \(\gamma\) terms by \(p_2^*\). Having removed these smart pairs, denote the maximal number of smart triplets that can be obtained from the remaining terms by \(p_3^*\). Theorem~\ref{thm:moment_bound} then gives us the bound
\[
\left|\bbE\left[\prod_{\tti=1}^\ttn\Interactions\sprack\tti\right]\right|\lesssim \frac1{T_{\kin}^{m\ttn}}N^{-\frac72m\ttn+\frac12p_2^* + \frac14p_3^*}\lesssim \frac1{T_{\kin}^{m\ttn}}N^{-\frac{10}3m\ttn+\frac13p_2^*},
\]
where the second inequality follows from the trivial bound \(2p_2^* + 3p_3^*\leq 2m\ttn\). Now observe that at least one \(\gamma\) term in each smart pair corresponds to a degree zero vertex, which gives us \(p_2^*\leq n_0\), and hence
\eq{CS bound SC}{
\left|\bbE\left[\prod_{\tti=1}^\ttn\Interactions\sprack\tti\right]\right|\lesssim \frac1{T_{\kin}^{m\ttn}}N^{-\frac{10}3m\ttn+\frac13n_0}.
}

\underline{Application of Proposition~\ref{p:WGBD}.} Enumerating the positions and frequencies, we obtain the expression
\[
\bbE\left[\prod_{\tti=1}^\ttn\Interactions\sprack\tti\right] = \frac\sigma{T_{\kin}^{m\ttn}} \sum_{j_1,\dots,j_{2m\ttn}}\bbE\left[\prod_{s=1}^{2m\ttn}\overline{\psi_{j_s\kappa_{2s-1}'}}\,\psi_{j_s\kappa_{2s-1}}\,\overline{\psi_{j_s\kappa_{2s}'}}\,\psi_{j_s\kappa_{2s}}\right],
\]
where \(\sigma = \sigma(\vec \ttL)\in \{\pm 1\}\); the frequencies \(\kappa_1,\dots,\kappa_{4m\ttn}\) consist of the input frequencies \eqref{IPF1}, every internal frequency, and every output frequency; the frequencies \(\kappa_1',\dots,\kappa_{4m\ttn}'\) consist of the input frequencies \eqref{IPF2}, every internal frequency, and every output frequency.

First, note that Proposition~\ref{p:WGBD} gives us
\[
\bbE\left[\prod_{\tti=1}^\ttn\Interactions\sprack \tti\right] = 0
\]
unless each of the frequencies \(\kappa_1,\dots,\kappa_{4m\ttn}\) is paired with one of the frequencies \linebreak\(\kappa_1',\dots,\kappa_{4m\ttn}'\), with paired frequencies being equal. This occurs only if each of the input frequencies in \eqref{IPF1} is paired with one of the input frequencies in \eqref{IPF2}, and hence we must have \(p\leq 4m\ttn\) to obtain a non-zero contribution.

Ignoring the direction of each edge, which is unimportant for our application of Proposition~\ref{p:WGBD}, we now construct the Weingarten graph \(W\) for the expression
\[
\prod_{s=1}^{2m\ttn}\overline{\psi_{j_s\kappa_{2s-1}'}}\,\psi_{j_s\kappa_{2s-1}}\,\overline{\psi_{j_s\kappa_{2s}'}}\,\psi_{j_s\kappa_{2s}}
\]
with fixed \(j_1,\dots,j_{2m\ttn}\) from the interaction graph \(G\) by the following algorithm:
\begin{itemize}
\item Replace each input vertex \tikz[baseline=-2.4,scale=0.15]{\node[indot] {};} by a frequency vertex \tikz[baseline=-2.4,scale=0.15]{\node[ghostdot] {};}.
\item Place a frequency vertex \tikz[baseline=-2.4,scale=0.15]{\node[ghostdot] {};} at the center of each of the edge that connects two interaction vertices \tikz[baseline=-2.4,scale=0.15]{\node[dot] {};}. (Subsequently, we consider the interaction vertices to be position vertices in \(W\).)
\item Label each position vertex \tikz[baseline=-2.4,scale=0.15]{\node[dot] {};} with one of the \(j_s\)'s and identify any two vertices for which the corresponding \(j_s\)'s coincide.
\item Label each frequency vertex \tikz[baseline=-2.4,scale=0.15]{\node[ghostdot] {};} with the corresponding frequency \(\kappa_s\) or \(\kappa_s'\) and identify any two vertices for which the corresponding frequencies coincide.
\end{itemize}

Our original interaction graph \(G\) has \(\ttn\) connected components. As there are only \(q\) distinct output frequencies, this procedure leaves us with a Weingarten graph \(W\) with at most \(q\) connected components. Taking \(d = \#\{j_1,\dots,j_{2m\ttn}\}\) to be the number of distinct positions, this procedure shows that the Weingarten graph \(W\) has \(d\) position vertices, \(p\) frequency vertices, and \(8m\ttn\) edges. Applying Proposition~\ref{p:WGBD}, we obtain
\[
\bbE\left[\prod_{s=1}^{2m\ttn}\overline{\psi_{j_s\kappa_{2s-1}'}}\,\psi_{j_s\kappa_{2s-1}}\,\overline{\psi_{j_s\kappa_{2s}'}}\,\psi_{j_s\kappa_{2s}}\right] \lesssim_{m,\ttn} N^{q-d-p}.
\]

Using this bound we may then estimate
\begin{align*}
\left|\bbE\left[\prod_{\tti=1}^\ttn\Interactions\sprack \tti\right]\right| &\lesssim_{m,\ttn} \frac1{T_{\kin}^{m\ttn}}\sum_{d=1}^{2m\ttn}\sum_{\substack{j_1,\dots,j_{2m\ttn}\\\#\{j_1,\dots,j_{2m\ttn}\} = d}}N^{q-d-p}\\
&\lesssim_{m,\ttn} \frac1{T_{\kin}^{m\ttn}}N^{q-p}.
\end{align*}
Taking the minimum of this bound, \eqref{CS bound}, and \eqref{CS bound SC}, we obtain \eqref{OsciBound}.
\epf

It remains to bound the oscillations. Fixing the interaction histories \(\vec\ttL\) and a coloring \(C\) of the interaction graph \(G(\vec\ttL)\), enumerate the frequencies and vertices as above. We say that a vertex \(V_s\) is \emph{non-degenerate} if there exists a frequency \(K_J\) so that \(V_{s-1}<K_J\leq V_s\) and we can write
\eq{nondeg}{
\varpi_s = \eta \lambda_{K_J} + \beta,
}
where \(\eta\neq 0\) and \(\beta\) is a linear combination of \(\lambda_{K_j}\)'s that does not depend on \(\lambda_{K_J}\). If the vertex \(V_s\) is not non-degenerate, we say it is \emph{degenerate}. We denote the number of degenerate vertices by \(D\in \llbracket0,2m\ttn\rrbracket\) and have the following lemma:
\begin{lemma}\label{l:counting degeneracy}
All degree \(0\) vertices are degenerate. All degree \(2\) and \(3\) vertices are non-\linebreak degenerate.
\end{lemma}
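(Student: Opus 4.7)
My plan is to handle the two claims separately, by a careful tracking of which edges in the interaction graph $G$ can host a ``new'' frequency at $V_s$. The first claim is immediate: if $V_s$ has degree $0$, there is no $K_J$ satisfying $V_{s-1}<K_J\leq V_s$, and so the existence clause in the definition of non-degeneracy fails vacuously.

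For the second claim, I focus on $V_s=v_{r,\ell_r}\sprack{\tti}$ in the unprimed portion of the $\tti$th copy (the primed case being identical by symmetry). By~\eqref{thetaj}, the modulation $\varpi_s=\omega_{r-1}(\cK\sprack{\tti},\ell\sprack{\tti})$ is a linear combination of eigenvalues attached to the four edges meeting each nonlinear vertex $v_{j,\ell_j}\sprack{\tti}$ of the current unprimed tree for $r\leq j\leq m$. I partition these edges into four classes: (i) the three below edges of $V_s$ itself; (ii) the three below edges of $v_{j,\ell_j}\sprack{\tti}$ for $r<j\leq m$; (iii) the top edges of $v_{j,\ell_j}\sprack{\tti}$ for $r\leq j<m$, each of which in $G$ coincides (after collapsing linear vertices) with a below edge of some $v_{j',\ell_{j'}}\sprack{\tti}$ with $j<j'\leq m$, using that $v_{m,\ell_m}$ is the unique vertex at level $m$; and (iv) the top edge of $v_{m,\ell_m}\sprack{\tti}$, which in $G$ becomes (after identification of the two output vertices and the removal step) the single edge linking the topmost nonlinear vertices of the unprimed and primed trees of copy $\tti$.

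The structural heart of the argument is the observation that any $K_J$ with $V_{s-1}<K_J\leq V_s$ cannot appear on an edge examined after $V_s$. The edges in classes (ii) and (iii) are, by construction, below edges of some $V_{t'}$ with $t'>s$; the edge in class (iv) is not a below edge of any interaction vertex and is therefore examined during the final right-to-left pass, hence also after $V_s$. Consequently a new color $K_J$ can enter $\varpi_s$'s formula only through one or more of the three below edges of $V_s$. Expanding $\Omega(k,\ell,m,n)=\lambda_k-\lambda_\ell+\lambda_m-\lambda_n$ in the $j=r$ summand of~\eqref{thetaj} then gives the coefficient of $\lambda_{K_J}$ in $\varpi_s$ as $(-1)^{\ell_r-1}(-\delta_1+\delta_2-\delta_3)$, where $\delta_{j'}\in\{0,1\}$ records whether the $j'$th below edge of $V_s$ carries color $K_J$; this coefficient vanishes only in the two adjacent patterns $(\delta_1,\delta_2,\delta_3)\in\{(1,1,0),(0,1,1)\}$.

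It then remains to verify the two remaining degree cases. If $V_s$ has degree $3$, the three below edges carry three distinct new colors, each occupying a single position, so each coefficient is $\pm1\neq 0$. If $V_s$ has degree $2$, then either the three below positions carry three distinct colors with exactly two new (so each new color sits at a single position), or else two positions share one new color while the third carries a different new color, in which case the solitary new color still has coefficient $\pm1$. In every sub-case at least one new color survives with nonzero coefficient, so $V_s$ is non-degenerate. The main obstacle is the structural identification in the previous paragraph: it depends on reading the bottom-to-top, right-to-left enumeration against the collapsed tree structure of $G$, and in particular on locating where each top edge of the current tree resurfaces as a below edge of a later interaction vertex.
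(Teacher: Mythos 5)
Your proof is correct, and it is essentially the paper's argument: the degree-$0$ case is vacuous, and the degree-$2$/$3$ cases are exactly the "inspection of \eqref{Osc} and \eqref{thetaj}" that the paper leaves implicit, which you have simply carried out in full (a new color can only enter $\varpi_s$ through the three below edges of $V_s$, where the coefficients $-1,+1,-1$ force some new color to survive with coefficient $\pm1$).
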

\bpf
The fact that all degree \(0\) vertices are degenerate is vacuous. The fact that all degree \(2\) and \(3\) vertices are non-degenerate follows from inspection of the expressions \eqref{Osc} and \eqref{thetaj}. 
\epf

Before stating our main estimates for the oscillations, let us introduce a notational convention. Fix and interaction history \(\vec\ttL\) and a coloring \(C\) of the interaction graph \(G(\vec\ttL)\). If \(\vec\ttK\in C\), a function \(f(\vec\ttK)\) can be thought of as a function \(f(K_1,\dots,K_p)\) of the \(p\) distinct frequencies \(K_1,\dots,K_p\). Now consider vertices \(V_r\), \(V_s\) with \(r<s\). If there are no frequencies \(K_j\) with \(V_r<K_j\leq V_s\) we define
\[
\sum_{V_r<K_j\leq V_s}f(\vec\ttK) := 0.
\]
Otherwise, up to re-indexing the frequencies, we may assume that \(V_r<K_1,\dots,K_d\leq V_s\) for some \(d\geq 1\). We then define
\[
\sum_{V_r<K_j\leq V_s}f(\vec\ttK):=\max_{K_{d+1},\dots,K_p}\sum_{K_1,\dots,K_d}f(K_1,\dots,K_p),
\]
where the maximum and sum are taken over all choices of \(K_1,\dots,K_p\in \llbracket-N,N\rrbracket\). We note that we have relaxed the restriction that \(K_1,\dots,K_p\) are distinct here: this will have no bearing on the subsequent estimates. 

With this convention in hand, our first estimate for the oscillations is the following:

\begin{lemma}\label{l:reciprocal bound} Let \(m,n\geq 1\). Fix an interaction history \(\vec\ttL\) and a coloring \(C\) of the interaction graph \(G(\vec\ttL)\).
Let \(1\leq T\leq N\), \(\nu>0\) and \(|\alpha|\leq N^\nu\). Then, if the vertex \(V_s\) has degree \(d\in\{0,1,2,3\}\) we have the estimate
\eq{DegreeBound}{
\sum_{V_{s-1}<K_j\leq V_s} \frac1{|\alpha + \varpi_s| + \frac1T}\prec_{\nu,m}\begin{cases}TN^d&\qtq{if}V_s\text{ is degenerate},\smallskip\\ N^d &\qtq{if}V_s\text{ is non-degenerate}.\end{cases}
}
\end{lemma}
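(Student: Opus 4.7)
The plan is to separate the two cases and handle each with a suitable combination of a trivial bound with Corollary~\ref{c:l}. The argument is essentially a variable-counting exercise, with the degenerate/non-degenerate distinction controlling whether we pay a factor of $T$ or gain a factor of $1/T$ on one of the $d$ summation variables via the local semicircle law.

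For the degenerate case, I would use the trivial pointwise bound $\frac{1}{|\alpha+\varpi_s|+1/T} \leq T$, and note that the sum ranges over $d$ variables $K_j \in \llbracket-N,N\rrbracket$, giving at most $(2N+1)^d \lesssim N^d$ terms. Combined, these yield $TN^d$, which exceeds any stochastic-dominance threshold deterministically.

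For the non-degenerate case, by definition there exists $K_J$ with $V_{s-1}<K_J\leq V_s$ such that
\[
\varpi_s = \eta\,\lambda_{K_J} + \beta,
\]
where $\eta \neq 0$ and $\beta$ does not depend on $\lambda_{K_J}$. Inspection of \eqref{Osc} and \eqref{thetaj} shows that $\eta$ is a signed integer with $|\eta|\lesssim_m 1$, so in particular $|\eta|\geq 1$. Setting $\alpha' = -(\alpha+\beta)/\eta$, I get
\[
\frac{1}{|\alpha+\varpi_s|+1/T} = \frac{1}{|\eta|\,|\lambda_{K_J}-\alpha'|+1/T} \leq \frac{1}{|\lambda_{K_J}-\alpha'|+1/T}.
\]
Treating all other $K_j$ as fixed, Corollary~\ref{c:l} gives
\[
\sum_{K_J=-N}^{N}\frac{1}{|\lambda_{K_J}-\alpha'|+1/T}\prec N\log(16+|\alpha'|).
\]
From Lemma~\ref{l:LSC}, every $\lambda_{K_j}$ satisfies $|\lambda_{K_j}|\prec 1$, so $|\beta|\prec_m 1$, and combined with $|\alpha|\leq N^\nu$ this yields $|\alpha'|\prec_\nu N^\nu$, whence $\log(16+|\alpha'|)\prec_\nu 1$. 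The sum over $K_J$ is therefore $\prec_{\nu,m} N$. The remaining $d-1$ summations over $K_j\in\llbracket-N,N\rrbracket$ each contribute a factor of at most $2N+1\lesssim N$, producing the final bound $\prec N^d$.

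No step presents a real obstacle — the non-degenerate case crucially exploits the integrality (hence non-vanishing) of $\eta$ together with the rigidity of the spectrum to kill the $\log(16+|\alpha'|)$ prefactor, and the degenerate case is handled by the crudest possible estimate. The slight subtlety worth flagging is that the uniformity of Corollary~\ref{c:l} in $\alpha$ is essential, since $\alpha'$ depends on the frozen variables $K_j$ with $j\neq J$; but this uniformity is already built into the stated corollary.
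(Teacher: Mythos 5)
Your proposal is correct and follows essentially the same route as the paper: the trivial bound $\leq T$ times the $N^d$ count in the degenerate case, and in the non-degenerate case the rigidity bound $|\beta|\prec_m 1$ together with the uniform-in-$\alpha$ estimate of Corollary~\ref{c:l} to sum over $K_J$, then crude summation over the remaining $d-1$ frequencies. Your extra observations (integrality of $\eta$ so $|\eta|\geq 1$, and absorbing the $\log(16+|\alpha'|)$ factor via $|\alpha'|\prec N^\nu$) merely make explicit steps the paper leaves implicit.
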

\bpf
For the degenerate case, we simply bound
\[
\frac1{|\alpha + \varpi_s| + \frac1T}\leq T.
\]

For the non-degenerate case, we choose \(\eta\), \(K_J\), and \(\beta\) as in \eqref{nondeg}. From \eqref{Rigid}, we have
\[
|\beta|\prec_{m}1.
\]
We may then apply \eqref{L1} to bound
\[
\sum_{K_J} \frac1{|\alpha+\varpi_s| + \frac1T}\prec_{\nu,m} N.
\]
The estimate \eqref{DegreeBound} now follows from summing over remaining \(d-1\) frequencies satisfying \(V_{s-1}<K_j\leq V_s\) and taking the maximum over all other frequencies.
\epf

Using this lemma, we obtain the following:

\begin{lemma}\label{l:sprocket} Let \(m,\ttn\geq 1\), \(\tfrac12<b<\tfrac12+m \), and \(1\leq T\leq N\). Fix an interaction history \(\vec\ttL\) and a coloring \(C\) of the interaction graph \(G(\vec\ttL)\). Then, for each \(\tti=1,\dots,\ttn\) we have
\eq{sprocket}{
\sum_{V_{2m(\ttn - \tti)}<K_j\leq V_{2m(\ttn+1 - \tti)}}\left|\Oscillations\sprack\tti\right|\prec_{b,m} T^{D\sprack \tti}N^{n_1\sprack \tti+2n_2\sprack \tti+3n_3\sprack \tti},
}
where \(D\sprack \tti\) is the number of degenerate vertices and \(n_d\sprack \tti\) is the number of degree \(d\) vertices in the subgraph \(G\sprack\tti\) of \(G\).
\end{lemma}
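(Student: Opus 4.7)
The strategy is to bound $|\Oscillations\sprack\tti|$ pointwise by pulling absolute values inside the triple integral and then to collapse the sum over the $\tti$-th block's frequencies one vertex at a time using Lemma~\ref{l:reciprocal bound}. Concretely, I would first write
$$
|\Oscillations\sprack\tti| \leq \int_{\R^3} \bigl(\tfrac1T+|\tau|\bigr)^{2b}\, |\widehat{\phi_T}(\tau-\alpha)|\,|\widehat{\phi_T}(\tau-\alpha')| \prod_{r=0}^m \frac{1}{|\alpha+\omega_r|+\tfrac1T}\cdot\frac{1}{|\alpha'+\omega_r'|+\tfrac1T}\, d\tau\, d\alpha\, d\alpha',
$$
using $|(\alpha+\omega_r\pm i/T)^{-1}|\leq(|\alpha+\omega_r|+1/T)^{-1}$, and then exchange the sum over the $K_j$'s with the integrals by Fubini.

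The core bookkeeping is that the $2m$ nontrivial modulations attached to the $\tti$-th block, namely $\varpi_{2m(\ttn-\tti)+1},\ldots,\varpi_{2m(\ttn+1-\tti)}$, correspond bijectively to the $2m$ interaction vertices of $G\sprack\tti$. I would process the vertices in reverse order from $V_{2m(\ttn+1-\tti)}$ down to $V_{2m(\ttn-\tti)+1}$. At step $s$, Lemma~\ref{l:independence} guarantees that the modulations $\varpi_r$ with $r>s$ (already handled) are independent of the $K_j$'s satisfying $V_{s-1}<K_j\leq V_s$, so those frequencies appear only in the single factor $(|\alpha+\varpi_s|+1/T)^{-1}$ or its $\alpha'$-counterpart. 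Applying Lemma~\ref{l:reciprocal bound} to the degree-$d$ vertex $V_s$ yields a factor $TN^d$ if $V_s$ is degenerate and $N^d$ otherwise. Multiplying over the $2m$ vertices of $G\sprack\tti$ produces precisely the claimed factor $T^{D\sprack\tti}N^{n_1\sprack\tti+2n_2\sprack\tti+3n_3\sprack\tti}$.

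Two technical points remain. First, Lemma~\ref{l:reciprocal bound} requires $|\alpha|\leq N^\nu$, so the regions $\{|\alpha|>N^\nu\}$, $\{|\alpha'|>N^\nu\}$, $\{|\tau|>N^\nu\}$ must be treated separately; here one exploits the rapid decay $|\widehat{\phi_T}(\sigma)|\lesssim_K T\langle T\sigma\rangle^{-K}$, valid for any $K$ since $\phi_T$ is a smooth compactly supported bump (up to the harmless factor $e^{t/T}$), so these tail contributions lose arbitrarily many powers of $N$ and are absorbed into~$\prec$. Second, the residual integral over $\tau,\alpha,\alpha'$ --- involving the weight $(1/T+|\tau|)^{2b}$, the two $\widehat{\phi_T}$ factors, and the two leftover factors $(|\alpha|+1/T)^{-1}(|\alpha'|+1/T)^{-1}$ from the $r=m$ term (which escape the frequency sum because $\omega_m=\omega_m'=0$) --- must be shown to be $N$-independent up to polylogs. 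This is the main obstacle: for $b$ close to $m+\tfrac12$, the weight $(1/T+|\tau|)^{2b}$ is a large polynomial in $\tau$ and the two surviving $\alpha,\alpha'$-factors alone do not give integrability at infinity. One must invoke the $\widehat{\phi_T}$-decay at sufficiently high order $K\gg b$ to effectively truncate $|\tau|,|\alpha|,|\alpha'|\lesssim N^\nu$, then dispatch the resulting bounded integral via a Cauchy--Schwarz estimate in $\tau$ such as $\int (\tfrac1T+|\tau|)^{2b}|\widehat{\phi_T}(\tau-\alpha)|^2\,d\tau\lesssim_b T(\tfrac1T+|\alpha|)^{2b}$, followed by an elementary integration in $\alpha,\alpha'$.
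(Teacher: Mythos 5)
There is a genuine gap in your treatment of the exterior regions, and it is visible in the fact that you never use the hypothesis \(b<\tfrac12+m\). Your plan is to restrict to \(|\tau|,|\alpha|,|\alpha'|\leq N^\nu\) (so that Lemma~\ref{l:reciprocal bound} applies) and to dismiss the complementary regions by the rapid decay \(|\widehat{\phi_T}(\sigma)|\lesssim_K T\langle T\sigma\rangle^{-K}\). But the bump factors only decay in the \emph{differences} \(\tau-\alpha\) and \(\tau-\alpha'\); they give no smallness whatsoever in the regime where \(\tau,\alpha,\alpha'\) are all large and within \(O(1/T)\) of each other, which is exactly where the weight \(\left(\tfrac1T+|\tau|\right)^{2b}\) is large. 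In that regime the only source of decay is the product of the \(m\) modulation denominators together with the leftover \(r=m\) factor: since \(|\varpi_s|\prec_m 1\) by eigenvalue rigidity \eqref{Rigid}, for \(|\alpha|>N^\nu\) each factor \(\bigl(\tfrac1T+|\alpha+\varpi_s|\bigr)^{-1}\) decays like \(|\alpha|^{-1}\), so the \(\alpha\)-integrand behaves like \(\left(\tfrac1T+|\alpha|\right)^{b-1-m}\) after the \(\tau\)-weight has been transferred onto \(\alpha\); this is (square-)integrable at infinity precisely because \(b<m+\tfrac12\), and for \(b\geq m+\tfrac12\) the tail genuinely diverges no matter how much decay of \(\widehat{\phi_T}\) you invoke. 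So "truncating \(|\alpha|\lesssim N^\nu\) by \(\widehat{\phi_T}\)-decay" is not a step that can be carried out, and the main analytic obstacle you correctly identified is resolved by the wrong mechanism.

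For comparison, the paper first separates the \(\alpha\)- and \(\alpha'\)-halves by a Cauchy--Schwarz in \(\tau\) followed by Young's convolution inequality, absorbing the weight into \(\bigl\|(1+T|\tau|)^{2b}\widehat{\phi_T}\bigr\|_{L^1}\) and weights \(\left(\tfrac1T+|\alpha|\right)^{b-1}\), and then estimates the resulting weighted \(L^2_\alpha\) norm of \(\prod_{s}\bigl[\sum_{V_{s-1}<K_j\leq V_s}(\tfrac1T+|\alpha+\varpi_s|)^{-1}\bigr]\) by splitting at \(|\alpha|=N^\nu\): Lemma~\ref{l:reciprocal bound} on the interior (giving \(T^{D}N^{n_1+2n_2+3n_3}\) exactly as in your vertex-by-vertex bookkeeping, which is correct and is the same counting as the paper's), and the crude bound \(\left(\tfrac1T+|\alpha|\right)^{-m}\) via \(|\varpi_s|\prec 1\) on the exterior, where \(b<\tfrac12+m\) closes the estimate. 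Your proof would be repaired by replacing the "\(\widehat{\phi_T}\)-truncation" step with this rigidity-plus-\(b<m+\tfrac12\) argument (or any equivalent way of extracting \((\tfrac1T+|\alpha|)^{-(m+1)}\)-type decay in the exterior region).
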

\bpf
We consider the case that \(\tti = \ttn\). The proof for other values of \(\tti\) is identical.

We recall from \eqref{thetaj} that each \(\omega_m=0\) and from Lemma~\ref{l:independence}, the modulation \(\varpi_s\) does not depend on any frequencies \(K_j\leq V_{s-1}\). Applying the Cauchy--Schwarz inequality in \(\tau\) and then Young's convolution inequality, we may then bound
\begin{align*}
&\sum_{V_0<K_j\leq V_{2m}}\left|\Oscillations\sprack n\right|\\
&\qquad\leq  \int_{\R^3} \frac{\left(\tfrac1T + |\tau|\right)^{2b}|\widehat \phi_T(\tau-\alpha)||\widehat\phi_T(\tau - \alpha')|}{\left(\frac 1T + |\alpha|\right)\left(\frac 1T + |\alpha'|\right)}\\
&\qquad\qquad\qquad\times\prod_{s=1}^m\left[\sum_{V_{s-1}<K_j\leq V_s} \frac1{\frac1T + |\alpha + \varpi_s|}\right]\cdot \prod_{s=m+1}^{2m}\left[\sum_{V_{s-1}<K_j\leq V_s} \frac1{\frac1T + |\alpha' + \varpi_s|}\right]\,d\alpha\,d\alpha'\,d\tau\\
&\qquad\lesssim_b \left\| \bigl(1 + T|\tau|\bigr)^{2b}\,\widehat\phi_T(\tau)\right\|_{L^1_\tau}^2 \left\|\left(\tfrac1T + |\alpha|\right)^{b-1}\prod_{s=1}^m\left[\sum_{V_{s-1}<K_j\leq V_s} \frac1{\frac1T + |\alpha + \varpi_s|}\right]\right\|_{L^2_\alpha}\\
&\qquad\qquad\qquad\times\left\|\left(\tfrac1T + |\alpha'|\right)^{b-1}\prod_{s=m+1}^{2m}\left[\sum_{V_{s-1}<K_j\leq V_s} \frac1{\frac1T + |\alpha' + \varpi_s|}\right]\right\|_{L^2_{\alpha'}}\\
&\qquad\lesssim_b \left\|\left(\tfrac1T + |\alpha|\right)^{b-1}\prod_{s=1}^m\left[\sum_{V_{s-1}<K_j\leq V_s} \frac1{\frac1T + |\alpha + \varpi_s|}\right]\right\|_{L^2_\alpha}\\
&\qquad\qquad\qquad\times\left\|\left(\tfrac1T + |\alpha'|\right)^{b-1}\prod_{s=m+1}^{2m}\left[\sum_{V_{s-1}<K_j\leq V_s} \frac1{\frac1T + |\alpha' + \varpi_s|}\right]\right\|_{L^2_{\alpha'}}.
\end{align*}

By symmetry, it suffices to bound the first norm in this expression. By a slight abuse of notation, denote the number of degenerate vertices and the number of degree \(d\) vertices in the set \(\{V_1,\dots,V_m\}\) by \(D\) and \(n_d\), respectively. We will prove that
\eq{recall this later}{
\left\|\left(\tfrac1T + |\alpha|\right)^{b-1}\prod_{s=1}^m\left[\sum_{V_{s-1}<K_j\leq V_s} \frac1{\frac1T + |\alpha + \varpi_s|}\right]\right\|_{L^2_\alpha}\prec_{b,m} T^{D}N^{n_1 + 2n_2 + 3n_3}.
}
from which the estimate \eqref{sprocket} follows.

To prove \eqref{recall this later}, we first use \eqref{Rigid} to obtain
\[
|\varpi_s|\prec_m 1.
\]
As a consequence, for any \(\nu>0\) we may bound the integral over the region where \(|\alpha|>N^\nu\) by
\begin{align*}
&\left\|\left(\tfrac1T + |\alpha|\right)^{b-1}\prod_{s=1}^m\left[\sum_{V_{s-1}<K_j\leq V_s} \frac1{\frac1T + |\alpha + \varpi_s|}\right]\right\|_{L^2(|\alpha|>N^\nu)}\\
&\qquad\qquad\qquad\qquad\qquad\prec_m N^{n_1 + 2n_2 + 3n_3}\left\|\left(\tfrac1T + |\alpha|\right)^{b-1-m}\right\|_{L^2(|\alpha|>N^\nu)}\\
&\qquad\qquad\qquad\qquad\qquad\prec_{b,m} N^{\nu(b-\frac12-m)}N^{n_1 + 2n_2 + 3n_3}.
\end{align*}
To bound the region where \(|\alpha|\leq N^\nu\), we instead apply Lemma~\ref{l:reciprocal bound} to estimate
\begin{align*}
&\left\|\left(\tfrac1T + |\alpha|\right)^{b-1}\prod_{s=1}^m\left[\sum_{V_{s-1}<K_j\leq V_s} \frac1{\frac1T + |\alpha + \varpi_s|}\right]\right\|_{L^2(|\alpha|\leq N^\nu)}\\
&\qquad\qquad\qquad\qquad\qquad\prec_{\nu,m} T^{D}N^{n_1 + 2n_2 + 3n_3}\left\|\left(\tfrac1T + |\alpha|\right)^{b-1}\right\|_{L^2(|\alpha|\leq N^\nu)}\\
&\qquad\qquad\qquad\qquad\qquad\prec_{b,\nu,m} T^{D}N^{n_1 + 2n_2 + 3n_3+\nu(b-\frac12)}.
\end{align*}
The estimate \eqref{recall this later} now follows from the fact that \(\nu>0\) can be chosen to be arbitrarily small.
\epf

Applying this estimate, we may finally bound the oscillations:

\begin{corollary}\label{c:TB}
Let \(m,\ttn\geq 1\), \(\frac12<b<\frac12+m \), and \(1\leq T\leq N\). Fix an interaction history \(\vec\ttL\) and a coloring \(C\) of the interaction graph \(G(\vec\ttL)\). Then we have the estimate
\eq{TB}{
\sum_{\vec\ttK\in C}\left|\prod_{\tti=1}^\ttn\Oscillations\sprack\tti\right| \prec_{b,m,\ttn} T^DN^p.
}
\end{corollary}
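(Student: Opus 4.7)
The plan is to prove \eqref{TB} by iterating the single-copy bound of Lemma~\ref{l:sprocket} across the $\ttn$ copies, then summing trivially over the $q$ output frequencies. The key structural input is Lemma~\ref{l:independence}: since the modulations appearing in $\Oscillations\sprack\tti$ are indexed by $r \in \llbracket 2m(\ttn-\tti)+1, 2m(\ttn-\tti+1)\rrbracket$, the contrapositive of that lemma forces $\Oscillations\sprack\tti$ to depend only on those frequencies $K_j$ with $K_j > V_{2m(\ttn-\tti)}$.

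I would sum the frequencies in order, starting from those introduced at the bottom of the rightmost copy and working left. Concretely, first sum over the $p\sprack\ttn := n_1\sprack\ttn + 2 n_2\sprack\ttn + 3 n_3\sprack\ttn$ frequencies lying in $(V_0, V_{2m}]$, i.e., those introduced by the vertices of copy $\ttn$. Among the $\ttn$ oscillation factors, these frequencies affect \emph{only} $\Oscillations\sprack\ttn$, since for $\tti' < \ttn$ the factor $\Oscillations\sprack{\tti'}$ depends only on $K_j > V_{2m(\ttn-\tti')} \geq V_{2m}$. Applying Lemma~\ref{l:sprocket} (case $\tti = \ttn$), which gives a bound uniform in the remaining frequencies, replaces this piece by the constant $T^{D\sprack\ttn} N^{p\sprack\ttn}$.

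Iterating the same reasoning for $\tti = \ttn-1, \ttn-2, \ldots, 1$ — at each step, copy $\tti$'s internal frequencies $K_j \in (V_{2m(\ttn-\tti)}, V_{2m(\ttn-\tti+1)}]$ affect only $\Oscillations\sprack\tti$ among the yet-unsummed factors, for precisely the same independence reason — accumulates the bound
\[
\prod_{\tti=1}^\ttn T^{D\sprack\tti} N^{p\sprack\tti} \;=\; T^{D}\, N^{p-q},
\]
using the identities $D = \sum_\tti D\sprack\tti$ and $p - q = \sum_\tti p\sprack\tti$, the latter being \eqref{Aye} applied per copy. After these $\ttn$ iterations no factor involving frequencies remains; the only unresolved sum is over the $q$ distinct output frequencies, each ranging over $\llbracket -N, N\rrbracket$, contributing at most $(2N+1)^q \lesssim N^q$. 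Multiplying gives the desired $T^{D} N^{p}$, with the $\prec_{b,m,\ttn}$ arising from applying Lemma~\ref{l:sprocket}'s $\prec_{b,m}$ bound $\ttn$ times.

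I do not anticipate serious obstacles, as the analytic content is already encapsulated in Lemmas~\ref{l:independence} and~\ref{l:sprocket}; the only delicate point is bookkeeping the order of summation so that at each step exactly one oscillation factor depends on the frequencies being summed, which is precisely what Lemma~\ref{l:independence} guarantees when the iteration proceeds from the copy $\ttn$ down to copy $1$ and then the output frequencies.
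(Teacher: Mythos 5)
Your argument is correct and follows essentially the same route as the paper: use Lemma~\ref{l:independence} to reduce the sum to a product of per-copy sums (the paper writes this as one factorized bound with the prefactor $N^q$ for the output frequencies rather than as an iteration), apply Lemma~\ref{l:sprocket} to each factor, and combine the exponents via \eqref{Aye}. The only cosmetic difference is that the paper invokes \eqref{Aye} once globally instead of per copy, which amounts to the same bookkeeping.
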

\bpf
Recall the our vertices are ordered so that \(V_1,\dots,V_{2m}\) lie in the rightmost connected component of the interaction graph \(G\), corresponding to \(\tti=\ttn\) in \(\LHS{TB}\), \(V_{2m+1},\dots,V_{4m}\) lie in the second connected component from the right, corresponding to \(\tti=\ttn-1\), etc. By Lemma~\ref{l:independence}, we may bound
\[
\LHS{TB}\lesssim N^q \prod_{\tti=1}^\ttn \left[\sum_{V_{2m(\ttn-\tti)}<K_j\leq V_{2m(\ttn+1-\tti)}}\left|\Oscillations\sprack\tti\right|\right],
\]
where we recall that \(q\) is the number of distinct output frequencies. Applying Lemma~\ref{l:sprocket} to each term in the product, the estimate \eqref{TB} now follows from \eqref{Aye}.
\epf

Having obtained estimates for each part of \(\RHS{Crostata}\), we now turn to the:

\bpf[Proof of Proposition~\ref{p:app}]
First, recall that the eigenvectors and eigenvalues are independent, and hence
\begin{align*}
&\bbE \|\chi_T\bbo_{\R_+}a\sbrack m\|_{X^b_T}^{2n}\\
&\qquad\leq \sum_{\vec\ttL,\vec\ttK} \bbE\left[\prod_{\tti=1}^\ttn\Interactions\sprack \tti\right]\cdot\bbE\left[\prod_{i=1}^n\Oscillations\sprack \tti\right]\cdot \left[\prod_{i=1}^n\Data\sprack \tti\right].
\end{align*}

Fix \(\vec\ttL\) and a coloring \(C\) of the interaction graph \(G(\vec\ttL)\). Combining \eqref{DataBound}, \eqref{OsciBound}, and \eqref{TB}, for any \(\delta>0\) we may bound
\begin{align}\label{Kbd}
&\left|\sum_{\vec\ttK\in C}\bbE\left[\prod_{\tti=1}^\ttn\Interactions\sprack\tti\right]\cdot\bbE\left[\prod_{\tti=1}^\ttn\Oscillations\sprack\tti\right]\cdot \left[\prod_{\tti=1}^\ttn\Data\sprack\tti\right]\right|\\
&\qquad\lesssim_{\delta,m,\ttn,b} \frac{T^D}{T_{\kin}^{m\ttn}}\min\bigl\{N^{q -p},N^{-3m\ttn},N^{-\frac{10}3m\ttn+\frac13n_0}\bigr\} N^{p+\delta}.\notag
\end{align}

From \eqref{Dee} and \eqref{Aye}, we have
\[
n_0 + n_1 = 2m\ttn - n_2 - n_3\leq \lfloor 2m\ttn - \tfrac13(p-q-n_1)\rfloor,
\]
and hence by Lemma~\ref{l:counting degeneracy}
\[
D\leq n_0 + n_1 \leq \tfrac32\left(\lfloor 2m\ttn - \tfrac13(p-q-n_1)\rfloor - \tfrac13n_1\right) - \tfrac12 n_0.
\]
We now apply this with \eqref{Kbd}, considering each of the following cases:

\smallskip

\underline{Case 1: \(n_0\geq m\ttn\) and \(p-q\geq 3m\ttn\).} Here, we may bound
\[
\RHS{Kbd}\lesssim_{\delta,m,\ttn,b}\frac{T^{\frac32m\ttn-\frac12n_0}}{T_{\kin}^{m\ttn}} N^{q+\delta}\lesssim_{\delta,m,\ttn,b}\left(\frac {T}{T_\kin}\right)^{m\ttn} N^{\ttn+\delta},
\]
where the second inequality follows from the fact that \(q\leq \ttn\).

\smallskip

\underline{Case 2: \(n_0\geq m\ttn\) and \(p-q< 3m\ttn\).} Here, we have
\[
\RHS{Kbd}\lesssim_{\delta,m,\ttn,b} \frac{T^{\frac32m\ttn - \frac12n_0}}{T_{\kin}^{m\ttn}}\left(\frac{\sqrt T}N\right)^{3m\ttn-(p-q)} N^{q+\delta}\lesssim_{\delta,m,\ttn,b}\left(\frac {T}{T_\kin}\right)^{m\ttn} N^{\ttn+\delta},
\]
where we have again used that \(q\leq \ttn\) as well as that \(T\leq N\).

\smallskip

\underline{Case 3: \(n_0 < m\ttn\) and \(p-q\geq \frac{10}3m\ttn - \frac13n_0\).} In this case, we get
\[
\RHS{Kbd}\lesssim_{\delta,m,\ttn,b} \frac{T^{\frac43m\ttn - \frac13n_0}}{T_{\kin}^{m\ttn}} N^{q+\delta}\lesssim_{\delta,m,\ttn,b}\left(\frac {T^{\frac43}}{T_\kin}\right)^{m\ttn} N^{\ttn+\delta},
\]
where the second inequality uses that \(n_0\geq 0\) and \(q\leq \ttn\). 

\smallskip

\underline{Case 4: \(n_0 < m\ttn\) and \(p-q< \frac{10}3m\ttn - \frac13n_0\).} For this final case, we obtain
\[
\RHS{Kbd}\lesssim_{\delta,m,\ttn,b} \frac{T^{\frac43m\ttn - \frac13n_0}}{T_{\kin}^{m\ttn}}\left(\frac{\sqrt T}N\right)^{\frac{10}3m\ttn - \frac13 n_0 - (p-q)} N^{q+\delta}\lesssim_{\delta,m,\ttn,b} \left(\frac {T^{\frac43}}{T_\kin}\right)^{m\ttn} N^{\ttn+\delta},
\]
where we have once again used that \(q\leq \ttn\) and \(T\leq N\).

\smallskip

Summing over all colorings \(C\) of the graph \(G(\vec\ttL)\) and then over all choices of interaction history \(\vec\ttL\), we arrive at the bound
\[
\bbE \|\chi_T\bbo_{\R_+}a\sbrack m\|_{X^b_T}^{2\ttn}\lesssim_{\delta,m,\ttn,b}\left(\frac {T^{\frac43}}{T_\kin}\right)^{m\ttn} N^{\ttn+\delta},
\]
where we have used that \(T\geq 1\). The estimate \eqref{AppXb} now follows from Markov's inequality.

It remains to consider the special cases \eqref{AppXb-1} and \eqref{AppXb-2}, where \(\ttn=1\) and \(m=1,2\), respectively. Here, we have
\[
\RHS{Kbd}\lesssim_{\delta,b}\frac{T^D}{T_\kin^m}\min\{N^{1-p},N^{-3m},N^{-\frac{10}3m+\frac13n_0}\} N^{p+\delta}.
\]

First consider the case that \(m=1\). When \(D\leq n_0 + n_1\leq 1\), we may bound \linebreak\(\min\{N^{1-p},N^{-3},N^{-\frac{10}3+\frac13n_0}\}\leq N^{1-p}\) to obtain
\[
\RHS{Kbd}\lesssim_{\delta,b}\frac T{T_{\kin}}N^{1+\delta}.
\]
When \(D = n_0 + n_1=2\) we have \(p\leq 3\) and so we may estimate \(\min\{N^{1-p},N^{-3},N^{-\frac{10}3+\frac13n_0}\}\leq N^{-3}\) to obtain\[\RHS{Kbd}\lesssim_{\delta,b}\frac T{T_{\kin}}\frac TN N^{1+\delta},\]which is acceptable.

When \(m=2\) and \(D\leq 2\), we again use that \(\min\{N^{1-p},N^{-6},N^{-\frac{20}3+\frac13n_0}\}\leq N^{1-p}\) to obtain
\[
\RHS{Kbd}\lesssim_{\delta,b} \frac{T^2}{T_{\kin}^2}N^{1+\delta}.
\]
It remains to check the cases where \(D\geq 3\). Using that \(D\leq n_0+n_1\), we have the following scenarios:
\begin{center}
\begin{tabular}{c | c c c c}
\(p\)&\(n_0\)&\(n_1\)&\(n_2\)&\(n_3\)\\\hline
$1$&$4$&$0$&$0$&$0$\\
$2$&$3$&$1$&$0$&$0$\\
$3$&$2$&$2$&$0$&$0$\\
$4$&$1$&$3$&$0$&$0$\\
$5$&$0$&$4$&$0$&$0$\\
$3$&$3$&$0$&$1$&$0$\\
$4$&$3$&$0$&$0$&$1$\\
$4$&$2$&$1$&$1$&$0$\\
$5$&$2$&$1$&$0$&$1$\\
$5$&$1$&$2$&$1$&$0$\\
$6$&$1$&$2$&$0$&$1$\\
$6$&$0$&$3$&$1$&$0$\\
$7$&$0$&$3$&$0$&$1$
\end{tabular}
\end{center}
Bounding \[D\leq n_0+n_1\qtq{and}\min\{N^{1-p},N^{-6},N^{-\frac{20}3+\frac13n_0}\}\leq N^{-6},\] we may readily check that each case leads to an acceptable contribution, except for the final one.

For the final case, recall from Lemma~\ref{l:Osci} that the initial frequencies must be paired. This means that, counting from left to right, each odd input edge is paired with an even input edge with paired edges having the same color. Further, by definition, a degree one vertex \(V_s\) can only be degenerate if the unique frequency \(K_j\) satisfying \(V_{s-1}<K_j\leq V_s\) appears in a consecutive pair of inputs and the other input frequency is a distinct color. This means that \(3\) of the \(4\) interaction vertices in our graph must have a repeated color on two consecutive inputs that does not appear later in our reviewing process, and a distinct color on the third edge that does appear later in our reviewing process. The remaining vertex has degree \(3\) and hence must have three distinctly colored inputs.

Now let us consider all colorings of \(m=2, \ttn=1\) interaction graphs that satisfy these properties. First, note that \(V_1\) cannot have degree \(3\). Indeed, the pairing of the input frequencies means that \(V_1\) can only have degree \(0\) or \(1\) for \emph{any} coloring of the graph. Taking \(V_1\) to be a degenerate degree one vertex, we next consider \(V_2\). Two of the input frequencies for \(V_2\) are initial frequencies. If these two frequencies are distinct, they must both be paired with an input frequency for \(V_1\): This is evidently impossible as \(V_1\) only has one unpaired input frequency. This not only shows that \(V_2\) is a degenerate degree \(1\) vertex but also that the interaction history for the left hand graph, \(\ell\) is either \((1,1)\) or \((3,1)\). By symmetry, we may assume we are in the latter case and hence we currently have the graph shown in Figure~\ref{f:bad case a}.

\begin{figure}[h!]
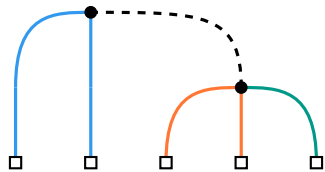

\begin{center}
\tikz{
\draw[very thick,col4] (6,0)--(6,1) {};
\draw[very thick,col5] (3,0)--(3,1) {};
\draw[very thick,col5] (4,0)--(4,1) {};
\draw[very thick,col5] (4,1)--(4,2) {};
\draw[very thick,col4] (5,0) .. controls (5,1) and (5.5,1) .. (6,1);
\draw[very thick,col3] (7,0) .. controls (7,1) and (6.5,1) .. (6,1);
\draw[very thick,dashed] (6,1) .. controls (6,2) and (5.5,2) .. (4,2);
\draw[very thick,col5] (3,1) .. controls (3,2) and (3.5,2) .. (4,2);
\node[indot] at (3,0) {};
\node[indot] at (4,0) {};
\node[indot] at (5,0) {};
\node[indot] at (6,0) {};
\node[indot] at (7,0) {};
\node[dot] at (6,1) {};
\node[dot] at (4,2) {};
}
\end{center}
\caption{The right hand interaction graph in the case \(m=2\), \(D=3\), \((n_0,n_1,n_2,n_3) = (0,3,0,1)\). As before, distinct colors represent distinct frequencies. The dashed frequency can be any color except orange or blue.}\label{f:bad case a}
\end{figure}

Turning to the left hand graph, let us note that we only have one free intial frequency (shown in green in Figure~\ref{f:bad case a}) in the right hand graph. This shows that \(V_3\) cannot be a degree \(3\) vertex and hence \(V_4\) must be degree \(3\). Further, the input edge of \(V_3\) that is not paired with another input edge of \(V_3\) must be paired with an initial edge that is an input to \(V_4\). Putting this together we are left with only two possible graphs, shown in Figure~\ref{f:bad case}.

\begin{figure}[h!]
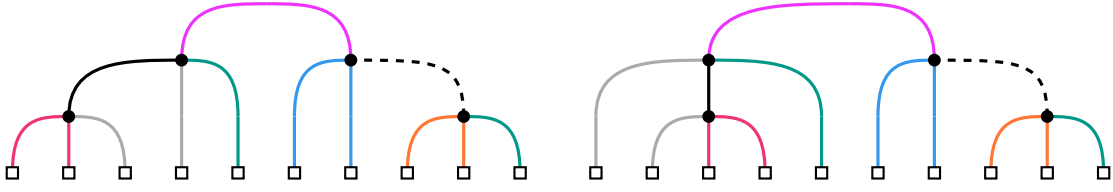

\begin{center}
\tikz[scale=.75]{
\draw[very thick,col7] (1,0)--(1,1) {};
\draw[very thick,col7] (1,1)--(1,2) {};
\draw[very thick,col6] (-1,0)--(-1,1) {};
\draw[very thick,col3] (2,0)--(2,1) {};
\draw[very thick,col6] (-2,0) .. controls (-2,1) and (-1.5,1) .. (-1,1);
\draw[very thick,col7] (0,0) .. controls (0,1) and (-.5,1) .. (-1,1);
\draw[very thick,col3] (2,1) .. controls (2,2) and (1.5,2) .. (1,2);
\draw[very thick,col1] (-1,1) .. controls (-1,2) and (0,2) .. (1,2);
\draw[very thick,col4] (6,0)--(6,1) {};
\draw[very thick,col5] (3,0)--(3,1) {};
\draw[very thick,col5] (4,0)--(4,1) {};
\draw[very thick,col5] (4,1)--(4,2) {};
\draw[very thick,col4] (5,0) .. controls (5,1) and (5.5,1) .. (6,1);
\draw[very thick,col3] (7,0) .. controls (7,1) and (6.5,1) .. (6,1);
\draw[very thick,dashed] (6,1) .. controls (6,2) and (5.5,2) .. (4,2);
\draw[very thick,col5] (3,1) .. controls (3,2) and (3.5,2) .. (4,2);
\draw[very thick,col2] (4,2) .. controls (4,3) and (3.25,3) .. (2.5,3) {};
\draw[very thick,col2] (1,2) .. controls (1,3) and (1.75,3) .. (2.5,3) {};
\node[indot] at (-2,0) {};
\node[indot] at (-1,0) {};
\node[indot] at (0,0) {};
\node[indot] at (1,0) {};
\node[indot] at (2,0) {};
\node[indot] at (3,0) {};
\node[indot] at (4,0) {};
\node[indot] at (5,0) {};
\node[indot] at (6,0) {};
\node[indot] at (7,0) {};
\node[dot] at (-1,1) {};
\node[dot] at (1,2) {};
\node[dot] at (6,1) {};
\node[dot] at (4,2) {};
}
\quad\quad
\tikz[scale=.75]{
\draw[very thick,col7] (-2,0)--(-2,1) {};
\draw[very thick,col7] (-2,1) .. controls (-2,2) and (-1,2) .. (0,2);
\draw[very thick,col6] (0,0)--(0,1) {};
\draw[very thick,col3] (2,0)--(2,1) {};
\draw[very thick,col7] (-1,0) .. controls (-1,1) and (-0.5,1) .. (0,1);
\draw[very thick,col6] (1,0) .. controls (1,1) and (.5,1) .. (0,1);
\draw[very thick,col3] (2,1) .. controls (2,2) and (1,2) .. (0,2);
\draw[very thick,col1] (0,1)--(0,2);
\draw[very thick,col4] (6,0)--(6,1) {};
\draw[very thick,col5] (3,0)--(3,1) {};
\draw[very thick,col5] (4,0)--(4,1) {};
\draw[very thick,col5] (4,1)--(4,2) {};
\draw[very thick,col4] (5,0) .. controls (5,1) and (5.5,1) .. (6,1);
\draw[very thick,col3] (7,0) .. controls (7,1) and (6.5,1) .. (6,1);
\draw[very thick,dashed] (6,1) .. controls (6,2) and (5.5,2) .. (4,2);
\draw[very thick,col5] (3,1) .. controls (3,2) and (3.5,2) .. (4,2);
\draw[very thick,col2] (4,2) .. controls (4,3) and (3.25,3) .. (2.5,3) {};
\draw[very thick,col2] (0,2) .. controls (0,3) and (1.25,3) .. (2.5,3) {};
\node[indot] at (-2,0) {};
\node[indot] at (-1,0) {};
\node[indot] at (0,0) {};
\node[indot] at (1,0) {};
\node[indot] at (2,0) {};
\node[indot] at (3,0) {};
\node[indot] at (4,0) {};
\node[indot] at (5,0) {};
\node[indot] at (6,0) {};
\node[indot] at (7,0) {};
\node[dot] at (0,1) {};
\node[dot] at (0,2) {};
\node[dot] at (6,1) {};
\node[dot] at (4,2) {};
}
\end{center}
\caption{The only possible interaction graphs in the case \(m=2\), \(D=3\), \((n_0,n_1,n_2,n_3) = (0,3,0,1)\). As before, distinct colors represent distinct frequencies. The dashed frequency can be any color except orange or blue.}\label{f:bad case}
\end{figure}

However, if we reflect either of the graphs in Figure~\ref{f:bad case} horizontally about its line of symmetry, we obtain a graph with either \((n_0,n_1,n_2,n_3) = (0,2,2,0)\) or \((n_0,n_1,n_2,n_3) = (1,0,3,0)\), depending on the color of the dashed frequency in Figure~\ref{f:bad case}. Each of these gives an acceptable contribution as \(D\leq n_0 + n_1\leq 2\). However, reflecting our original graph corresponds to reversing the order in which we enumerated the vertices, or equivalently considering the complex conjugate of our coloring. Either way, it is clear that the original and reflected graphs must satisfy identical bounds, and hence we obtain an acceptable contribution to \eqref{AppXb-2}.
\epf

\section{The linearized operator around the approximate solution}\label{s:lin}

In this section we consider the real linear operator \(\cL\), as defined in \eqref{Linearization}, and prove Proposition~\ref{p:Lin}.

Using \eqref{app-sum} and \eqref{Linearization} we may decompose
\begin{align*}
\cL a &= \sum_{\mfi,\mfj=0}^M 2 \tfrac{\mu^2}{N} \Bigl[ \Psi^*\bigl( \Psi a^{(\mfi)} \odot \overline{ \Psi a^{(\mfj)}} \odot \Psi a \bigr) - \tfrac1{2N+1}\<a\sbrack \mfj,a\sbrack \mfi\>a - \tfrac1{2N+1}\<a\sbrack \mfj,a\>a\sbrack \mfi\Bigr]\\
&\quad +  \sum_{\mfi,\mfj=0}^M\tfrac{\mu^2}{N} \Bigl[\Psi^*\bigl( \Psi a^{(\mfi)}\odot  \overline{ \Psi a } \odot \Psi a^{(\mfj)} \bigr) - \tfrac1{2N+1}\<a,a\sbrack \mfi\>a\sbrack \mfj - \tfrac1{2N+1}\<a,a\sbrack \mfj\>a\sbrack \mfi\Bigr] \\
& = \sum_{\mfi,\mfj=0}^M \Bigl[2 \cL_{\mfi\mfj} a + \overline{\cL_{\mfi\mfj}' a}\Bigr],
\end{align*}
where the operators \(\cL_{\mfi\mfj}\), \(\cL_{\mfi\mfj}'\) are complex linear.

Our main estimate for the operators \(\cL_{\mfi\mfj}\), \(\cL_{\mfi\mfj}'\) is stated in the following lemma, the proof of which we momentarily delay:

\begin{lemma}\label{l:target} Let \(0\leq \mfi,\mfj\leq M\) and \(1\leq T\leq N\). Then we have the estimate
\begin{equation}
    \label{targetestimate}
\left\|\chi_T\, \cL_{\mfi\mfj} \right\|_{X^{1/2}_T \to X^{-1/2}_T} + \left\|\chi_T\, \cL_{\mfi\mfj}' \right\|_{X^{1/2}_T \to X^{-1/2}_T} \prec_{\mfi,\mfj} \left( \frac{T^{4/3}}{T_{\kin}} \right)^{\frac{\mfi+\mfj+1}{2}}.
\end{equation}
\end{lemma}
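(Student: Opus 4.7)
I would prove \eqref{targetestimate} by squaring the $X_T^{-1/2}$ norm (a $T^*T$-style reduction) and then applying the interaction-graph machinery of Section~\ref{s:app} with minor modifications; the bounds for $\cL_{\mfi\mfj}$ and $\cL_{\mfi\mfj}'$ are analogous, so I focus on the first. By duality of $X_T^{1/2}$ with $X_T^{-1/2}$ under the $L^2_t\ell^2$ pairing, it suffices to show
\[
\bigl|\langle \chi_T \cL_{\mfi\mfj} a,\,c\rangle_{L^2_t\ell^2}\bigr| \prec_{\mfi,\mfj} \Bigl(\tfrac{T^{4/3}}{T_\kin}\Bigr)^{(\mfi+\mfj+1)/2}\|a\|_{X_T^{1/2}}\|c\|_{X_T^{1/2}}
\]
for arbitrary $a,c$. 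Expanding $a^{(\mfi)}$ and $a^{(\mfj)}$ inside $\cL_{\mfi\mfj}$ via Lemma~\ref{l:Profile}, the bilinear form on the left becomes a sum over interaction graphs of the type treated in Section~\ref{s:app} with $m = \mfi+\mfj+1$ total nonlinear vertices, except that two of the bottom leaves carry the free vectors $a$ and $\bar c$ rather than the initial-data factors $A(k_{0,\cdot}/N)$.

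The Weingarten moment bounds of Lemma~\ref{l:Osci} and the oscillation estimates of Lemmas~\ref{l:reciprocal bound}--\ref{l:sprocket} and Corollary~\ref{c:TB} depend only on the graph topology and on the eigenvalue-rigidity estimates of Section~\ref{s:LSC}, so they apply verbatim to the modified graphs. The only substantial change is in the ``Data'' factor: instead of bounding each input leaf by $\|A\|_{L^\infty}^2$, the two free leaves are handled by a Cauchy--Schwarz step in the spectral variables $\alpha, \alpha'$ appearing in the Fourier-side representation of the $X_T^{\pm 1/2}$ norms. The natural $(\tfrac1T+|\alpha|)^{-1/2}$ and $(\tfrac1T+|\alpha'|)^{-1/2}$ weights arising in the oscillation integral --- coming from the negative Sobolev index on the target side --- are exactly those appearing in the definitions of $\|a\|_{X_T^{1/2}}$ and $\|c\|_{X_T^{1/2}}$, so these norms can be extracted cleanly at no cost in powers of $T$ or $N$.

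Following the four-case analysis in the proof of Proposition~\ref{p:app} (splitting according to $n_0 \gtrless m\ttn$ and $p - q \gtrless \tfrac{10}{3} m\ttn - \tfrac{1}{3} n_0$, with $\ttn=1$ and $m=\mfi+\mfj+1$) then yields the squared bound $(T^{4/3}/T_\kin)^{\mfi+\mfj+1}$, and taking square roots gives the exponent $(\mfi+\mfj+1)/2$ in \eqref{targetestimate}. The main obstacle will be verifying that the two distinguished free leaves do not produce degenerate interaction graphs outside the four cases treated in Section~\ref{s:app}: when the free-leaf frequencies sit adjacent to specific nonlinear vertices, they modify the degree counts $n_0, \dots, n_3$ and the smart-pair/smart-triplet penalties from Theorem~\ref{thm:moment_bound}, and configurations analogous to Figures~\ref{f:bad case a}--\ref{f:bad case} may reappear in modified form. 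I expect these to be excluded by the same horizontal-reflection symmetry argument used in Section~\ref{s:app}, which relates any putative bad coloring to one with a strictly smaller degenerate count.
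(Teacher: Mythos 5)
Your reduction is where the argument fails. Extracting $\|a\|_{X_T^{1/2}}\|c\|_{X_T^{1/2}}$ from the bilinear form by Cauchy--Schwarz in the spectral variables is, in effect, a Hilbert--Schmidt bound on the conjugated operator $\fL_{\mfi\mfj}$, i.e.\ the case $\ttn=1$ of the trace $\tr\bigl[(\fL_{\mfi\mfj}^*\fL_{\mfi\mfj})^\ttn\bigr]$. The four-case analysis of Proposition~\ref{p:app} applied to this quantity does \emph{not} give $(T^{4/3}/T_\kin)^{\mfi+\mfj+1}$: it gives $N^{1+\delta}(T^{4/3}/T_\kin)^{\mfi+\mfj+1}$, the extra $N$ coming from the single free output/trace frequency ($q=1$); in Proposition~\ref{p:app} the analogous factor $N^{\ttn+\delta}$ is absorbed by the $\frac1N$ normalization, which is absent for an operator norm. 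After taking square roots you are left with an $N^{1/2}$ loss, which is fatal for the contraction argument in the proof of Theorem~\ref{t:main}. If instead you mean to fix $a,c$ and bound moments of $\langle\chi_T\cL_{\mfi\mfj}a,c\rangle$, then the exceptional event depends on $(a,c)$ and the supremum defining the operator norm cannot be recovered: a net of the unit ball has cardinality exponential in $N$, incompatible with the merely polynomial failure probabilities that fixed moments provide.

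Relatedly, a single second-moment bound plus Markov cannot yield the stochastic domination $\prec$ of Definition~\ref{d:SD}, which demands failure probability $O(N^{-K})$ for every $K$. This is precisely why the paper estimates $\bbE\tr\bigl[(\fL_{\mfi\mfj}^*\fL_{\mfi\mfj})^\ttn\bigr]$ for \emph{arbitrary} $\ttn$, chaining $\ttn$ copies of the interaction graph through the trace (Figures~\ref{f:hilbert_schmidt}--\ref{f:schatten}) and then taking $\ttn$ large: the $2\ttn$-th root simultaneously kills the $N^{1+\delta}$ prefactor and upgrades Markov to overwhelming probability. Carrying this out needs the general-$\ttn$ analogues Lemma~\ref{l:Osci2}, Lemma~\ref{l:sprocket2} and Corollary~\ref{c:TB2}; in particular the weights $(\tfrac1T+|\tau|)^{-1/2}$ in the kernel are not absorbed ``at no cost'' -- handling them, together with the $\delta(\tau_0-\tau_3')$ chaining, requires a Young/Hardy--Littlewood--Sobolev argument with exponents chosen just above criticality. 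Finally, your worry about the bad configurations of Figures~\ref{f:bad case a}--\ref{f:bad case} is misplaced: that refined reflection argument is only needed for \eqref{AppXb-2}, whereas the target $(T^{4/3}/T_\kin)^{m\ttn}$ here follows from the four generic cases alone.
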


We may then give the

\bpf[Proof of Proposition~\ref{p:Lin}] It suffices to prove that for each \(0\leq \mfi,\mfj\leq M\) we have the estimate
\eq{targetcons}{
\|\chi_T\, \cL_{\mfi\mfj} \|_{X^{b}_T \to X^{b-1}_T} + \|\chi_T \,\cL_{\mfi\mfj}' \|_{X^{b}_T \to X^{b-1}_T} \prec_{b,i,j} (NT)^{2b-1} \left( \frac{T^{4/3}}{T_{\kin}} \right)^{\frac{\mfi+\mfj+1}{2}}.
}
For simplicity, we only consider the bound for \(\cL_{\mfi\mfj}\). The estimate for \(\cL_{\mfi\mfj}'\) is identical.
 
Recalling that $X^0_T=L^2_t \ell^2$, we argue as in the proof of Lemma~\ref{l:Nonlinear}, first using the embedding \(\ell^2\subset\ell^\infty\) and \eqref{Cont}, followed by \eqref{free bird} when \(\mfi=0\) or \(\mfj = 0\) and \eqref{AppXb} otherwise to obtain
\begin{align*}
\left\| \chi_T\, \cL_{\mfi\mfj} a \right\|_{X^0_T} & \lesssim \tfrac{\mu^2}{N} \|\chi_{2T}\, a^{(\mfi)} \|_{L^\infty_t \ell^2}\|\chi_{2T}\, a^{(\mfj)} \|_{L^\infty_t \ell^2}\|a\|_{L^2_t\ell^2}\\
&\lesssim_b  \tfrac{\mu^2}NT^{2b-1}\|\chi_{4T}\,a^{(\mfi)} \|_{X^b_T}\|\chi_{4T}\,a^{(\mfj)} \|_{X^b_T}\|a\|_{X_T^0}\prec_{b,\mfi,\mfj} N\left(\frac{T^{4/3}}{T_{\kin}}\right)^{\frac{\mfi+\mfj+1}2}\|a\|_{X_T^0},
\end{align*}
where the final inequality uses hypotheses that \(b\leq 5/6\) and \(T\geq 1\).

We now interpolate between this bound and \eqref{targetestimate} to yield
\begin{align*}
\|\chi_T\,\cL_{\mfi\mfj}\|_{X_T^b\to X_T^{b-1}}&\lesssim_b T^{2b-1}\|\chi_T\,\cL_{\mfi\mfj}\|_{X_T^{1-b}\to X_T^{b-1}}\\
&\lesssim_bT^{2b-1} \|\chi_T\,\cL_{\mfi\mfj}\|_{X_T^{1/2}\to X_T^{-1/2}}^{1-(2b-1)}\|\chi_T\,\cL_{\mfi\mfj}\|_{X_T^0\to X_T^0}^{2b-1}\prec_{b,\mfi,\mfj} (NT)^{2b-1}\left(\frac{T^{4/3}}{T_{\kin}}\right)^{\frac{\mfi+\mfj+1}2},
\end{align*}
which completes the proof of \eqref{targetcons}.
\epf

We now turn to the proof of Lemma~\ref{l:target}. Here, we confine our attention to the operator \(\cL_{\mfi\mfj}\) restricted to positive times. The corresponding estimate for negative times and for the operator \(\cL_{\mfi\mfj}'\) follow from a similar argument.

Denoting the Fourier transform by \(\cF\), we introduce the operator
\[
\fL_{\mfi\mfj} = \bigl(\tfrac1T + |\cdot|\bigr)^{-\frac12}\cF e^{it\Lambda}\chi_T\,\bbo_{\R_+}\,\cL_{\mfi\mfj}e^{-it\Lambda}\cF^*\bigl(\tfrac1T + |\cdot|\bigr)^{-\frac12},
\]
so that
\[
\| \chi_T\,\bbo_{\R_+}\,\cL_{\mfi\mfj}\|_{X_T^{1/2}\to X_T^{-1/2}} = \|\fL_{\mfi\mfj}\|_{L_t^2\ell^2\to L_t^2\ell^2}.
\]
We then compute that for any \(f\in L^2_t\ell^2\) we have
\[
\bigl(\fL_{\mfi\mfj} f\bigr)_k(\tau) = \int_\R\sum_{\ell=-N}^N\fL_{\mfi\mfj}(\tau,k;\sigma,\ell)f_\ell(\sigma)\,d\sigma,
\]
where the kernel
\begin{align*}
\fL_{\mfi\mfj} (\tau_0,\vk_0;\tau_3,\vk_3) & = \frac2{(2\pi)^{3/2}}\sum_{\vk_1,\vk_2}\int_{\R^2}\frac{\mu^2}{N} \frac{\widehat\chi_T\bigl(\tau_0 - \tau_1 + \tau_2 - \tau_3 -  \Omega(\vk_0,\vk_1,\vk_2,\vk_3)\bigr)}{\sqrt{\frac1T + |\tau_0|}\sqrt{\frac1T + |\tau_3|}} \gamma(\vk_0,\vk_1,\vk_2,\vk_3)\\
&\qquad\qquad\qquad\qquad\times\cF\bigl(\chi_{2T} \bbo_{\R_+}f^{(\mfi)}\bigr)_{\vk_1}(\tau_1)\, \overline{\cF\bigl(\chi_{2T}\bbo_{\R_+} f^{(\mfj)}\bigr)_{\vk_2} (\tau_2)} \, d\tau_1 \,d\tau_2,
\end{align*}
and the profiles \(f\sbrack \mfi\), \(f\sbrack \mfj\) are defined as in \eqref{Profile}.

We will bound the operator norm of \(\fL_{\mfi\mfj}\) by considering the expected value of
\[
\tr\Bigl[\bigl(\fL_{\mfi\mfj}^*\fL_{\mfi\mfj}\bigr)^\ttn\Bigr].
\]
To do this, we first consider the operator \(\mf M_{\mfi\mfj} = \fL_{\mfi\mfj}^*\fL_{\mfi\mfj}\) with kernel
\[
\mf M_{\mfi\mfj}(\tau_0',\vk_0';\tau_3,\vk_3) = \sum_{\vk_0,\vk_3'}\int_{\R^2}\overline{\fL_{\mfi\mfj}(\tau_3',\vk_3';\tau_0',\vk_0')}\fL_{\mfi\mfj}(\tau_0,\vk_0;\tau_3,\vk_3)\bbo_{\{\vk_3'=\vk_0\}}\delta(\tau_3'-\tau_0)\,d\tau_0\,d\tau_3'
\]

Mirroring our approach from Section~\ref{s:app}, we introduce interaction histories \(\ell,\ell''\) and frequencies \(\cK,\cK''\) for \(f\sbrack \mfi\), \(\overline{f\sbrack \mfi}\), respectively, as well as interaction histories \(\ell',\ell'''\) and frequencies \(\cK',\cK'''\) for \(\overline{f\sbrack \mfj}\), \(f\sbrack \mfj\), respectively. We then take \(\ttL = (\ell''',\ell'',\ell',\ell)\), \linebreak\(\ttK = (\vk_0,\vk_1,\vk_2,\vk_1',\vk_2',\vk_3',\cK''',\cK'',\cK',\cK)\) and use Lemma~\ref{l:Profile} to obtain (c.f. \eqref{Nala}):
\begin{align}\label{Jafar}
\mf M_{\mfi\mfj}(\tau_0',\vk_0';\tau_3,\vk_3)
&= \sum_{\ttL,\ttK}\Identifications\cdot\Interactions\cdot\Oscillations\cdot\Data,
\end{align}
where we denote
\begin{align*}
&\Identifications(\ttL,\ttK) = \Delta(\vk_2',\cK''',\ell''')\Delta(\vk_1',\cK'',\ell'')\Delta(\vk_2,\cK',\ell')\Delta(\vk_1,\cK,\ell)\bbo_{\{\vk_3'=\vk_0\}},\\
&\Interactions(\ttL,\ttK)\!=\! \tfrac{\mu^{4m}}{N^{2m}}\gamma(\vk_0,\vk_1,\vk_2,\vk_3)\overline{\gamma(\vk_0',\vk_1',\vk_2',\vk_3')}\\&\qquad\qquad\qquad\qquad\qquad\times\prod_{r=1}^m\gamma_{r,\ell_r'''}(\cK''')\,\overline{\gamma_{r,\ell_r''}(\cK'')}\,\overline{\gamma_{r,\ell_r'}(\cK')}\,\gamma_{r,\ell_r}(\cK),\\
&\Oscillations(\ttL,\ttK)\\
&\qquad\qquad\qquad = \tfrac1{2\pi^3}\int_{\R^{10}}\tfrac{\widehat\chi_T\left(\tau_0 - \tau_1 + \tau_2 - \tau_3 - \Omega(\vk_0,\vk_1,\vk_2,\vk_3)\right)}{\sqrt{\frac1T + |\tau_0|}\sqrt{\frac1T + |\tau_3|}}\tfrac{\overline{\widehat\chi_T\left(\tau_0' - \tau_1' + \tau_2' - \tau_3' - \Omega(\vk_0',\vk_1',\vk_2',\vk_3')\right)}}{\sqrt{\frac1T + |\tau_0'|}\sqrt{\frac1T + |\tau_3'|}}\delta(\tau_0 - \tau_3')\\
&\qquad\qquad\qquad\quad\times\tfrac{\widehat\phi_T(\tau_2'-\alpha''')\,\overline{\widehat\phi_T(\tau_1'-\alpha'')}\,\overline{\widehat\phi_T(\tau_2-\alpha')}\,\widehat\phi_T(\tau_1-\alpha)\,d\alpha'''\,d\alpha''\,d\alpha'\,d\alpha\,d\tau_1'\,d\tau_2'\,d\tau_1\,d\tau_2\,d\tau_3'\,d\tau_0}{\prod_{r=0}^\mfj\left(\alpha''' + \omega_r(\cK''',\ell''') - \frac{i}{T}\right) \left(\alpha' + \omega_r(\cK',\ell') - \frac{i}{T}\right)\prod_{r=0}^\mfi\left(\alpha'' + \omega_r(\cK'',\ell'') + \frac{i}{T}\right) \left(\alpha + \omega_r(\cK,\ell) + \frac{i}{T}\right) },\\  
&\Data(\ttK) = \bA\bigl(k_{0,1}''',\dots,k_{0,2\mfj+1}'''\bigr)\,\overline{\bA\bigl(k_{0,1}'',\dots,k_{0,2\mfi+1}''\bigr)}\,\overline{\bA\bigl(k_{0,1}',\dots,k_{0,2\mfj+1}'\bigr)}\,\bA\bigl(k_{0,1},\dots,k_{0,2\mfi+1}\bigr),
\end{align*}
with \(m = \mfi+\mfj+1\) and \(\phi_T(t) = \frac{e^{t/T}}{2\pi}\chi_{2T}(t) \).

To obtain bounds, it will again be useful to introduce an interaction graph associated to this expression. To the authors' knowledge, graphs of this form were first used to prove similar estimates for the linearized operator in~\cite{DH1,CG1}. For a fixed choice of \(\ttL\), our the interaction graph is formed as follows:
\begin{itemize}
\item Take two copies, \(G\) and \(G'\), of the (unique) Feynman diagram for \(a\sbrack 1\).
\item Identify the output vertex in the diagram for \(a\sbrack \mfi\) with the left input vertex in \(G\).
\item Identify the output vertex in the diagram for \(\overline{a\sbrack \mfj}\) with the middle input vertex in \(G\).
\item Identify the output vertex in the diagram for \(\overline{a\sbrack \mfi}\) with the left input vertex in \(G'\).
\item Identify the output vertex in the diagram for \(a\sbrack \mfj\) with the middle input vertex in \(G'\).
\item Identify the output vertex in \(G\) with the right input vertex in \(G'\).
\item Finally, remove all vertices connected to exactly two edges.
\end{itemize}
This construction is shown in Figure~\ref{f:hilbert_schmidt}, with a concrete example given in Figure~\ref{f:hilbert_schmidt_example}.

\begin{figure}[h!]
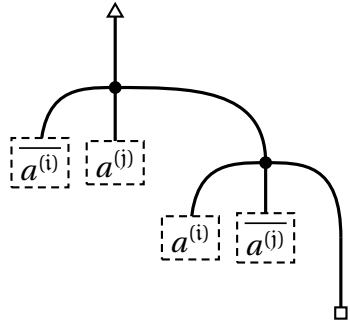

\begin{center}
\tikz{
\draw[very thick] (1,1)--(1,3) {};

\draw[very thick] (0,1) .. controls (0,2) and (0.5,2) .. (1,2);
\draw[very thick] (3,1) .. controls (3,2) and (2,2) .. (1,2);

\draw[very thick] (2,0) .. controls (2,1) and (2.5,1) .. (3,1);
\draw[very thick] (3,0)--(3,1) {};
\draw[very thick] (4,0) .. controls (4,1) and (3.5,1) .. (3,1);

\draw[very thick] (4,0)--(4,-1);

\node[outdot] at (1,3) {};
\node[dot] at (1,2) {};
\node[dot] at (3,1) {};
\node[indot] at (4,-1) {};
\node[gluedot] at (0,1) {$\overline{a\sbrack \mfi}$};
\node[gluedot] at (1,1) {$a\sbrack \mfj$};
\node[gluedot] at (3,0) {$\overline{a\sbrack \mfj}$};
\node[gluedot] at (2,0) {$a\sbrack \mfi$};

}
\end{center}
\caption{The interaction graph for \eqref{Jafar}. The Feynman diagrams for \(a\sbrack \mfi\), etc. appear in the dashed boxes.}\label{f:hilbert_schmidt}
\end{figure}

\begin{figure}[h!]
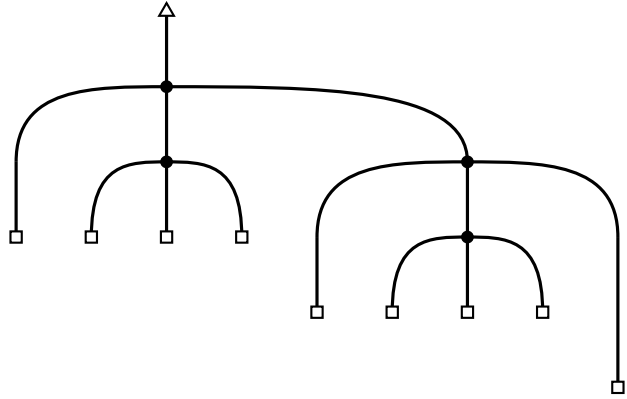

\begin{center}
\tikz{

\draw[very thick] (2,1)--(2,4);
\draw[very thick] (0,1)--(0,2);

\draw[very thick] (0,2) .. controls (0,3) and (1,3) .. (2,3);

\draw[very thick] (1,1) .. controls (1,2) and (1.5,2) .. (2,2);
\draw[very thick] (3,1) .. controls (3,2) and (2.5,2) .. (2,2);

\draw[very thick] (6,0)--(6,2) {};
\draw[very thick] (4,0)--(4,1) {};
\draw[very thick] (8,-1)--(8,1) {};
\draw[very thick] (4,1) .. controls (4,2) and (5,2) .. (6,2);
\draw[very thick] (8,1) .. controls (8,2) and (7,2) .. (6,2);
\draw[very thick] (5,0) .. controls (5,1) and (5.5,1) .. (6,1);
\draw[very thick] (7,0) .. controls (7,1) and (6.5,1) .. (6,1);

\draw[very thick] (6,2) .. controls (6,3) and (4,3) .. (2,3);

\node[outdot] at (2,4) {};

\node[dot] at (2,3) {};
\node[dot] at (2,2) {};
\node[dot] at (6,2) {};
\node[dot] at (6,1) {};

\node[indot] at (0,1) {};
\node[indot] at (1,1) {};
\node[indot] at (2,1) {};
\node[indot] at (3,1) {};
\node[indot] at (4,0) {};
\node[indot] at (5,0) {};
\node[indot] at (6,0) {};
\node[indot] at (7,0) {};
\node[indot] at (8,-1) {};
}
\end{center}
\caption{The interaction graph for \eqref{Jafar} from Figure~\ref{f:hilbert_schmidt} in the case \(\mfi=0\) and \(\mfj=1\).}\label{f:hilbert_schmidt_example}
\end{figure}

We now consider the the operator \((\fL_{\mfi\mfj}^*\fL_{\mfi\mfj})^\ttn = \mf M_{\mfi\mfj}^n\). Mirroring our approach from Section~\ref{s:app}, we denote the interaction history and frequencies of the \(\tti\)\textsuperscript{th} copy of \(\mf M_{\mfi\mfj}\) by \(\ttL\sprack \tti\), \(\ttK\sprack \tti\), respectively, and similarly denote the input and output variables by \((\tau_3\sprack\tti,\vk_3\sprack \tti)\) and \(({\tau_0\sprack \tti}',{\vk_0\sprack \tti}')\). We again take \(\vec\ttL = (\ttL\sprack 1,\dots,\ttL\sprack n)\), but for the frequencies, we make the minor modification \[\vec\ttK = (\ttK\sprack 1,\dots,\ttK\sprack n,{\vk_0\sprack 1}',\dots,{\vk_0\sprack \ttn}', \vk_3\sprack1,\dots,\vk_3\sprack\ttn).\] Setting \(\Identifications\sprack \tti = \Identifications(\ttL\sprack \tti,\ttK\sprack \tti)\), etc., we arrive at the following expression for the trace of \(\mf M_{\mfi\mfj}^\ttn\)
\begin{align}
\tr\Bigl[\mf M_{\mfi\mfj}^\ttn\Bigr] =  \sum_{\vec\ttL,\vec\ttK}\int_{\R^{2n}}\prod_{\tti=1}^\ttn&\Identifications\sprack \tti\bbo_{\{{\vk_0\sprack \tti}' = \vk_3\sprack{\tti+1}\}}\cdot\Interactions\sprack\tti\label{Chewbacca}\\
&\times\Oscillations\sprack \tti\delta({\tau_0\sprack \tti}' - \tau_3\sprack{\tti+1})\cdot\Data\sprack \tti\,d\vec\tau_0'\,d\vec\tau_3,\notag
\end{align}
where we take \(\vk_3\sprack{\ttn+1} = \vk_3\sprack1\), \(\tau_3\sprack{\ttn+1} = \tau_3\sprack 1\), and denote \(\vec\tau_0' = ({\tau_0\sprack 1}',\dots,{\tau_0\sprack \ttn}')\), \(\vec\tau_3 = (\tau_3\sprack1,\dots,\tau_3\sprack\ttn)\).

Taking \(G\sprack \tti\) to be the interaction graph for the \(\tti\)\textsuperscript{th} copy of \(\mf M_{\mfi\mfj}\) in \(\mf M_{\mfi\mfj}^n\), we form an interaction graph for the product by first identifying the output vertex of \(G\sprack{\tti+1}\) with the rightmost input vertex in \(G\sprack \tti\) for each \(\tti=1,\dots,n-1\), and then removing all vertices connected to exactly two edges. This is illustrated in Figure~\ref{f:schatten}.

\begin{figure}[h!]
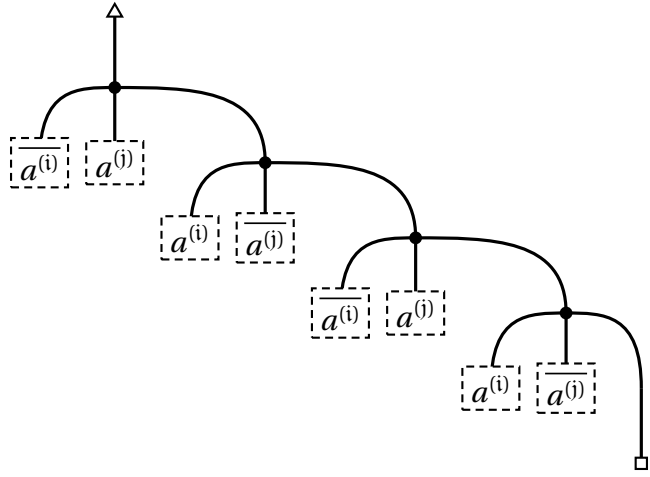

\begin{center}
\tikz{
\begin{scope}
\draw[very thick] (1,1)--(1,3) {};
\draw[very thick] (0,1) .. controls (0,2) and (0.5,2) .. (1,2);
\draw[very thick] (3,1) .. controls (3,2) and (2,2) .. (1,2);
\draw[very thick] (2,0) .. controls (2,1) and (2.5,1) .. (3,1);
\draw[very thick] (3,0)--(3,1) {};
\draw[very thick] (5,0) .. controls (5,1) and (4,1) .. (3,1);
\node[outdot] at (1,3) {};
\node[dot] at (1,2) {};
\node[dot] at (3,1) {};
\node[gluedot] at (0,1) {$\overline{a\sbrack \mfi}$};
\node[gluedot] at (1,1) {$a\sbrack \mfj$};
\node[gluedot] at (3,0) {$\overline{a\sbrack \mfj}$};
\node[gluedot] at (2,0) {$a\sbrack \mfi$};
\end{scope}
\begin{scope}[shift={(4,-2)}]
\draw[very thick] (1,1)--(1,2) {};
\draw[very thick] (0,1) .. controls (0,2) and (0.5,2) .. (1,2);
\draw[very thick] (3,1) .. controls (3,2) and (2,2) .. (1,2);
\draw[very thick] (2,0) .. controls (2,1) and (2.5,1) .. (3,1);
\draw[very thick] (3,0)--(3,1) {};
\draw[very thick] (4,0) .. controls (4,1) and (3.5,1) .. (3,1);
\draw[very thick] (4,0)--(4,-1);
\node[dot] at (1,2) {};
\node[dot] at (3,1) {};
\node[indot] at (4,-1) {};
\node[gluedot] at (0,1) {$\overline{a\sbrack \mfi}$};
\node[gluedot] at (1,1) {$a\sbrack \mfj$};
\node[gluedot] at (3,0) {$\overline{a\sbrack \mfj}$};
\node[gluedot] at (2,0) {$a\sbrack \mfi$};
\end{scope}

}
\end{center}
\caption{The interaction graph for \eqref{Chewbacca} in the case \(n=2\).}\label{f:schatten}
\end{figure}

Continuing to mirror the approach of Section~\ref{s:app}, we now enumerate the vertices in \(G\), working from the rightmost subgraph, \(G\sprack \ttn\), to the leftmost subgraph, \(G\sprack 1\), and enumerating each subgraph as follows:
\begin{itemize}
\item Enumerate the interaction vertices for \(\overline{a\sbrack \mfj}\) from bottom to top.
\item Enumerate the interaction vertices for \(a\sbrack \mfi\) from bottom to top.
\item Enumerate the interaction vertices for \(a\sbrack \mfj\) from bottom to top.
\item Enumerate the interaction vertices for \(\overline{a\sbrack \mfi}\) from bottom to top.
\item Enumerate the two remaining interaction vertices from right to left.
\end{itemize}

For each vertex \(V_s\), we take \(\varpi_s\) to be the associated modulation as defined in Section~\ref{s:app}, with the exception of the vertices \(V_{2m\tti-1},V_{2m\tti}\) for \(\tti=1,\dots,n\), i.e., the last two vertices enumerated in the subgraph \(G\sprack \tti\). For these vertices, we take
\begin{alignat*}{3}
&\varpi_{2m-1} = \Omega(\vk_0\sprack n,\vk_1\sprack n,\vk_2\sprack n,\vk_3\sprack n),\qquad
&\varpi_{2m} &= \Omega(\vk_0\sprack n,{\vk_1\sprack n}',{\vk_2\sprack n}',\vk_3\sprack n),\\
&\varpi_{4m-1} = \Omega(\vk_0\sprack{n-1},\vk_1\sprack{n-1},\vk_2\sprack{n-1},\vk_3\sprack{n-1}),\qquad
&\varpi_{4m} &= \Omega(\vk_0\sprack{n-1},{\vk_1\sprack{n-1}}',{\vk_2\sprack{n-1}}',\vk_3\sprack{n-1}),\\
&\dots\\
&\varpi_{2m\ttn-1} = \Omega(\vk_0\sprack 1,\vk_1\sprack 1,\vk_2\sprack 1,\vk_3\sprack 1),\qquad
&\varpi_{2m\ttn} &= \Omega(\vk_0\sprack 1,{\vk_1\sprack 1}',{\vk_2\sprack 1}',\vk_3\sprack 1).
\end{alignat*}

Now we have enumerated the vertices and defined the associated modulations, we consider a fixed coloring \(C\) of the graph \(G\) by \(p\in \llbracket1,\ttn(6m+1)\rrbracket\) colors. We then proceed as in Section~\ref{s:app} to make identical definitions of degree and degeneracy for each vertex. Further, we may readily verify that this construction ensures that Lemmas~\ref{l:independence},~\ref{l:counting degeneracy}, and~\ref{l:reciprocal bound} remain valid.

To bound the interactions, we have the following analog of Lemma~\ref{l:Osci}:
\begin{lemma}\label{l:Osci2}
Let \(n\geq 1\) and \(\mfi,\mfj\geq 0\). Fix an interaction history \(\vec\ttL\) and coloring \(C\) of the associated interaction graph \(G(\vec\ttL)\). Then, for each choice of \(\vec\ttK\in C\) we have the estimate
\eq{OsciBound2}{
\left|\bbE\left[\prod_{\tti=1}^\ttn\Interactions\sprack\tti\bbo_{\{{\vk_0\sprack \tti}' = \vk_3\sprack{\tti+1}\}}\right]\right| \lesssim_{m,\ttn}\frac1{T_\kin^{m\ttn}}\min\bigl\{N^{1-p},N^{-3m\ttn},N^{-\frac{10}3m\ttn+\frac13n_0}\bigr\},
}
where \(m = \mfi+\mfj+1\).
\end{lemma}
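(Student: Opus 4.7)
The plan is to adapt the three-pronged strategy from the proof of Lemma~\ref{l:Osci} to the cyclically-glued setting of the trace expansion. The product $\prod_{\tti=1}^\ttn\Interactions\sprack\tti$ still contains exactly $2m\ttn$ gamma factors (two from the outer kernel of each $\mf M_{\mfi\mfj}$ copy, plus $2\mfi+2\mfj$ from the nested Duhamel profiles). Hölder's inequality combined with Corollary~\ref{cor:moment_bound} gives the bound $N^{-3m\ttn}/T_\kin^{m\ttn}$; applying Theorem~\ref{thm:moment_bound} and using that every smart pair involves a degree-zero vertex (so that $p_2^* \le n_0$) gives the bound $N^{-\frac{10}{3}m\ttn+\frac{1}{3}n_0}/T_\kin^{m\ttn}$. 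Both arguments are local: they depend only on counting gamma terms, atoms, and degree-zero interactions, all of which are insensitive to how the copies of $\mf M_{\mfi\mfj}$ are glued together. They therefore transfer verbatim.

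The only genuinely new input is the bound $N^{1-p}$, which replaces the $N^{q-p}$ of Lemma~\ref{l:Osci}. I will repeat the Weingarten-graph construction there: after enumerating positions $j_1,\dots,j_{2m\ttn}$, form the Weingarten graph $W$ from the interaction graph $G$ by placing a frequency vertex at each input of $G$ and at the center of each internal edge, then identify vertices whose labels (positions or frequencies) agree. The decisive difference from Lemma~\ref{l:Osci} is that the indicators $\bbo_{\{{\vk_0\sprack\tti}'=\vk_3\sprack{\tti+1}\}}$ cyclically identify the output frequency of each copy of $\mf M_{\mfi\mfj}$ with an outermost input frequency of the next copy, fusing the $\ttn$ subgraphs of $G$ into a single loop. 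Consequently $W$ has at most one connected component, i.e.\ $c\le 1$, rather than the up-to-$q$ available in Lemma~\ref{l:Osci}.

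Proposition~\ref{p:WGBD} then yields $\bbE[\psi_G] = \bigO(N^{c-V}) = \bigO(N^{1-d-p})$ for each fixed position assignment taking $d$ distinct values, where $V = d+p$ is the total number of vertices of $W$. Summing over position assignments yields an $\bigO(N^d)$ factor that exactly cancels the $N^{-d}$, leaving $N^{1-p}/T_\kin^{m\ttn}$. Combining the three estimates by taking their minimum delivers \eqref{OsciBound2}.

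The main obstacle is verifying carefully that the cyclic gluing really does connect the $\ttn$ subgraphs into a single component of $W$. This amounts to tracing through the construction of $W$ to confirm that the output frequency ${\vk_0\sprack\tti}'$ appears as a frequency vertex in the portion of $W$ coming from the $\tti$\textsuperscript{th} copy (arising from the $\gamma(\vk_0',\vk_1',\vk_2',\vk_3')$ factor at the output vertex) and is then identified, via the indicator, with the frequency vertex coming from the outermost input $\vk_3\sprack{\tti+1}$ of the next copy. Once these identifications are in place, the cyclic structure forces $c\le 1$ regardless of any other coincidences.
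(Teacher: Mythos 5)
Your proposal is correct and in substance the same as the paper's: the paper disposes of this lemma in one line by observing that \(\LHS{OsciBound2}\) is literally \(\LHS{OsciBound}\) with \(q=\ttn=1\) and \(m\) replaced by \(m\ttn\), so Lemma~\ref{l:Osci} applies verbatim. Your three bounds are exactly the three prongs of that earlier proof, and your connectivity observation (the chain/cyclic gluing of the \(\ttn\) copies forces the Weingarten graph to have a single connected component, so Proposition~\ref{p:WGBD} yields the factor \(N^{1-p}\)) is precisely the structural reason why \(q=1\) in that reduction.
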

\bpf
By inspecting the associated expressions, we may readily verify that $\LHS{OsciBound2}$ is identical to $\LHS{OsciBound}$ with \(q=\ttn=1\) and \(m\) replaced by \(m\ttn\). Consequently, the estimate \eqref{OsciBound2} is an immediate corollary of \eqref{OsciBound}.
\epf

For the oscillations, we first prove an analog of Lemma~\ref{l:sprocket}:

\begin{lemma}\label{l:sprocket2} Let \(\ttn\geq 1\), \(\mfi,\mfj\geq 0\), and \(1\leq T\leq N\). Fix an interaction history \(\vec\ttL\) and a coloring \(C\) of the interaction graph \(G(\vec\ttL)\). Then, for each \(\tti=1,\dots,\ttn\) we have
\eq{sprocket2}{
\left\|\sum_{V_{2m(\ttn - \tti)}<K_j\leq V_{2m(\ttn+1 - \tti)}}\left|\Oscillations\sprack\tti\right|\right\|_{L^2_{{\tau_0\sprack\tti}',\tau_3\sprack\tti}}\prec_{\mfi,\mfj,n} T^{D\sprack \tti}N^{n_1\sprack \tti+2n_2\sprack \tti+3n_3\sprack \tti},
}
where \(D\sprack \tti\) is the number of degenerate vertices and \(n_d\sprack \tti\) is the number of degree \(d\) vertices in the subgraph \(G\sprack\tti\) of \(G\).
\end{lemma}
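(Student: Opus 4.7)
The plan is to adapt the proof of Lemma~\ref{l:sprocket} to the present setting, where an $L^2$ norm is now taken in the two external variables $({\tau_0\sprack\tti}',\tau_3\sprack\tti)$ rather than in a single internal variable $\tau$. By symmetry we restrict to $\tti = \ttn$. After integrating out the constraint $\delta(\tau_0 - \tau_3')$ inside $\Oscillations\sprack\ttn$, the two $\widehat{\chi_T}$ factors share the single integration variable $\tau_0$; the external weights $(\tfrac1T + |\tau_3|)^{-1/2}$ and $(\tfrac1T + |\tau_0'|)^{-1/2}$ remain outside, while the product of the remaining two $1/\sqrt{\cdot}$ factors becomes the interior weight $(\tfrac1T + |\tau_0|)^{-1}$. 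The rational product over the four groups of modulations $\omega_r(\cK,\ell),\omega_r(\cK',\ell'),\omega_r(\cK'',\ell''),\omega_r(\cK''',\ell''')$ is unchanged.

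We then move the external $L^2$ norm inside by Minkowski's inequality and exploit the Schwartz decay of $\widehat{\chi_T}$: applying Cauchy--Schwarz in each external variable against the corresponding weight, the external norm is controlled by a constant times the $L^1$-type norm of $\widehat{\chi_T}$, which is uniformly bounded via $\|(1+T|\xi|)^k\widehat{\chi_T}\|_{L^1_\xi}\lesssim_k 1$. This reduces the estimate to a bound on an integrand in the internal variables $\tau_0,\tau_1,\tau_2,\tau_1',\tau_2'$ weighted by $(\tfrac1T + |\tau_0|)^{-1}$ together with the same rational product. From this point we proceed exactly as in Lemma~\ref{l:sprocket}: Cauchy--Schwarz in $\tau_0$ pairs the interior weight with itself, after which Young's convolution inequality absorbs the four $\widehat{\phi_T}$ convolutions using $\|(1+T|\tau|)^{2b}\widehat{\phi_T}\|_{L^1_\tau}\lesssim_b 1$.

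We are then left with four weighted $L^2$ estimates of the form
\[
\left\|\left(\tfrac1T + |\alpha|\right)^{-1/2}\prod_s\sum_{V_{s-1}<K_j\le V_s}\frac{1}{\tfrac1T+|\alpha+\varpi_s|}\right\|_{L^2_\alpha},
\]
one for each of $\alpha,\alpha',\alpha'',\alpha'''$, with the four products covering all $2m$ interaction vertices in the subgraph $G\sprack\ttn$. Each is bounded precisely as in the proof of Lemma~\ref{l:sprocket} by splitting into the regions $|\alpha|\le N^\nu$ and $|\alpha|>N^\nu$ and applying Lemma~\ref{l:reciprocal bound}: degenerate vertices contribute a factor of $T$ and non-degenerate vertices of degree $d$ contribute $N^d$. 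Combining the four products via Lemma~\ref{l:counting degeneracy} yields precisely $T^{D\sprack\ttn}N^{n_1\sprack\ttn + 2n_2\sprack\ttn + 3n_3\sprack\ttn}$, as required by \eqref{sprocket2}.

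The principal obstacle is the careful bookkeeping of how the two external $(\cdot)^{-1/2}$ weights pair with the two $\widehat{\chi_T}$ kernels to produce exactly the single internal weight $(\tfrac1T + |\tau_0|)^{-1}$ with no extra factor of $T$ or $N$. Once this reduction is in hand, the argument is essentially a copy of Lemma~\ref{l:sprocket} modulo notational adjustments, with the eigenvalue rigidity bound $|\varpi_s|\prec 1$ from \eqref{Rigid} controlling the large-$\alpha$ region in the stochastic domination sense.
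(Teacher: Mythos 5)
Your reduction has two genuine gaps, and they sit precisely where the paper has to work hardest. First, the claim that after Minkowski the external $L^2_{{\tau_0\sprack\tti}',\tau_3\sprack\tti}$ norm is ``controlled by a constant times the $L^1$-type norm of $\widehat{\chi_T}$'' is false. For fixed internal variables the $\tau_3$-dependence is $(\tfrac1T+|\tau_3|)^{-1/2}\widehat{\chi_T}(c-\tau_3)$, and
\[
\Bigl\|\bigl(\tfrac1T+|\tau_3|\bigr)^{-\frac12}\widehat{\chi_T}(c-\tau_3)\Bigr\|_{L^2_{\tau_3}}\sim \Bigl(\tfrac{T}{\frac1T+|c|}\Bigr)^{\frac12},
\]
which is of size $T$ when $|c|\lesssim 1/T$, not $O(1)$; the same happens in $\tau_0'$. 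These pile-ups are not an artifact: in the degenerate case they are exactly the source of the factors $T$ attributed to the two vertices $V_{2m\tti-1},V_{2m\tti}$ on the right-hand side of \eqref{sprocket2}, so no argument producing a uniform constant there can be correct, and when those vertices are non-degenerate the only way to avoid paying $T$ is to keep the $\varpi$-shift inside $\widehat{\chi_T}$ and play it against the corresponding frequency sum (Lemma~\ref{l:reciprocal bound}); discarding it through an $L^1$ bound, or pulling the frequency sum outside the norm by the triangle inequality, loses powers of $T^{1/2}$ and more.

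Second, your bookkeeping of the vertices is off: the four resolvent products in $\Oscillations\sprack\tti$ only cover the $2(\mfi+\mfj)=2m-2$ interaction vertices internal to the four Duhamel sub-diagrams, not ``all $2m$ interaction vertices of $G\sprack\tti$.'' The two remaining vertices (the glued cubic vertices) have modulations $\varpi_{2m\tti-1},\varpi_{2m\tti}$ appearing inside the arguments of the two $\widehat{\chi_T}$ factors, and their contributions $N^{d}$ (and $T$ if degenerate) have to be extracted from those factors. This is what the paper's proof does: it proves a weighted pointwise bound with weights $(\tfrac1T+|\tau_3|)^{b}(\tfrac1T+|\tau_0'|)^{b}$ for $b$ slightly above $\tfrac12$, introduces $g_1,g_2$ (the frequency-summed $\widehat{\chi_T}$ factors, estimated in $L^p$ with $\tfrac1p=5b-2$ via $|\widehat{\chi_T}(\sigma-\varpi)|\lesssim(\tfrac1T+|\sigma-\varpi|)^{-1}$, Lemma~\ref{l:reciprocal bound} and rigidity) and $h_1,\dots,h_4$ (in $L^q$ with $\tfrac1q=2-2b$, via \eqref{recall this later}), and glues them with Young plus the Hardy--Littlewood--Sobolev inequality to absorb the non-integrable central weight $(\tfrac1T+|\tau_0|)^{2b-2}$, finally choosing $b-\tfrac12$ small and using $T\leq N$. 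Your proposal has no mechanism replacing the $g_1,g_2$/HLS step, so the powers of $N$ and $T$ coming from these two vertices, and from the external weights, are unaccounted for; the case $\mfi=0$ or $\mfj=0$ also needs the separate treatment the paper gives it.
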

\begin{proof}
We consider the case \(\tti = \ttn\). The proof for other values of \(\tti\) is identical.

Let us first consider the case that \(\mfi,\mfj\geq 1\). Taking \(\frac12<b<\frac7{12}\) we denote
\begin{align*}
h_1(\tau_2) & = \left(\tfrac1T + |\tau_2|\right)^{b-\frac12}\int_\R \frac{|\widehat \phi_T(\tau_2 - \alpha')|}{\frac1T + |\alpha'|}\prod_{s=1}^\mfj\left[\sum_{V_{s-1}<K_j\leq V_s} \frac1{\frac1T + |\alpha' + \varpi_s|}\right]\,d\alpha',\\
h_2(\tau_1) &= \left(\tfrac1T + |\tau_1|\right)^{b-\frac12}\int_\R \frac{|\widehat \phi_T(\tau_1 - \alpha)|}{\frac1T + |\alpha|}\prod_{s=\mfj+1}^{\mfi+\mfj}\left[\sum_{V_{s-1}<K_j\leq V_s} \frac1{\frac1T + |\alpha + \varpi_s|}\right]\,d\alpha,\\
h_3(\tau_1') &=\left(\tfrac1T + |\tau_1'|\right)^{b-\frac12}\int_\R \frac{|\widehat \phi_T(\tau_1' - \alpha'')|}{\frac1T + |\alpha''|}\prod_{s=\mfi+\mf j+1}^{\mfi+2\mfj}\left[\sum_{V_{s-1}<K_j\leq V_s} \frac1{\frac1T + |\alpha'' + \varpi_s|}\right]\,d\alpha'',\\
h_4(\tau_2') &= \left(\tfrac1T + |\tau_2'|\right)^{b-\frac12}\int_\R \frac{|\widehat \phi_T(\tau_2' - \alpha''')|}{\frac1T + |\alpha'''|}\prod_{s=\mfi+2\mfj+1}^{m-2}\left[\sum_{V_{s-1}<K_j\leq V_s} \frac1{\frac1T + |\alpha''' + \varpi_s|}\right]\,d\alpha''',\\
g_1(\sigma) &= \left(\tfrac1T + |\sigma|\right)^{b-\frac12}\sum_{V_{2m-2}<K_j\leq V_{2m-1}}|\widehat \chi_T(\sigma - \varpi_{2m-1})|,\\
g_2(\sigma) &=  \left(\tfrac1T + |\sigma|\right)^{b-\frac12}\sum_{V_{2m-1}<K_j\leq V_{2m}}|\widehat \chi_T(\sigma - \varpi_{2m})|.
\end{align*}
We may then bound
\begin{align}
&\bigl(\tfrac1T+|\tau_3|\bigr)^b\bigl(\tfrac1T+|\tau_0'|\bigr)^b\sum_{V_0<K_j\leq V_{2m}}\left|\Oscillations\sprack \ttn(\tau_3,\tau_0') \right|\label{1701}\\
&\qquad\lesssim\int_{\R^5} T^{6b-3}\bigl(\tfrac1T+|\tau_0|\bigr)^{2b-2}g_1(\tau_0 - \tau_1 + \tau_2 - \tau_3)g_2(\tau_0' -\tau_1' + \tau_2' - \tau_0)\notag\\
&\qquad\qquad\qquad\qquad\times h_1(\tau_2)h_2(\tau_1)h_3(\tau_1')h_4(\tau_2')\,d\tau_1'\,d\tau_2'\,d\tau_1\,d\tau_2\,d\tau_0.\notag
\end{align}

Applying Young's convolution inequality with \(\tfrac1q = 2-2b\) and \(\tfrac1p = 5b-2\) we have
\begin{align*}
\left\|\int_{\R^2}g_1(\cdot - \tau_1 + \tau_2 - \tau_3) h_1(\tau_2)h_2(\tau_1+\tau_2)\,d\tau_1\,d\tau_2\right\|_{L^{1/b}}\lesssim \|g_1\|_{L^p}\|h_1\|_{L^q}\|h_2\|_{L^q}.
\end{align*}
An application of the Hardy--Littlewood--Sobolev Lemma then yields the bound
\begin{align*}
\LHS{1701}&\lesssim T^{6b-3} \left\|\bigl(\tfrac1T+|\tau_0|\bigr)^{2b-2}\right\|_{L^{q,\infty}} \|g_1\|_{L^p}\|g_2\|_{L^p}\|h_1\|_{L^q}\|h_2\|_{L^q}\|h_3\|_{L^q}\|h_4\|_{L^q}\\
&\lesssim T^{6b-3} \|g_1\|_{L^p}\|g_2\|_{L^p}\|h_1\|_{L^q}\|h_2\|_{L^q}\|h_3\|_{L^q}\|h_4\|_{L^q},
\end{align*}
where \(\|\cdot\|_{L^{q,\infty}}\) denotes the weak \(L^q\)-quasinorm.

It remains to bound the \(h_j\) and \(g_j\) contributions. Applying Young's convolution inequality and \eqref{recall this later}, for \(\frac1r = \frac32-2b\) we have
\begin{align*}
\|h_1\|_{L^q} &\lesssim \left\|\left(\tfrac1T + |\tau|\right)^{-\frac12}\right\|_{L^r_\tau}\left\|\left(1 + T|\sigma|\right)^b\widehat\phi_T(\sigma)\right\|_{L^1_\sigma}\left\|\left(\tfrac1T + |\alpha|\right)^{b-1}\prod_{s=1}^\mfj\left[\sum_{V_{s-1}<K_j\leq V_s} \frac1{\frac1T + |\alpha + \varpi_s|}\right]\right\|_{L^2_\alpha}\\
&\prec_{b,\mfj} T^{2b-1} T^{\widetilde D}N^{\widetilde n_1 + 2\widetilde n_2 + 3\widetilde n_3},
\end{align*}
where \(\widetilde D\), \(\widetilde n_d\) are the numbers of degenerate, respectively degree \(d\), vertices amongst the \(V_1,\dots,V_{\mfj}\). The corresponding estimates for \(h_2,h_3,h_4\) in \(L^q\) are similar.

Turning to the contribution of \(g_1\), we first note that
\[
|\widehat\chi_T(\sigma - \varpi_{2m-1})|\lesssim \frac 1{\frac1T + |\sigma - \varpi_{2m-1}|}.
\]
Using \eqref{Rigid} to bound \(|\varpi_{m-1}|\prec 1\), we then estimate the integral over the region where \(|\sigma|>N^\nu\) by
\begin{align*}
\left\|\left(\tfrac1T + |\sigma|\right)^{b-\frac12}\sum_{V_{2m-2}<K_j\leq V_{2m-1}}\frac 1{\frac1T + |\sigma - \varpi_{2m-1}|}\right\|_{L^p(|\sigma|>N^\nu)} &\prec N^d\left\|\left(\tfrac1T + |\sigma|\right)^{b-\frac32}\right\|_{L^p(|\sigma|>N^\nu)}\\
&\prec N^d N^{(6b-\frac72)\nu},
\end{align*}
where \(d\) denotes the degree of the vertex \(V_{2m-1}\). For the remaining region, we apply Lemma~\ref{l:reciprocal bound} to bound
\[
\left\|\left(\tfrac1T + |\sigma|\right)^{b-\frac12}\sum_{V_{2m-2}<K_j\leq V_{2m-1}}\frac 1{\frac1T + |\sigma - \varpi_{2m-1}|}\right\|_{L^p(|\sigma|\leq N^\nu)}\prec T^{\widetilde D}N^d N^{(6b -\frac52)\nu},
\]
where we now take \(\widetilde D = 1\) if \(V_{2m-1}\) is degenerate and \(\widetilde D=  0\) otherwise. As \(\nu>0\) is arbitrary, this yields the bound
\[
\|g_1\|_{L^p}\prec T^{\widetilde D}N^d.
\]
The estimate for \(g_2\) is in \(L^p\) similar.

Combining our bounds for the \(h_j\) and \(g_j\), we arrive at the estimate
\[
\left|\LHS{1701}\right|\prec T^{14b-7  +D}N^{n_1+2n_2+3n_3}.
\]
Using that \(T\leq N\) we may choose \(0<b-\frac12\ll1\) arbitrarily small to obtain \eqref{sprocket}.

It remains to consider the case where \(\mfi=0\) or \(\mfj = 0\). Let us consider the case that \(\mfi = 0\) as the case that \(\mfj = 0\) is similar. Here, the functions \(h_2,h_4\) become
\begin{align*}
h_2(\tau_1) &= \left(\tfrac1T + |\tau_1|\right)^{b-\frac12}\int_\R \frac{|\widehat \phi_T(\tau_1 - \alpha)|}{\frac1T + |\alpha|}\,d\alpha,\\
h_4(\tau_2') &= \left(\tfrac1T + |\tau_2'|\right)^{b-\frac12}\int_\R \frac{|\widehat \phi_T(\tau_2' - \alpha''')|}{\frac1T + |\alpha'''|}\,d\alpha''',
\end{align*}
and with the same choice of \(p\) as before, we may apply Young's convolution inequality to bound
\[
\|h_2\|_{L^q}\lesssim \left\|\left(\tfrac1T + |\alpha|\right)^{b-\frac32}\right\|_{L^q}\lesssim T^{b-\frac12},
\]
which is again acceptable.
\end{proof}

Arguing as in Corollary~\ref{c:TB}, we obtain the following corollary of Lemma~\ref{l:sprocket2}:
\begin{corollary}\label{c:TB2}
Let \(\ttn\geq 1\), \(\mfi,\mfj\geq 0\), and \(1\leq T\leq N\). Fix an interaction history \(\vec\ttL\) and a coloring \(C\) of the interaction graph \(G(\vec\ttL)\). Then we have the estimate
\eq{TB2}{
\sum_{\vec\ttK\in C}\left|\int_{\R^{2n}}\prod_{\tti=1}^\ttn\Oscillations\sprack\tti(\tau_3\sprack\tti,{\tau_0\sprack \tti}')\delta\bigl({\tau_0\sprack \tti}'-\tau_3\sprack{\tti+1}\bigr) \,d\vec\tau_0'\,d\vec\tau_3\right| \prec_{\mfi,\mfj,\ttn} T^DN^p.
}
\end{corollary}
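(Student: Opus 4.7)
The plan is to mimic the proof of Corollary~\ref{c:TB}, adapted to the trace structure via two additional ingredients: a Schatten/Cauchy--Schwarz reduction to handle the cyclic gluing imposed by the $n$ delta functions, and the $L^2$ control from Lemma~\ref{l:sprocket2} in place of the pointwise control from Lemma~\ref{l:sprocket}. First I would integrate out the delta functions $\delta({\tau_0\sprack\tti}'-\tau_3\sprack{\tti+1})$, which sets ${\tau_0\sprack\tti}'=\tau_3\sprack{\tti+1}$ cyclically (with the convention $\tau_3\sprack{n+1}=\tau_3\sprack 1$) and reduces the integral to the trace $\tr(L_1 L_2\cdots L_n)$ of the integral operators $L_\tti$ with kernel $\Oscillations\sprack\tti$. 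Using H\"older's inequality for Schatten classes and the embedding $S^2\subset S^n$ for $n\geq 2$ then gives
\[\left|\int_{\R^n}\prod_{\tti=1}^n\Oscillations\sprack\tti\,d\vec\tau_3\right|\leq\prod_{\tti=1}^n\|L_\tti\|_{S^n}\leq\prod_{\tti=1}^n\|\Oscillations\sprack\tti\|_{L^2_{\tau_3\sprack\tti,\tau_3\sprack{\tti+1}}},\]
which is equivalent to an iterated Cauchy--Schwarz bound around the cycle.

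Next I would process the sum over $\vec\ttK\in C$ by iterating from $\tti=n$ down to $\tti=1$, exactly as in the proof of Corollary~\ref{c:TB}. At stage $\tti=n$, the frequencies $K_j$ local to $G\sprack n$ (namely those with $V_0<K_j\leq V_{2m}$) appear only in $\|\Oscillations\sprack n\|_{L^2}$ by Lemma~\ref{l:independence}, so the sum over them can be brought inside this factor and bounded uniformly in the remaining frequencies by Lemma~\ref{l:sprocket2}. Iterating this procedure (at stage $\tti$, the $K_j$ local to $G\sprack\tti$ attach only to $\|\Oscillations\sprack\tti\|_{L^2}$, since the factors with $\tti'>\tti$ have already been pulled out as uniform constants, and Lemma~\ref{l:independence} forbids their appearance in the factors with $\tti'<\tti$) produces
\[\sum_{\vec\ttK\in C}\prod_{\tti=1}^n\|\Oscillations\sprack\tti\|_{L^2}\prec_{\mfi,\mfj,n}\prod_{\tti=1}^n T^{D\sprack\tti}N^{n_1\sprack\tti+2n_2\sprack\tti+3n_3\sprack\tti}=T^D N^{n_1+2n_2+3n_3}.\]

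Finally, the cyclic identification $\vk_3\sprack\tti={\vk_0\sprack\tti}'$ means that every distinct frequency enters one of the boundary modulations $\varpi_{2m\tti-1}$ or $\varpi_{2m\tti}$, so there are no free ``output'' frequencies and the analog of \eqref{Aye} reads $n_1+2n_2+3n_3=p$ (effectively $q=0$). This immediately yields the claimed bound \eqref{TB2}. The main obstacle in this strategy is the Schatten/trace reduction of the delta-coupled cyclic integral to a product of Hilbert--Schmidt norms; once that step is in hand, the combinatorial bookkeeping faithfully recycles the argument for Corollary~\ref{c:TB}, with the single accounting change $q=0$.
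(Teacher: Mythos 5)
Your overall strategy---decouple the frequency sums across the copies via Lemma~\ref{l:independence}, bound the cyclically glued $\tau$-integral by an iterated Cauchy--Schwarz (Schatten) argument, and conclude with Lemma~\ref{l:sprocket2} and the degree count---is exactly the argument the paper intends by ``arguing as in Corollary~\ref{c:TB}''. However, as written you perform the two main steps in the wrong order, and this is a genuine gap rather than a presentational one. After your step 1 you must control $\sum_{\vec\ttK\in C}\prod_{\tti}\|\Oscillations\sprack\tti\|_{L^2}$, and in step 2 you propose to ``bring the sum inside'' each factor so as to apply Lemma~\ref{l:sprocket2}. The triangle inequality goes the other way: Lemma~\ref{l:sprocket2} bounds $\bigl\|\sum_{K_j}|\Oscillations\sprack\tti|\bigr\|_{L^2}$, which is \emph{smaller} than $\sum_{K_j}\|\Oscillations\sprack\tti\|_{L^2}$, and the discrepancy matters here. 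The gain behind Lemma~\ref{l:sprocket2} is the pointwise-in-$\alpha$ spectral averaging of Lemma~\ref{l:reciprocal bound}, $\sum_{k}\bigl(\tfrac1T+|\alpha+\varpi|\bigr)^{-1}\prec N$; once you take the $L^2$ norm at fixed frequencies first, each resonance carries an $L^2$ mass of order $T^{1/2}$, so summing the norms over a non-degenerate frequency costs roughly $NT^{1/2}$ instead of $N$. Your intermediate quantity therefore exceeds $T^DN^p$ by positive powers of $T$ in general, and in any case is not what Lemma~\ref{l:sprocket2} controls, so the chain of inequalities does not close.

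The fix is to reverse the order, which is how the paper proceeds: first bring the sum over $\vec\ttK\in C$ inside the $\tau$-integrals and, exactly as in the proof of Corollary~\ref{c:TB}, use Lemma~\ref{l:independence} and the nested sum/maximum convention to bound $\sum_{\vec\ttK}\prod_{\tti}|\Oscillations\sprack\tti|$ by $N^q\prod_{\tti}\bigl[\sum_{V_{2m(\ttn-\tti)}<K_j\leq V_{2m(\ttn+1-\tti)}}|\Oscillations\sprack\tti|\bigr]$ pointwise in the $\tau$ variables; only then apply your cyclic Cauchy--Schwarz to these frequency-summed kernels and invoke Lemma~\ref{l:sprocket2} together with \eqref{Aye}. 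Two smaller points: the Schatten embedding $S^2\subset S^{\ttn}$ only covers $\ttn\geq2$, while the corollary is stated for $\ttn\geq1$ (the diagonal case $\ttn=1$ needs a separate, if routine, pointwise-in-$\tau$ variant of the bounds in the proof of Lemma~\ref{l:sprocket2}); and your claim that $q=0$, i.e.\ $n_1+2n_2+3n_3=p$, is not quite accurate, since the interaction graph here is a chain with one free output edge whose color need not recur---the cyclic frequency identification sits in the indicators handled by Lemma~\ref{l:Osci2}, not in \eqref{TB2}---but this is immaterial: one only needs $n_1+2n_2+3n_3=p-q\leq p$, and the at most one leftover color is absorbed into the prefactor $N^q$ exactly as in Corollary~\ref{c:TB}.
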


We are finally in a position to complete the

\bpf[Proof of Lemma~\ref{l:target}]

Combining \eqref{OsciBound2} and \eqref{TB2}, with the estimate
\[
\sup_{\vec\ttK}\left|\prod_{\tti=1}^\ttn\Data\sprack\tti\right|\lesssim_{A,\mfi,\mfj} 1,
\]
we may argue as in the proof of Proposition~\ref{p:app} to obtain the estimate
\[
\bbE\tr\Bigl[\mf M_{\mfi \mfj}^n\Bigr]\lesssim_{\mfi,\mfj,n,\delta} \frac{T^D}{T_{\kin}^{mn}}\min\{N^{1-p},N^{-3mn},N^{-\frac{10}3m\ttn+\frac13n_0}\}N^{\delta+p},
\]
for any \(\delta>0\), where \(m = \mfi + \mfj +1\).

Employing \eqref{Dee} and \eqref{Aye} as in the proof of Proposition~\ref{p:app}, we may then bound
\[
\bbE\|\fL_{\mfi\mfj}\|_{L^2_t\ell^2\to L^2_t\ell^2}^\ttn\leq\bbE\tr\Bigl[\bigl(\fL_{\mfi\mfj}^*\fL_{\mfi\mfj}\bigr)^n\Bigr]=\bbE\tr\Bigl[\mf M_{\mfi \mfj}^n\Bigr] \lesssim_{\mfi,\mfj,n,\delta} N^{1+\delta}\left(\frac{T^{4/3}}{T_{\kin}}\right)^{mn}.
\]
As \(n\geq 1\) is arbitrary, the estimate \eqref{targetestimate} follows from an application of Markov's inequality.
\epf


\end{document}